\documentclass[a4paper,11pt]{article}
\usepackage[top=1in, left=1in, right=1in, bottom=1in]{geometry}
\usepackage[colorlinks=true, linkcolor=blue, citecolor=cyan, urlcolor=blue]{hyperref}

\usepackage{graphicx,enumitem}
\usepackage{color}
\usepackage{latexsym}
\usepackage{amsthm, amsmath, amssymb, mathrsfs,mathtools}
\usepackage{algorithmic}
\usepackage{algorithm}
\usepackage{comment}  
\usepackage{float}

\newtheorem{theorem}{Theorem}[section]
   
\newtheorem{remark}{Remark}
\newtheorem{proposition}[theorem]{Proposition}
\newtheorem{lemma}[theorem]{Lemma}
\newtheorem{problem}{Problem}
\newtheorem{corollary}[theorem]{Corollary}

\usepackage{bm}
\newcommand{\vect}[1]{\boldsymbol{\mathbf{#1}}}
\newcommand{\vertiii}[1]{{\left\vert\kern-0.25ex\left\vert\kern-0.25ex\left\vert #1 
    \right\vert\kern-0.25ex\right\vert\kern-0.25ex\right\vert}}
\usepackage{xcolor}

\newcommand{\dive}{\operatorname{div}}
\DeclareMathOperator*{\argmin}{arg\,min}
\newcommand{\tannabla}{\nabla_{\varGamma}}
\newcommand{\tandive}{\operatorname{div}_{\varGamma}}
\newcommand{\dotu}{\dot{u}}  
\newcommand{\cv}{\overline{v}}

\newcommand{\cphi}{\overline{\varphi}}

\newcommand{\ur}{u_{1}}

\newcommand{\ui}{u_{2}}

\newcommand{\vr}{p_{1}}
\newcommand{\vi}{p_{2}}
\newcommand{\qr}{q_{1}}
\newcommand{\qi}{q_{2}}
\newcommand{\rw}{w_{1}}
\newcommand{\iw}{w_{2}}
\newcommand{\rlam}{\Lambda_{1}}
\newcommand{\ilam}{\Lambda_{2}}

\newcommand{\Id}{\operatorname{Id}}
\newcommand{\intervalI}{I}
\newcommand{\Jact}{j_{t}}
\newcommand{\aaa}{{\mathsf{a}}}
\newcommand{\aat}{{\mathsf{a}}_t}
\newcommand{\GG}{G} 
\newcommand{\LL}{\vect{\mathsf{L}}}
\newcommand{\QQ}{\vect{\mathsf{Q}}}
\newcommand{\SSig}{\vect{\mathsf{S}}}

\newcommand{\sfTheta}{\Theta} 
\newcommand{\sfj}{F}

\newcommand{\HH}{\vect{\mathsf{H}}}

\newcommand{\VV}{\theta}
\newcommand{\Vn}{\theta_{n}}

\newcommand{\nn}{\vect{n}}

\newcommand{\dn}[1]{\partial_{\nn}{#1}}

\newcommand{\intO}[1]{\int_{\varOmega}{#1}{\, {d}{x}}} 
  
\newcommand{\intS}[1]{\int_{\varSigma}{#1}{\, {d}{s}}} 
\newcommand{\intG}[1]{\int_{\varGamma}{#1}{\, {d}{s}}}

\newcommand{\ADMMpara}{\gamma}
\begin{document}

\title{Cavity shape reconstruction with a homogeneous Robin condition via a constrained coupled complex boundary method with ADMM}
\author{Mustapha Essahraoui$^{\dagger}$   \and  El Mehdi Cherrat$^{\dagger}$ \and Lekbir Afraites$^{\dagger}$ \and Julius Fergy Tiongson Rabago$^{\ddagger}$\thanks{Corresponding author}}
\date{%
	{\footnotesize
        $^{\dagger}$Laboratory of Mathematics $\&$ Informatics and their Interactions (LM2I)\\%
        Faculty of Sciences and Techniques\\%
        Sultan Moulay Slimane University,
        Beni Mellal, Morocco\\\vspace{-2pt}
        \texttt{essahraouimoustapha@gmail.com, \ cherrat.elmehdi@gmail.com, \ l.afraites@usms.ma,\ lekbir.afraites@gmail.com}\\[2ex]
	$^{\ddagger}$Faculty of Mathematics and Physics\\%
	 Institute of Science and Engineering\\%
         Kanazawa University, Kanazawa 920-1192, Japan\\\vspace{-2pt}
        \texttt{jfrabago@gmail.com,\ jftrabago@gmail.com}}\\[2ex]        
    \today
}

\maketitle
\begin{abstract}
We revisit the problem of identifying an unknown portion of a boundary subject to a Robin condition, based on a pair of Cauchy data on the accessible part of the boundary.
It is known that a single measurement may correspond to infinitely many admissible domains.
Nonetheless, numerical strategies based on shape optimization have been shown to yield reasonable reconstructions of the unknown boundary.
In this study, we propose a new application of the coupled complex boundary method to address this class of inverse boundary identification problems. The overdetermined problem is reformulated as a complex boundary value problem with a complex Robin condition that couples the Cauchy data on the accessible boundary. The reconstruction is achieved by minimizing a cost functional constructed from the imaginary part of the complex-valued solution.
To improve stability with respect to noisy data and initialization, we augment the formulation with inequality constraints through prior admissible bounds on the state, leading to a constrained shape optimization problem. The shape derivative of the complex state and the corresponding shape gradient of the cost functional are derived, and the resulting problem is solved using an alternating direction method of multipliers (ADMM) framework. The proposed approach is implemented using the finite element method and validated through various numerical experiments.

\medskip

\textit{Keywords:}{ Inverse boundary identification, Robin boundary condition, Shape optimization, CCBM (coupled complex boundary method), ADMM (Alternating Direction Method of Multipliers)}
\end{abstract}


\section{Introduction}
\label{sec:Introduction} 
This study considers the classical problem of identifying a cavity inside a conducting body.  
The domain $\varOmega$ is assumed to be doubly connected, with an accessible exterior boundary $\varSigma$ and an interior boundary $\varGamma$, which do not intersect (i.e., $\varSigma \cap \varGamma = \emptyset$), to be identified from a single boundary measurement of a harmonic function $u$ on $\varSigma$.  
On the inaccessible boundary $\varGamma$, the function $u$ satisfies a homogeneous Robin condition.  
Given a Cauchy pair $(f,g)$ on $\varSigma$, we consider the following overdetermined boundary value problem:
\begin{equation}
\label{eq:gip}
-\,\Delta u = 0 \ \text{in } \varOmega, \qquad 
u = f, \ \partial_{\nn}u = g \ \text{on } \varSigma, \qquad 
\partial_{\nn}u + \alpha u = 0 \ \text{on } \varGamma,
\end{equation}
where $\alpha$ is a fixed non-negative Lipschitz function satisfying $\alpha(x) \geqslant \alpha_{0} > 0$ for all $x \in \varGamma$, and $\nn$ denotes the outward unit normal to $\partial\varOmega$.  
In the inverse problem framework, the objective is to determine the unknown boundary $\varGamma$ from the prescribed Cauchy data $(f,g)$ on $\varSigma$.
\begin{problem}\label{eq:inverse_problem}
Given Dirichlet data $f$ and corresponding Neumann data $g \coloneqq \partial_{\nn}u$ on $\varSigma$, where $u$ satisfies
    \begin{equation}
    \label{eq:state}
    -\,\Delta u = 0 \ \text{in } \varOmega, \qquad
    u = f \ \text{on } \varSigma, \qquad
    \partial_{\nn}u + \alpha u = 0 \ \text{on } \varGamma,
    \end{equation}
determine the unknown interior boundary $\varGamma$.
\end{problem}
In Problem~\ref{eq:inverse_problem}, the choices $\alpha = 0$ and $\alpha = \infty$ correspond, respectively, to homogeneous Neumann and homogeneous Dirichlet boundary conditions imposed on $\varGamma$. 
When $\alpha(x) \in (0,\infty)$ for $x \in \varGamma$, the boundary condition is of homogeneous Robin type. According to Newton’s cooling law, this condition describes uniform environmental interactions (thermal or electrostatic), which are taken to vanish in the present setting for simplicity. In the literature, the parameter associated with the Robin condition is sometimes called the impedance or the admittance. Although these quantities differ physically, being reciprocals of one another, they are mathematically interchangeable because the Robin condition may equivalently be expressed as $\alpha^{-1} \partial_{\nn} u + u = 0$. 
For clarity of presentation, the analysis in this work is restricted to the case of a strictly positive constant $\alpha$, while the methodology can also be adapted to more general settings.

The primary goal of this work is to enhance the identification of concave or non-convex portions of the cavity enclosed by $\varGamma$ (see Figure~\ref{fig:reconstruction} for an illustration) by employing a nonstandard shape optimization strategy (Subsection~\ref{subsec:shape_optimization_formulation}) for the associated inverse problem.

\begin{figure}[htbp]
    \centering
    \includegraphics[width=0.5\textwidth]{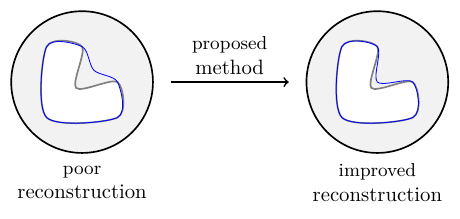}
    \caption{illustration of enhanced concavity detection.}
    \label{fig:reconstruction}
\end{figure}

Problem~\ref{eq:inverse_problem} arises in a broad class of inverse boundary value problems, including the identification of an inaccessible portion of the boundary from thermal or electrostatic measurements collected on an accessible subset $\varSigma \subset \partial\varOmega$.
In the electrostatic setting, the state variable $u$ denotes the electric potential in a conducting domain $\varOmega$, while measurements are restricted to $\varSigma$.
For simplicity, the conductivity in \eqref{eq:gip} and \eqref{eq:state} is assumed to be scalar, homogeneous, and normalized to unity.
The inverse problem is to reconstruct the unknown boundary component $\varGamma$ from the Cauchy data $(u,\partial_{\nn}u)$ prescribed on $\varSigma$.
Related formulations and physical motivations may be found in \cite{AlessandriniDelPieroRondi2003,AlessandriniSincich2007,ChaabaneaJaoua1999,CakoniKress2007,FangLu2004,RabagoAzegami2018}, whereas analytical derivations and qualitative properties of the governing equations are discussed in \cite{FasinoInglese2007,Inglese1997,KaupSantosa1995,KaupSantosaVogelius1996}.
For numerical methods addressing the reconstruction of $\varGamma$ from boundary measurements, assuming that the impedance coefficient $\alpha$ is known a priori, we refer to \cite{AfraitesRabago2025,CakoniKress2007,CakoniKressSchuft2010a,KressRundell2005,Loh1987,Rundell2008,FangLinMa2019,FangZeng2009}.

Identifiability in the inverse Robin problem has been extensively studied. 
Local uniqueness of $\varGamma$ can hold for specific domains \cite{IngleseMariani2004}, but in general, for a fixed constant $\alpha$, a single Cauchy pair on $\varSigma$ may correspond to infinitely many domains, making $\varGamma$ or $\alpha$ indeterminate \cite{CakoniKress2007}. 
Two linearly independent Cauchy pairs, with one positive, guarantee uniqueness of both $\varGamma$ and $\alpha$ \cite{Bacchelli2009,PaganiPeerotti2009}, and stability results for two measurements are available \cite{Sincich2010}.  

For homogeneous Dirichlet or Neumann conditions, a single Cauchy pair uniquely determines the missing boundary \cite[Thm. 2.3]{CakoniKress2007}. 
In particular, for Neumann data on $\varGamma$, the harmonic conjugate of $u$ ensures uniqueness if $f$ is non-constant \cite[Rem. 2.4]{CakoniKress2007}.

Reconstructing the unknown boundary $\varGamma$ from boundary measurements is an ill-posed inverse problem. 
Existing studies \cite{AfraitesRabago2025,RabagoAzegami2018,CaubetDambrineKateb2013} indicate that, even when the Robin coefficient $\alpha$ is known, recovering concave portions of the boundary can be particularly challenging. Classical shape optimization methods, typically based on boundary mismatch or energy-gap functionals \cite{AfraitesRabago2025,RabagoAzegami2018,CaubetDambrineKateb2013,Fang2022}, may exhibit reduced sensitivity to concave features, which can affect the accuracy of the reconstruction, especially with respect to the depth of the concavity.

In this work, we employ a non-standard shape optimization framework based on the coupled complex boundary method (CCBM), first proposed in \cite{Chengetal2014} for inverse source problems. Since its introduction, CCBM has been extended to various inverse and shape reconstruction problems, including inverse conductivity \cite{Gongetal2017}, parameter identification for elliptic equations \cite{ZhengChengGong2020}, geometric inverse source problems \cite{Afraites2022,AfraitesMasnaouiNachaoui2022}, Bernoulli free boundary problems \cite{Rabago2023b}, free surface flows \cite{RabagoNotsu2024}, inverse Cauchy problems for the Stokes system \cite{Ouiassaetal2022}, obstacle reconstruction in viscous fluids \cite{RabagoAfraitesNotsu2025}, tumor localization \cite{Rabago2025}, source recovery in time-fractional PDEs \cite{HriziPrakashNovotny2025}, bioluminescence tomography \cite{WuGongGongZhangZhu2026}, and metal--semiconductor contact reconstruction \cite{AfraitesHadriHriziRabago2026}. These works demonstrate that CCBM provides an effective and flexible framework for shape identification problems, both theoretically and computationally.

The present work is motivated by the difficulty of accurately reconstructing concave or nonconvex portions of an unknown Robin boundary from limited boundary measurements. Numerical investigations in \cite{AfraitesRabago2024,AfraitesRabago2025} considered this geometric inverse problem within classical shape optimization frameworks: the single-measurement setting was studied in \cite{AfraitesRabago2024}, while \cite{AfraitesRabago2025} demonstrated that the use of multiple measurements can significantly improve the recovery of such features. These studies also indicate that least-squares and Kohn--Vogelius-type formulations may suffer from deterioration in reconstruction quality near concave regions, particularly in the single-measurement case.
In contrast, the CCBM framework leads to a more regular and structurally stable state system, thereby providing a favorable setting for variational analysis and numerical approximation. However, to the best of our knowledge, the recovery of Robin boundaries has not yet been investigated within the CCBM framework.
Previous studies have also demonstrated that CCBM-based methods yield reconstruction quality comparable to, and in certain settings more robust than, Kohn--Vogelius-type approaches for free boundary and surface identification problems; see, e.g., \cite{Rabago2023b,RabagoNotsu2024}.

In the present work, however, we focus on the more challenging single-measurement setting. 
Motivated by the aforementioned difficulties in recovering concave boundary features, we incorporate an inequality constraint into the CCBM-based shape optimization formulation in order to improve the reconstruction of such regions. 
The resulting constrained problem is solved using the alternating direction method of multipliers (ADMM), following ideas introduced in \cite{RabagoHadriAfraitesHendyZaky2024} and further developed in \cite{CherratAfraitesRabago2025b,CherratAfraitesRabago2026} for related inverse problems with homogeneous Dirichlet boundary conditions. 
In contrast, the present study considers a Robin boundary condition on the unknown interface, which is known to yield a substantially more challenging reconstruction problem \cite{AfraitesRabago2025}.

\color{black}
The remainder of this paper is organized as follows. 
In Section~\ref{sec:problem_setting}, we introduce the problem setting (equation~\ref{eq:shape_problem}) and the associated shape optimization formulation (equation~\ref{eq:shape_problem}). 
Section~\ref{sec:Shape_Derivatives} is devoted to the shape sensitivity analysis and the derivation of the corresponding shape derivatives (see Prop.~\ref{rem:Jmaps} and Remark~\ref{prop:Jmaps}). 
In Section~\ref{sec:Numerical_Approximation}, we discuss the numerical approximation and implementation of the proposed approach within the CCBM framework. 
Numerical examples are presented to illustrate the difficulty of accurately recovering concave regions, even for relatively large values of the Robin coefficient (see~Figure~\ref{fig:all_reg_radius_0.3}). 
In Section~\ref{sec:inequality_constrained_shape_reconstruction}, we introduce the inequality-constrained formulation (Problem~\ref{prob:optimal_shape_problem}) and present the corresponding ADMM-based algorithm (Algorithm~\ref{algo:ADMM-SGBD}) together with numerical results, with particular emphasis on the case of unit Robin coefficient. 
Finally, Section~\ref{sec:conclusion} summarizes the main findings and concludes the paper.
\medskip

%
%
\section{Problem setting and shape optimization formulation}
\label{sec:problem_setting}
\subsection{The main problem} 
Let $D$ be a ${C}^{1,1}$ smooth, open, bounded planar set\footnote{The extension to three dimensions is straightforward; we restrict to two dimensions for simplicity of presentation.}, and let ${d_{\circ}} > 0$ be fixed.
Define the admissible class of subdomains
\begin{equation}
\label{eq:admissible_domains}
\mathcal{A} \coloneqq \big\{ \omega \Subset D \ \big| \ \omega \text{ is connected, of class } C^{1,1}, \ \operatorname{dist}(\omega,\partial D) > {d_{\circ}}, \text{ and } D \setminus \overline{\omega} \text{ is connected} \big\}.
\end{equation}
We denote $\varOmega \coloneqq D \setminus \overline{\omega}$, $\varSigma \coloneqq \partial D$, and $\varGamma \coloneqq \partial \omega$.
We say that $\varOmega$ is admissible if $\varOmega = D \setminus \overline{\omega}$ for some $\omega \in \mathcal{A}$.
With a slight abuse of notation, we also write $\varOmega \in \mathcal{A}$.

The main problem is then to
\begin{equation}\label{eq:shape_problem}
	\text{find $\omega \in \mathcal{A}$ and $u$ satisfying the overdetermined system \eqref{eq:gip}}.
\end{equation}
We assume $f \in H^{3/2}(\varSigma)$ with $f \not\equiv 0$, and let $g \in H^{1/2}(\varSigma)$ denote the corresponding Neumann measurement, namely
\[
g=\Lambda_{\varSigma}(f)=\partial_{\nn}u,
\]
where $u$ is the solution of \eqref{eq:state}.

\begin{remark}\label{rem:data_regularity}
The above regularity assumptions are stronger than necessary. In fact, it is sufficient to assume that $\varSigma$ is Lipschitz, $f \in H^{1/2}(\varSigma)$, and $g \in H^{-1/2}(\varSigma)$. The higher regularity is adopted only to streamline the presentation and proofs, ensure that the shape derivative of the state variable belongs to $H^1(\varOmega)$, and permit a boundary integral representation of the shape gradient, which is convenient for both analysis and computation.
\end{remark}

%
%
%
\subsection{Shape optimization setting}
\label{subsec:shape_optimization_formulation}
We present the CCBM formulation of \eqref{eq:gip} and its shape optimization reformulation \eqref{eq:shape_problem}, based on a least-squares fitting of the imaginary part of the complex PDE solution.

The approach begins by recasting \eqref{eq:gip} as the complex system
\begin{equation}
\label{eq:ccbm_state}
	-\Delta u = 0 \ \text{in} \ \varOmega, \qquad
	\partial_{\nn} u + i {\rho} u = g + i  {\rho}  f \ \text{on} \ \varSigma, \qquad
	\partial_{\nn} u + \alpha u = 0 \ \text{on} \ \varGamma,
\end{equation}
with $i = \sqrt{-1}$, for some fixed constant ${\rho} > 0$.
The introduction of the free parameter ${\rho}$ is not \textit{strictly} required in the CCBM formulation. 
In the majority of existing works \cite{Rabago2023b,RabagoNotsu2024,RabagoAfraitesNotsu2025,Rabago2025,CherratAfraitesRabago2026}, the choice ${\rho} = 1$ is sufficient for shape identification. 
However, selecting values of ${\rho}$ different from unity may improve the reconstruction results; see, for instance, \cite{AfraitesHadriHriziRabago2026}.
For simplicity, we present the formulation with ${\rho} = 1$ and reintroduce ${\rho}$ in the numerical experiments.

Before presenting the weak form of the complex PDE system \eqref{eq:ccbm_state} and its well-posedness, we first introduce some function space notations.  

Let $W^{m,p}(\varOmega)$ be the standard real Sobolev space with norm $\|\cdot\|_{W^{m,p}(\varOmega)}$, and $W^{0,p}(\varOmega) = L^p(\varOmega)$. In particular, $H^m(\varOmega) = W^{m,2}(\varOmega)$ with inner product $(\cdot,\cdot)_{m,\varOmega}$ and norm $\|\cdot\|_{H^m(\varOmega)}$.  
The complex space $\HH^m(\varOmega)$ has inner product $(\!(u,v)\!)_{m,\varOmega} = (u,\overline{v})_{m,\varOmega}$\footnote{$(\cdot,\cdot)_{m,\varOmega}$ is the real Sobolev inner product extended componentwise.} and norm $\vertiii{v}_{\HH^m(\varOmega)} = \sqrt{(\!(v,v)\!)_{m,\varOmega}}$. Similarly, $Q = L^2(\varOmega)$, $\QQ = \LL^2(\varOmega)$, $S = L^2(\varSigma)$, and $\SSig = \LL^2(\varSigma)$.

We define the sesquilinear form $\aaa$ and linear form $l$ on $\HH^{1}(\varOmega)$ by
\[
\aaa(u,v) = \intO{\nabla u \cdot \nabla \overline{v}} + i \intS{u \overline{v}} + \alpha \intG{u \overline{v}}, \quad 
l(v) = \intS{g \overline{v}} + i \intS{f \overline{v}}, \quad u,v \in \HH^1(\varOmega).
\]

The variational formulation of \eqref{eq:ccbm_state} is then:
\begin{equation}\label{eq:state_weak_form}
\text{find } u \in \HH^{1}(\varOmega) \text{ such that } \aaa(u,v) = l(v), \ \forall v \in \HH^{1}(\varOmega).
\end{equation}

Existence and uniqueness of the solution follow from the complex Lax-Milgram lemma \cite[p. 376]{DautrayLionsv21998} (see also \cite[Lem. 2.1.51, p. 40]{SauterSchwab2011}):  

\begin{lemma}
For given $f \in H^{1/2}(\varSigma)$ and $g \in (H^{1/2}(\varSigma))'$, there exists a unique weak solution $u \in \HH^{1}(\varOmega)$ of \eqref{eq:state_weak_form}.
\end{lemma}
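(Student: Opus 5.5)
We apply the complex Lax--Milgram lemma to the sesquilinear form $\aaa$ and the antilinear form $l$ on $\HH^1(\varOmega)$. Three things need checking: $\aaa$ is bounded, $l$ is bounded, and $\aaa$ is coercive in the complex sense, i.e.\ $|\aaa(v,v)| \geqslant c\,\vertiii{v}_{\HH^1(\varOmega)}^2$. It is enough to show $\operatorname{Re}\aaa(v,v) + \operatorname{Im}\aaa(v,v) \geqslant c\vertiii{v}^2$, because $|z| \geqslant (\operatorname{Re} z + \operatorname{Im} z)/\sqrt{2}$.

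\textbf{Boundedness.} The domain $\varOmega = D\setminus\overline{\omega}$ with $\omega\in\mathcal{A}$ is a bounded Lipschitz (indeed $C^{1,1}$) domain. Hence the trace operator is continuous from $H^1(\varOmega)$ into $L^2(\varSigma)$ and into $L^2(\varGamma)$, and in fact into $H^{1/2}$ of each boundary component. Cauchy--Schwarz then gives
\[
|\aaa(u,v)| \leqslant (1 + C_{\varSigma}^2 + \alpha C_{\varGamma}^2)\,\vertiii{u}_{\HH^1(\varOmega)}\vertiii{v}_{\HH^1(\varOmega)}.
\]
For $l$, we read $\intS{g\overline{v}}$ as the duality pairing $\langle g, \overline{v}|_{\varSigma}\rangle$ between $(H^{1/2}(\varSigma))'$ and $H^{1/2}(\varSigma)$, applied separately to the real and imaginary parts of $v$. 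This yields
\[
|l(v)| \leqslant C\bigl(\|g\|_{(H^{1/2}(\varSigma))'} + \|f\|_{L^2(\varSigma)}\bigr)\vertiii{v}_{\HH^1(\varOmega)}.
\]

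\textbf{Coercivity.} For $v\in\HH^1(\varOmega)$ we have
\[
\operatorname{Re}\aaa(v,v) = \|\nabla v\|_{\QQ}^2 + \alpha\|v\|_{L^2(\varGamma)}^2, \qquad \operatorname{Im}\aaa(v,v) = \|v\|_{\SSig}^2.
\]
Their sum is therefore $\|\nabla v\|^2_{\QQ} + \|v\|^2_{\SSig} + \alpha\|v\|^2_{L^2(\varGamma)} \geqslant \|\nabla v\|^2_{\QQ} + \|v\|^2_{\SSig}$, using $\alpha>0$. The key ingredient is a Friedrichs/Poincaré-type inequality
\[
\|v\|_{H^1(\varOmega)}^2 \leqslant C_P\bigl(\|\nabla v\|_{L^2(\varOmega)}^2 + \|v\|_{L^2(\varSigma)}^2\bigr) \quad \text{for all } v\in H^1(\varOmega),
\]
applied to the real and imaginary parts separately. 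This is the main step.

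We prove it by the standard compactness-contradiction argument. Suppose there is a sequence $v_n$ with $\|v_n\|_{H^1}=1$ and $\|\nabla v_n\|^2 + \|v_n\|^2_{L^2(\varSigma)} \to 0$. Rellich's theorem (valid since $\varOmega$ is bounded Lipschitz) gives a subsequence with $v_n\to v$ in $L^2(\varOmega)$. Since $\nabla v_n\to 0$, the convergence is in fact strong in $H^1(\varOmega)$, and $\nabla v = 0$. Because $\varOmega$ is connected (this is part of the definition of $\mathcal{A}$), $v$ is a constant. Continuity of the trace gives $v|_{\varSigma}=0$, so $v\equiv 0$, which contradicts $\|v\|_{H^1}=1$.

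Combining these estimates gives $|\aaa(v,v)| \geqslant (2C_P^2)^{-1/2}\vertiii{v}^2_{\HH^1(\varOmega)}$. The complex Lax--Milgram lemma \cite[p. 376]{DautrayLionsv21998} then yields existence and uniqueness of $u\in\HH^1(\varOmega)$ solving \eqref{eq:state_weak_form}, together with the stability bound
\[
\vertiii{u}_{\HH^1(\varOmega)} \leqslant C\bigl(\|g\|_{(H^{1/2}(\varSigma))'} + \|f\|_{H^{1/2}(\varSigma)}\bigr).
\]
The Robin term on $\varGamma$ was not needed for coercivity; the boundary term on $\varSigma$ alone suffices.
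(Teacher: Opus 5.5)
Your proof is correct and takes the same route as the paper, which invokes the complex Lax--Milgram lemma and gives no further details. Your checks fill in exactly what the paper leaves to its references: boundedness via the trace, and coercivity via $|\aaa(v,v)| \geqslant \bigl(\operatorname{Re}\aaa(v,v)+\operatorname{Im}\aaa(v,v)\bigr)/\sqrt{2}$ combined with a Friedrichs inequality that uses the trace on $\varSigma$ and the connectedness of $\varOmega$.
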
 
Writing $u = \ur + i \ui$, the real and imaginary parts satisfy
\begin{align}
&-\Delta \ur = 0 \ \text{in} \ \varOmega, \qquad
\partial_{\nn} \ur = \ui + g \ \text{on} \ \varSigma, \qquad
\partial_{\nn} \ur + \alpha \ur = 0 \ \text{on} \ \varGamma; \label{eq:real_state} \\
&-\Delta \ui = 0 \ \text{in} \ \varOmega, \qquad
\partial_{\nn} \ui = -\ur + f \ \text{on} \ \varSigma, \qquad
\partial_{\nn} \ui + \alpha \ui = 0 \ \text{on} \ \varGamma. \label{eq:imaginary_state}
\end{align}
To reformulate the original setting in terms of shape optimization, observe that if $\ui = 0$ in $\varOmega$, then continuity and harmonicity imply $\ui = 0$ on $\varSigma$ and $\varGamma$, and by Green's formula, $\partial_{\nn} \ui = 0$ on $\varSigma$.
Hence, $(\varOmega, \ur)$ solves the original problem \eqref{eq:gip}. Conversely, any solution $(\varOmega,u)$ of \eqref{eq:gip} satisfies \eqref{eq:real_state} and \eqref{eq:imaginary_state}. 
Therefore, the inverse shape problem is equivalent to the following:
\begin{problem}
\label{eq:main_problem}
Find $\omega \in \mathcal{A}$ such that 
\[
\ui = 0 \quad \text{in} \ \varOmega = D \setminus \overline{\omega},
\]
where $u = \ur + i \ui$ solves the well-posed complex PDE system \eqref{eq:ccbm_state}.
\end{problem}
To solve Problem~\ref{eq:main_problem}, we consider the problem
\begin{equation}\label{eq:shape_problem}
\inf_{\varOmega \in \mathcal{A}} \; \left\{
J(\varOmega)
= \frac12 \intO{|\ui|^2} \right\},
\end{equation}
where $\ui = \Im\{u\}$ and $u$ solves \eqref{eq:ccbm_state}, with $\mathcal{A}$ defined in \eqref{eq:admissible_domains}.

Since $\varSigma$ is fixed, only $\varGamma$ is perturbed; hence, Lipschitz regularity of $\varSigma$ suffices to derive shape derivatives in distributed form.
To obtain a boundary integral form of the shape derivative of the functional in \eqref{eq:shape_problem}, in accordance with the Hadamard--Zol\'{e}sio structure theorem \cite[Thm.~3.6, p.~479]{DelfourZolesio2011}, we require higher regularity assumptions on the data and on the annular domain $\varOmega$ (recall Remark~\ref{rem:data_regularity} and \eqref{eq:admissible_domains}).

The minimization problem \eqref{eq:shape_problem} is solved numerically using a gradient-based method combined with the finite element method (FEM), requiring the computation of the shape derivative of the cost functional.
This derivative will be derived in the next section using shape calculus \cite{DelfourZolesio2011,HenrotPierre2018,MuratSimon1976,Simon1980,SokolowskiZolesio1992}.

\section{Shape sensitivity analysis}
\label{sec:Shape_Derivatives}
For completeness, we briefly recall some preliminary concepts related to shape derivatives from shape calculus; see \cite{DelfourZolesio2011,HenrotPierre2018,SokolowskiZolesio1992}.
\subsection{Some elements of Shape Calculus}\label{subsec:elements_of_shape_calculus}
	Let $D_{d_{\circ}}$ be an open set with a ${C}^{\infty}$ boundary, such that $\{ x \in D \mid \text{$d(x,\partial D) > {d_{\circ}}/2$}\} \subset D_{{{d_{\circ}}}} \subset \{ x\in D \mid \text{$d(x,\partial D) > {d_{\circ}}/3$}\}$. 
	Let $\VV \in {C}^{k,1}(\mathbb{R}^{d})$, $k \in \mathbb{N}$, be a smooth vector field with compact support in $\overline{D}_{d_{\circ}}$ and let $\sfTheta^k$ denote the set of all such admissible deformations.
We denote the normal component by $\Vn \coloneqq \langle \VV, \nn \rangle = \VV \cdot \nn$, and let $\Id$ be the identity matrix in $\mathbb{R}^{d}$.

For small $t_0 > 0$, define the perturbation map
\[
T_t = \Id + t \VV, \qquad t \in \intervalI \coloneqq [0,t_0), \qquad \VV \in \sfTheta^k,
\]
so that $\varOmega_t = T_t(\varOmega)$, $\varGamma_t = T_t(\varGamma)$, and $\varSigma_t = \varSigma$ for all $t$ since $\operatorname{supp}(\VV) \subset \overline{\varOmega}_{d_{\circ}}$.  

Set
\[
A_t \coloneqq {\Jact} ({D}T_t^{-1})({D}T_t)^{-\top}, \qquad 
B_t \coloneqq {\Jact} |({D}T_t)^{-\top} \nn|, \qquad {\Jact} \coloneqq \det \,{D}T_t > 0,
\]
with bounds $0<{b}_1 \leqslant B_t \leqslant {b}_2$ and $0<{b}_3 |\xi|^2 \leqslant A_t \xi \cdot \xi \leqslant {b}_4 |\xi|^2$ for all $\xi \in \mathbb{R}^d$.

We assume that the interval $\intervalI$ is sufficiently small so that, for all $t \in \intervalI$, the mappings
\begin{equation}\label{eq:continuity_of_the_maps}
[t \mapsto {\Jact}] \in {C}^1(\intervalI,{C}(\overline{\varOmega})), \qquad
[t \mapsto A_t] \in {C}^1(\intervalI,{C}(\overline{\varOmega})^{d\times d}),\qquad
[t \mapsto B_t] \in {C}^1(\intervalI,{C}(\partial{\varOmega}))
\end{equation}
are well defined and possess the stated regularity.

At $t=0$, $j_0 = 1$, $A_0 = \Id$, $B_0 = 1$, with derivatives
\begin{equation}\label{eq:derivatives}
\frac{d}{dt} {\Jact} \big|_{0} = \dive \VV, \qquad 
\frac{d}{dt} A_t \big|_{0} = (\dive \VV)\Id - D\VV - (D\VV)^\top \eqqcolon A, \qquad
\frac{d}{dt} B_t \big|_{0} = \dive_\varGamma \VV.
\end{equation}

Let $u_t$ be the state defined on $\varOmega_t$. 
The material derivative of $u$ in the direction $\VV$ is defined as
\[
\dot{u} \coloneqq \lim_{t\searrow 0} \frac{u^t - u}{t},
\quad \text{where } u^t \coloneqq u_t \circ T_t.
\]

For a given $\VV \in \sfTheta^{1}$, the Eulerian semiderivative of $\sfj(\varOmega)$ in the direction of $\VV$ is defined as
\[
d\sfj(\varOmega)[\VV] \coloneqq \lim_{t\searrow0} \frac{\sfj(\varOmega_t) - \sfj(\varOmega)}{t}.
\]

If $d\sfj(\varOmega)[\VV]$ exists for all $\VV \in \sfTheta^{1}$ and the map $\VV \mapsto d\sfj(\varOmega)[\VV]$ is linear and continuous on $\sfTheta^{1}$, then $\sfj$ is said to be shape differentiable at $\varOmega$, and $d\sfj(\varOmega)[\VV]$ is called the \textit{shape derivative}.
Moreover, under suitable regularity assumptions, the shape derivative admits the boundary representation
\[
d\sfj(\varOmega)[\VV] = \int_{\partial \varOmega} G \, \nn \cdot \VV\, ds,
\]
and $G$ is called the \textit{shape gradient} of $J$ at $\varOmega$ \cite[Def.~3.4, p.~479]{DelfourZolesio2011}.
\subsection{Computation of the shape gradient}
\label{subsec:shape_derivative_of_the_cost_using_the_Eulerian_derivatives}
In this section, we compute the shape derivative of $J$ (Proposition~\ref{prop:Jmaps}) using the rearrangement method \cite{IKP2008}. 
This approach derives the shape gradient via the material derivative of the state without requiring its explicit form (cf. \cite{Rabago2023b}) and only assumes H\"older, rather than ${C}^{1}$, continuity of $t \mapsto u^{t}$ near $0$.

Hereinafter, we fix $k = 1$ in $\sfTheta^{k}$, and $\varOmega$ and $\theta$ are always assumed admissible; that is, $\varOmega = D \setminus \overline{\omega}$ with $\omega \in \mathcal{A}$ and $\theta \in \sfTheta^{1}$, without further notice, and this will usually be omitted for brevity.
Moreover, for economy of space, we adopt, for any complex-valued functions $\varphi, \psi \in \HH^{2}(\varOmega)$, the following notations:\footnote{Here $\tannabla$ denotes the tangential gradient operator on $\varGamma$. An intrinsic definition can be found, for instance, in \cite[Chap.~5, Sec.~5.1, p.~492]{DelfourZolesio2011}.}
\begin{align*}
\Psi_{\varGamma}(\varphi, \psi) 
&= \big( \tannabla \varphi_{1} \cdot \tannabla \psi_{2} 
- \tannabla \varphi_{2} \cdot \tannabla \psi_{1} \big)
+ (\alpha^{2} - \alpha \kappa)(\psi_{1}\, \varphi_{2} - \psi_{2}\, \varphi_{1});\\
\Psi^{\dagger}_{\varGamma}(\varphi, \psi) 
&= \big( \tannabla \varphi_{1} \cdot \tannabla \psi_{1} 
+ \tannabla \varphi_{2} \cdot \tannabla \psi_{2} \big)
- (\alpha^{2} - \alpha \kappa)(\varphi_{1}\, \psi_{1} + \varphi_{2}\, \psi_{2}).
\end{align*}
%

The main result of this subsection is as follows:
\begin{proposition}
\label{prop:Jmaps}
The functional $J$ is shape differentiable, and its shape derivative in the direction $\VV$ is given by
\begin{equation}\label{eq:shape_gradient}
{d}J(\varOmega)[\VV] 
= \intG{ \GG_{1}(u,p) \, \nn \cdot \VV }
\coloneqq \intG{ \left( \frac{1}{2} |\ui|^2 
+
\Psi_{\varGamma}(u, p) \right)\nn \cdot \VV }.
\end{equation}
Here, $\kappa$ denotes the mean curvature of $\varGamma$, $u = \ur + i \ui$ is the unique solution to~\eqref{eq:ccbm_state}, and $p = \vr + i \vi$ is the unique solution of the adjoint system
\begin{equation}\label{eq:adjoint_system}
-\Delta p = \ui \quad \text{in } \varOmega, 
\qquad
\dn{p} - i p = 0 \quad \text{on } \varSigma, 
\qquad
\dn{p} + \alpha p = 0 \quad \text{on } \varGamma.
\end{equation}
\end{proposition}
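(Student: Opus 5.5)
We follow the rearrangement method of \cite{IKP2008}, as in \cite{Rabago2023b}. The shape gradient is obtained from the transported state $u^t = u_t\circ T_t$ and the adjoint $p$, and no explicit equation for the material derivative is needed.

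\textbf{Transported problem and H\"older continuity.} Pulling the weak form \eqref{eq:state_weak_form} on $\varOmega_t$ back to $\varOmega$ gives
\[
\intO{A_t\nabla u^t\cdot\nabla\overline{v}} + i\intS{u^t\overline{v}} + \alpha\intG{B_t u^t\overline{v}} = l(v), \qquad \forall v\in\HH^1(\varOmega).
\]
The right-hand side is unchanged because $\varSigma_t=\varSigma$ and $\VV$ vanishes near $\varSigma$. Subtract the $t=0$ equation, test with $v=u^t-u$, and use the uniform coercivity supplied by the bounds on $A_t$ and $B_t$ together with the regularity \eqref{eq:continuity_of_the_maps}. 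This yields $\vertiii{u^t-u}_{\HH^1(\varOmega)}\le Ct$, which is the Lipschitz (hence H\"older) continuity we need. We do not need to show that $(u^t-u)/t$ converges.

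\textbf{Rearrangement identity.} Write $J(\varOmega_t)=\frac12\intO{j_t|\ui^t|^2}$. Then
\[
J(\varOmega_t)-J(\varOmega)=\frac12\intO{(j_t-1)|\ui^t|^2}+\intO{\ui(\ui^t-\ui)}+\frac12\intO{|\ui^t-\ui|^2}.
\]
The last term is $O(t^2)$. We rewrite the middle term using the adjoint \eqref{eq:adjoint_system}. The adjoint is well posed by the same Lax--Milgram argument, applied to the form $\aaa$ with $i$ replaced by $-i$ on $\varSigma$. The sign choice $\dn p - ip=0$ makes the relevant sesquilinear pairing of $p$ with $u^t-u$ match the conjugate of $\aaa(u^t-u,\cdot)$, so that
\[
\intO{\ui(\ui^t-\ui)}=\Im\big\{\aaa(u^t-u,\,p)\big\}
\]
up to conjugation conventions, which we check carefully. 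Subtracting the transported and untransported state equations tested with $p$ then gives
\[
\aaa(u^t-u,p) = -\intO{(A_t-\Id)\nabla u^t\cdot\nabla\overline p}-\alpha\intG{(B_t-1)u^t\overline p}.
\]
Divide by $t$ and pass to the limit, using $u^t\to u$ in $\HH^1$ and \eqref{eq:derivatives}. This gives the distributed form
\[
dJ(\varOmega)[\VV]=\frac12\intO{\dive\VV\,|\ui|^2}-\Im\Big\{\intO{A\nabla u\cdot\nabla\overline p}+\alpha\intG{\tandive\VV\,u\overline p}\Big\}.
\]
Linearity and continuity in $\VV\in\sfTheta^1$ are evident from this expression, so $J$ is shape differentiable.

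\textbf{Boundary form.} This step is the main obstacle. We use the regularity $u,p\in\HH^2(\varOmega)$, which follows from the $C^{1,1}$ boundary and the data assumptions in Remark~\ref{rem:data_regularity}. We also use the identity
\[
A\nabla u\cdot\nabla\overline p=\dive\big((\nabla u\cdot\nabla\overline p)\VV-(\VV\cdot\nabla u)\nabla\overline p-(\VV\cdot\nabla\overline p)\nabla u\big)+(\VV\cdot\nabla u)\Delta\overline p+(\VV\cdot\nabla\overline p)\Delta u.
\]
Integrating by parts, with only $\varGamma$ contributing, leaves interior terms involving $\Delta u=0$ and $\Delta p=-\ui$. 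The term $(\VV\cdot\nabla u)\Delta\overline p$ produces $\Im\intO{\ui\,\VV\cdot\nabla u}=\intO{\ui\,\VV\cdot\nabla\ui}$, and this combines with $\frac12\intO{\dive\VV|\ui|^2}$ to give $\frac12\intG{|\ui|^2\Vn}$.

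On $\varGamma$ we decompose $\nabla=\tannabla+\nn\dn{}$ and substitute $\dn u=-\alpha u$ and $\dn p=-\alpha p$. The boundary terms become
\[
\big(\tannabla u\cdot\tannabla\overline p-\alpha^2u\overline p\big)\Vn + 2\alpha(\VV_\varGamma\text{-terms}).
\]
For the curve term $\alpha\intG{\tandive\VV\,u\overline p}$ we apply the tangential Green formula $\tandive\VV=\tandive\VV_\varGamma+\kappa\Vn$. The tangential-component contributions should cancel against the ones above, leaving $-\alpha\kappa u\overline p\,\Vn$.

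Finally we take imaginary parts, writing $\Im(a\overline b)=a_2b_1-a_1b_2$. This should reproduce $\Psi_\varGamma(u,p)$ with the coefficient $(\alpha^2-\alpha\kappa)$ and the stated signs. We expect the sign bookkeeping in this last step to need the most care.
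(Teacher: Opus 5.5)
Your proposal is correct and follows essentially the same route as the paper. Testing the adjoint with $u^t-u$ gives exactly $\intO{\ui(u^t-u)}=\aaa(u^t-u,p)$, with no conjugation ambiguity, and this leads to the same distributed form; your pointwise divergence identity reproduces Lemma~\ref{eq:integral_equivalence}, and the tangential Green formula plus taking imaginary parts then yields $\Psi_{\varGamma}(u,p)$. The only difference is cosmetic: you split $\VV=\VV_\varGamma+\Vn\nn$ instead of applying \eqref{eq:tangential_Greens_formula} directly to $\phi=u\overline{p}$, and in that bookkeeping the tangential boundary terms are $\alpha\,\VV_\varGamma\cdot\tannabla(u\overline{p})$ with coefficient $\alpha$, not $2\alpha$ (the factor $2$ belongs to the normal part, giving $\alpha^2-2\alpha^2=-\alpha^2$), which is precisely what cancels against $\alpha\intG{\tandive\VV_\varGamma\,u\overline{p}}$.
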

The weak formulation of \eqref{eq:adjoint_system} reads as follows:	
\begin{equation}\label{eq:adjoint_equation}
	\text{	find $p \in \HH^{1}(\varOmega)$ such that} \ \ \intO{\nabla p \cdot \nabla \overline{\varphi}} - i \intS{p \overline{\varphi}} + \alpha \intG{p \overline{\varphi}} = \intO{ \ui {\cphi} }, \ \ \forall \varphi \in \HH^{1}(\varOmega).
\end{equation}
The existence and uniqueness of a solution to \eqref{eq:adjoint_equation} is again a consequence of the Lax–Milgram lemma.
Note that, given the regularity assumptions on the data and the domain, the state and adjoint solutions enjoy higher regularity; that is, it can be shown that $u \in \HH^{2}(\varOmega)$ and, similarly, $p \in \HH^{2}(\varOmega)$.

The proof of Proposition \ref{prop:Jmaps} relies on the following lemmas, which follow from standard arguments (see, e.g., \cite{Rabago2023b}) and are therefore omitted.
\begin{lemma}\label{lem:sesquiliner_form}
	The sesquilinear form $\aat$ defined on $\HH^{1}(\varOmega) \times \HH^{1}(\varOmega)$ by
	\[
		\aat({u},{v}) \coloneqq \intO{A_t \nabla {u} \cdot \nabla {\cv}} + i \intS{{u} {\cv}} + \alpha \intG{B_t {u} {\cv}}, \quad \forall {u}, {v} \in \HH^{1}(\varOmega),
	\]
	is bounded and coercive on $\HH^{1}(\varOmega) \times \HH^{1}(\varOmega)$ for $t \in \intervalI$.
\end{lemma}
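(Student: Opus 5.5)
We prove boundedness and coercivity of $\aat$ separately, using only the uniform bounds on $A_t$ and $B_t$ recorded in Subsection~\ref{subsec:elements_of_shape_calculus}, namely
\[
b_3|\xi|^2\leqslant A_t\xi\cdot\xi\leqslant b_4|\xi|^2,\qquad b_1\leqslant B_t\leqslant b_2,\qquad t\in\intervalI .
\]

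\textbf{Boundedness.} By continuity of $t\mapsto A_t$ into ${C}(\overline{\varOmega})^{d\times d}$, the entries of $A_t$ are bounded in sup-norm uniformly for $t\in\intervalI$, say by $\|A_t\|_\infty\leqslant C_A$; shrinking $t_0$ if necessary is harmless. Then:
\begin{itemize}[label={}]
\item the volume term satisfies $\big|\intO{A_t\nabla u\cdot\nabla\cv}\big|\leqslant C_A\|\nabla u\|_{\QQ}\|\nabla v\|_{\QQ}$ by Cauchy--Schwarz;
\item the $\varSigma$ term is bounded by $\|u\|_{\SSig}\|v\|_{\SSig}$;
\item the $\varGamma$ term is bounded by $\alpha b_2\|u\|_{\LL^2(\varGamma)}\|v\|_{\LL^2(\varGamma)}$.
\end{itemize}
The trace theorem on the Lipschitz domain $\varOmega$ gives $\|w\|_{\LL^2(\partial\varOmega)}\leqslant C_{tr}\vertiii{w}_{\HH^1(\varOmega)}$. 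Hence $|\aat(u,v)|\leqslant M\vertiii{u}_{\HH^1(\varOmega)}\vertiii{v}_{\HH^1(\varOmega)}$ with $M$ independent of $t$.

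\textbf{Coercivity.} Take $v=u$. The first and third terms of $\aat(u,u)$ are real and nonnegative, and the second is purely imaginary. Since $|z|\geqslant(\Re z+\Im z)/\sqrt2$ when $\Re z,\Im z\geqslant 0$, we get
\[
|\aat(u,u)|\geqslant \tfrac{1}{\sqrt2}\Big(b_3\|\nabla u\|_{\QQ}^2+\|u\|_{\SSig}^2+\alpha b_1\|u\|_{\LL^2(\varGamma)}^2\Big)\geqslant \tfrac{c_0}{\sqrt2}\Big(\|\nabla u\|_{\QQ}^2+\|u\|_{\SSig}^2\Big),
\]
with $c_0=\min\{b_3,1\}$; the $\varGamma$ term was dropped, though it could be kept. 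To control the full $\HH^1$ norm we use the Friedrichs-type inequality
\[
\vertiii{w}_{\HH^1(\varOmega)}^2\leqslant C_F\big(\|\nabla w\|_{\QQ}^2+\|w\|_{\SSig}^2\big),
\]
which holds because $\varSigma$ has positive measure and $\varOmega$ is connected. Applying it to the real and imaginary parts of $u$ yields $|\aat(u,u)|\geqslant \beta\vertiii{u}_{\HH^1(\varOmega)}^2$ with $\beta=c_0/(\sqrt2 C_F)>0$ uniform in $t$. This is exactly the coercivity required by the complex Lax--Milgram lemma.

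\textbf{Main obstacle.} The only nontrivial input is the Friedrichs inequality, which is a standard compactness (Rellich) argument. Suppose it fails: then there are $w_n$ with $\vertiii{w_n}_{\HH^1(\varOmega)}=1$ and $\|\nabla w_n\|_{\QQ}+\|w_n\|_{\SSig}\to0$. Along a subsequence, $w_n\to w$ strongly in $\QQ$, and since $\nabla w_n\to0$ the convergence is in fact strong in $\HH^1(\varOmega)$. Thus $\nabla w=0$, so $w$ is constant on the connected set $\varOmega$, and $w|_{\varSigma}=0$ forces $w=0$, contradicting $\vertiii{w}_{\HH^1(\varOmega)}=1$. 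Uniformity in $t$ holds because the reference domain $\varOmega$ is fixed and only the coefficients $A_t$ and $B_t$ depend on $t$, through their uniform bounds.
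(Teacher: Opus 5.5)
Your proof is correct and is essentially the standard argument the paper omits (it calls the lemma standard and defers to earlier CCBM work). Boundedness follows from Cauchy--Schwarz, the trace theorem and the uniform bounds on $A_t$, $B_t$; for coercivity you observe that $\aat(u,u)$ has nonnegative real part $\intO{A_t\nabla u\cdot\nabla\overline{u}}+\alpha\intG{B_t|u|^2}$ and nonnegative imaginary part $\|u\|_{\SSig}^2$, then combine $|z|\geqslant(\Re z+\Im z)/\sqrt2$ with a Friedrichs-type inequality on the connected domain $\varOmega$. The one step you leave implicit is that $\intO{A_t\nabla u\cdot\nabla\overline{u}}$ is real. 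This holds because $A_t=\Jact(DT_t)^{-1}(DT_t)^{-\top}$ is symmetric, so the cross terms between the gradients of the real and imaginary parts of $u$ cancel.
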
 
\begin{lemma}
	\label{lem:transported_problem}
	The function $u^{t}=u_{1}^{t} + i \ui^{t}$ uniquely solves in $\HH^{1}(\varOmega)$ the equation
	\begin{equation}\label{eq:transformed_state}
		\aat({u^{t}},{v}) = l({v}), \quad \forall {v} \in \HH^{1}(\varOmega).	
	\end{equation}
\end{lemma}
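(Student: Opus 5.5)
The plan is to pull the perturbed problem on $\varOmega_t$ back to the fixed domain $\varOmega$ with $T_t$ and change variables in every integral. Take $v_t \in \HH^1(\varOmega_t)$ and set $v = v_t \circ T_t \in \HH^1(\varOmega)$; this map is a bijection $\HH^1(\varOmega_t) \to \HH^1(\varOmega)$ because $T_t$ is a $C^{1,1}$ diffeomorphism for $t \in \intervalI$.

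The weak form on $\varOmega_t$ is
\[
\int_{\varOmega_t} \nabla u_t \cdot \nabla \overline{v}_t \,dx + i\int_{\varSigma} u_t \overline{v}_t\,ds + \alpha \int_{\varGamma_t} u_t \overline{v}_t\,ds = \int_{\varSigma} (g + i f)\overline{v}_t\,ds .
\]
We transform each term as follows.

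\textbf{Volume term.} The chain rule gives $(\nabla u_t)\circ T_t = (DT_t)^{-\top}\nabla u^t$, and $dx$ picks up the Jacobian factor $\Jact$. The volume term therefore becomes $\intO{A_t \nabla u^t \cdot \nabla \overline{v}}$, with $A_t = \Jact (DT_t)^{-1}(DT_t)^{-\top}$ as defined above (note that $DT_t^{-1}$ means $(DT_t)^{-1}$).

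\textbf{Robin term on $\varGamma_t$.} The tangential Jacobian (Nanson's formula) is $ds_t = \Jact |(DT_t)^{-\top}\nn|\, ds = B_t\, ds$. This term becomes $\alpha\intG{B_t u^t \overline{v}}$.

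\textbf{Terms on $\varSigma$.} Since $\operatorname{supp}\VV \subset \overline{D}_{d_\circ}$ stays away from $\partial D$, we have $T_t = \Id$ near $\varSigma$. Hence $\varSigma_t = \varSigma$, $u^t = u_t$ on $\varSigma$, and the boundary integrals over $\varSigma$, together with $l(v)$, are unchanged.

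Together these give $\aat(u^t,v) = l(v)$ for all $v \in \HH^1(\varOmega)$.

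\textbf{Uniqueness.} Lemma~\ref{lem:sesquiliner_form} gives boundedness and coercivity of $\aat$, using the uniform bounds on $A_t$ and $B_t$. The complex Lax--Milgram lemma then yields a unique solution of \eqref{eq:transformed_state}, and it must coincide with $u^t$. Existence and uniqueness of $u_t$ itself follow from the first lemma applied on the admissible domain $\varOmega_t$.

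The only mildly delicate point is justifying the surface change of variables on $\varGamma_t$ with $C^{1,1}$ regularity. This is standard (see \cite{DelfourZolesio2011}).
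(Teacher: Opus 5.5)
Your proposal is correct and is the standard pull-back argument the paper relies on; the paper omits this proof as routine and defers to the literature. Transforming the volume and Robin terms with $T_t$ produces $A_t$ and $B_t$, the $\varSigma$-integrals are unchanged because $T_t=\Id$ near $\varSigma$, and uniqueness follows from Lemma~\ref{lem:sesquiliner_form} via Lax--Milgram, exactly as you describe.
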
	
\begin{lemma}\label{lem:holder_continuity}
	The solution ${u^{t}}$ of \eqref{eq:transformed_state} is (uniformly) bounded in the neighborhood of $0$ and 
	\[
		\vertiii{u^{t} - u}_{\HH^{1}(\varOmega)} = O(t).
	\] 
\end{lemma}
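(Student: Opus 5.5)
The plan is to exploit the uniform (in $t$) coercivity of the transported sesquilinear form $\aat$ from Lemma~\ref{lem:sesquiliner_form}, together with the $C^1$-regularity of the coefficient maps in \eqref{eq:continuity_of_the_maps}. Two things are needed: a uniform bound on $u^t$, and then an estimate of $u^t - u$ by testing the difference of the variational equations \eqref{eq:transformed_state} and \eqref{eq:state_weak_form} with $v = u^t - u$.

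\emph{Step 1 (uniform coercivity).} First I will make the coercivity constant of Lemma~\ref{lem:sesquiliner_form} explicit and check that it does not depend on $t$. For $v \in \HH^1(\varOmega)$ we have
\[
\Re\,\aat(v,v) = \intO{A_t\nabla v\cdot\nabla\cv} + \alpha\intG{B_t|v|^2} \geqslant \min\{{b}_3,\alpha {b}_1\}\Big(\|\nabla v\|_{\QQ}^2 + \|v\|_{\LL^2(\varGamma)}^2\Big).
\]
The boundary term on $\varSigma$ is purely imaginary, so it drops out of the real part. A Friedrichs-type inequality on the fixed domain $\varOmega$ shows that $\|\nabla v\|^2_{\QQ}+\|v\|^2_{\LL^2(\varGamma)}$ is equivalent to $\vertiii{v}^2_{\HH^1(\varOmega)}$. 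Hence there is a constant $c_0>0$, independent of $t\in\intervalI$, such that
\[
|\aat(v,v)| \geqslant \Re\,\aat(v,v) \geqslant c_0\vertiii{v}^2_{\HH^1(\varOmega)}.
\]
The right-hand side $l$ does not depend on $t$, because $\varSigma_t=\varSigma$. Testing \eqref{eq:transformed_state} with $v=u^t$ then gives $c_0\vertiii{u^t}_{\HH^1(\varOmega)}\leqslant \|l\|$, which is the uniform boundedness.

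\emph{Step 2 (difference equation).} Set $w_t \coloneqq u^t-u$. Subtracting \eqref{eq:state_weak_form} from \eqref{eq:transformed_state}, and using that $\aaa=\aaa_0$, gives for every $v\in\HH^1(\varOmega)$
\[
\aat(w_t,v) = \aaa(u,v)-\aat(u,v) = -\intO{(A_t-\Id)\nabla u\cdot\nabla\cv} - \alpha\intG{(B_t-1)u\cv}.
\]
I take $v=w_t$, apply the uniform coercivity from Step 1, and then use the Cauchy--Schwarz inequality and the trace theorem. This yields
\[
c_0\vertiii{w_t}_{\HH^1(\varOmega)} \leqslant C\Big(\|A_t-\Id\|_{C(\overline{\varOmega})}+\alpha\|B_t-1\|_{C(\varGamma)}\Big)\vertiii{u}_{\HH^1(\varOmega)}.
\]

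\emph{Step 3 (rate).} By \eqref{eq:continuity_of_the_maps}, the maps $t\mapsto A_t$ and $t\mapsto B_t$ are $C^1$ on $\intervalI$ with $A_0=\Id$ and $B_0=1$. The mean value theorem therefore gives $\|A_t-\Id\|_{C(\overline{\varOmega})}\leqslant t\sup_{s\in\intervalI}\|\tfrac{d}{ds}A_s\|$, and similarly for $B_t$. Inserting this into Step 2 gives $\vertiii{u^t-u}_{\HH^1(\varOmega)}=O(t)$.

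The only genuinely delicate point is Step 1. Because the form is complex and the $\varSigma$-term carries the factor $i$, one has to argue through the real part rather than invoke a real coercivity estimate directly. One must also check that the constant depends only on ${b}_1$, ${b}_3$, $\alpha$ and the fixed domain $\varOmega$, and not on $t$.
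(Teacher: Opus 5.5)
Your proof is correct and is essentially the paper's argument: subtract the reference equation from the transported one, test with $u^{t}-u$, and combine coercivity with the $C^{1}$ dependence of $A_t$ and $B_t$ from \eqref{eq:continuity_of_the_maps}. The one difference is the arrangement of the difference equation. The paper keeps $\aaa$ on the left with the perturbation acting on $u^{t}$, which is why it first needs the uniform bound on $u^{t}$. You keep $\aat$ on the left with the perturbation acting on $u$, which is why you verify uniform coercivity. Dividing once by $\vertiii{u^{t}-u}_{\HH^{1}(\varOmega)}$, as you do, gives the claimed $O(t)$ directly; this is sharper than the limit the paper writes down, $\tfrac{1}{t}\vertiii{u^{t}-u}^{2}_{\HH^{1}(\varOmega)}\to 0$.
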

\begin{proof}
	It is clear that, with $v = u^{t}$ in \eqref{eq:transformed_state} and by Lemma \ref{lem:sesquiliner_form}, $\vertiii{u^{t}}_{\HH^{1}(\varOmega)}$ is bounded for all $t\in \intervalI$.
	Meanwhile, the variational equation $\aat(u^{t},v) - \aaa(u,v) = 0$, for all ${v} \in \HH^{1}(\varOmega)$, is equivalent to
	%
	\begin{equation}\label{eq:difference_equation}
	\aaa(u^{t} - u, v) = - \intO{(A_{t} - \Id) \nabla {u^{t}} \cdot \nabla {\cv}} - \alpha \intG{ (B_{t} - 1) {u^{t}} {\cv}},\quad \forall {v} \in \HH^{1}(\varOmega).
	\end{equation}
	By taking $v = u^{t} - u$ above, and then applying the continuity of the maps given in \eqref{eq:continuity_of_the_maps}, it can be shown without difficulty that the following limit holds
	\[
		\lim_{t \searrow 0} \frac{1}{t} \vertiii{u^{t} - u}_{\HH^{1}(\varOmega)}^{2} = 0,	
	\]
	which verifies the assertion.
\end{proof}
\begin{remark}
Note that, by dividing both sides of \eqref{eq:difference_equation} by $t$, and then letting $t \searrow 0$, one recovers the equation (in variational form) satisfied by the material derivative $\dotu$ of the state variable $u$.
\end{remark}
Finally, to complete our preparations, we recall the following identity (see, e.g., \cite{DelfourZolesio2011}) which is needed to write the shape derivative of $J$ in terms of a boundary integral.
The $\HH^2(\varOmega)$ regularity of ${u}$, which holds true since $\varOmega$ is assumed to be of class ${C}^{1,1}$ and $\VV \in \sfTheta^1$, will again be used subsequently without further notice.	
\begin{lemma}
\label{eq:integral_equivalence}
The state solution $u$ of \eqref{eq:ccbm_state} and the adjoint solution $p$ to \eqref{eq:adjoint_system} satisfy the following identity
	\begin{equation} \label{eq:domain_identity}
	\begin{aligned}
	\intO{A \nabla {u} \cdot \nabla \overline{p}}
	&= - \intO{ \ui \VV \cdot \nabla {u} } 
		+ \alpha \intG{ \left( {u}(\VV \cdot \nabla \overline{{p}}) + \overline{p} (\VV \cdot \nabla {u}) \right) } 
		+ \intG{(\nabla \overline{{p}} \cdot \nabla {u})\Vn}.
	\end{aligned}
	\end{equation}
\end{lemma}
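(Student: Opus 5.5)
The plan is to derive \eqref{eq:domain_identity} from a pointwise divergence identity and then integrate it over $\varOmega$ by the divergence theorem. The boundary conditions of \eqref{eq:ccbm_state} and \eqref{eq:adjoint_system} then reduce the boundary terms to the stated form. Everything is bilinear in $(u,\overline{p})$, so the complex-valued case follows from the real one by linearity.

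\emph{Step 1 (pointwise identity).} For $\varphi,\psi \in H^{2}(\varOmega)$ (real or complex, no conjugation) and $\VV \in \sfTheta^{1}$, with $A = (\dive \VV)\Id - D\VV - (D\VV)^{\top}$, I will verify
\[
A\nabla\varphi\cdot\nabla\psi
= \dive\big[(\nabla\varphi\cdot\nabla\psi)\VV - (\VV\cdot\nabla\varphi)\nabla\psi - (\VV\cdot\nabla\psi)\nabla\varphi\big]
+ (\VV\cdot\nabla\varphi)\Delta\psi + (\VV\cdot\nabla\psi)\Delta\varphi .
\]
First I check it for smooth functions by expanding each divergence:
\begin{itemize}
\item $\dive((\nabla\varphi\cdot\nabla\psi)\VV) = (\nabla\varphi\cdot\nabla\psi)\dive\VV + D^{2}\varphi\,\VV\cdot\nabla\psi + D^{2}\psi\,\VV\cdot\nabla\varphi$;
\item $\dive((\VV\cdot\nabla\varphi)\nabla\psi) = D\VV^{\top}\nabla\varphi\cdot\nabla\psi + D^{2}\varphi\,\VV\cdot\nabla\psi + (\VV\cdot\nabla\varphi)\Delta\psi$, and symmetrically for the third term.
\end{itemize}
The Hessian terms cancel and exactly the terms of $A\nabla\varphi\cdot\nabla\psi$ remain. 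I then extend to $H^{2}$ by density of smooth functions in $H^{2}(\varOmega)$, which is available since $\varOmega$ is $C^{1,1}$. This bookkeeping, in particular getting the transposes of $D\VV$ in the right places, is the only delicate step.

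\emph{Step 2 (integration).} Apply Step 1 with $\varphi = u$ and $\psi = \overline{p}$; both lie in $\HH^{2}(\varOmega)$ by the regularity noted after Proposition~\ref{prop:Jmaps}. Integrate over $\varOmega$ and use the divergence theorem. Since $\VV$ has compact support in $\overline{D}_{d_{\circ}}$, it vanishes on $\varSigma$, so only $\varGamma$ contributes, with $\nn$ the outward normal to $\varOmega$. The volume terms simplify as follows:
\begin{itemize}
\item $\Delta u = 0$ in $\varOmega$;
\item $\Delta\overline{p} = -\overline{\ui} = -\ui$, because $\ui$ is real.
\end{itemize}
Hence the volume contribution is $-\intO{\ui\,\VV\cdot\nabla u}$.

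\emph{Step 3 (boundary terms).} On $\varGamma$ the boundary integrand is
\[
(\nabla u\cdot\nabla\overline{p})\Vn - (\VV\cdot\nabla u)\,\dn{\overline{p}} - (\VV\cdot\nabla\overline{p})\,\dn{u}.
\]
Insert the Robin conditions $\dn{u} = -\alpha u$ and $\dn{\overline{p}} = -\alpha\overline{p}$; the latter follows from \eqref{eq:adjoint_system} because $\alpha$ is real. The two last terms become $\alpha\big(\overline{p}\,(\VV\cdot\nabla u) + u\,(\VV\cdot\nabla\overline{p})\big)$. This gives exactly the right-hand side of \eqref{eq:domain_identity}.
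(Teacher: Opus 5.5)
Your proof is correct: the pointwise identity checks out (the Hessian terms cancel and the two transposed $D\VV$ terms reproduce $-D\VV-(D\VV)^{\top}$), and the volume and Robin boundary terms reduce exactly as you state, using that $\VV$ vanishes near $\varSigma$ and that $\alpha$ and $\ui$ are real. The paper gives no proof of its own and simply recalls this identity from Delfour--Zol\'esio; your argument is the standard derivation behind that citation.
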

%
%
%
Now we prove Proposition \ref{prop:Jmaps} as follows.
\begin{proof}[Proof of Proposition \ref{prop:Jmaps}]
	The proof proceeds in two steps: first, we derive an expression for the shape derivative of $J$ involving domain and boundary integrals, and then we express the whole expression in terms of just a boundary integral. 

	\underline{Step 1.} We consider the difference
	\begin{align*}
		J(\varOmega_{t}) - J(\varOmega) 
		= \frac12 \intO{ ({\Jact} - 1) |\ui^{t}|^{2} } + \frac12 \intO{ (|\ui^{t}|^{2} - |\ui|^{2})}
		\eqqcolon I_{1}(t) + I_{2}(t),
	\end{align*}
	and then look at the limit of $\dfrac{1}{t}\left(I_{1}(t) + I_{2}(t)\right)$ as $t \searrow 0$.

	For the limit of $I_{1}(t)/t$ as $t \searrow 0$, we have
	\begin{equation}\label{expression1}
		\frac{I_{1}(t)}{t}  \longrightarrow \frac12 \intO{ \dive \VV |\ui|^{2} } \eqqcolon I_{1} \quad \text{as $t \longrightarrow 0$}.
	\end{equation}
	Meanwhile, to get the limit of $I_{2}(t)/t$, let us consider
	\[
		I_{2}(t) = \frac12 \intO{ |\ui^{t} - \ui|^{2}} + \intO{ \ui (\ui^{t} - \ui)}
			\eqqcolon I_{21}(t) + I_{22}(t).
	\]
	Using Lemma \ref{lem:holder_continuity}, we see that $I_{21}(t)/t \to 0$ as $t \searrow 0$.
	On the other hand, to get the limit of $I_{22}(t)/t$ as $t \searrow 0$, we make use of \eqref{eq:adjoint_equation} with $\varphi = u^{t} - u$ and \eqref{eq:difference_equation} with $v = p$ which are given as follows: 
	\[
	\begin{aligned}
		&\intO{\nabla p \cdot \nabla \overline{(u^{t} - u)}} - i \intS{p \overline{(u^{t} - u)}} + \alpha \intG{p \overline{(u^{t} - u)}} = \intO{ \ui \overline{(u^{t} - u)} }\\
		\quad& \Longleftrightarrow \quad 
		\intO{ \ui (u^{t} - u) }
		= \aaa(u^{t} - u,{p}) 
		= - \intO{(A_{t} - \Id) \nabla {u^{t}} \cdot \nabla \overline{p}} - \alpha \intG{ (B_{t} - 1) {u^{t}} \overline{p}}.	
	\end{aligned}
	\]
	After dividing the latter equation by $t$ and taking the imaginary part, we obtain the following identity:
	\[
	\frac{I_{22}(t)}{t} = \Im \left\{ \intO{ \ui \left(\frac{u^{t} - u}{t}\right) }  \right\}
	= \Im\left\{ - \intO{\left(\frac{A_{t} - \Id}{t}\right) \nabla {u^{t}} \cdot \nabla \overline{p}} - \alpha \intG{ \left(\frac{B_{t} - 1}{t}\right) {u^{t}} \overline{p}} \right\}.
	\]

	Thus, using \eqref{eq:derivatives}, we get
	\begin{equation}\label{expression2}
		\frac{I_{22}(t)}{t} \longrightarrow \Im\left\{ -\intO{A \nabla {u} \cdot \nabla \overline{p}} - \alpha \intG{ \tandive\! \VV {u} \overline{p}} \right\} \eqqcolon I_{2}\quad \text{as $t \longrightarrow 0$}.
	\end{equation}
	Putting \eqref{expression1} and \eqref{expression2} together, we get an expression for the shape derivative of $J$
	\[
		{d}J(\varOmega)[\VV]
			= I_{1} + I_{2}
			= \frac12 \intO{ \dive \VV |\ui|^{2} } + \Im\left\{ \intO{-A \nabla {u} \cdot \nabla \overline{p}} - \alpha \intG{ \tandive\! \VV {u} \overline{p}} \right\}.
	\] 

	\underline{Step 2.} We simplify the above shape derivative by expressing it solely as a boundary integral over $\varGamma$.
	First, we recall a version of the tangential Green's formula,\footnote{A proof of this formula can be found in \cite{MuratSimon1976}.} which is valid, for instance, when $\varGamma \in {C}^{1,1}$ and for a function $\phi \in W^{2,1}(D)$, given as follows
\begin{equation}
	\label{eq:tangential_Greens_formula}
	\intG{\left( \nabla \phi \cdot \VV + \phi \tandive \VV \right) }
		=  \intG{ \left( \dn{\phi}  +\phi \kappa \right) {\Vn}},
\end{equation} 
where $\kappa = \tandive \nn$ denotes the mean curvature of $\varGamma$.

Taking $\phi = u \overline{p}$ in equation \eqref{eq:tangential_Greens_formula}, 
it can be verified that
\[
		\alpha\intG{\left(  \overline{p} (\nabla u \cdot \VV) + u ( \nabla \overline{p} \cdot \VV \right)} + \alpha\intG{u \overline{p} \tandive \VV } 
		=\intG{ \left( -2\alpha^2 + \alpha\kappa \right) u \overline{p} {\Vn}}.
\]
In addition, we have the following identities
\begin{align*}
\frac12 \intO{  \dive \VV |\ui|^{2} }
	&= - \intO{ \ui \VV \cdot \nabla \ui } + \frac12 \intG{ |\ui|^{2} \Vn },\\
\intG{(\nabla \overline{{p}} \cdot \nabla {u})\Vn}
	&= \intG{(\tannabla \overline{{p}} \cdot \tannabla {u} + \alpha^2 u \overline{p} )\Vn}.
\end{align*}
Now, using the above identities together with \eqref{eq:domain_identity}, we get
%
	\begin{align*}
	I_{1} + I_{2}
	& = \frac12 \intG{ |\ui|^{2} \Vn } + \intG{\Im\left\{ (\alpha^2-\alpha \kappa)u\overline{p}-\tannabla \overline{{p}} \cdot \tannabla {u} \right\} \Vn}.
	\end{align*}

	Simplifying the second integral above gives the desired expression for the shape gradient $G$ given in \eqref{eq:shape_gradient}.
	This proves the proposition.
\end{proof} 
%
%
%
We record an alternative expression of the shape derivative in the following remark.
\begin{remark}
	\label{rem:Jmaps}
	Consider the adjoint variable $p = \vr + i \vi$ defined as the unique solution to
	\begin{equation}\label{eq:adjoint_system_Re}
		-\Delta p 		=i\ui \ \text{in $\varOmega$},\qquad\quad
		\dn{p} - i p 	=0 \ \text{on $\varSigma$},\qquad\quad
		\dn{p} + \alpha p=0 \ \text{on $\varGamma$}.
	\end{equation}
	This choice of adjoint yields the alternative boundary representation
	\begin{equation}
	\label{eq:GG_correct}
	{d}J(\varOmega)[\VV] 
	= \intG{\GG_2 \, \nn \cdot \VV}
	\coloneqq \intG{ \left( \frac12 |\ui|^2  -  \Psi^{\dagger}_{\varGamma}(u,p) \right) \nn \cdot \VV},
	\end{equation}
	with $u = \ur + i \ui$ denoting the unique solution to \eqref{eq:ccbm_state}.
\end{remark}

The next conclusion can be drawn easily from \eqref{eq:shape_gradient} and \eqref{eq:adjoint_system}.
\begin{corollary}[Necessary condition]\label{cor:necessary_condition}
	Let the domain $\varOmega^{\star}$ be such that $u=u(\varOmega^{\star})$ satisfies equation \eqref{eq:gip}, i.e., there holds $\ui = 0$ in $\varOmega^{\star}$ or equivalently, $u(\varOmega^{\star})|_{\varSigma} = f$ and $\dn{}u(\varOmega^{\star})|_{\varSigma} = g$.
	Then, $\varOmega^{\star}$ is a stationary solution of \eqref{eq:shape_problem}.
	That is, it fulfills the necessary optimality condition
	\begin{equation}
	\label{eq:optimality_condition}
		{d}J(\varOmega^{\star})[\VV] = 0, \quad \text{for all $\VV \in \sfTheta^{1}$}.
	\end{equation}
\end{corollary}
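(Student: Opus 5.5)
The plan is to show that the shape gradient $\GG_{1}(u,p)$ in \eqref{eq:shape_gradient} vanishes identically on $\varGamma$ when $\varOmega = \varOmega^{\star}$. We do this by proving that the adjoint state $p$ is zero there. Proposition~\ref{prop:Jmaps} then gives \eqref{eq:optimality_condition} directly.

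\textbf{Step 1.} We first record the consequences of the hypothesis. By assumption, $u=u(\varOmega^{\star})$ solves \eqref{eq:ccbm_state}, and its imaginary part satisfies $\ui=0$ in $\varOmega^{\star}$. Equivalently, by the discussion preceding Problem~\ref{eq:main_problem}, $\ur$ solves \eqref{eq:gip}. In particular $\ui\equiv 0$, so $\ui|_{\varGamma}=0$. If one starts instead from the Cauchy-data formulation, we check that $\ur+i\cdot 0$ satisfies the weak form \eqref{eq:state_weak_form}. Indeed, on $\varSigma$ we have $\dn{\ur}+i\ur = g+if$. Uniqueness from the complex Lax--Milgram lemma then forces $u=\ur$, and hence $\ui=0$.

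\textbf{Step 2.} Next we show $p=0$. The adjoint problem \eqref{eq:adjoint_system}, in weak form \eqref{eq:adjoint_equation}, has right-hand side $\intO{\ui\cphi}=0$. The associated sesquilinear form (the adjoint of $\aaa$, carrying $-i$ on $\varSigma$) is coercive by the same argument used for $\aaa$. Uniqueness therefore gives $p\equiv 0$ in $\HH^{1}(\varOmega^{\star})$. By the stated $\HH^{2}$ regularity, the traces $p_1,p_2$ and the tangential gradients $\tannabla p_1,\tannabla p_2$ all vanish on $\varGamma$.

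\textbf{Step 3.} Now we evaluate the gradient. Every term of $\Psi_{\varGamma}(u,p)$ is bilinear in $(u,p)$, so $\Psi_{\varGamma}(u,p)=0$ on $\varGamma$. Also $\tfrac12|\ui|^2=0$. Hence $\GG_{1}\equiv 0$, and ${d}J(\varOmega^{\star})[\VV]=0$ for every $\VV\in\sfTheta^{1}$. As a cross-check, the same conclusion holds with the alternative adjoint \eqref{eq:adjoint_system_Re} and $\GG_2$, since its source $i\ui$ also vanishes.

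The only point requiring care is the equivalence in the hypothesis, namely that matching Cauchy data forces $\ui=0$. This rests on uniqueness for \eqref{eq:ccbm_state}, which Step 1 handles. One could also argue more directly that $J\ge 0$ and $J(\varOmega^{\star})=0$, so $\varOmega^{\star}$ is a global minimizer and its Eulerian derivative, being linear in $\VV$, must vanish. The adjoint argument above avoids any subtlety about one-sided derivatives.
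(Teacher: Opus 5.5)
Your proposal is correct and follows the same route as the paper: $\ui=0$ makes the source in the adjoint problem \eqref{eq:adjoint_system} vanish, so $p=0$ by uniqueness, and hence $\GG_{1}(u,p)=\tfrac12|\ui|^2+\Psi_{\varGamma}(u,p)\equiv 0$ on $\varGamma$, which gives ${d}J(\varOmega^{\star})[\VV]=0$. Your Step~1, where matching Cauchy data forces $\ui=0$ through uniqueness for \eqref{eq:ccbm_state}, spells out an equivalence that the paper simply takes from its discussion before Problem~\ref{eq:main_problem}; the paper's own proof is a terse version of your Steps~2--3.
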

\begin{proof}
By the assumption that $\ui = 0$ on $\varOmega^{\star}$, we find that $p = p(\varOmega^{\star})$ on $\varOmega^{\star}$. Thus, it follows that $\GG \equiv 0$ on $\varSigma^{\star}$, which implies ${d}J(\varOmega^{\star})[\VV] = 0$ for any $\VV \in \sfTheta^1$.
\end{proof}
\begin{remark}
	We note that solutions of the necessary condition \eqref{eq:optimality_condition} might exist such that the state does not satisfy equation $\ui = 0$ in $\varOmega^{\star}$.
	However, only in the case of exact matching of boundary data a stationary domain $\varOmega^{\star}$ is a global minimum because $J(\varOmega^{\star})=0$.
\end{remark}

\section{Numerical approximation}
\label{sec:Numerical_Approximation}

The proposed shape optimization approach for \eqref{eq:gip} is implemented using a Sobolev gradient-based finite element method, following our previous work \cite{AfraitesRabago2025,CherratAfraitesRabago2025b,CherratAfraitesRabago2025}. For clarity, we briefly summarize the numerical approach.

A natural descent direction for $J$ is $\VV = -G\nn$ with $G \in L^2(\varGamma)$, $G \not\equiv 0$. However, this can lead to unstable reconstructions and poor mesh quality. To mitigate this, we employ an $H^1$-Riesz representation of the shape gradient \cite{Doganetal2007,Azegami2020}, seeking $\VV \in H_{\varSigma,0}^{1}(\varOmega)^d$ such that
\begin{equation}\label{eq:Sobolev_gradient_computation}
	\beta \intO{ \nabla \VV : \nabla \vect{\varphi}} + (1 - \beta) \intG{ \tannabla \VV : \tannabla \vect{\varphi} }
	= - \intG{ G \nn \cdot \vect{\varphi} }, \quad \forall \vect{\varphi} \in H_{\varSigma,0}^{1}(\varOmega)^d,
\end{equation}
where $\beta \in (0,1]$ is a regularization parameter. 
This construction smoothly extends the boundary deformation field $-G\nn$ into $\varOmega$, while the tangential term improves mesh regularity and suppresses spurious oscillations along the evolving free boundary \cite{Neuberger1997}. 
In particular, for nonzero values of $\beta$, the resulting boundary evolution is observed to be significantly smoother, especially near highly curved or concave regions. 
In this sense, the proposed deformation strategy yields an implicit regularization effect similar to perimeter penalization techniques commonly employed in shape optimization, such as the addition of a term of the form
\[
	\eta \operatorname{Per}(\partial\varOmega) \coloneqq \eta \int_{\partial\varOmega} 1 \, ds,
\qquad \eta > 0.
\]
Although such perimeter regularization may also be incorporated into the objective functional (for instance, in the form $J(\varOmega) + \eta \operatorname{Per}(\partial\varOmega)$), the numerical experiments presented in this work suggest that the proposed smoothing mechanism already provides adequate stabilization for the reconstruction problems under consideration (Remark~\ref{rem:no_need_for_perimeter_functional}).

The $k$th domain approximation $\varOmega^k$ is computed as follows:
\begin{enumerate}[label=\arabic*.]
    \item \textit{Initialization}: select an initial shape $\varOmega^0$.
    \item \textit{Iteration} ($k = 0,1,2,\dots, M$):
    \begin{enumerate}[label*=\arabic*.]
        \item Solve the state and adjoint systems on $\varOmega^k$.
        \item Compute the update vector $\VV^k$ and step size $t^k = \mu J(\varOmega^k)/|\VV^k|_{H^1(\varOmega^k)^d}^2$ (see \cite{RabagoAzegami2020}), with $\mu = 2$ and reduced if necessary to avoid mesh inversion.
        \item Update the domain: $\varOmega^{k+1} = (\mathrm{id} + t^k \VV^k) \varOmega^k$.
    \end{enumerate}
    \item \textit{Stop Test}: Repeat \textit{Iteration} until convergence (i.e., terminate after $M=200$ iterations).
\end{enumerate}
\begin{remark}\label{rem:no_need_for_perimeter_functional}
The presented iterative scheme naturally extends to handle noisy data. In the experiments that follow, we demonstrate that it is sufficient to achieve accurate reconstructions under moderate noise. Although regularization terms--such as perimeter penalization--can be incorporated, they do not appear necessary in our case.
\end{remark}
All computations were performed in \textsc{FreeFem++} \cite{Hecht2012} on a MacBook Pro with Apple M1 chip and 16GB RAM.

To test robustness against noise, we define $u^\delta = (1 + \delta\,\text{g.n.})\, u^\star$, where $u^\star$ is the exact solution for input $g$ and ``g.n.'' is Gaussian noise with zero mean and standard deviation $0.5$.  
The corresponding noisy measurement is then $g|_{\varSigma} \coloneqq \dn{u^\delta}$.
\subsection{Numerical examples and discussion}
\label{subsec:numerical_examples} 
We set $\alpha = 100$, take $f = 1$ on $\varSigma$, and consider a circular domain of unit radius. 
Four cavity shapes are examined: E = ellipse, K = kite, S = square, and L = L-block. Figures~\ref{fig:all_reg_radius_0.3}--\ref{fig:kite_adjoint_real_imag_beta08} present the reconstruction and field approximation results. 
In all cases, unless otherwise stated, $\rho = 1$ in \eqref{eq:ccbm_state}.
The exterior boundary $\varSigma$ is shown as a solid (black or blue) line, the exact interior boundary $\varGamma^{\star}$ as a solid magenta line, and the initial guess $\varGamma^{0}=C(x_0,y_0,r)$ as a black dotted circle (Figure~\ref{fig:all_reg_radius_0.3} only), where $C(x_0,y_0,r)$ is a circle of center $(x_0,y_0)$ and radius $r$.
Reconstructed shapes for noise levels $\delta = 0,\,0.03,\,0.06,$ and $0.09$ are plotted with dashed curves of distinct colors.

Figure~\ref{fig:all_reg_radius_0.3} illustrates the effect of the parameter $\beta$ by comparing reconstructions for $\beta = 0$ and $\beta = 0.8$.
In the noise-free case, the reconstructions closely match the exact shapes. As the noise level increases, the method remains stable, with mild smoothing near corners, particularly for S and L. 
Larger values of $\beta$ suppress oscillations and improve stability, consistent with its regularizing effect. 
However, increasing $\beta$ promotes convexity in the reconstructed shapes, which is advantageous for convex cavities but may be detrimental for non-convex geometries.

Figure~\ref{fig:cost_grad_history} illustrates the convergence of the gradient-based updates in terms of the cost functional and gradient norm. 
Although not shown for all cases, convergence is faster for smooth geometries (E and K) and slower for non-smooth ones (S and L), which violate the regularity assumptions on the unknown boundary $\varGamma$. 
In our experiments, larger values of $\beta$ yield smoother convergence histories.

Figure~\ref{fig:ellipse_state_real_imag_comparison} shows the distribution of the real and imaginary parts of the state solution \eqref{eq:ccbm_state} on the cavity boundary at the final iterate $k=200$ for the elliptical cavity. 
The real component converges to a stable distribution, while the imaginary component tends to zero almost everywhere.

Figures~\ref{fig:kite_state_real_imag_beta08} and~\ref{fig:kite_adjoint_real_imag_beta08} illustrate the evolution of the real and imaginary parts of the state and adjoint solutions on the cavity boundary at selected iterations $k=0,50,200$. 
The imaginary component remains small but nonzero in all cases, reflecting numerical effects while preserving physical consistency.

Moreover, Figure~\ref{fig:kite_adjoint_real_imag_beta08} indicates that both components of the adjoint variable decrease toward small magnitudes as the optimization proceeds. 
This behavior is consistent with the first-order necessary optimality condition in Corollary~\ref{cor:necessary_condition}, up to discretization and noise effects, and indicates that the adjoint solution approaches zero near optimality.

In summary, the method yields stable and accurate cavity reconstructions under varying regularization and noise levels for the given magnitude of the Robin coefficient $\alpha$. 
The decay of the adjoint field and the small imaginary component of the state solution are consistent with the theoretical optimality conditions (Corollary~\ref{cor:necessary_condition}).
\begin{figure}[h!]
    \centering
    \resizebox{0.23\textwidth}{!}{\includegraphics{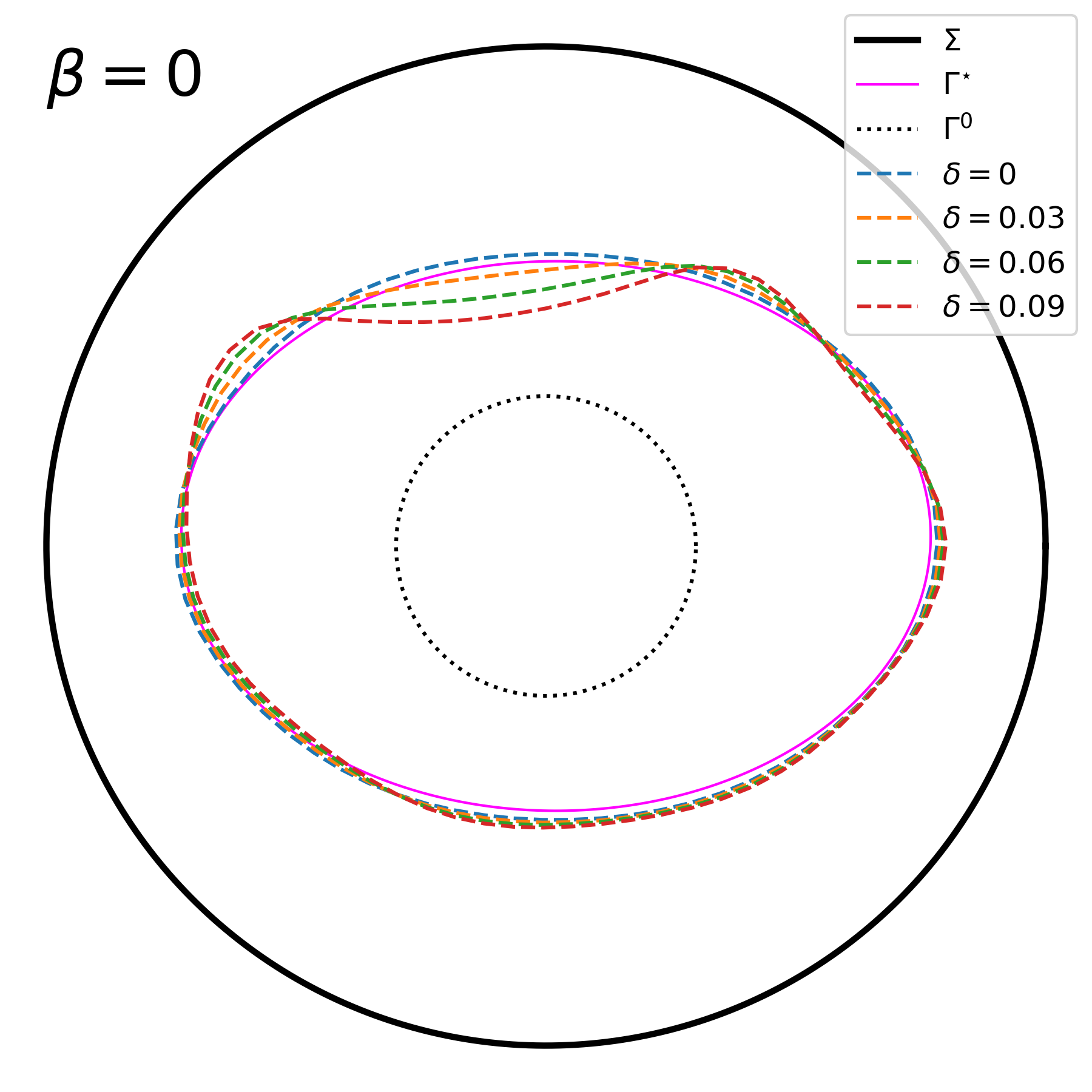}}\ 
    \resizebox{0.23\textwidth}{!}{\includegraphics{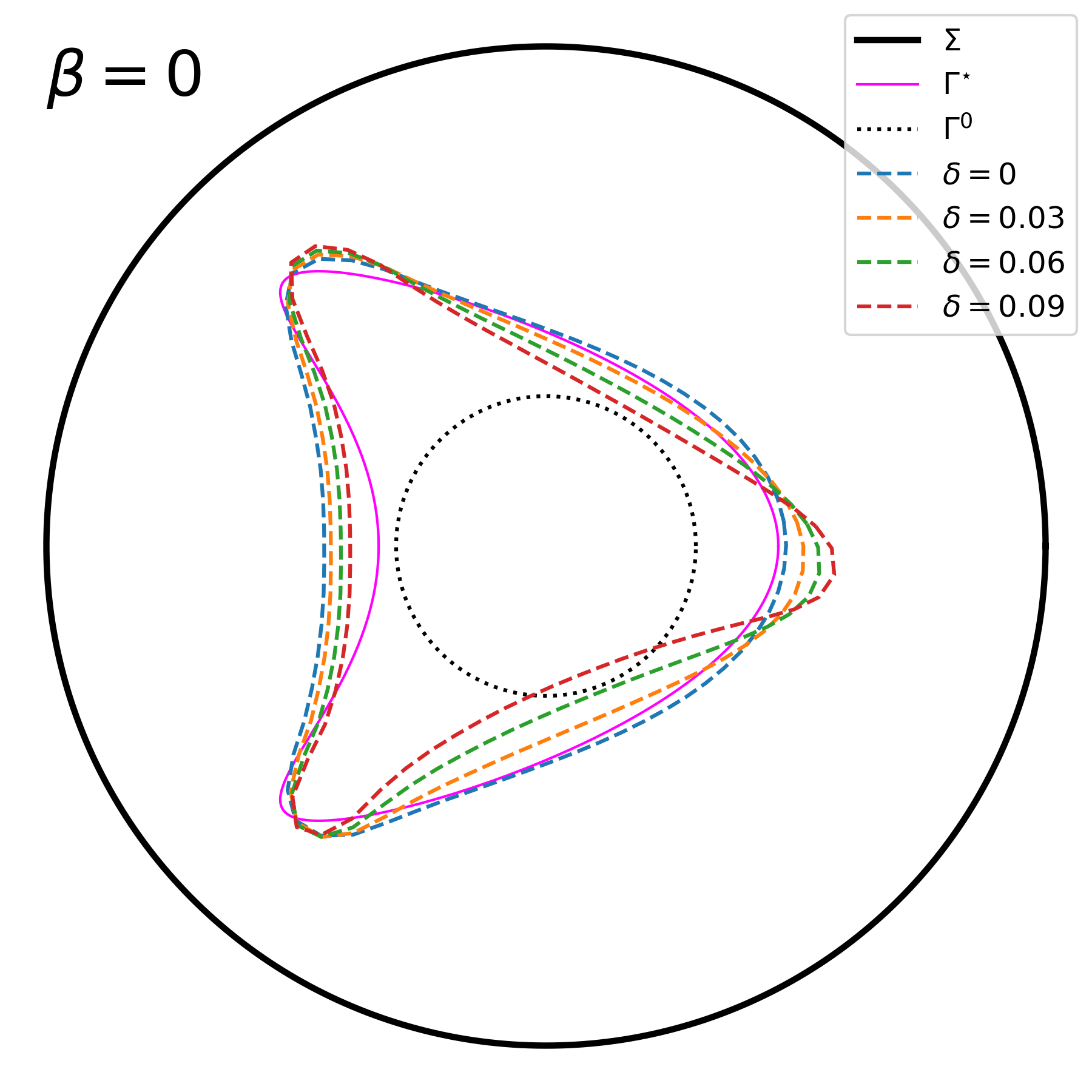}}\ 
    \resizebox{0.23\textwidth}{!}{\includegraphics{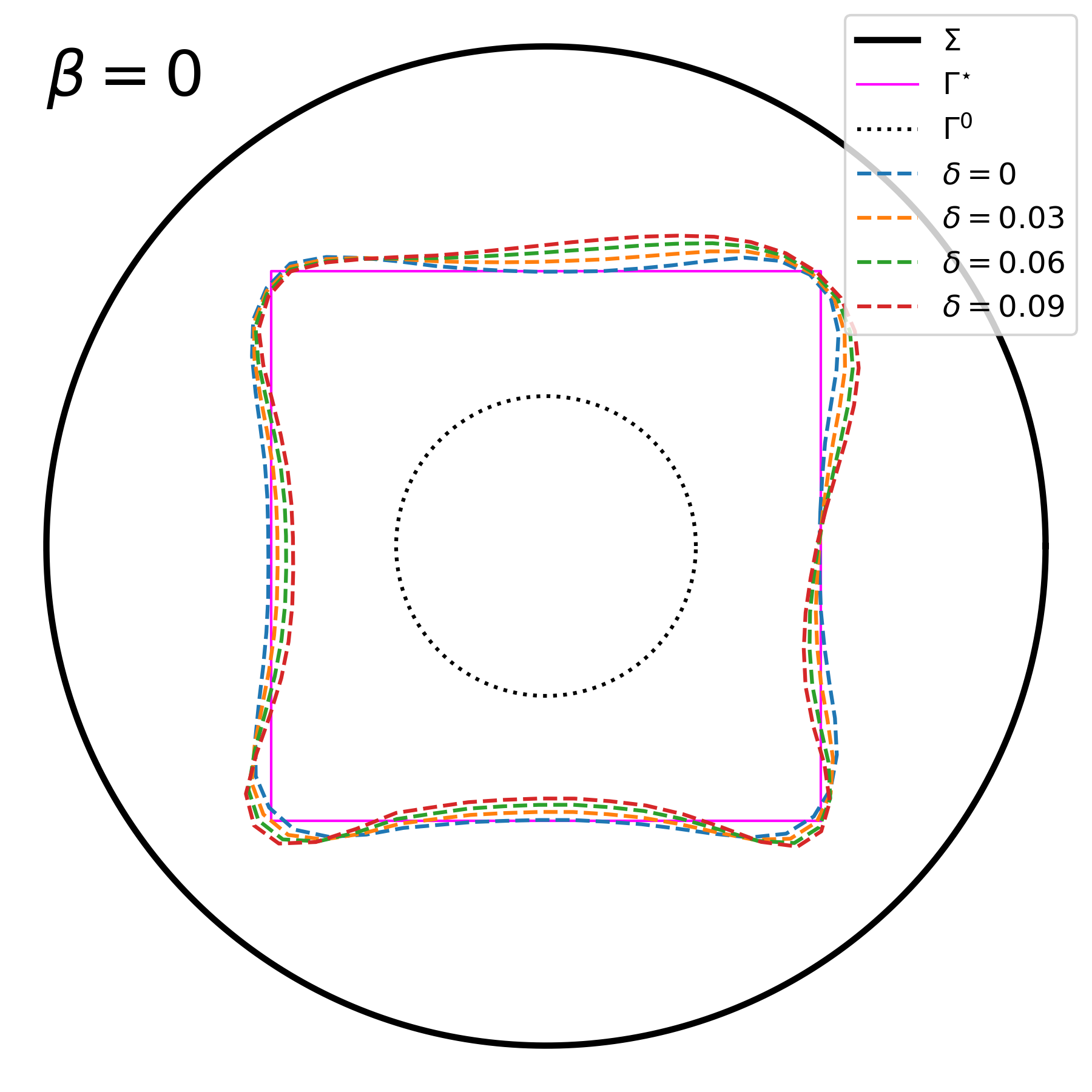}}\ 
    \resizebox{0.23\textwidth}{!}{\includegraphics{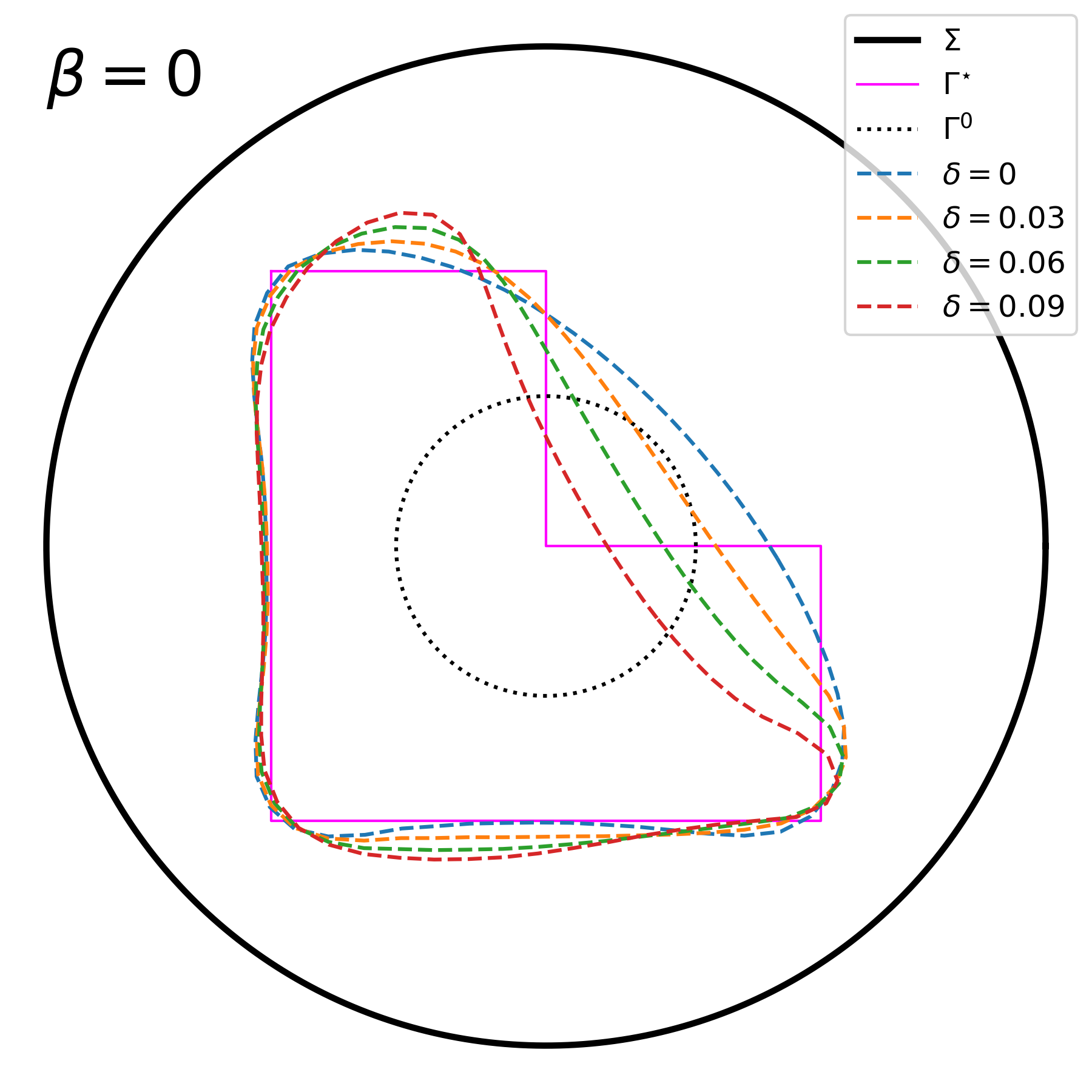}}\\[4pt]
    
    \resizebox{0.23\textwidth}{!}{\includegraphics{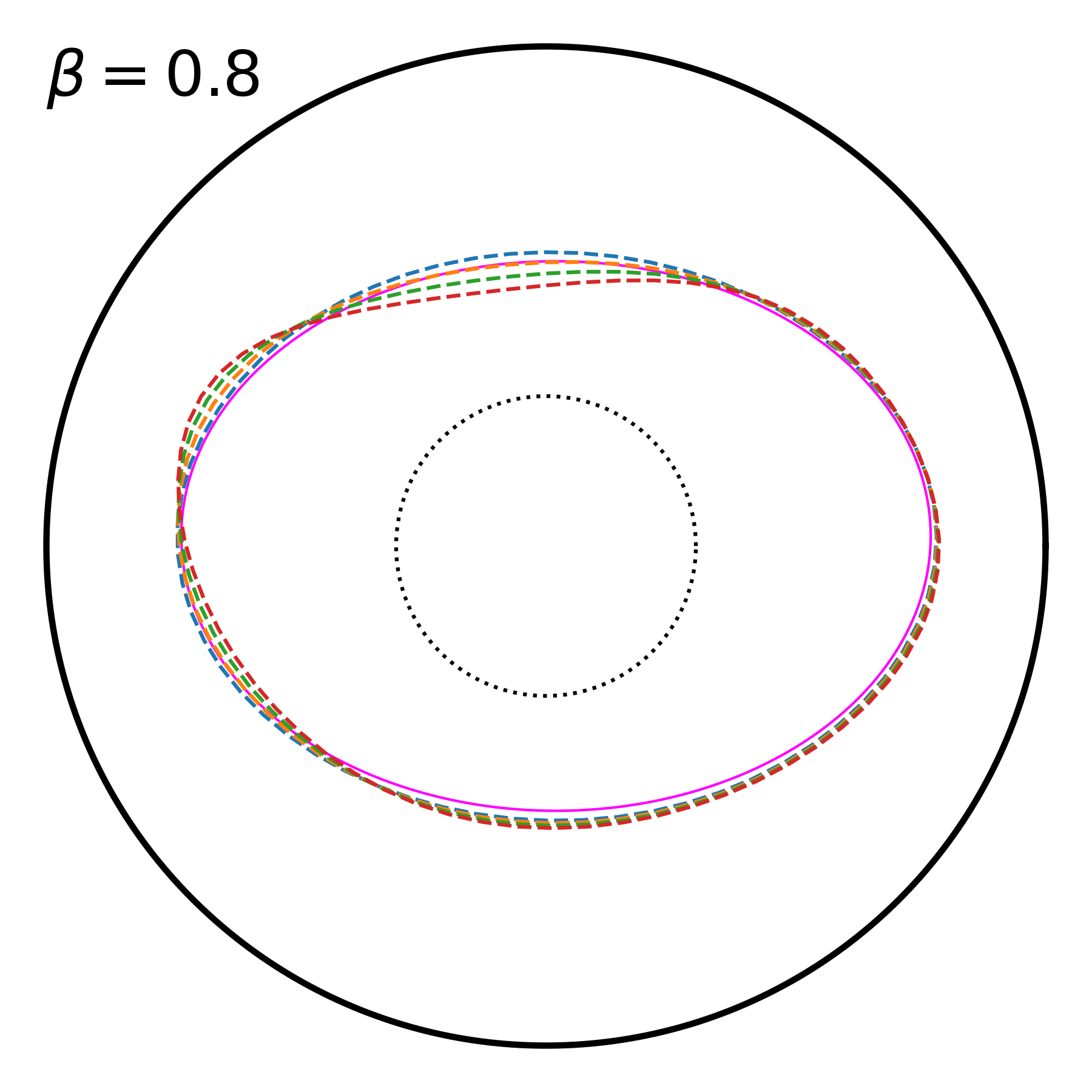}}\ 
    \resizebox{0.23\textwidth}{!}{\includegraphics{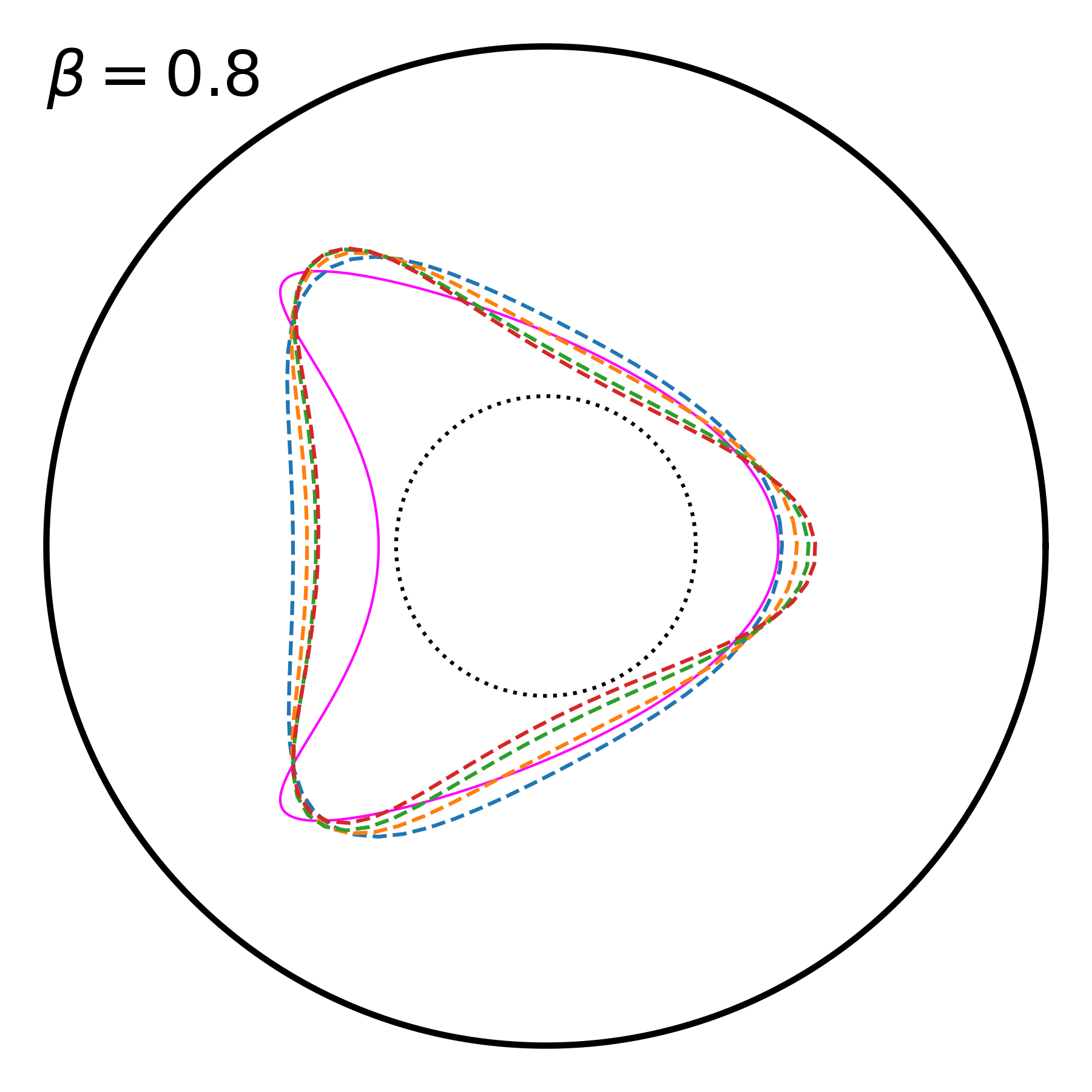}}\ 
    \resizebox{0.23\textwidth}{!}{\includegraphics{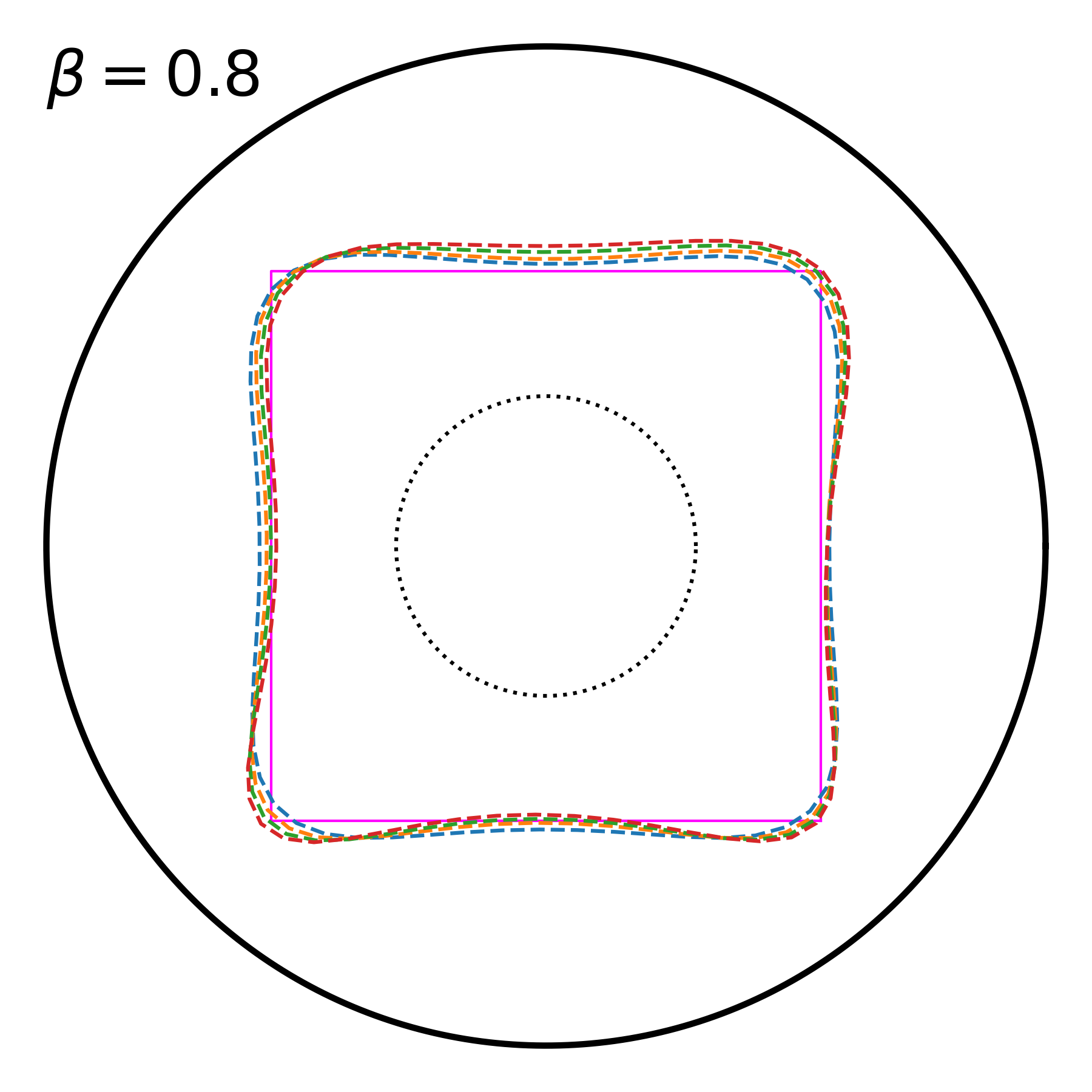}}\ 
    \resizebox{0.23\textwidth}{!}{\includegraphics{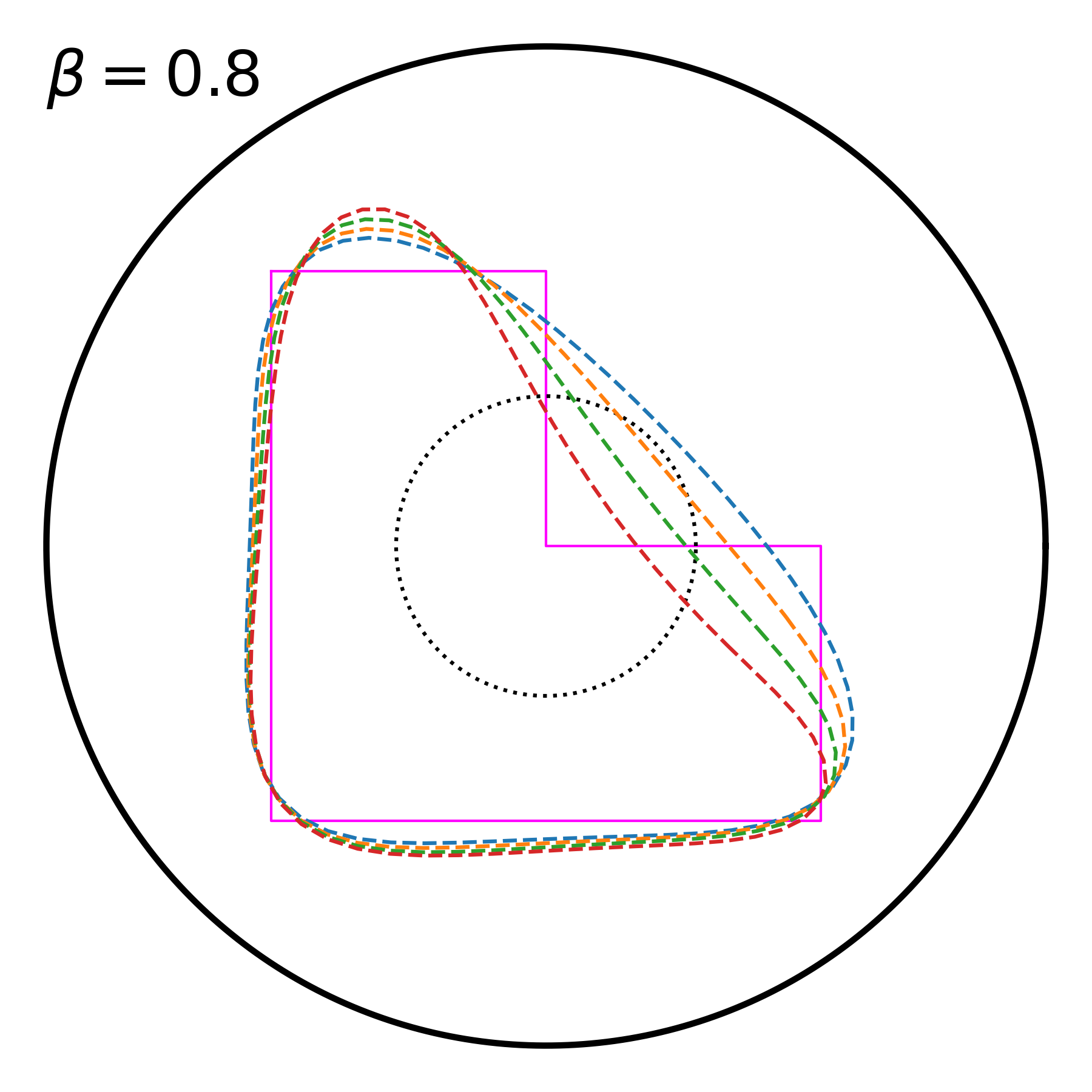}}

\caption{
Comparison of results for different regularization parameters ($\beta = 0, 0.8$) across various tests (columns: E = ellipse, K = kite, S = square, L = L-block) with radius $= 0.3$ for the initial guess and different noise levels ($\delta = 0, 0.03, 0.06, 0.09$). 
}
\label{fig:all_reg_radius_0.3}
\end{figure}

\begin{figure}[h!]
    \centering 
    \resizebox{0.24\textwidth}{!}{\includegraphics{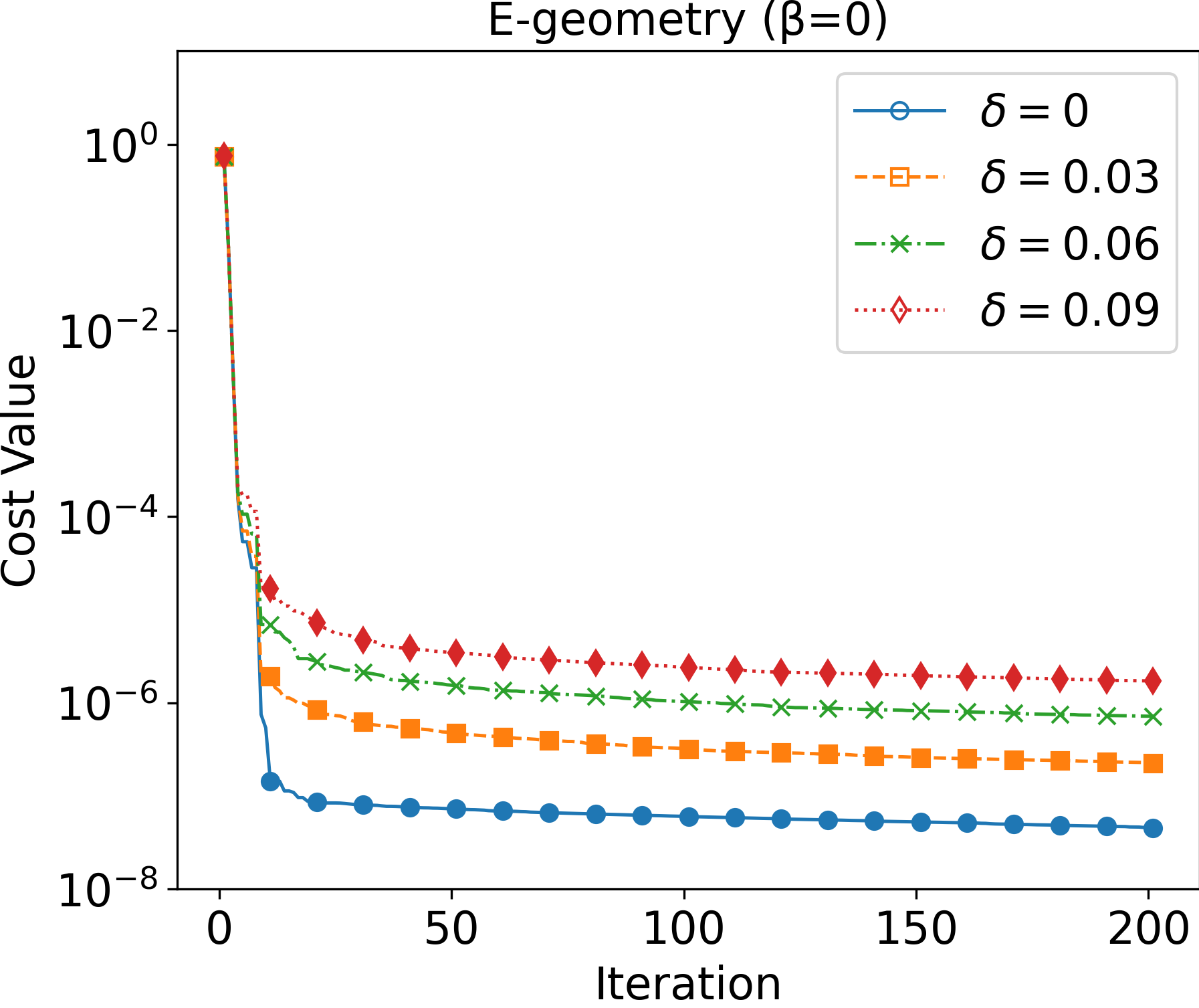}}
    \resizebox{0.24\textwidth}{!}{\includegraphics{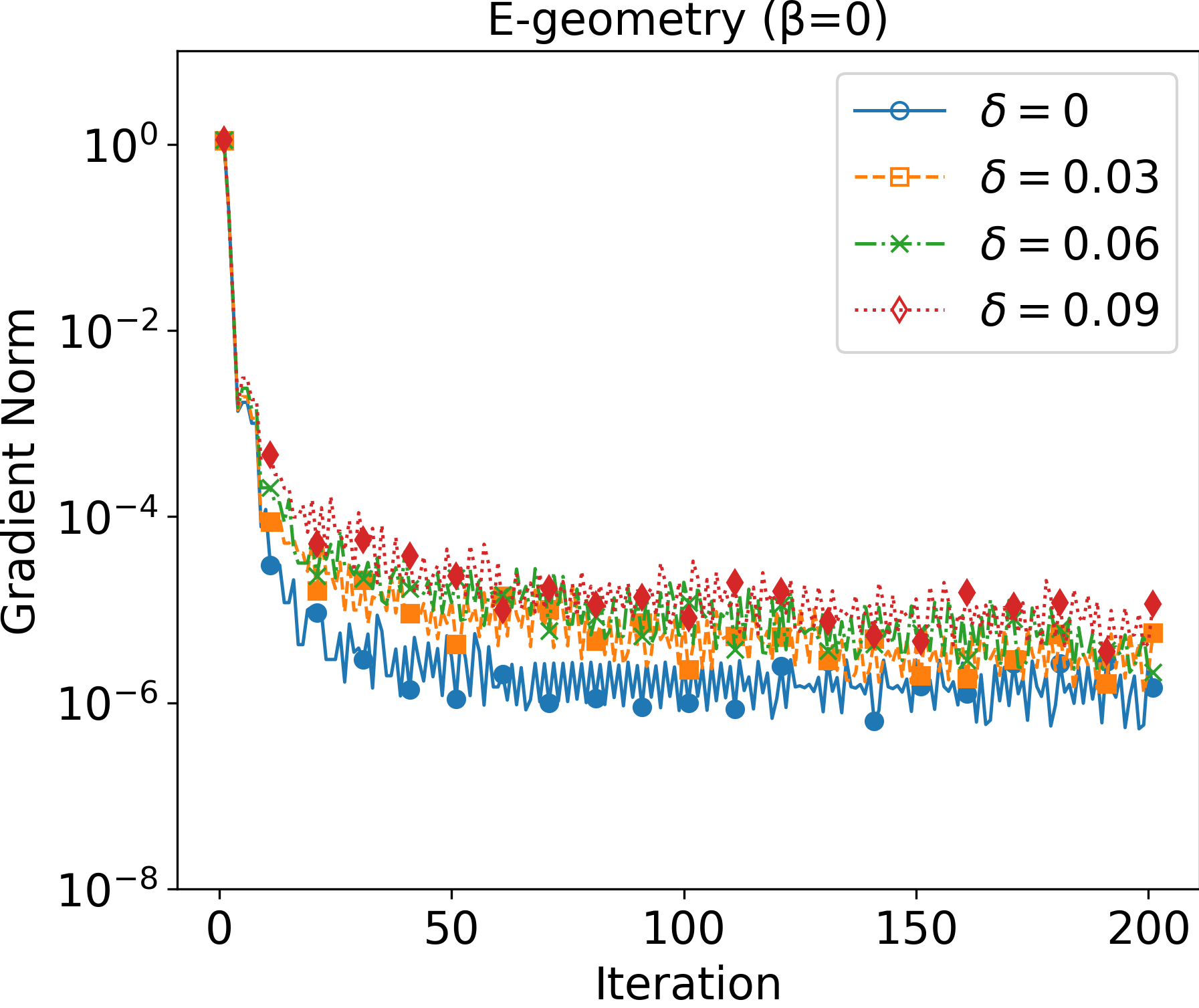}}
    \resizebox{0.24\textwidth}{!}{\includegraphics{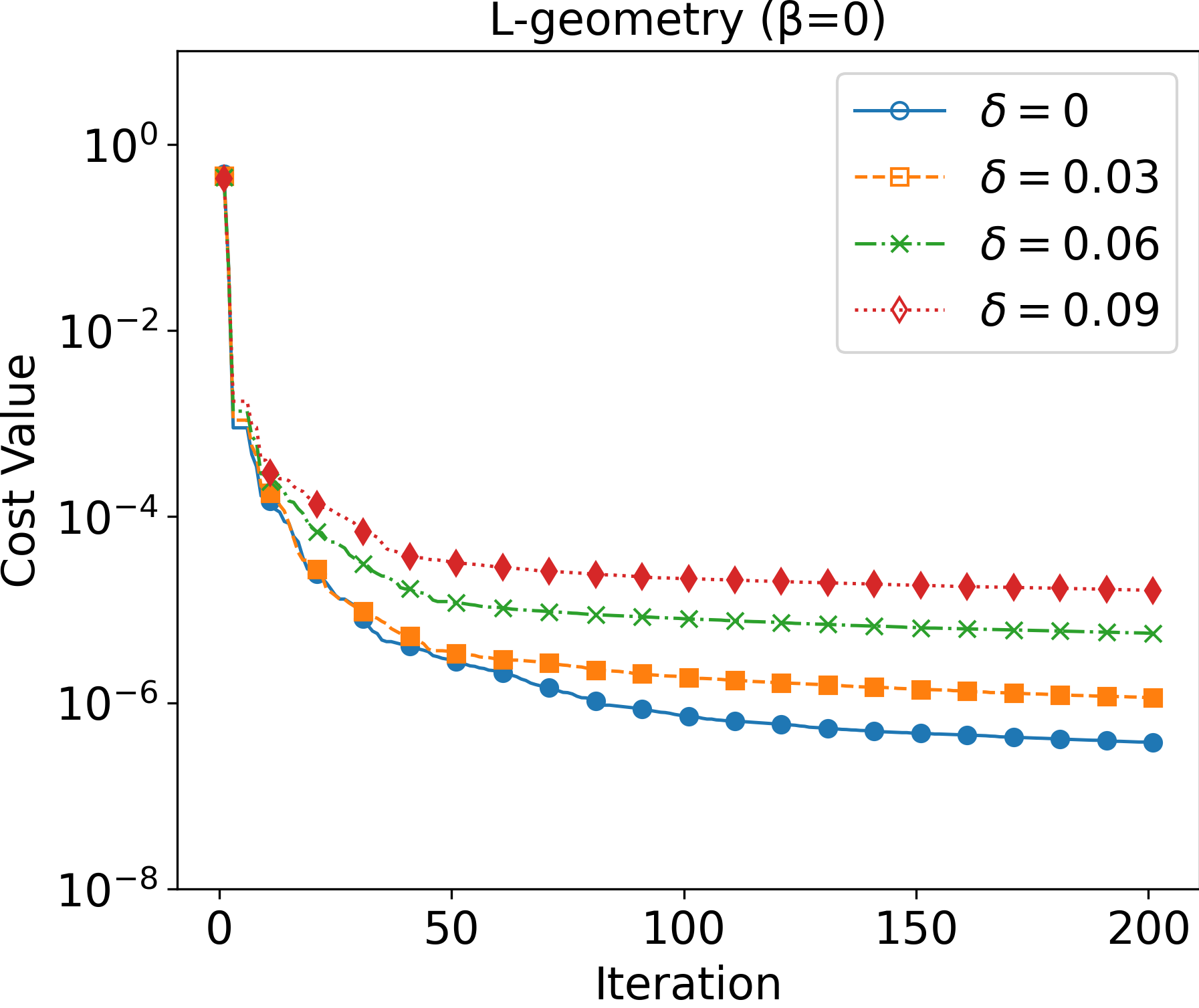}}
    \resizebox{0.24\textwidth}{!}{\includegraphics{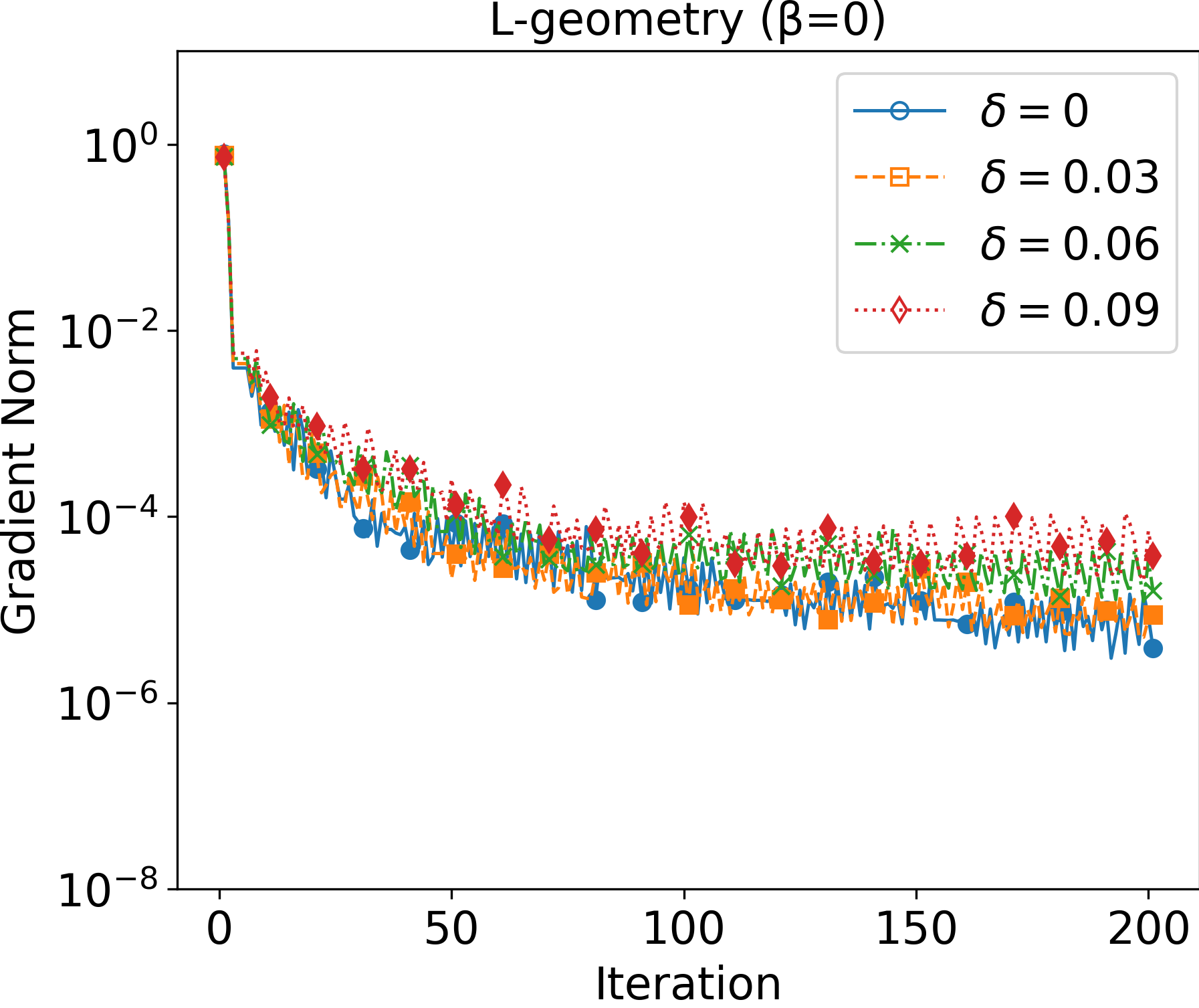}}	
    \caption{Cost function and gradient norm evolution for the Ellipse (E) and L-block (L) cases.}  
    \label{fig:cost_grad_history}
\end{figure}

\begin{figure}[h!]
    \centering 
    \resizebox{0.24\textwidth}{!}{\includegraphics{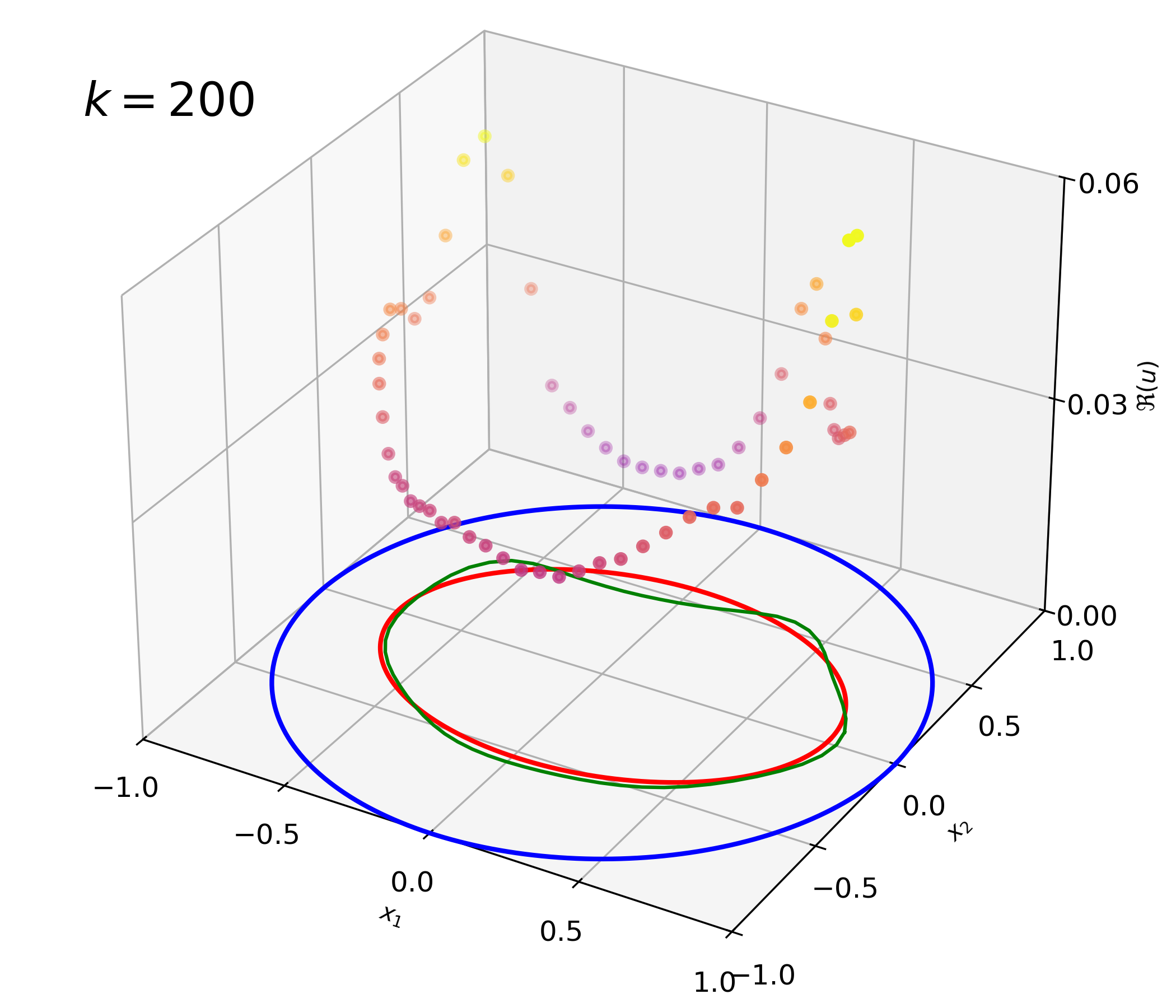}}
    \resizebox{0.24\textwidth}{!}{\includegraphics{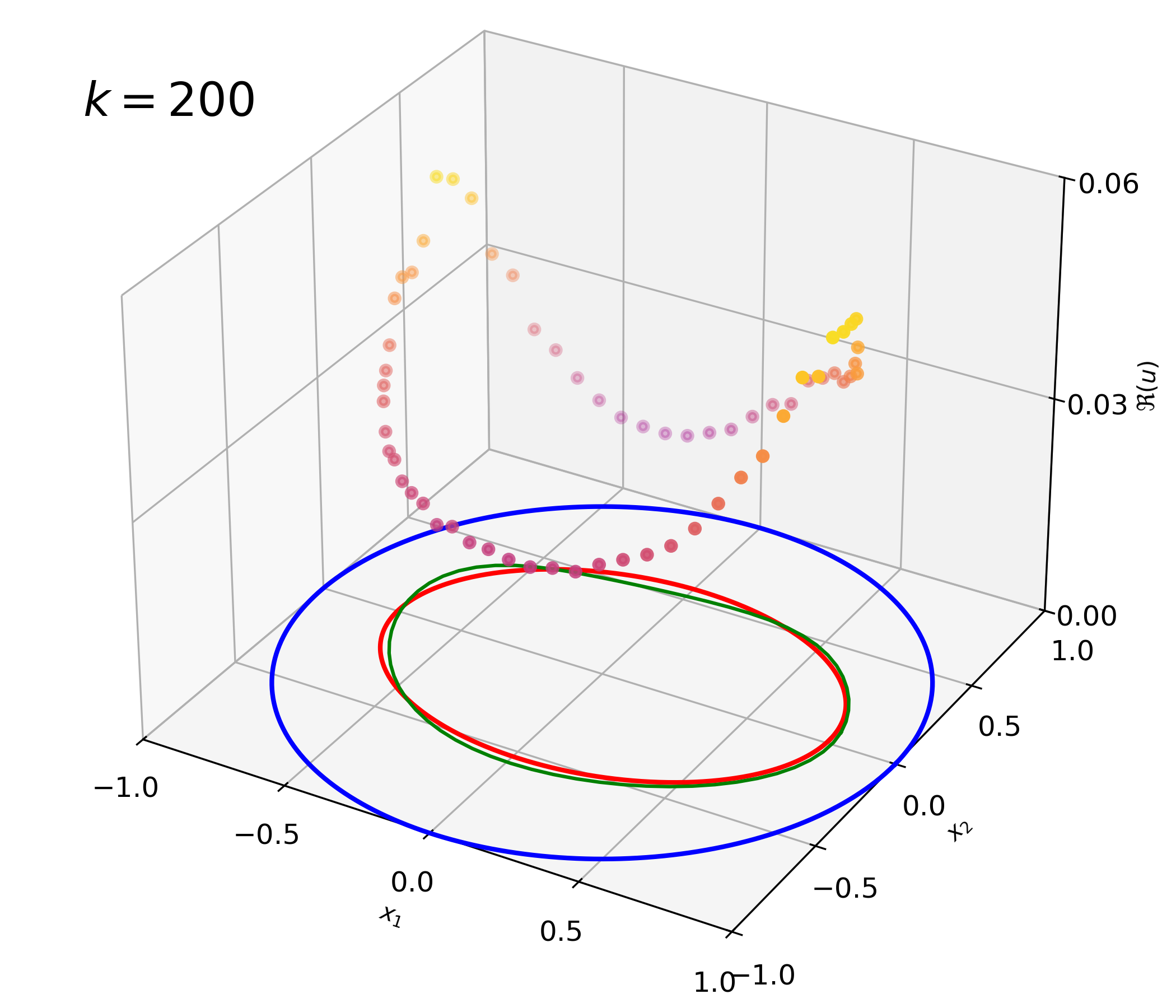}}\hfill
    \resizebox{0.24\textwidth}{!}{\includegraphics{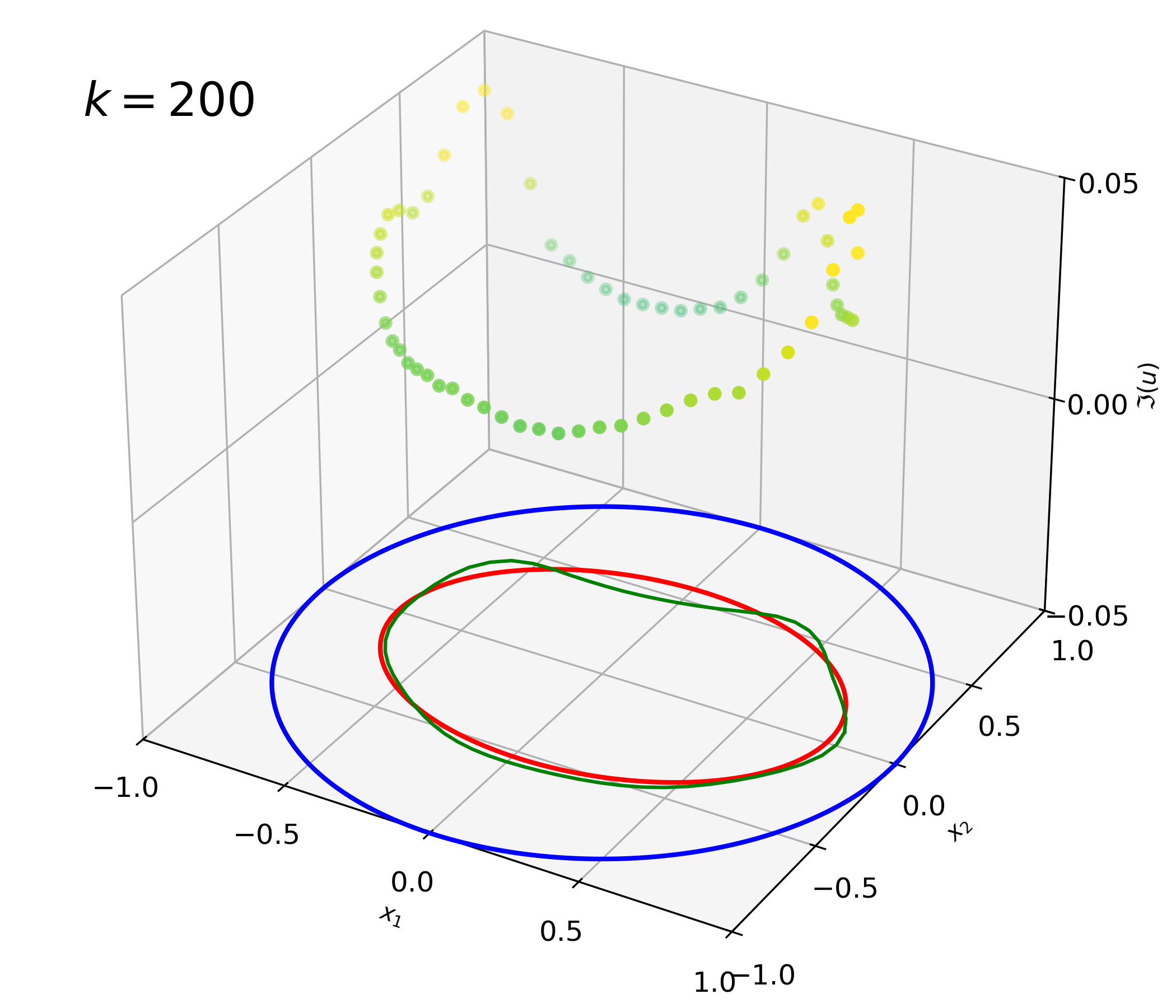}}
    \resizebox{0.24\textwidth}{!}{\includegraphics{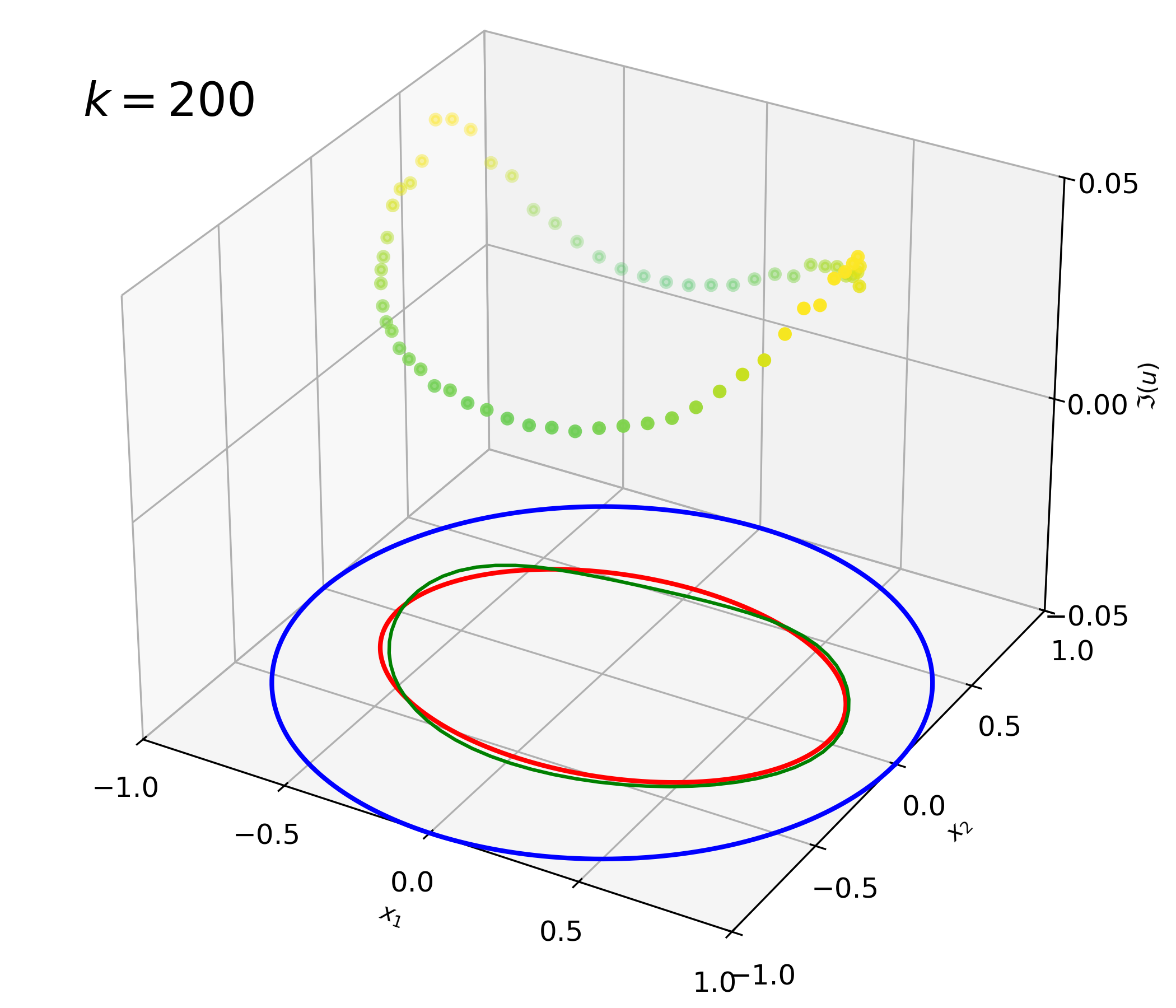}}\\[4pt]
    \resizebox{0.48\textwidth}{!}{\includegraphics{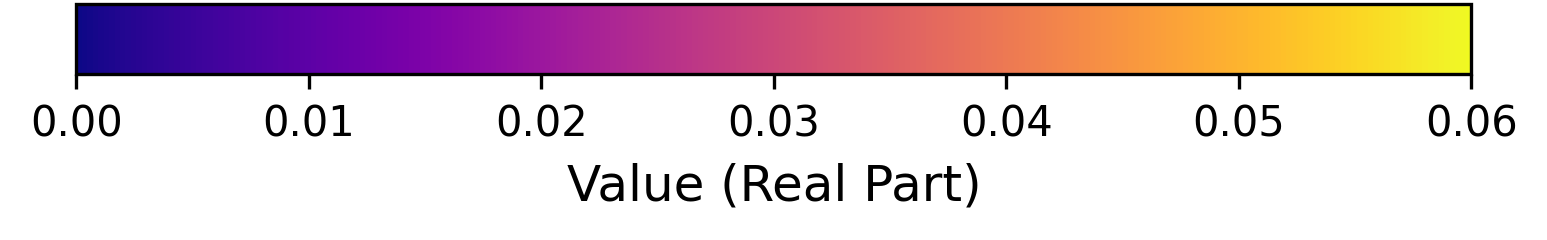}}\hfill
    \resizebox{0.48\textwidth}{!}{\includegraphics{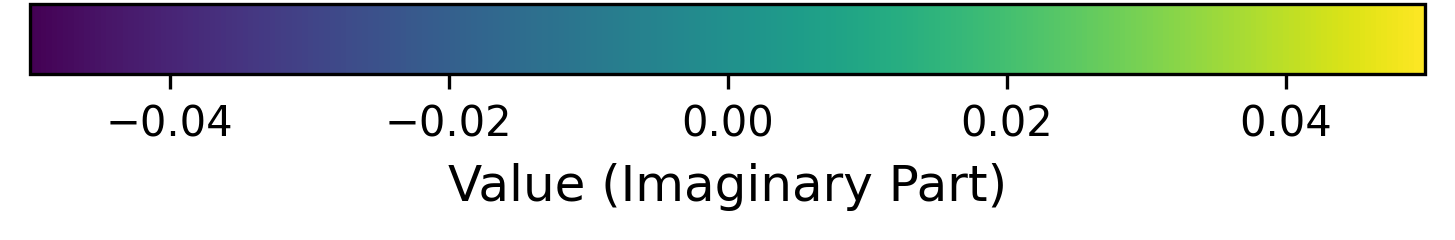}}
\caption{
Real and imaginary parts of the state solution at $k = 200$: the first two panels correspond to the real part and the last two to the imaginary part; in each pair, $\beta = 0$ (left) and $\beta = 0.8$ (right), with $\varGamma^{0} = C(0,0,0.3)$.
}
\label{fig:ellipse_state_real_imag_comparison}
\end{figure}

\begin{figure}[h!]
    \centering
    \resizebox{0.32\textwidth}{!}{\includegraphics{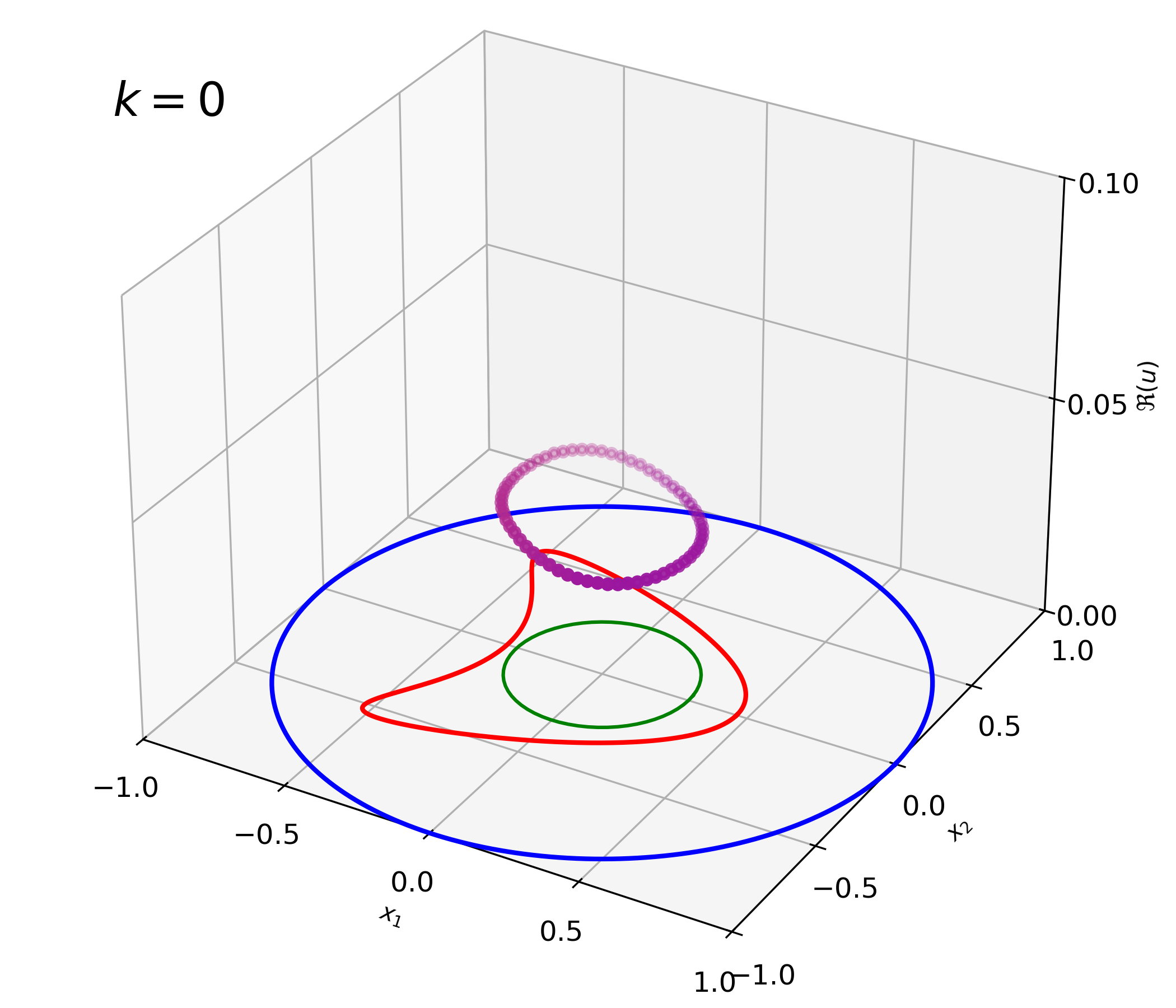}}\ 
    \resizebox{0.32\textwidth}{!}{\includegraphics{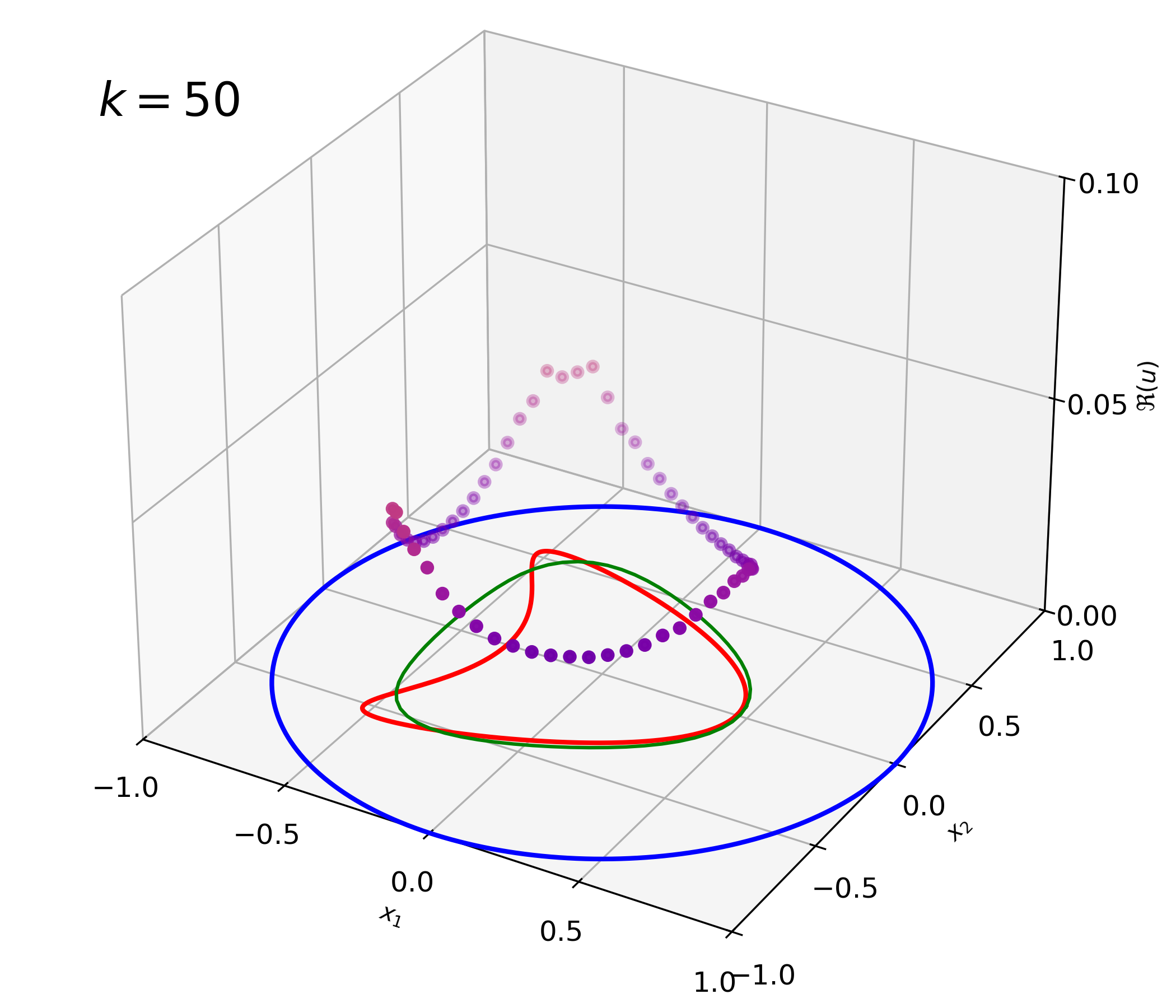}}\ 
    \resizebox{0.32\textwidth}{!}{\includegraphics{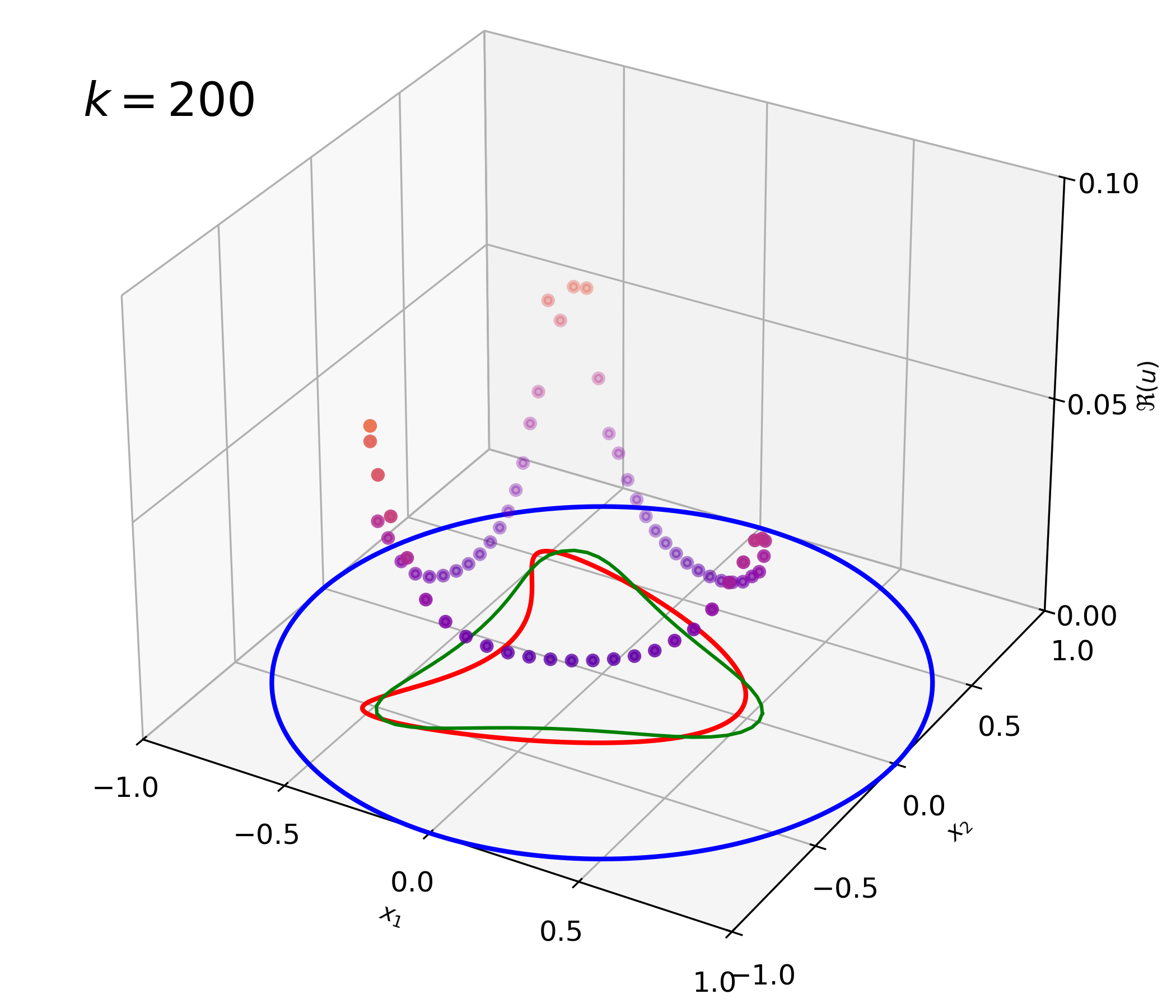}}\\[4pt]
    \resizebox{0.4\textwidth}{!}{\includegraphics{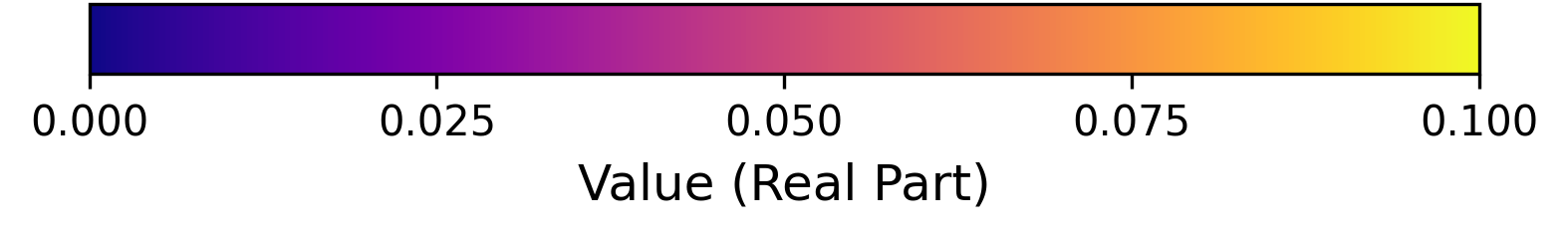}}\\[6pt]
    \resizebox{0.32\textwidth}{!}{\includegraphics{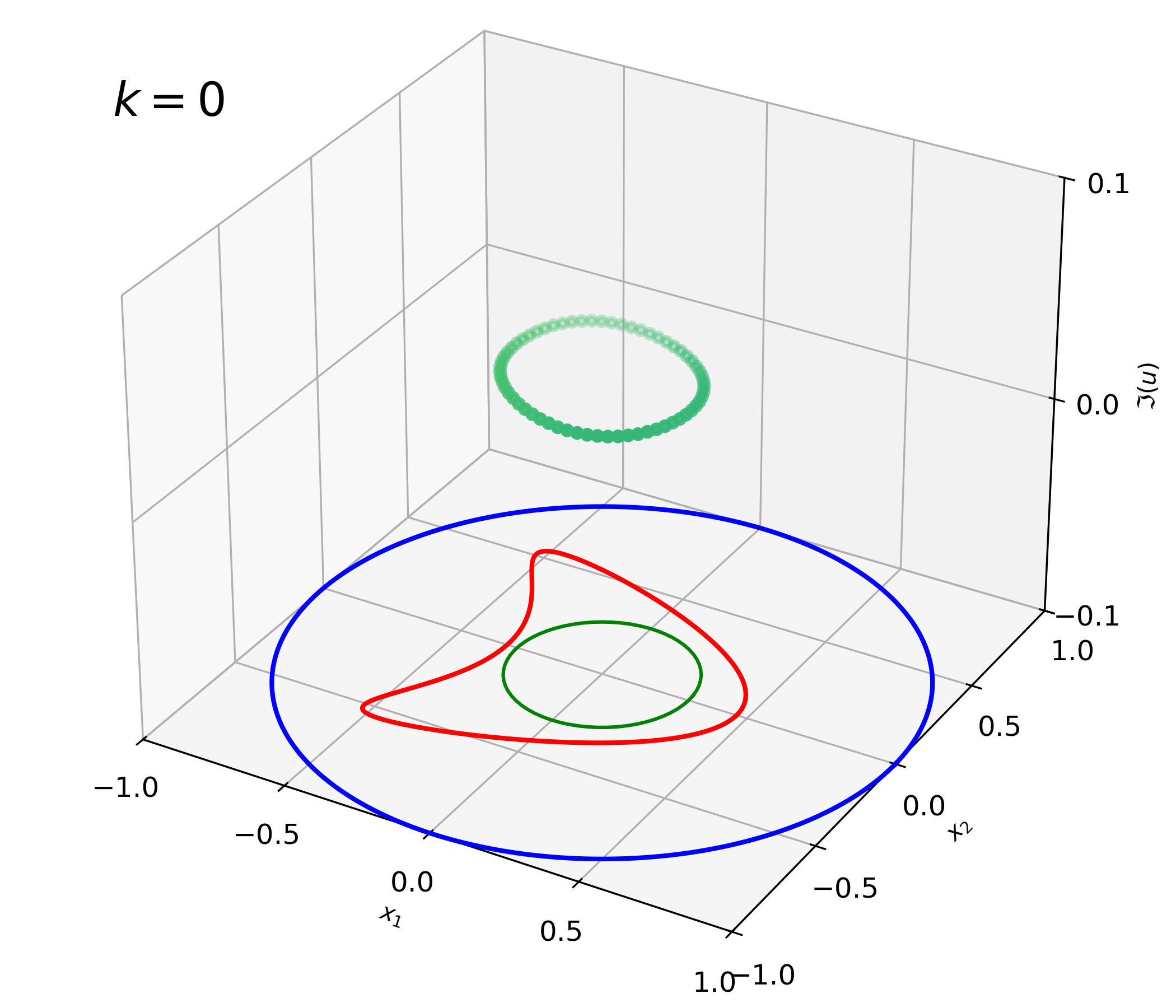}}\ 
    \resizebox{0.32\textwidth}{!}{\includegraphics{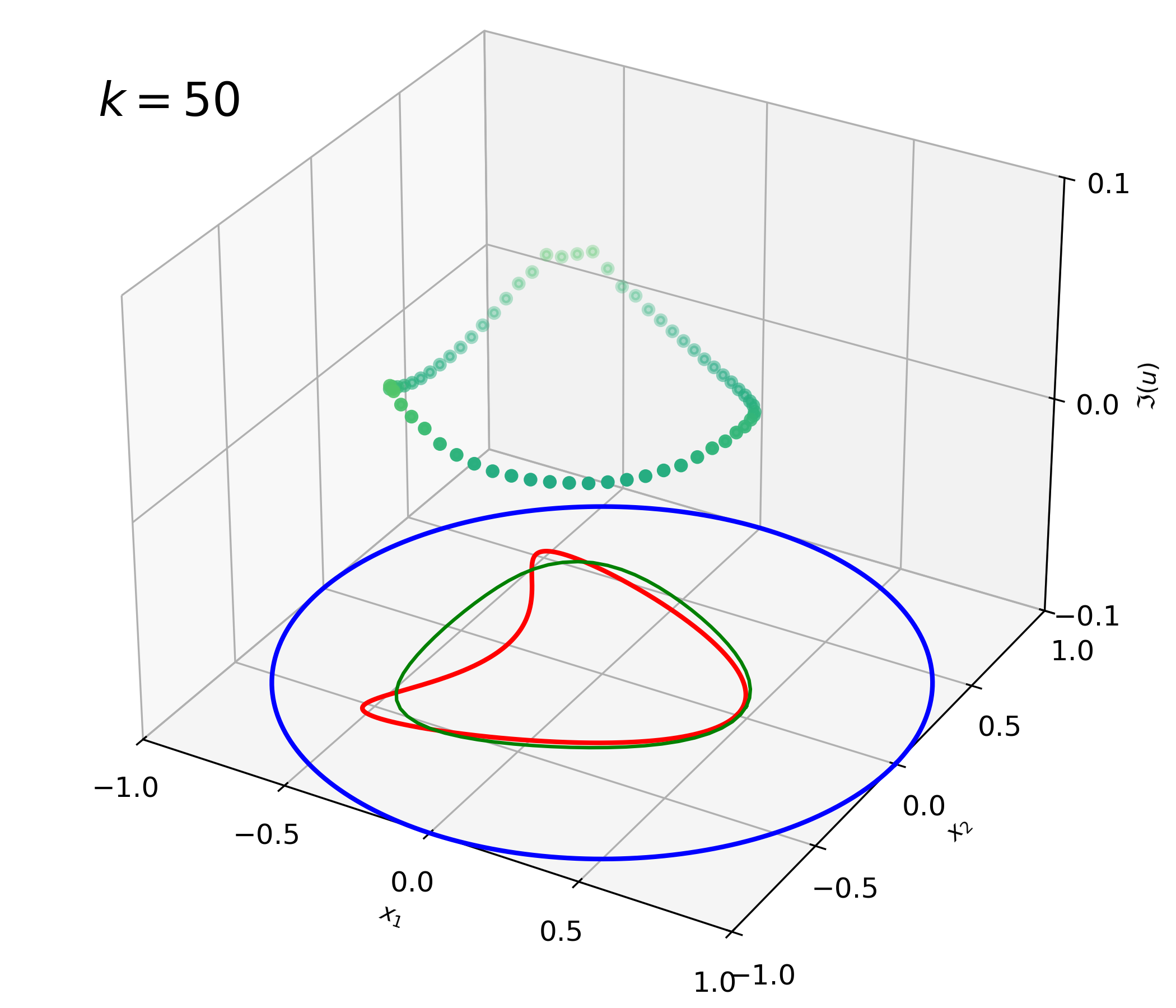}}\ 
    \resizebox{0.32\textwidth}{!}{\includegraphics{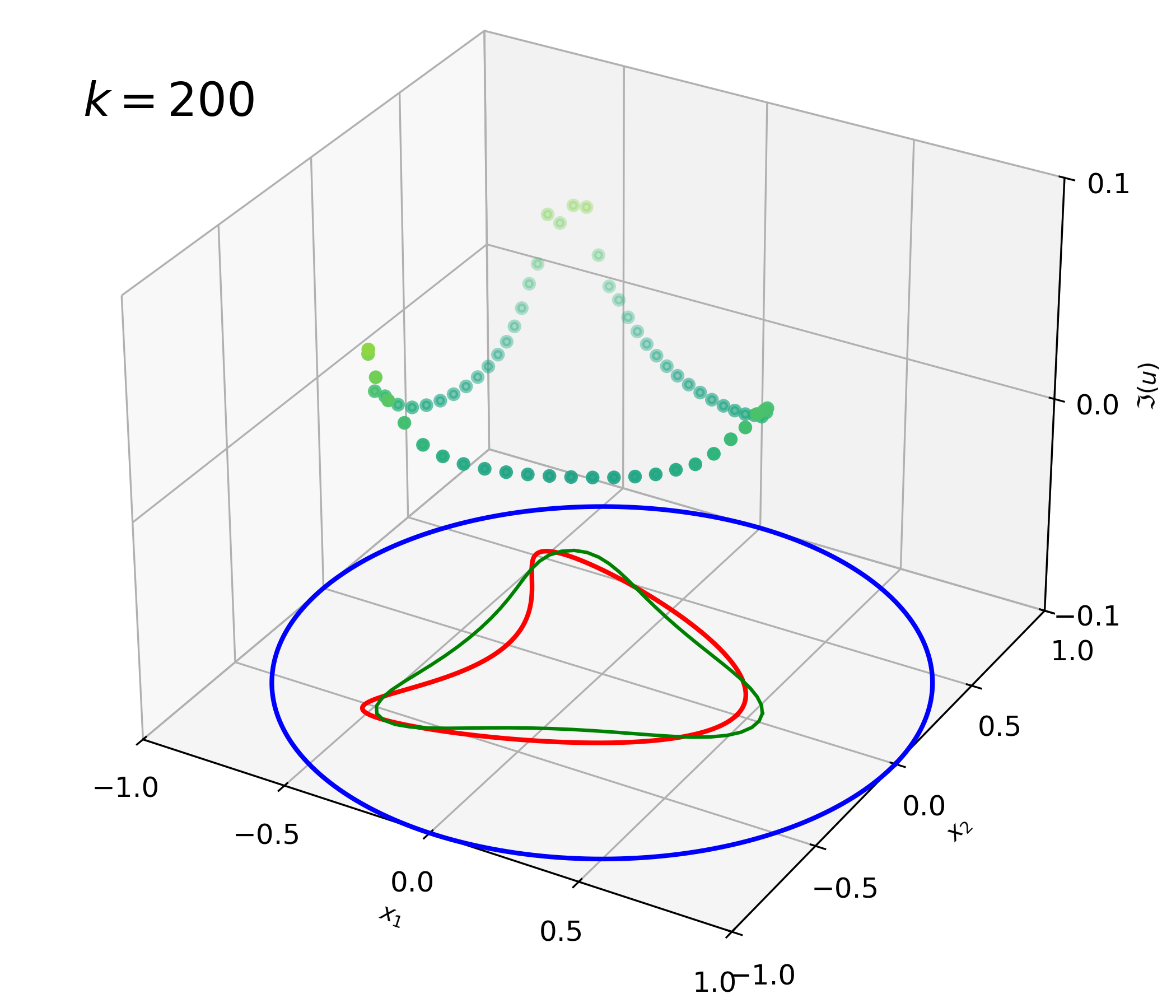}}\\[4pt]
    \resizebox{0.4\textwidth}{!}{\includegraphics{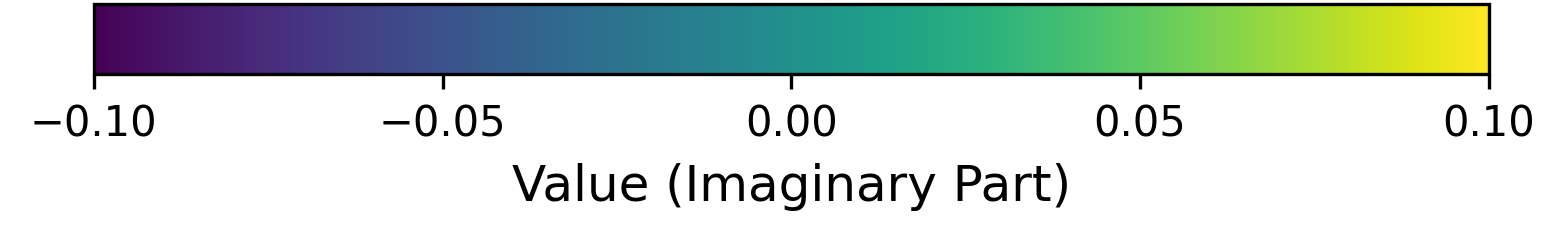}}
\caption{
Evolution of the real and imaginary parts of the state solution for $\beta = 0.8$ at $k = 0, 50, 200$, with $\varGamma^{0} = C(0,0,0.3)$.
}
\label{fig:kite_state_real_imag_beta08}
\end{figure}

\begin{figure}[h!]
    \centering
    \resizebox{0.32\textwidth}{!}{\includegraphics{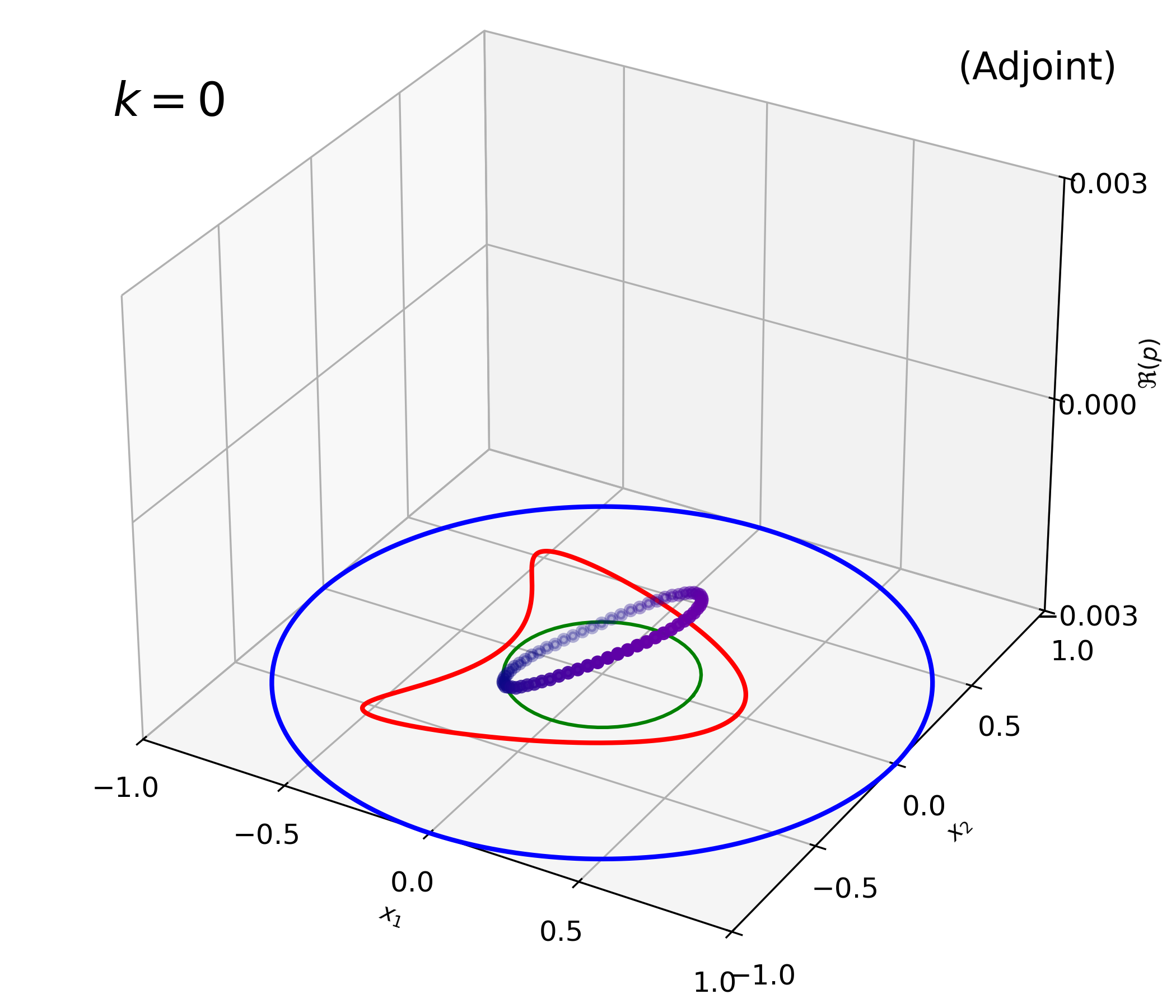}}\ 
    \resizebox{0.32\textwidth}{!}{\includegraphics{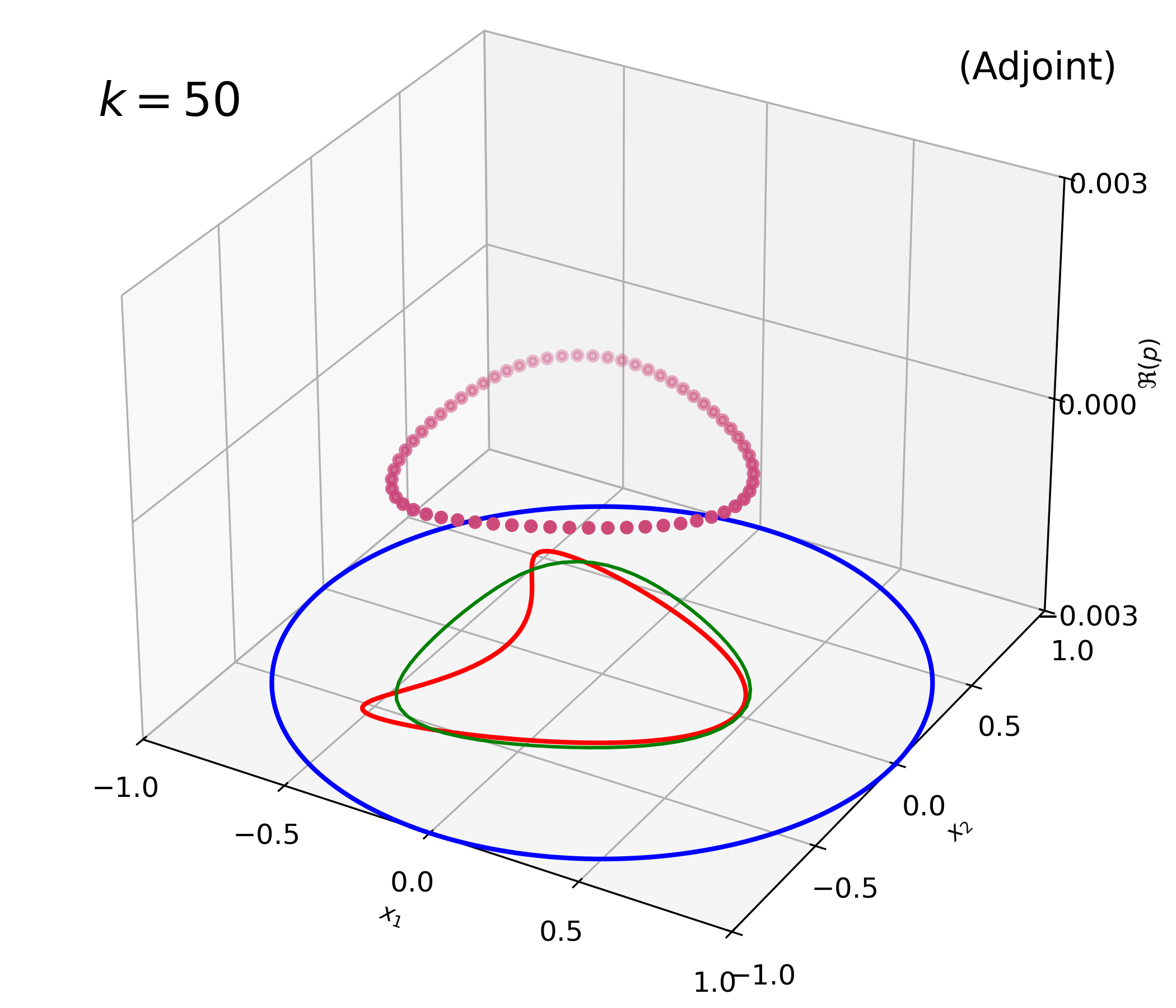}}\ 
    \resizebox{0.32\textwidth}{!}{\includegraphics{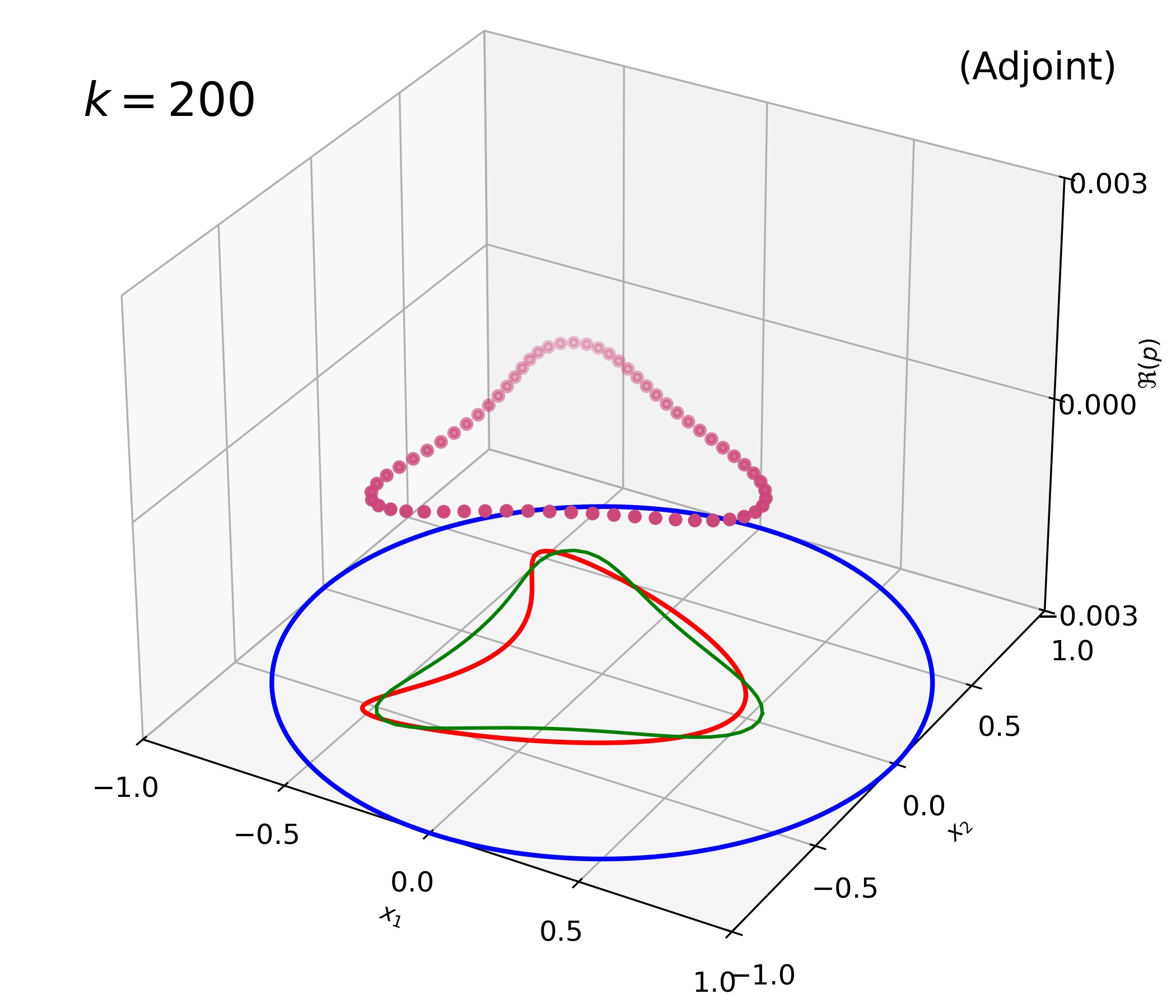}}\\[4pt]
    \resizebox{0.4\textwidth}{!}{\includegraphics{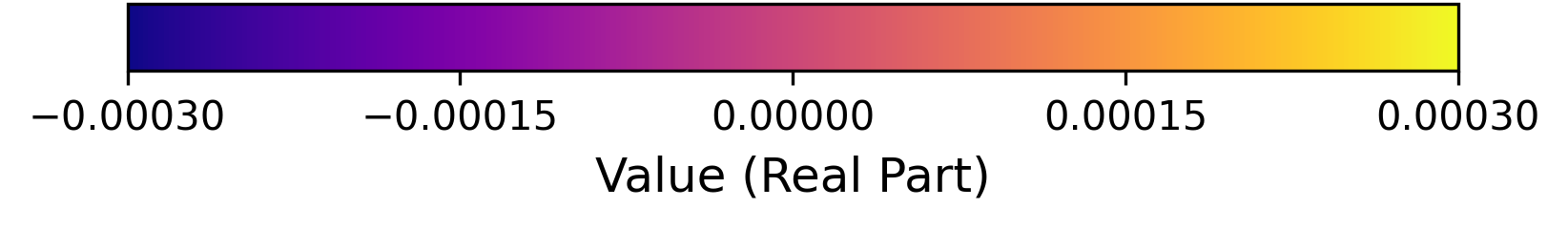}}\\[6pt]
    \resizebox{0.32\textwidth}{!}{\includegraphics{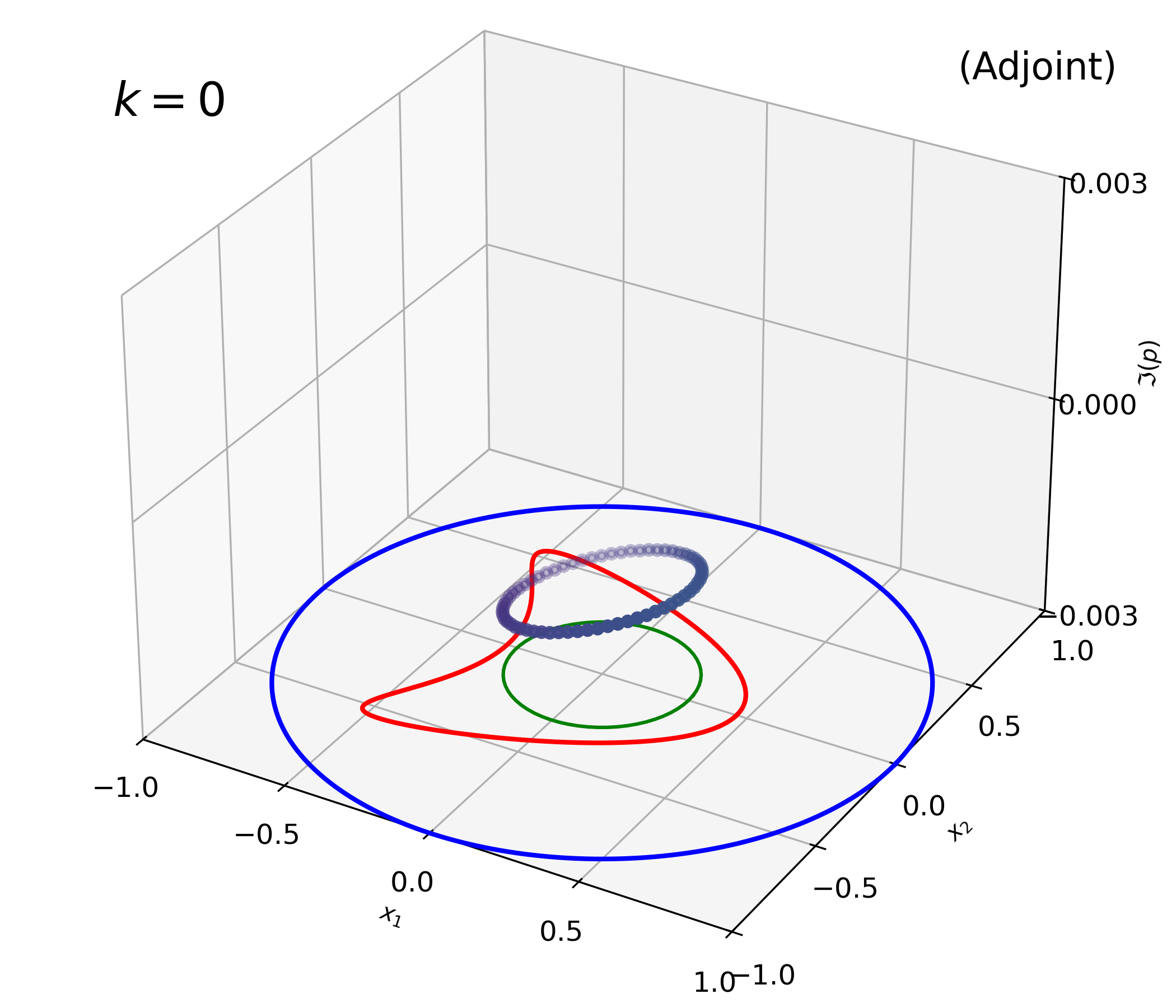}}\ 
    \resizebox{0.32\textwidth}{!}{\includegraphics{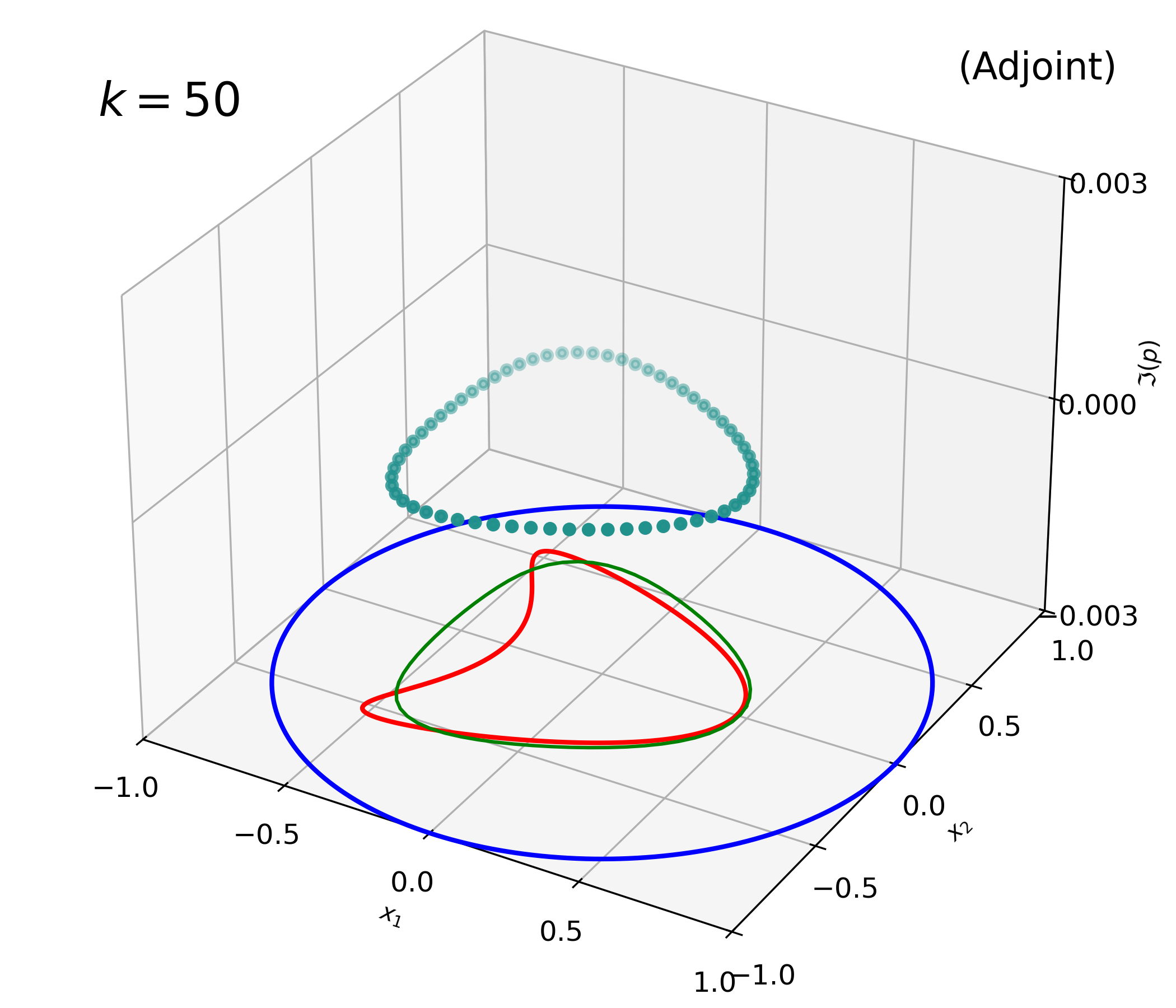}}\ 
    \resizebox{0.32\textwidth}{!}{\includegraphics{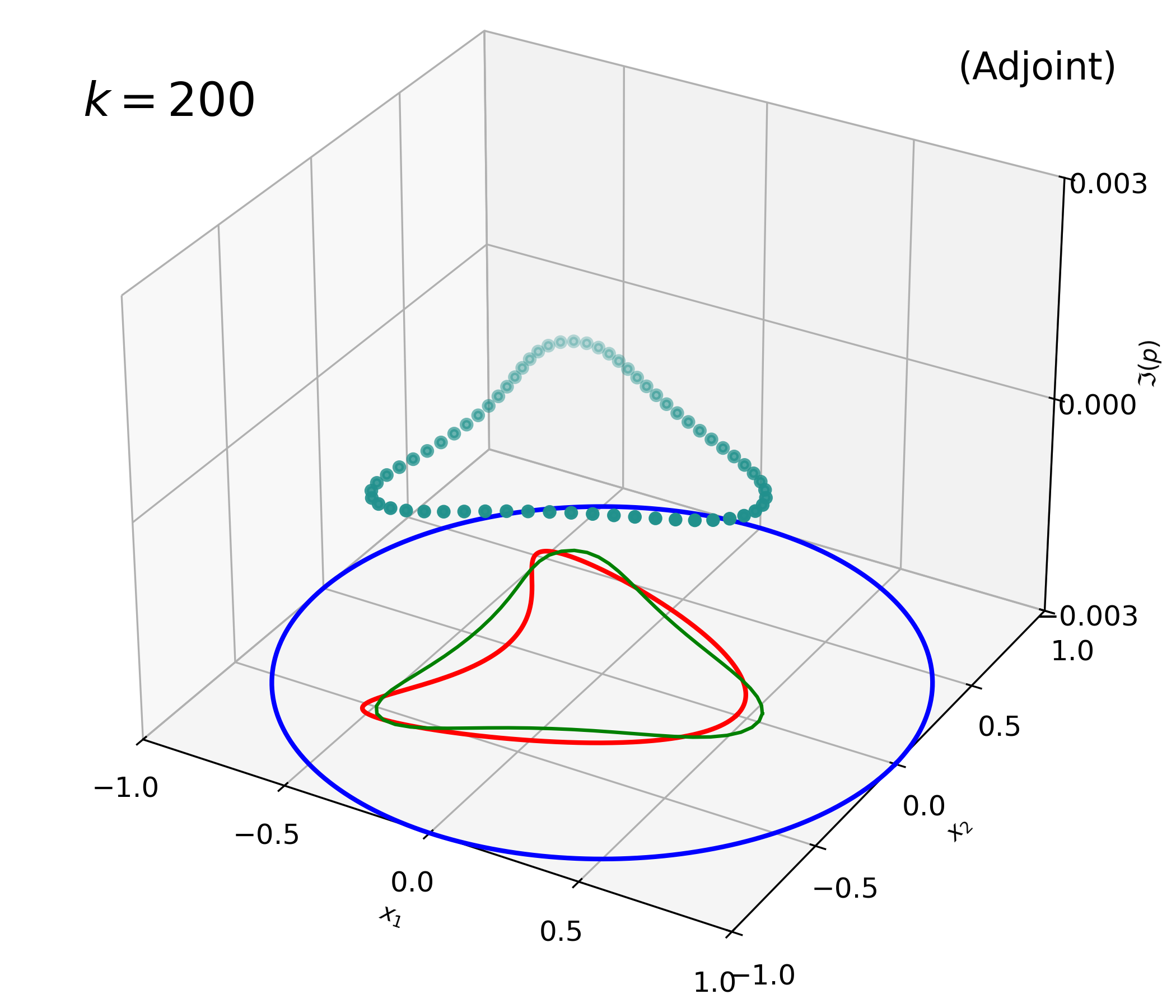}}\\[4pt]
    \resizebox{0.4\textwidth}{!}{\includegraphics{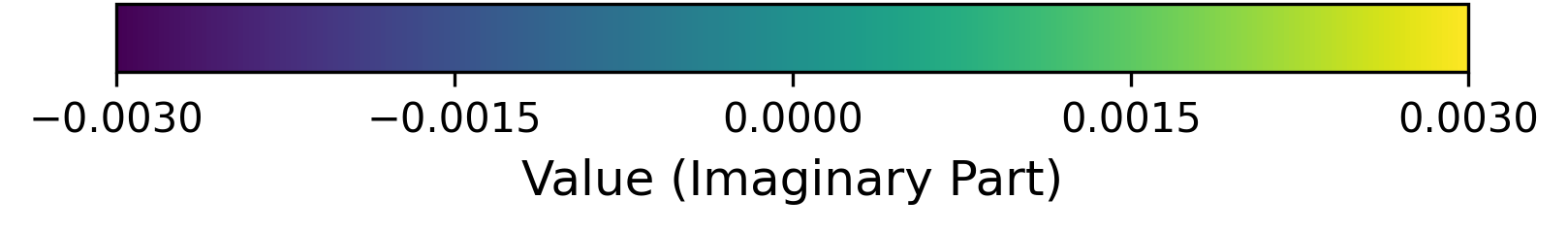}}\\[6pt]
\caption{
Evolution of the real and imaginary parts of the adjoint solution for $\beta = 0.8$ at $k = 0, 50, 200$, with $\varGamma^{0} = C(0,0,0.3)$.
}
\label{fig:kite_adjoint_real_imag_beta08}
\end{figure}

\section{Shape reconstruction with an inequality constraint}
\label{sec:inequality_constrained_shape_reconstruction}
As shown in the previous sections, the inverse boundary identification problem can be reformulated as the minimization problem \eqref{eq:shape_problem}, which yields reasonable reconstructions in practice.
Nevertheless, the problem remains inherently ill-posed and sensitive to noise.
In the numerical experiments presented thus far, the prescribed Robin coefficient is relatively large (e.g., $\alpha = 100$), which empirically leads to more stable reconstructions.
From a PDE perspective, such values correspond to stronger boundary interactions, closer to a Dirichlet-type condition, thereby providing better control of the solution.
For smaller values of $\alpha$, on the other hand, the boundary condition approaches a Neumann-type regime, resulting in weaker boundary interactions and a more severely ill-posed inverse problem.
Consequently, the reconstruction quality deteriorates, particularly in accurately resolving the depth of concavities.
This behavior reflects a fundamental difficulty of the problem and is not specific to the CCBM formulation, but is also observed in more classical approaches, such as least-squares tracking of boundary data and the Kohn--Vogelius formulation \cite{AfraitesRabago2025}.

To overcome this difficulty, we augment the CCBM shape optimization problem with an inequality constraint and solve the resulting constrained problem (Problem~\ref{prob:optimal_shape_problem}) using an Alternating Direction Method of Multipliers (ADMM) strategy.
Within this framework, the ADMM approach enhances the stability of the reconstruction while preserving the original CCBM formulation and the shape sensitivity analysis developed in Section~\ref{sec:Shape_Derivatives}.
In particular, the method enforces a physically motivated constraint on the real part of the state variable through a variable splitting technique, without introducing additional geometric regularization terms.
\begin{problem}\label{prob:optimal_shape_problem}
Let $a$ and $b$ be given fixed constants with $b \geqslant a$.
Find the shape $\omega^{\star}$ in the space of admissible set
\[
	\mathcal{U} = \left\{ \omega \in \mathcal{A} \mid \text{$a \leqslant \Re\{u\} \leqslant b$ a.e. in $\varOmega$ where $u$ solves \eqref{eq:ccbm_state}} \right\}
\]
such that
\begin{equation}\label{eq:control} 
	\omega^{\star} 
	= \argmin_{\omega \in \mathcal{U}} J({\varOmega})
	\coloneqq \argmin_{\omega \in \mathcal{U}} \left\{ \frac{1}{2} \intO{ \left( \Im\{u\} \right)^{2} } \right\}.
\end{equation}	
\end{problem}
Note that identifying $\omega$ is equivalent to identifying $\varOmega$, since $\varOmega = D \setminus \overline{\omega}$ and defining one automatically determines the other.
With a slight abuse of notation, we also write
\[
\varOmega^{\star} = \argmin_{\varOmega \in \mathcal{U}} J(\varOmega),
\]
identifying $\varOmega = D \setminus \overline{\omega}$ for $\omega \in \mathcal{U}$.
Throughout this section, we assume that $\varOmega \in \mathcal{U}$ and consider admissible deformation fields $\VV \in \sfTheta^{1}$ without further notice.
\subsection{ADMM in shape optimization setting}
\label{subsec:ADMM_algorithm}
Our immediate goal is to reformulate Problem~\ref{prob:optimal_shape_problem} into a form amenable to an ADMM-based numerical solution in the spirit of  \cite{CherratAfraitesRabago2026} (see also \cite{RabagoHadriAfraitesHendyZaky2024,CherratAfraitesRabago2025b}).
To this end, we first provide additional motivation for the introduction of the inequality constraint.
Observe from \eqref{eq:shape_problem} that although the boundary data is incorporated through the state equation, it is propagated into the domain via an elliptic PDE, which inherently smooths the data.
As a consequence, fine geometric features of the cavity, such as nonconvex regions and concavities, may be partially obscured, leading to a loss of reconstruction accuracy.
In particular, the optimization relies on a mismatch functional that reflects the data only indirectly, without enforcing the bounds on the state variable within the domain.

The imposed bounds 
\begin{equation}\label{eq:bounds}
a \leqslant \Re\{u\} \leqslant b  \quad \text{a.e. in } \varOmega
\end{equation}
provide additional physically motivated information on the state variable, which can be justified, for instance, by maximum principle arguments for mixed Dirichlet--Robin problems (see, e.g., \cite{Chicco1997}).
By explicitly enforcing these constraints, the reconstruction is guided not only by the governing PDE \eqref{eq:ccbm_state}, but also by admissible ranges of the state, thereby improving stability with respect to noisy data and enhancing sensitivity to geometric features.
To incorporate the constraints efficiently, we introduce an auxiliary variable to decouple the inequality constraint \eqref{eq:bounds} from the state equation, allowing for a more flexible treatment within the optimization procedure.
This leads to an equivalent constrained formulation in which the real part of the state variable is handled separately through a projection onto a suitable admissible set.
We begin by defining a closed convex subset of $L^{2}(\varOmega)$ that encodes the imposed bounds, which will play a central role in the subsequent ADMM construction.

Let $a<b$ be fixed real constants.
Define the admissible convex set
\begin{equation}\label{eq:admissible_set_K}
\mathcal{K}
\coloneqq
\big\{
v \in L^{2}(\varOmega)\; ;\; a \leqslant v \leqslant b \ \text{a.e. in } \varOmega
\big\},
\end{equation}
and the indicator functional $U_{\mathcal{K}}$ of $\mathcal{K}$; that is,
$U_{\mathcal{K}}(v) = 0$ if $v \in \mathcal{K}$, and $U_{\mathcal{K}}(v) = \infty$ if $v \in L^{2}(\varOmega) \setminus \mathcal{K}$.

Introducing an auxiliary variable $v$ associated with the real part $\ur$ of the complex state solution to \eqref{eq:ccbm_state}, we consider the constrained shape optimization problem
\begin{equation}\label{eq:control_Uad}
	(\omega^{\star}, v^{\star})
	=
	\argmin_{(\omega,v) \in \mathcal{E}}
	\left\{
	J({\varOmega}) + U_{\mathcal{K}}(v)
	\right\},
\end{equation}
where the admissible set $\mathcal{E}$ is defined by
\[
\mathcal{E}
\coloneqq
\Big\{
(\omega,v)\in \mathcal{U}\times L^{2}(\varOmega)
\;\Big|\;
\ur(\varOmega)=v \ \text{a.e. in } \varOmega,
\ \ u \text{ solves \eqref{eq:ccbm_state} in } \varOmega
\Big\}.
\]

For a given penalty parameter $\ADMMpara>0$\footnote{This free parameter can, in principle, be optimized via a bilevel optimization approach; however, we fix it here to simplify the discussion, as this choice is already effective for our purposes.} and a Lagrange multiplier
$\lambda \in L^{2}(\varOmega)$, we introduce the functional
\[
	\Upsilon_{\ADMMpara}(\varOmega, v, \lambda)
	= \intO{ \left( \frac{\ADMMpara}{2}  |\ur- v|^2 + \lambda (\ur - v) \right) },
\]
and the augmented Lagrangian
\begin{equation}
\label{eq:augmented_lagrangian}
	{L}_{\ADMMpara}(\omega, v; \lambda)
	=
	J({\varOmega})
	+
	U_{\mathcal{K}}(v)
	+
	\Upsilon_{\ADMMpara}(\varOmega, v, \lambda),
\end{equation}
where $\ur = \ur(\varOmega)$ denotes the real part of the solution of \eqref{eq:ccbm_state} in the annular domain $\varOmega$.

Starting from an initial guess $(\omega^{0}, v^{0}, \lambda^{0})$, the ADMM iterations proceed as follows:
\begin{align}
    \omega^{k+1}
    &= \argmin_{\omega \in \mathcal{U}}
    {L}_{\ADMMpara}(\omega, v^{k}; \lambda^{k}),
    \label{eq:controle}\tag{SP1}
    \\
    v^{k+1}
    &= \argmin_{v \in L^{2}(\varOmega)}
    {L}_{\ADMMpara}(\omega^{k+1}, v; \lambda^{k}),
    \label{eq:etat}\tag{SP2}
    \\
    \lambda^{k+1}
    &= \lambda^{k} + \ADMMpara \big(\ur^{k+1} - v^{k+1}\big),
    \label{eq:parametre1}\tag{SP3}
\end{align}
where $\ur^{k+1} \coloneqq \ur(\varOmega^{k+1})$, and $k \in \mathbb{N} \cup \{0\}$.

\begin{remark}
\begin{itemize}
\item The shape subproblem \eqref{eq:controle} is solved using the same shape derivative,
adjoint formulation, and Sobolev gradient method as in the unconstrained case; only an additional
quadratic term appears in the objective functional.
\item The auxiliary variable update \eqref{eq:etat} admits the explicit solution given by the projection formula \eqref{eq:projection_formula}, corresponding to the projection of $\ur^{k+1} + \lambda^{k}/\ADMMpara$ onto the admissible set $\mathcal{K}$.
\item The proposed ADMM scheme does not require the explicit computation of the material
derivative of the state variable.
\end{itemize}
\end{remark}

A concise description of the complete ADMM procedure is summarized in
Algorithm~\ref{algo:ADMM_algorithm}. 

\begin{algorithm}
\begin{enumerate}\itemsep0.1em 
	\item \textit{Input.} Fix $\ADMMpara$, $a$, and $b$, and define the Cauchy pair $(f, g)$.
	\item \textit{Initialization.} Choose an initial shape $\omega^{0}$ and set the initial values
	$v^{0}$ and $\lambda^{0}$.
	\item \textit{Iteration.} For $k = 0,1,2,\ldots$, compute $(\omega^{k+1}, v^{k+1}, \lambda^{k+1})$
	using equations \eqref{eq:controle}--\eqref{eq:parametre1} through the following sequential steps:
	\[
		\{v^{k}, \lambda^{k}\}
		\ \stackrel{\eqref{eq:controle}}{\longrightarrow} \ 
		\omega^{k+1}
		\ \stackrel{\eqref{eq:etat}}{\longrightarrow} \ 
		v^{k+1}
		\ \stackrel{\eqref{eq:parametre1}}{\longrightarrow} \ 
		\lambda^{k+1}.
	\]
	\item \textit{Stopping criterion.} Repeat \textit{Iteration} until convergence.
\end{enumerate}
\caption{ADMM algorithm for the solution of problem \eqref{eq:control}.}
\label{algo:ADMM_algorithm}
\end{algorithm}

The resolution of subproblems \eqref{eq:controle} and \eqref{eq:etat} is presented in the following two subsections.

\subsection{Solution of the $\omega$-subproblem \eqref{eq:controle}}
\label{subsec:omega_subproblem}

We first consider the solution of the $\omega$-subproblem \eqref{eq:controle}, which consists in
minimizing the augmented Lagrangian ${L}_{\ADMMpara}$ with respect to the shape variable $\omega$.
The $\omega$-subproblem reads
\[
 \omega^{k+1}
 =
 \argmin_{\omega \in \mathcal{U}}
 \left\{
 J({\varOmega})
 + U_{\mathcal{K}}(v^{k})
 + \Upsilon(\varOmega, v^{k}, \lambda^{k}; \ADMMpara)
 \right\}.
\]

Since the indicator function $U_{\mathcal{K}}(v^{k})$ does not depend on the shape variable $\omega$, it does not contribute to the shape derivative and can be omitted in its computation.
We therefore introduce, for $k\in \mathbb{N} \cup \{0\}$, the following shape functional:
\[
Y^{k}({\varOmega})
\coloneqq
J({\varOmega})
+
\Upsilon(\varOmega, v^{k}, \lambda^{k}; \ADMMpara).
\]
For ease of writing in the subsequent discussion, we denote
\[
	Y^{k}({\varOmega})
	= \intO{ \Phi_{\ADMMpara}(u, v^{k}, \lambda^{k}) } 
	\coloneqq \intO{
	\left( 
	\frac12 |\ui |^2
        +
        \frac{\ADMMpara}{2}|\ur -v^{k}|^2
        +
        \lambda^{k}(\ur -v^{k})
	\right)
	}.
\]

To solve \eqref{eq:controle} numerically, it is necessary to compute the shape
derivative of the functional $Y^{k}(\varOmega)$.

\begin{proposition}
\label{prop:shape_gradient_Yk}
For all $k \in \mathbb{N} \cup \{0\}$, the functional $Y^{k}(\varOmega)$ is shape differentiable, and its shape derivative at $\varOmega \in \mathcal{U}$ in the direction $\VV \in \sfTheta^{1}$ admits a boundary representation 
\[
dY^k(\Omega)[\theta] = \intG{ \GG^{k}_q \Vn},
\]
with shape gradient
\begin{equation}
\label{eq:Gq_prop}
\GG^{k}_q
=
- \Psi^{\dagger}_{\varGamma}(u, q) 
+ 
\Phi_{\ADMMpara}(u, v^{k}, \lambda^{k}),
\end{equation}
where $\kappa=\tandive\nn$ is the mean curvature of $\varGamma$,
$u=\ur +i \ui \in\HH^{2}(\varOmega)$ is the unique solution of the state problem
\eqref{eq:ccbm_state}, and the adjoint state $q=\qr +i \qi \in \HH^{2}(\varOmega)$
is the unique solution of
\begin{equation}
\label{eq:adjoint_ADMM_prop_strong}
-\,\Delta q
=
\ADMMpara(\ur -v^{k})+\lambda^{k}
+i\ui 
\ \text{in } \varOmega,
\qquad
\dn q - i q = 0 \ \text{on } \varSigma,
\qquad
\dn q + \alpha q = 0 \ \text{on } \varGamma.
\end{equation}
\end{proposition}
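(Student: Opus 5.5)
The proof follows the two-step rearrangement argument used for Proposition~\ref{prop:Jmaps}, adapted to the adjoint of Remark~\ref{rem:Jmaps}. Here, however, the integrand $\Phi_{\ADMMpara}$ depends on both $\ur$ and $\ui$, and $v^k,\lambda^k$ are fixed functions.

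First, a convention. The functions $v^k,\lambda^k$ are defined on the fixed reference domain, so on $\varOmega_t$ they are understood as transported quantities, i.e.\ $v^k\circ T_t^{-1}$ and $\lambda^k\circ T_t^{-1}$ (equivalently, extended to $D$ and held fixed). The boundary formula \eqref{eq:Gq_prop}, which contains no derivative of $v^k$ or $\lambda^k$, is consistent with this transport convention. Pulling $Y^k(\varOmega_t)$ back to $\varOmega$ gives
\[
Y^k(\varOmega_t)-Y^k(\varOmega)=\intO{(\Jact-1)\Phi_{\ADMMpara}(u^t,v^k,\lambda^k)}+\intO{\big(\Phi_{\ADMMpara}(u^t,v^k,\lambda^k)-\Phi_{\ADMMpara}(u,v^k,\lambda^k)\big)}.
\]
By \eqref{eq:derivatives} and Lemma~\ref{lem:holder_continuity}, the first term divided by $t$ tends to $\intO{\dive\VV\,\Phi_{\ADMMpara}(u,v^k,\lambda^k)}$. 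In the second term, the quadratic remainders $\tfrac12|\ui^t-\ui|^2$ and $\tfrac{\ADMMpara}{2}|\ur^t-\ur|^2$ are $o(t)$, again by Lemma~\ref{lem:holder_continuity}. What remains is the linear part
\[
\intO{\big(\ui(\ui^t-\ui)+(\ADMMpara(\ur-v^k)+\lambda^k)(\ur^t-\ur)\big)}=\Re\Big\{\intO{\overline{F}\,(u^t-u)}\Big\},\qquad F:=\ADMMpara(\ur-v^k)+\lambda^k+i\ui,
\]
which explains the right-hand side of \eqref{eq:adjoint_ADMM_prop_strong}.

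Next, write the weak form of \eqref{eq:adjoint_ADMM_prop_strong},
\[
\intO{\nabla q\cdot\nabla\cphi}-i\intS{q\cphi}+\alpha\intG{q\cphi}=\intO{F\cphi},
\]
which is uniquely solvable by Lax--Milgram. Test it with $\varphi=u^t-u$ and conjugate; the left side becomes $\aaa(u^t-u,q)$. Combining with \eqref{eq:difference_equation} for $v=q$ gives
\[
\Re\Big\{\intO{\overline{F}(u^t-u)}\Big\}=\Re\Big\{-\intO{(A_t-\Id)\nabla u^t\cdot\nabla\overline{q}}-\alpha\intG{(B_t-1)u^t\overline{q}}\Big\}.
\]
Dividing by $t$ and letting $t\searrow0$ yields the distributed form
\[
dY^k(\varOmega)[\VV]=\intO{\dive\VV\,\Phi_{\ADMMpara}}-\Re\Big\{\intO{A\nabla u\cdot\nabla\overline q}+\alpha\intG{\tandive\VV\,u\overline q}\Big\}.
\]
Linearity and continuity in $\VV$ are then clear, so $Y^k$ is shape differentiable.

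Step 2 converts this to a boundary integral, using $u,q\in\HH^2(\varOmega)$ (from the $C^{1,1}$ regularity) and $\supp\VV\cap\varSigma=\emptyset$. I would first prove the analogue of Lemma~\ref{eq:integral_equivalence} with $-\Delta q=F$:
\[
\intO{A\nabla u\cdot\nabla\overline q}=-\intO{\overline{F}\,\VV\cdot\nabla u}+\alpha\intG{\big(u\,\VV\cdot\nabla\overline q+\overline q\,\VV\cdot\nabla u\big)}+\intG{(\nabla\overline q\cdot\nabla u)\Vn}.
\]
Its proof is the usual integration by parts of $\dive\big((\VV\cdot\nabla u)\nabla\overline q+(\VV\cdot\nabla\overline q)\nabla u-(\nabla u\cdot\nabla\overline q)\VV\big)$, using $\Delta u=0$, $\Delta\overline q=-\overline F$ and the Robin conditions on $\varGamma$. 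Combined with the tangential Green formula \eqref{eq:tangential_Greens_formula} for $\phi=u\overline q$, the $\alpha$-terms become $\intG{(-2\alpha^2+\alpha\kappa)u\overline q\,\Vn}$. Next,
\[
\intO{\dive\VV\,\Phi_{\ADMMpara}}=-\intO{\VV\cdot\nabla\Phi_{\ADMMpara}}+\intG{\Phi_{\ADMMpara}\Vn},
\]
where the boundary term is taken with $\nn$ outward to $\varOmega$, following the paper's convention. Finally, split $\nabla\overline q\cdot\nabla u=\tannabla\overline q\cdot\tannabla u+\alpha^2u\overline q$ on $\varGamma$.

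The main obstacle is verifying that the remaining domain integrals cancel. This requires
\[
\Re\Big\{\intO{\overline F\,\VV\cdot\nabla u}\Big\}=\intO{\big(\ui\VV\cdot\nabla\ui+(\ADMMpara(\ur-v^k)+\lambda^k)\VV\cdot\nabla\ur\big)},
\]
which is exact. The left-over $\intO{\VV\cdot\nabla_x v^k(\cdots)}$-type terms must be absorbed by the transport convention for $v^k,\lambda^k$. I would check this bookkeeping carefully, since if $v^k,\lambda^k$ were instead held fixed in Eulerian coordinates, an extra domain term would survive. After the cancellation, the boundary terms collect as
\[
\Phi_{\ADMMpara}\Vn-\Re\{\tannabla\overline q\cdot\tannabla u-(\alpha^2-\alpha\kappa)u\overline q\}\Vn.
\]
Expanding the real part gives exactly $\Psi^\dagger_\varGamma(u,q)$, which yields \eqref{eq:Gq_prop}.
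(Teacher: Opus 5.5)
Your route is the paper's route: the argument of Proposition~\ref{prop:Jmaps} with the real-part pairing of Remark~\ref{rem:Jmaps}. Most of your computations are correct: the linearization $\Re\{\overline{F}(u^t-u)\}$, the adjoint identity, the analogue of Lemma~\ref{eq:integral_equivalence}, the tangential Green step, and $\Re\{\tannabla\overline q\cdot\tannabla u-(\alpha^2-\alpha\kappa)u\overline q\}=\Psi^\dagger_\varGamma(u,q)$.

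\textbf{The gap.} It sits exactly at the step you flag, and your resolution of it is backwards. Under the transport convention you have $Y^k(\varOmega_t)=\intO{j_t\,\Phi_{\gamma}(u^t,v^k,\lambda^k)}$. So your Step~1 formula is correct for that functional. But then
\[
\intO{\dive\theta\,\Phi_{\gamma}}+\Re\intO{\overline{F}\,\theta\cdot\nabla u}=\intG{\Phi_{\gamma}\,\theta_n}+\intO{\big[(\gamma(u_1-v^k)+\lambda^k)\,\theta\cdot\nabla v^k-(u_1-v^k)\,\theta\cdot\nabla\lambda^k\big]},
\]
and the last integral does not vanish in general. Already for the toy term $\intO{\lambda^k}$, this convention gives $\intO{\lambda^k\dive\theta}=\intG{\lambda^k\theta_n}-\intO{\theta\cdot\nabla\lambda^k}$.

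The underlying reason is that a functional built from $v^k\circ T_t^{-1}$ depends on $T_t$ inside $\varOmega$, not only on $\varOmega_t$. For such a functional no Hadamard boundary representation can be expected. The absence of $\nabla v^k$ and $\nabla\lambda^k$ in \eqref{eq:Gq_prop} is therefore evidence against your convention, not for it.

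\textbf{The fix.} Use the convention you rejected: treat $v^k$ and $\lambda^k$ as fixed functions on $D$, for instance $H^1(D)$ extensions (truncation by $P_{\mathcal K}$ preserves $H^1$). The pull-back is then $\intO{j_t\,\Phi_\gamma(u^t,v^k\circ T_t,\lambda^k\circ T_t)}$. Step~1 acquires the additional terms
\[
\intO{\big[-(\gamma(u_1-v^k)+\lambda^k)\,\theta\cdot\nabla v^k+(u_1-v^k)\,\theta\cdot\nabla\lambda^k\big]},
\]
which come from $(v^k\circ T_t-v^k)/t\to\theta\cdot\nabla v^k$ in $L^2(\varOmega)$, and likewise for $\lambda^k$.

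These terms cancel the residual above exactly. Equivalently, writing $\partial_v,\partial_\lambda$ for the partial derivatives of $\Phi_\gamma$ in its second and third arguments,
\[
\dive\theta\,\Phi_\gamma+\partial_v\Phi_\gamma\,\theta\cdot\nabla v^k+\partial_\lambda\Phi_\gamma\,\theta\cdot\nabla\lambda^k=\dive(\Phi_\gamma\theta)-\Re\{\overline F\,\theta\cdot\nabla u\}.
\]
The term $\Re\{\overline F\,\theta\cdot\nabla u\}$ is exactly what your $A$-identity supplies. With this correction, and the rest of your Step~2 unchanged, you obtain $\GG^k_q=\Phi_\gamma(u,v^k,\lambda^k)-\Psi^\dagger_\varGamma(u,q)$.
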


Expression \eqref{eq:Gq_prop} can be obtained similarly to the proof of Proposition \ref{prop:Jmaps}, so we omit it.

The following remarks summarize alternative boundary representations of the shape derivative associated with different adjoint formulations (and all other quantities are as defined in Proposition~\ref{prop:shape_gradient_Yk}.).
\begin{remark}\label{rem:shape_gradient_Yk_alt}
An alternative expression for the shape derivative can be obtained by considering a different adjoint system. 
More precisely, letting $w = \rw + i \iw$ be the unique weak solution of the adjoint system
\begin{equation}\label{eq:adjoint_ADMM_choice}
-\,\Delta w
=
i\big(\ADMMpara(\ur - v^{k}) + \lambda^{k}\big) - \ui
\ \text{in } \varOmega, \qquad
\dn w - i w = 0 \ \text{on } \varSigma, \qquad
\dn w + \alpha w = 0 \ \text{on } \varGamma,
\end{equation}
the corresponding shape gradient becomes
\begin{equation}\label{eq:shape_gradient_Yk}
\GG^{k}_w
=
\Phi_{\ADMMpara}(u, v^{k}, \lambda^{k})
+
\Psi_{\varGamma}(u, w).
\end{equation}

\end{remark}
\begin{remark}\label{prop:shape_gradient_Yk_crossed}
Alternatively, the adjoint system can be split into two components associated with the real and imaginary parts of the state.
The corresponding shape gradient is then given by
\begin{equation}\label{eq:shape_gradient_Yk_split}
\GG^{k}_{\Lambda1}
=
\GG_{1}
+
\frac{\ADMMpara}{2}|\ur-v^{k}|^2
+
\lambda^{k}(\ur-v^{k})
-
\Psi^{\dagger}_{\varGamma}(u, \Lambda),
\end{equation}
where $\GG_{1} = \GG_{1}(u, p)$ is given by \eqref{eq:shape_gradient} while $\Lambda=\rlam+i \ilam$ is the unique weak solution to 
\begin{equation}
\label{eq:adjoint_Lambda_crossed}
-\Delta \Lambda
=
\ADMMpara(\ur-v^{k})+\lambda^{k}
\ \text{in } \varOmega,
\qquad
\dn \Lambda - i \Lambda = 0 \ \text{on } \varSigma,
\qquad
\dn \Lambda + \alpha \Lambda = 0 \ \text{on } \varGamma.
\end{equation}
\end{remark}

\begin{remark}
\label{prop:shape_gradient_Yk_separate}
Another expression for the shape gradient that is associated with the results stated in Remark~\ref{rem:Jmaps} is given by
\begin{equation}\label{eq:shape_gradient_Yk_split_2}
\GG^{k}_{\Lambda2} 
= \GG_2 + \GG^{k}_{\Lambda1}
-
\GG_{1},
\end{equation}
where $\GG_{2} = \GG_{2}(u, p)$ is given by \eqref{eq:GG_correct}.
\end{remark}

Inspection of \eqref{eq:shape_gradient_Yk_split} and \eqref{eq:shape_gradient_Yk_split_2} reveals a natural decoupling into two contributions: one term independent of the adjoint variable $\Lambda$, and another that does not depend on $p$. 
This structural feature provides a clear rationale for introducing partial shape gradients, in which only selected components of the full shape gradient are retained to define the deformation field in the mesh update.

More precisely, one may restrict attention to the following expressions:
\begin{align}
\GG^{k}_{\sharp1}
&=
\GG_{1}
+
\frac{\ADMMpara}{2}|\ur-v^{k}|^2
+
\lambda^{k}(\ur-v^{k})\label{eq:another_set_of_choices_for_G1},\\
\GG^{k}_{\sharp2}
&=
\GG_{2}
+
\frac{\ADMMpara}{2}|\ur-v^{k}|^2
+
\lambda^{k}(\ur-v^{k}).\label{eq:another_set_of_choices_for_G2}
\end{align}
A related strategy was employed in \cite{HIKKP2009} for the exterior Bernoulli free boundary problem. However, we emphasize that the present inverse geometry problem is severely ill-posed, in contrast to the Bernoulli case, which is only mildly ill-posed. 
In this context, the use of partial shape gradients may degrade the quality of the descent direction and, consequently, the reconstruction, unless suitable stabilization mechanisms are incorporated.

The idea of selecting only certain components of the shape gradient within gradient-based algorithms has also been explored in \cite{CherratAfraitesRabago2026}; see also \cite{RabagoKimura2025}. 
Accordingly, within the CCBM--ADMM framework, one may consider the following six choices for the normal velocity of the free boundary in the boundary deformation:
\[
	G_{l}^{k}\nn, \qquad l \in \{ q,w,\Lambda1, \Lambda2,\sharp1, \sharp2\},
\]
where the expressions $G_{l}^{k}$, for $l=q,w,\Lambda1, \Lambda2,\sharp1, \sharp2$, are given respectively by \eqref{eq:Gq_prop}, \eqref{eq:shape_gradient_Yk}, \eqref{eq:shape_gradient_Yk_split}, \eqref{eq:shape_gradient_Yk_split_2}, \eqref{eq:another_set_of_choices_for_G1}, and \eqref{eq:another_set_of_choices_for_G2}.

%
\subsection{Extension and regularization of the deformation field}
The numerical implementation of the normal velocity $G_{l}^{k}\nn$, $l \in \{ q,w,\Lambda1, \Lambda2,\sharp1, \sharp2\}$, in mesh deformation requires computing its $H^{1}$ Riesz representative to obtain a smooth deformation field over the computational mesh (see Section~\ref{sec:Numerical_Approximation}), thereby defining nodal velocities both on the boundary and in the interior. 
This construction naturally leads to the SGBD algorithm in Algorithm~\ref{algo:SGBD_algorithm} for solving \eqref{eq:controle}.
\begin{algorithm}[h!]
\caption{SGBD algorithm for the $\omega$-subproblem \eqref{eq:controle}}\label{algo:SGBD_algorithm}
\begin{enumerate}

\item \textit{Input:} Fix ${\beta}$, $\mu$, $\ADMMpara$, $a$, $b$, and $\varepsilon$. Set $\lambda^{k}$ and initialize $\varOmega^{k}_0=\varOmega^{k}$.
\item \textit{Iteration:} For $m=0,1,2,\ldots$,
\begin{enumerate}\itemsep0.1em
\item[2.1] Solve the state system and the associated adjoint system(s) on $\varOmega_m^k$;
\item[2.2] Compute the descent direction $\VV_m^k$ from \eqref{eq:Sobolev_gradient_computation}, with $G$ replaced by one of $G_q^k$, $G_w^k$, $G_{\Lambda1}^k$, $G_{\Lambda2}^k$, $G_{\sharp1}^k$, or $G_{\sharp2}^k$, depending on the chosen shape gradient;
\item[2.3] Set $t^k=\mu J^{k}(\varOmega_m^k)/\|\VV_m^k\|_{H^{1}(\varOmega_m^k)^d}$ and update
\[
\varOmega_{m+1}^k=\{x+t^k \VV_m^k(x)\mid x\in \varOmega_m^k\}.
\]
\end{enumerate}

\item \textit{Stop test:} Repeat \textit{Iteration} until
\[
\|dY^{k}(\varOmega_m^k)[\VV_m^k]\|<\varepsilon.
\]

\item \textit{Output:} $\varOmega^{k+1}=\varOmega_{m+1}^k$.

\end{enumerate}
\end{algorithm}

%
%
%
%
%
\subsection{Solution of the $v$-subproblem \eqref{eq:etat}}
We next solve the $v$-subproblem \eqref{eq:etat} by minimizing ${L}_{\ADMMpara}$ given by \eqref{eq:augmented_lagrangian} with respect to $v$, that is,
\begin{align*}
v^{k+1} 
&= \argmin_{v \in L^{2}({\varOmega})}
\Big\{ J(\varOmega^{k+1})+U_{\mathcal{K}}(v)
+ \frac{\ADMMpara}{2}  \intO{\vert {\ur^{k+1}}-v \vert^2} 
+ \intO{\lambda^{k}  ( {\ur^{k+1}}-v )} 
\Big\}.
%
\end{align*}

Applying the projection method, we obtain
\begin{equation}\label{eq:projection_formula}
	v^{k+1} = P_{\mathcal{K}}\left({\ur^{k+1}} + \dfrac{\lambda^{k}}{\ADMMpara} \right),
\end{equation}
where $P_{\mathcal{K}}(\varphi) \coloneqq \max(a, \min(b, \varphi))$ for all $\varphi \in L^{2}({\varOmega})$ is the projection operator onto the admissible set $\mathcal{K}$.
\subsection{ADMM-SGBD algorithm}
Finally, building upon the preceding discussion, we propose a modification of Algorithm~\ref{algo:ADMM_algorithm} for the numerical solution of the constrained shape optimization problem \eqref{eq:control}, incorporating an inequality constraint governed by \eqref{eq:real_state}. 
More specifically, Algorithm~\ref{algo:ADMM_algorithm} is refined into a nested ADMM--SGBD scheme tailored to the problem \eqref{eq:control_Uad}, as described in Algorithm~\ref{algo:ADMM-SGBD}.
\begin{algorithm}[!htp] 
\caption{ADMM--SGBD}\label{algo:ADMM-SGBD}
\begin{enumerate} \itemsep0.1em 
\item \textit{Initialization:} Given $(f,g)$, fix $N \in \mathbb{N}$ and parameters $\ADMMpara$, $a$, $b$, $\mu$, $\beta$, $\varepsilon$. Choose initial guesses $\omega^{0}$, $v^{0}$, and $\lambda^{0}$.

\item \textit{Iteration:} For $k=0,\ldots,N$,
\begin{enumerate}\itemsep0.1em
\item[2.1] Update $\omega^{k+1}$ by solving the $\omega$-subproblem with data $(v^k,\lambda^k)$ using Algorithm~\ref{algo:SGBD_algorithm};
\item[2.2] Compute $u^{k+1}$ by solving \eqref{eq:state_weak_form} on $\varOmega^{k+1}$, where $\varOmega^{k+1} = D \setminus \overline{\omega}^{k+1}$, and set $\ur^{k+1}=\Re\{u(\varOmega^{k+1})\}$;
\item[2.3] Update $v^{k+1}$ by projection:
\[
v^{k+1}=\max \big( a, \min (\ur^{k+1}+\lambda^{k}/\ADMMpara, b ) \big);
\]
\item[2.4] Update the multiplier:
\[
\lambda^{k+1}=\lambda^{k}+\ADMMpara (\ur^{k+1}-v^{k+1});
\]
\end{enumerate}

\item \textit{Stop test:} Repeat \textit{Iteration} until convergence.
\end{enumerate}
\end{algorithm}
%
%
%


\subsection{Numerical tests and discussion for ADMM}
\label{subsec:numerical_examples_ADMM} 
We test the proposed numerical method given by Algorithm~\ref{algo:ADMM-SGBD}, focusing on the effect of the weight parameter $\rho > 0$ in the CCBM formulation of the state \eqref{eq:ccbm_state}. 
The parameter $\rho$ appears only in the weak form of the complex state system \eqref{eq:ccbm_state} and does not enter explicitly in the expressions of the shape gradients. 
The test problem is essentially the same as in Subsection~\ref{subsec:numerical_examples}, except that we set $\alpha = 1$ and examine the influence of the Dirichlet data $f = 1$ and $f = f_{1} \coloneqq  \cos{\arctan(x_{2}/x_{1})}$ for $(x_{1}, x_{2}) \in \partial D$, where $D = C(0,0,1)$ denotes the unit circle.
In all tests, we set $\beta = 0.9$ in \eqref{eq:Sobolev_gradient_computation}, and $\gamma = 0.001$, $\lambda^{0} = 0.001$, and $v^{0} = 1$ in Algorithm~\ref{algo:ADMM-SGBD}.
For simplicity, we take $a = \min_{x \in \varOmega^{\star}} u^{\star}(x)$ and $b = \max_{x \in \varOmega^{\star}} u^{\star}(x)$ in the inequality constraint defining the admissible set $\mathcal{K}$ in \eqref{eq:admissible_set_K}.
Lastly, the iteration stops after $N = 1000$ iterations.

We first examine the effect of using different deformation fields, as discussed in subsection~\ref{subsec:omega_subproblem}. 
In particular, we compare \eqref{eq:Gq_prop}, which involves a single adjoint problem (denoted ADMM-joint or ADMMj), with \eqref{eq:shape_gradient_Yk_split}, which requires two adjoint problems (denoted ADMM-split or ADMMs). 
The reconstructions for $f=1$ are displayed in Figure~\ref{ADMMfig:gradient_test_1}, while the corresponding histories of the cost functional, gradient norm, and Hausdorff distance are shown in Figure~\ref{ADMMfig:gradient_test_1_cost_norm_hd}. 
Both approaches yield nearly identical reconstructions and exhibit very similar convergence behavior across all metrics, suggesting that the additional adjoint solve in the split formulation does not provide a significant advantage for the present test case.

We next investigate the influence of the parameter $\rho$. 
For small values such as $\rho = 1$, the method exhibits limited sensitivity to the nonconvex part of the geometry and does not clearly recover the cavity concavity. 
As $\rho$ increases, the reconstructions improve qualitatively, and for $\rho \geqslant 5$, a slight indentation becomes visible in the corresponding region. 
This indicates that larger values of $\rho$ enhance the sensitivity of the reconstruction to finer geometric features. 
Nevertheless, accurately recovering the depth and sharpness of the concavity remains difficult even for larger $\rho$, reflecting the inherent ill-posedness of the inverse problem. 
Overall, the numerical results suggest that the CCBM formulation may provide improved qualitative reconstruction properties relative to the classical approaches numerically investigated in \cite[Figs.~1--2]{AfraitesRabago2025}.

To further assess the observed limitation, we repeat the experiment with the alternative Dirichlet data $f = \cos{\arctan(x_{2}/x_{1})}$. 
The resulting reconstructions are shown in Figure~\ref{ADMMfig:gradient_test_2}, with the corresponding histories in Figure~\ref{ADMMfig:gradient_test_2_cost_norm_hd}. 
In this case, the reconstructions are noticeably improved, particularly in capturing the concave features. 
This improvement is consistent with the observed decrease in both the cost and the Hausdorff distance. 
Nevertheless, the difference between the joint and split approaches remains negligible, as both methods exhibit almost identical behavior.

The numerical results indicate that the choice of prescribed Dirichlet data has a significant impact on reconstruction quality, with more informative inputs leading to improved recovery of geometric features, particularly concavity. The weight parameter $\rho$ also plays a crucial role: small values yield limited sensitivity, while larger values enhance the reconstruction, although accurate recovery of concavity depth remains challenging. In contrast, the use of a single adjoint formulation or its split counterpart produces nearly identical reconstructions and convergence histories, suggesting that the additional computational cost of the split approach does not provide a clear benefit in this setting.

\begin{figure}[h!]
    \centering 
    \resizebox{0.24\textwidth}{!}{\includegraphics{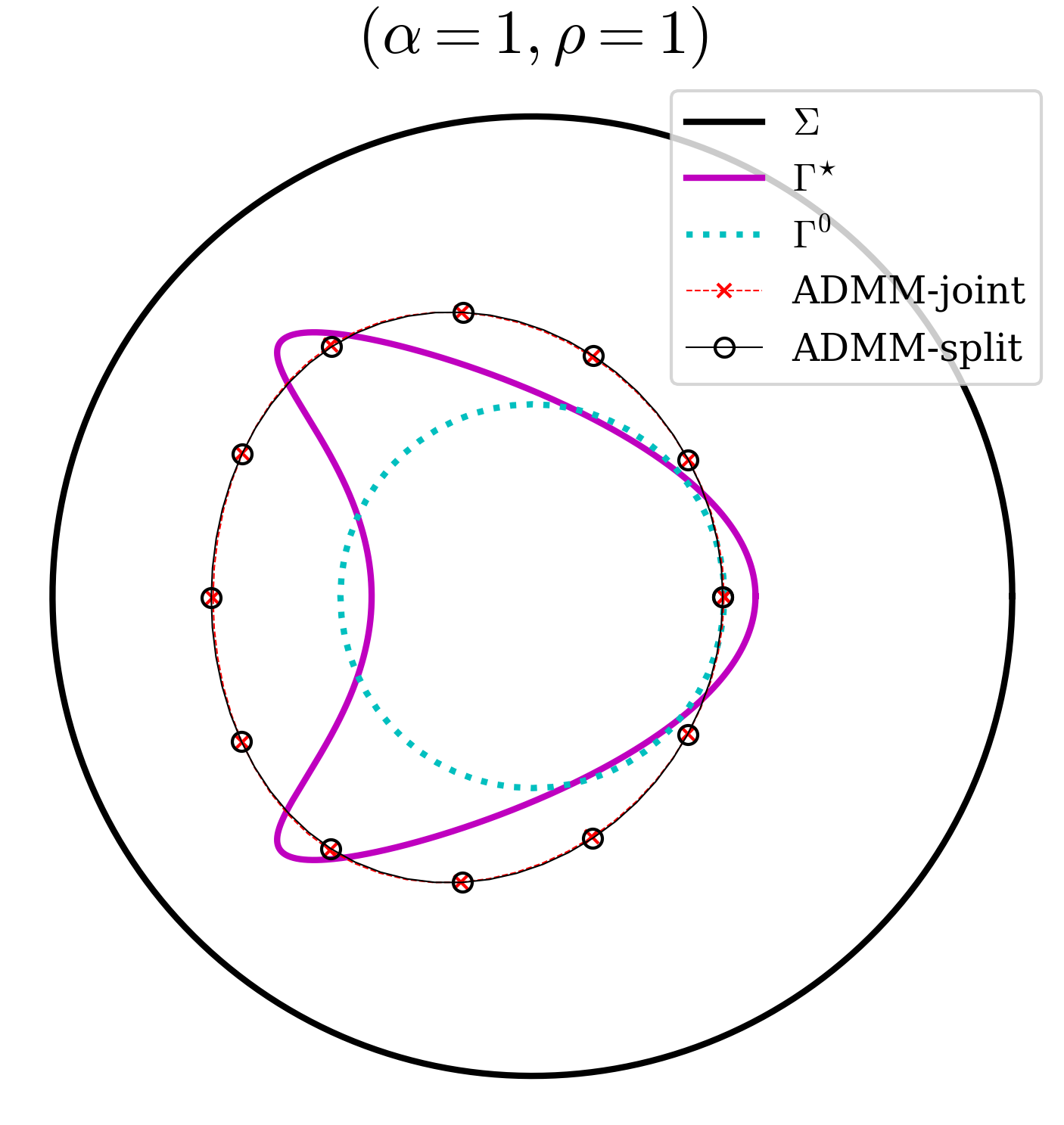}}
    \resizebox{0.24\textwidth}{!}{\includegraphics{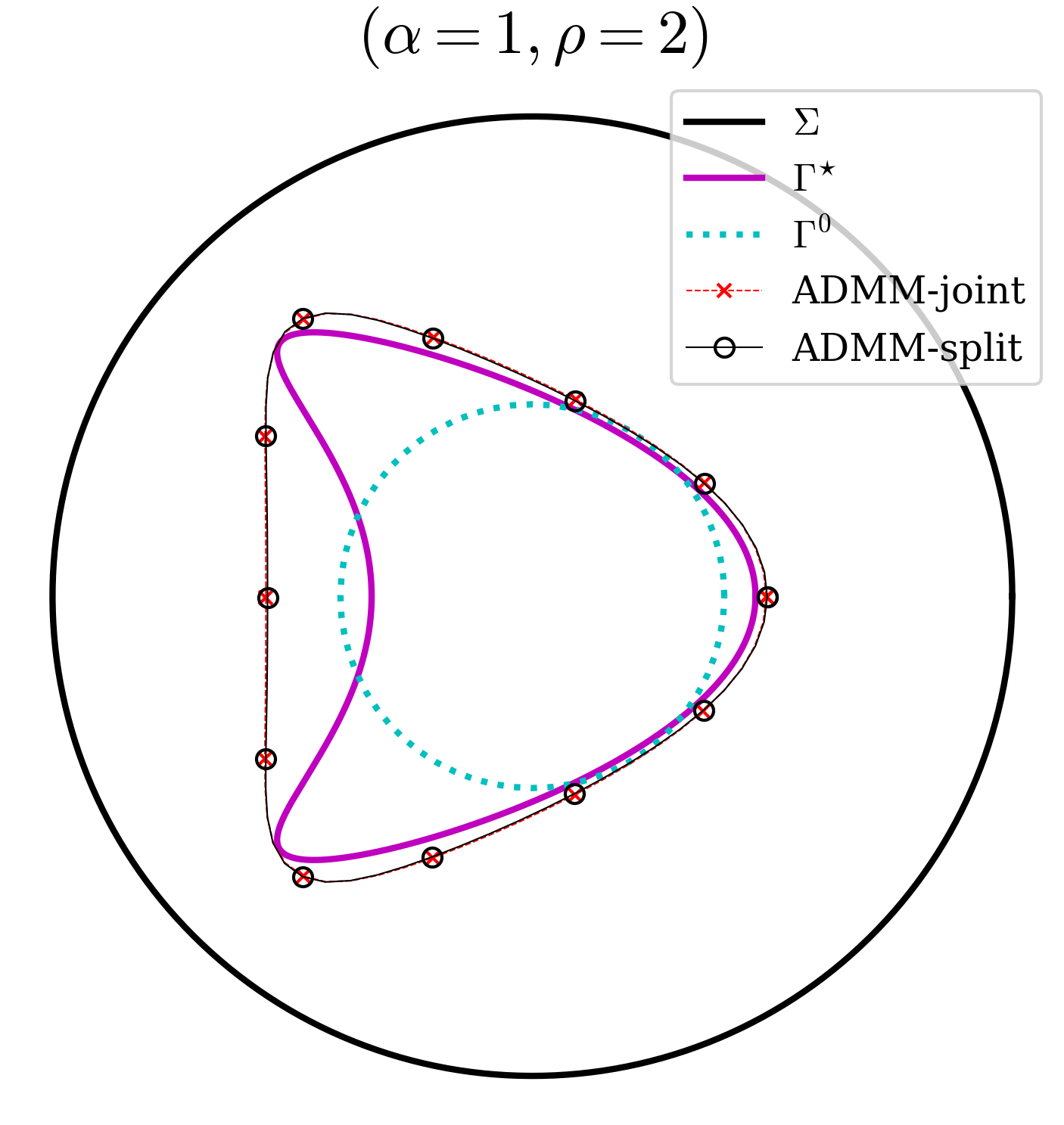}}
    \resizebox{0.24\textwidth}{!}{\includegraphics{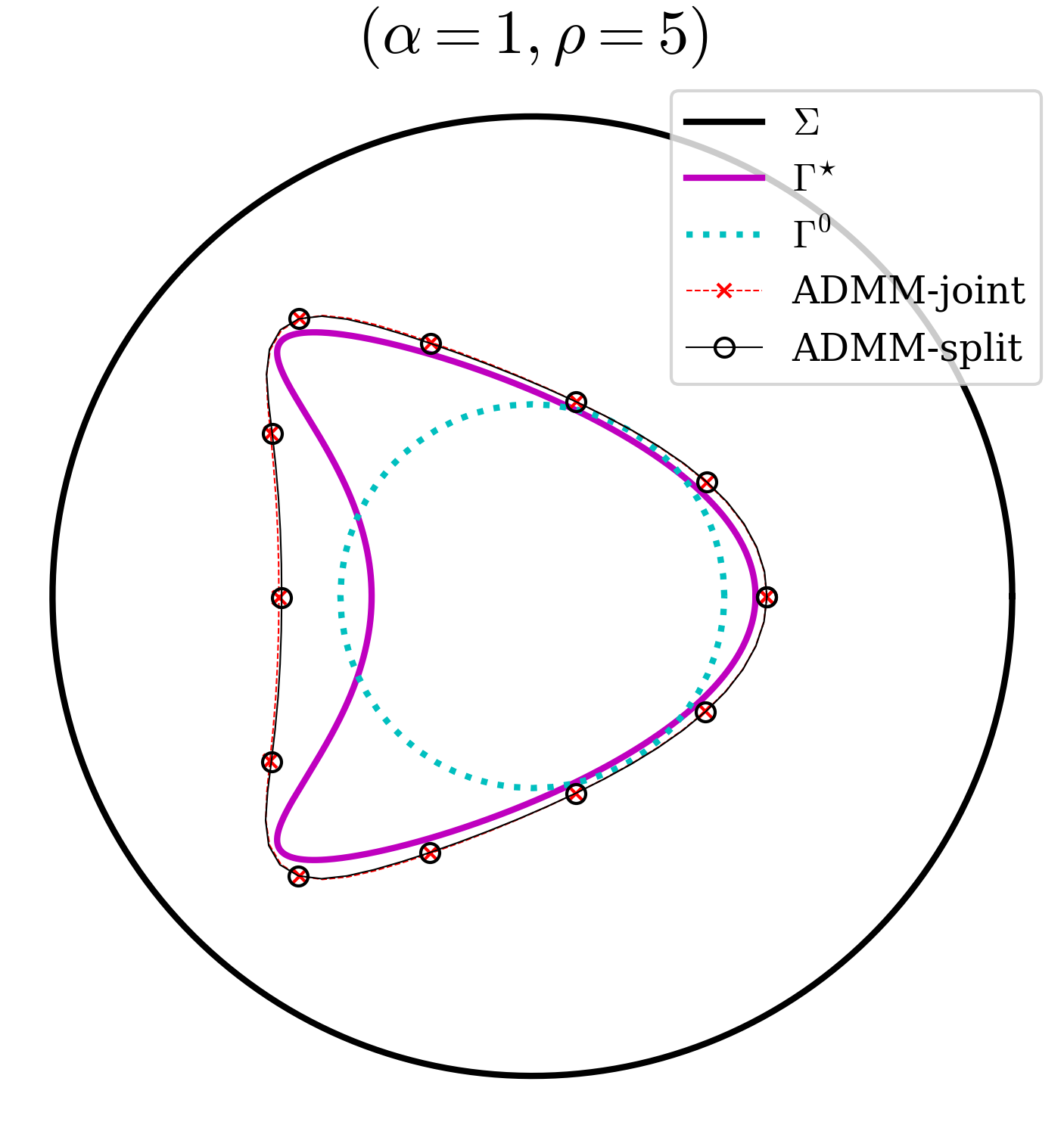}}
    \resizebox{0.24\textwidth}{!}{\includegraphics{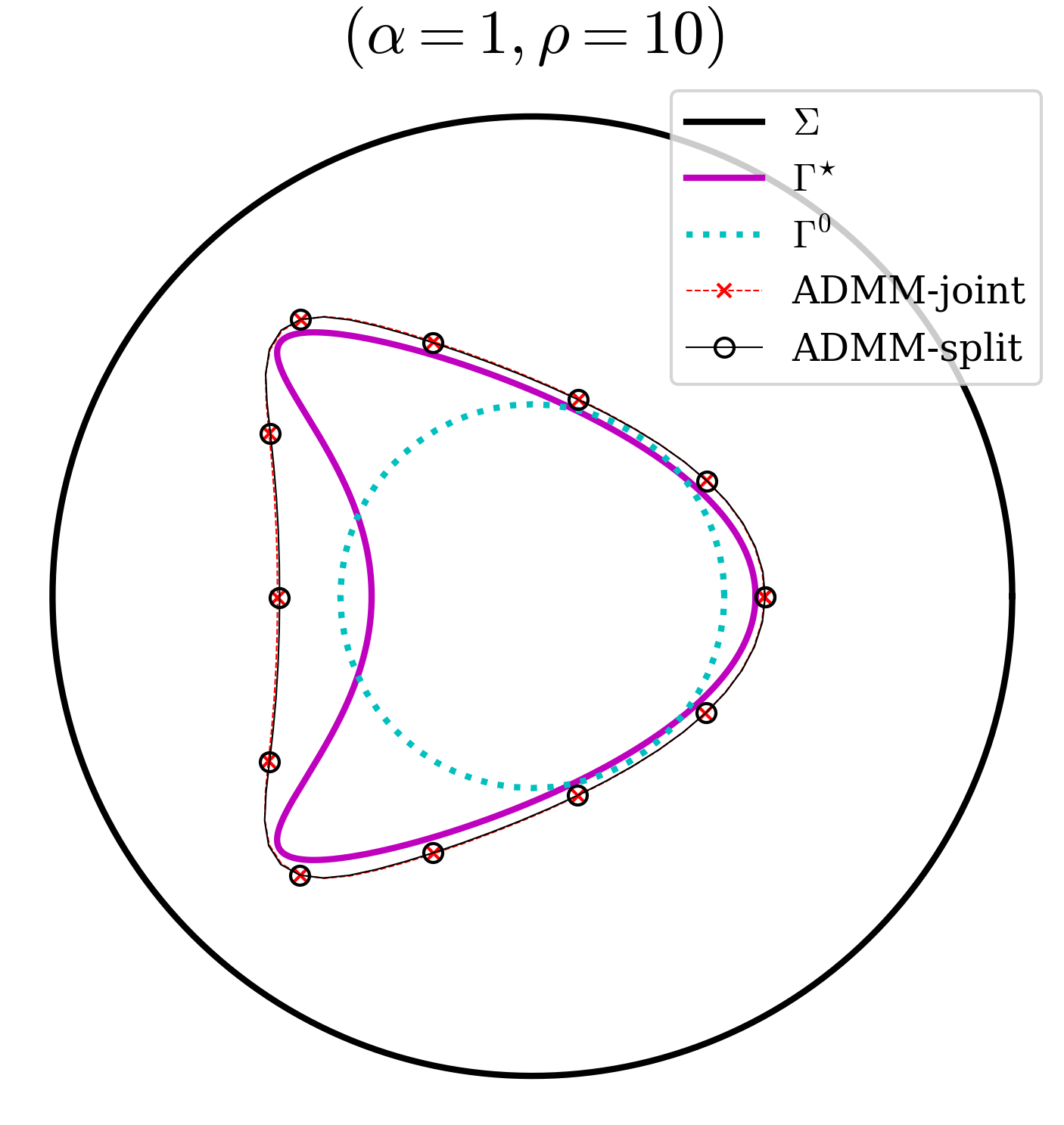}} 
\caption{
Comparison of the ADMM-based reconstruction using \eqref{eq:Gq_prop} and \eqref{eq:shape_gradient_Yk_split}, tested with different values of $\rho = 1,2,5,10$ under exact measurements with $f=1$.
}
\label{ADMMfig:gradient_test_1}
\end{figure}

\begin{figure}[h!]
    \centering 
    \resizebox{0.31\textwidth}{!}{\includegraphics{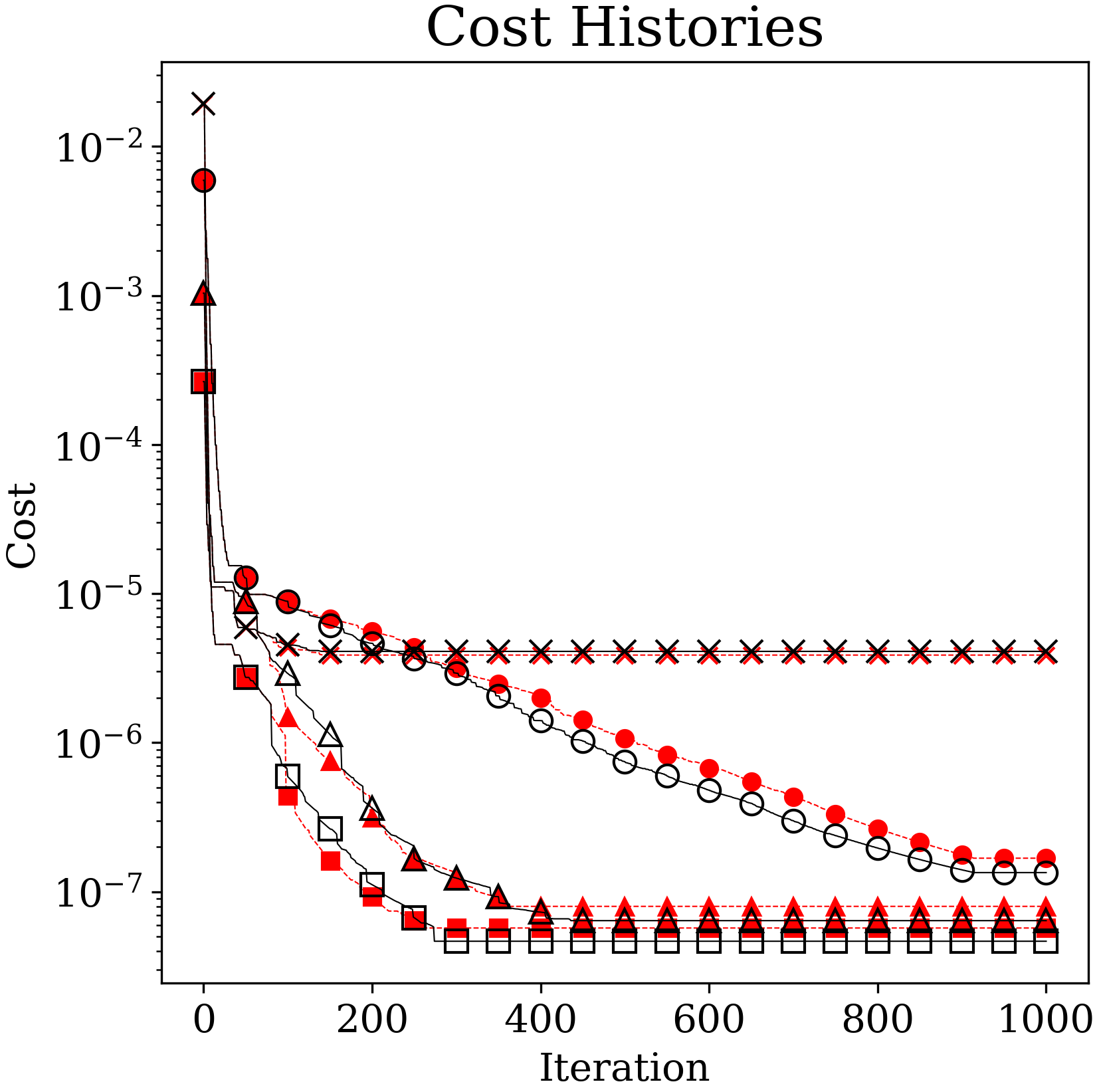}} \hfill
    \resizebox{0.31\textwidth}{!}{\includegraphics{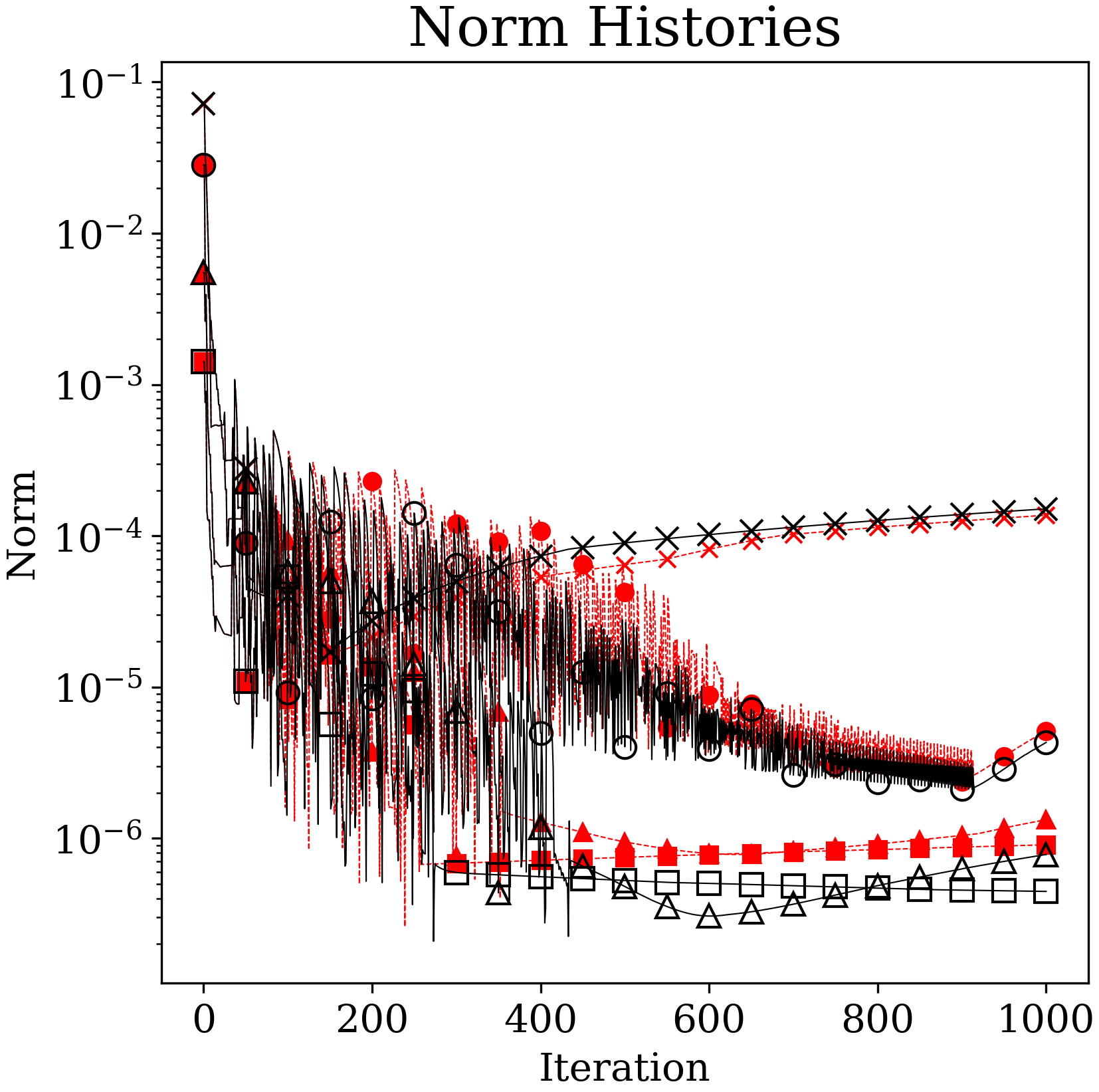}}\hfill
    \resizebox{0.32\textwidth}{!}{\includegraphics{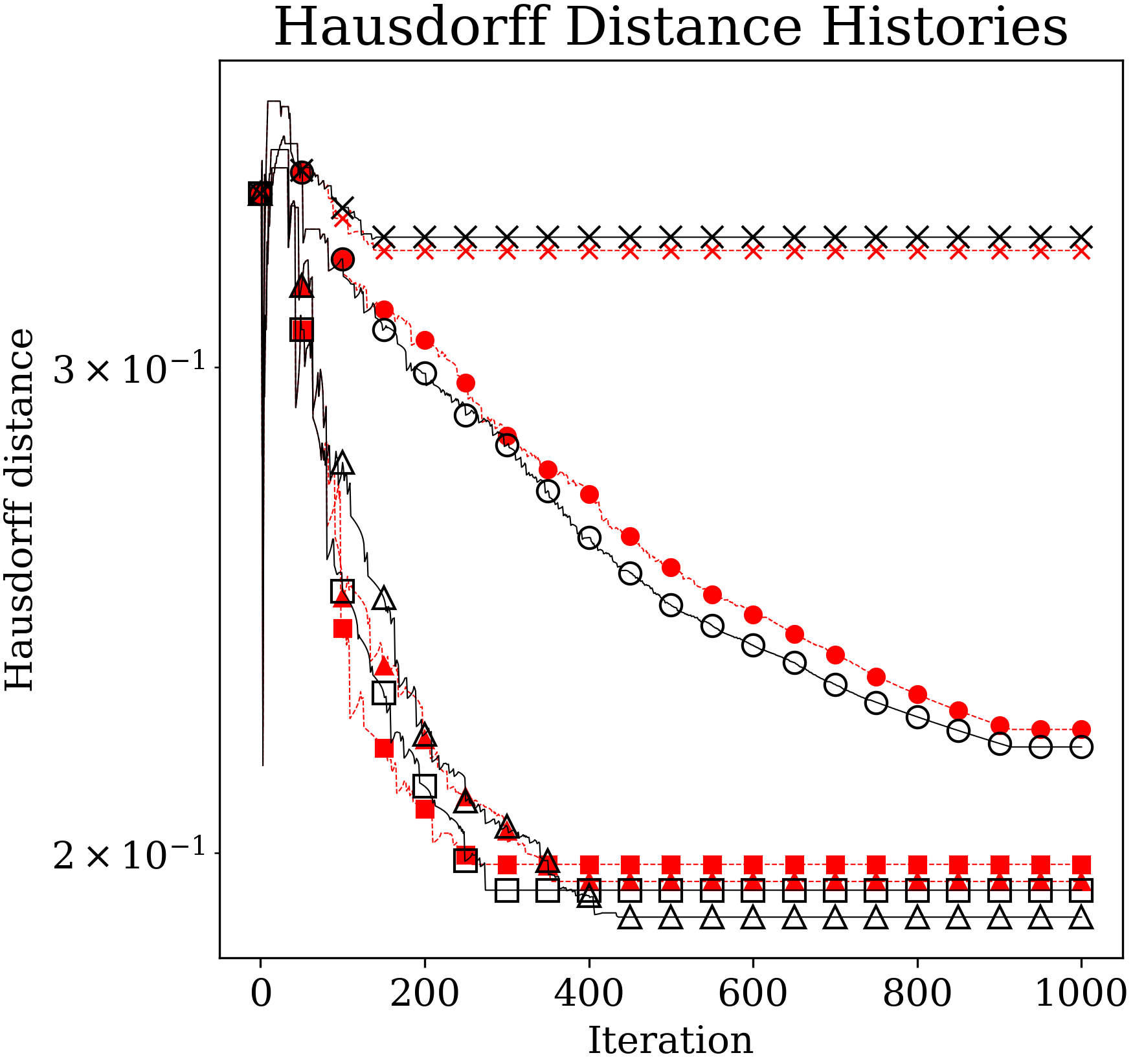}} \\[4pt]
    \resizebox{0.8\textwidth}{!}{\includegraphics{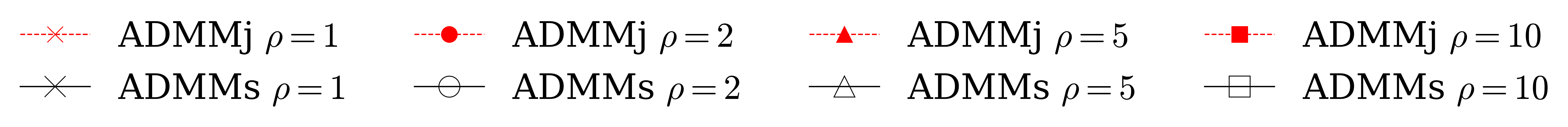}}    
\caption{
Corresponding histories of values for the cost, gradient norm, and Hausdorff distances for Figure~\ref{ADMMfig:gradient_test_1}.
}
\label{ADMMfig:gradient_test_1_cost_norm_hd}
\end{figure}

\begin{figure}[h!]
    \centering 
    \resizebox{0.24\textwidth}{!}{\includegraphics{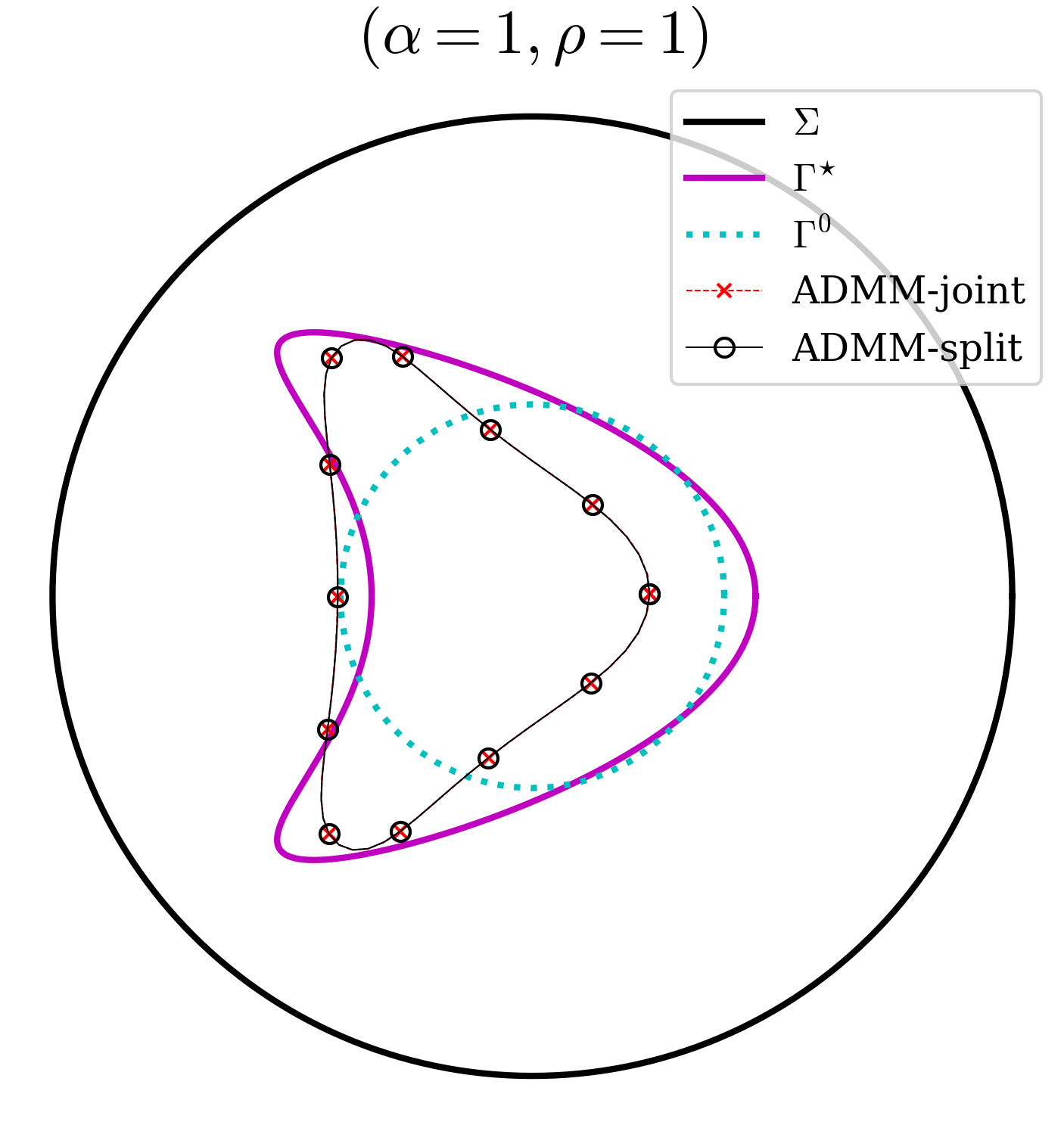}}
    \resizebox{0.24\textwidth}{!}{\includegraphics{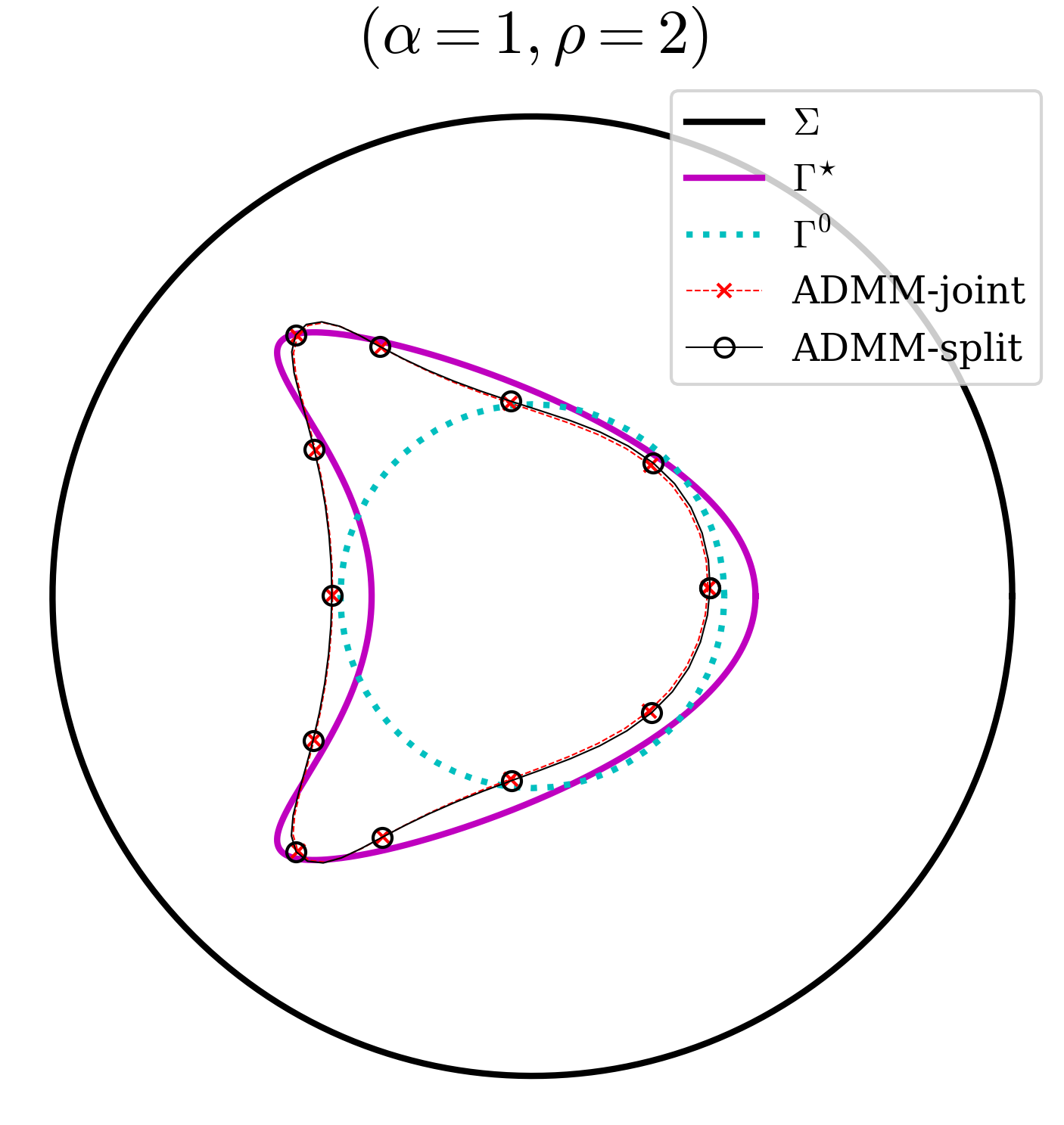}}
    \resizebox{0.24\textwidth}{!}{\includegraphics{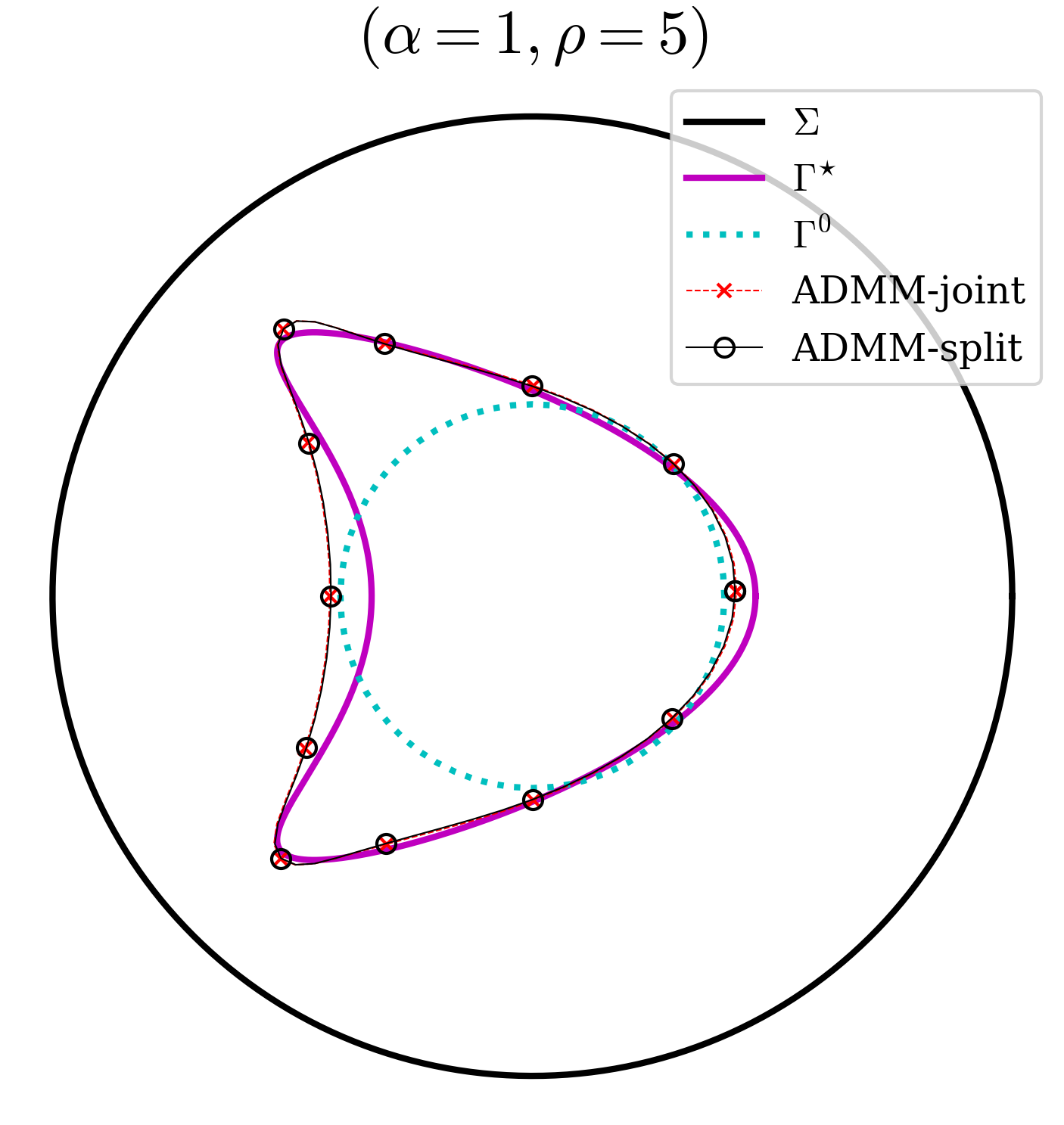}}
    \resizebox{0.24\textwidth}{!}{\includegraphics{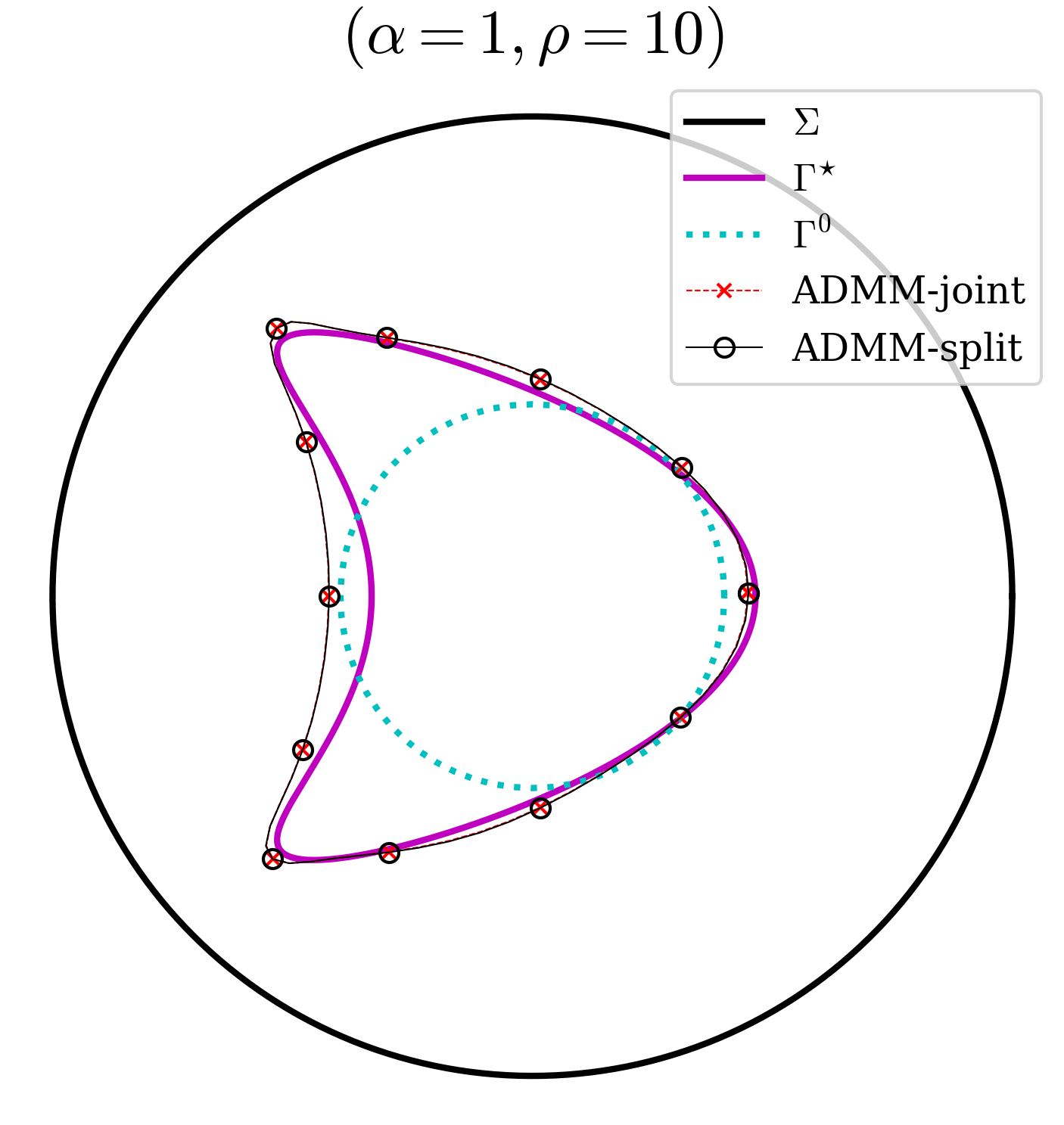}} 
\caption{
Comparison of the ADMM-based reconstruction using \eqref{eq:Gq_prop} and \eqref{eq:shape_gradient_Yk_split}, tested with different values of $\rho = 1,2,5,10$ under exact measurements with $f=\cos{\arctan(x_{2}/x_{1})}$.
}
\label{ADMMfig:gradient_test_2}
\end{figure}

\begin{figure}[h!]
    \centering 
    \resizebox{0.31\textwidth}{!}{\includegraphics{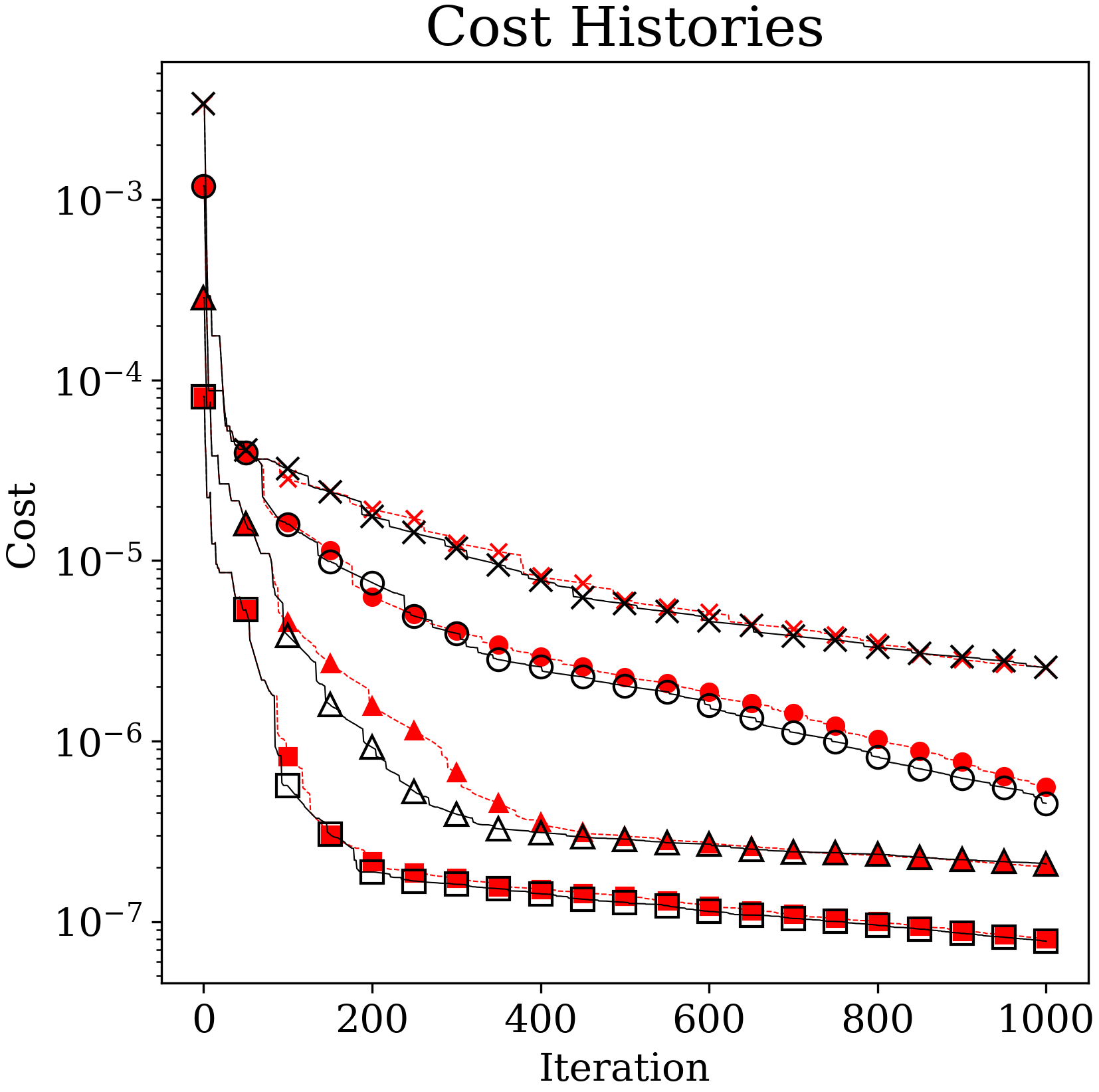}} \hfill
    \resizebox{0.31\textwidth}{!}{\includegraphics{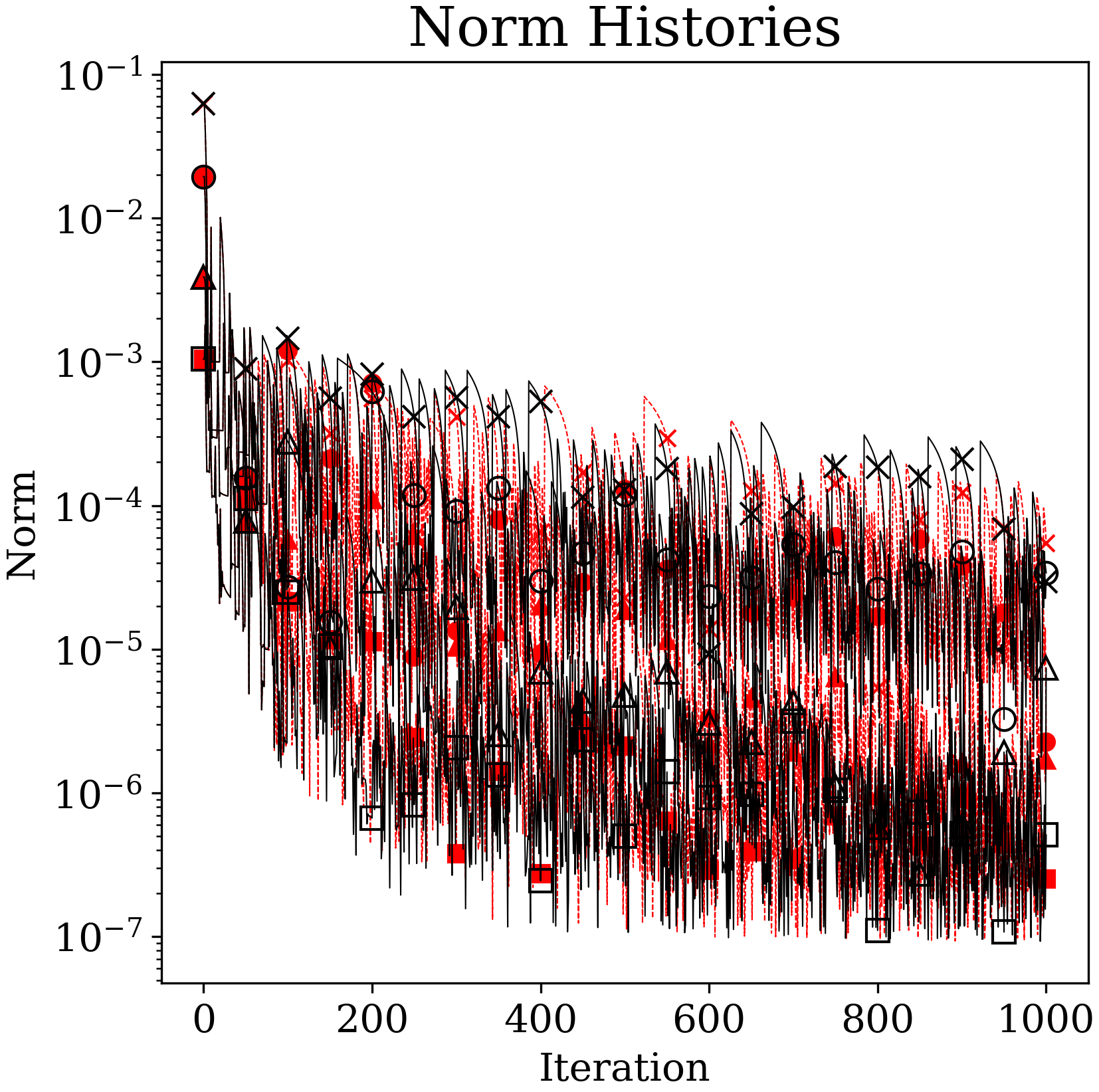}}\hfill
    \resizebox{0.33\textwidth}{!}{\includegraphics{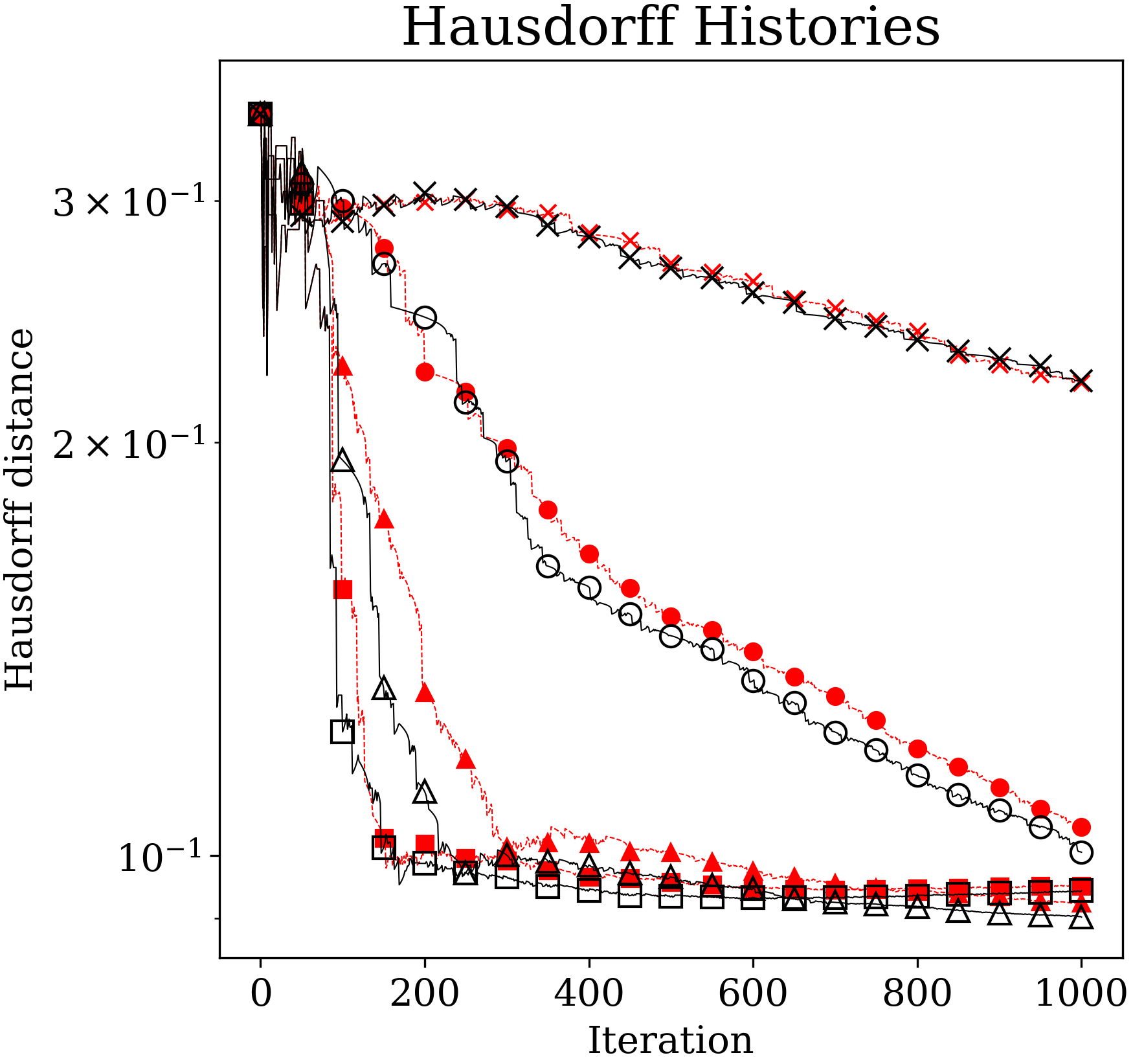}} \\[4pt]
    \resizebox{0.8\textwidth}{!}{\includegraphics{K_joint_vs_split_unit_input_legend_CCBM_ADMM_2x6.png}}    
\caption{
Corresponding histories of values for the cost, gradient norm, and Hausdorff distances for Figure~\ref{ADMMfig:gradient_test_2}.
}
\label{ADMMfig:gradient_test_2_cost_norm_hd}
\end{figure}

We now compare the proposed ADMM--CCBM method with split adjoint to the standard CCBM approach, adopting the prescribed Dirichlet data $f = f_{1} = \cos{\arctan(x_{2}/x_{1})}$ throughout, unless otherwise stated.

Figures~\ref{ADMMfig:test_initial_guess_1} and \ref{ADMMfig:test_initial_guess_2} illustrate the influence of the initial guess and the penalty parameter $\rho$ on the reconstruction obtained with CCBM and ADMM-CCBM (hereafter referred to as ADMM).

For $\rho = 10$, Figure~\ref{ADMMfig:test_initial_guess_1} suggests that ADMM exhibits weaker sensitivity to the choice of initial guess compared to CCBM, indicating improved robustness with respect to initialization. 
In particular, ADMM consistently recovers shapes that are qualitatively closer to the target across the different initializations considered. 
In contrast, while CCBM can yield good reconstructions for favorable initial guesses (e.g., $\varGamma^{0} = C(0,0,0.4)$), its performance deteriorates when the initial geometry is either poorly located or mismatched in size.

Figure~\ref{ADMMfig:test_initial_guess_2} further highlights the role of $\rho$.
Within the tested range $\rho = 2,5,10$, increasing $\rho$ appears to improve the stability of the reconstructions produced by ADMM with respect to the initial configuration. 
For smaller values of $\rho$, however, this effect is less pronounced, and the reconstruction quality may degrade. 
We emphasize that this trend is empirical and restricted to the values of $\rho$ considered; larger values of $\rho$ do not necessarily lead to further improvement; this behavior was also observed in additional numerical experiments not reported here.

Across both figures, CCBM appears comparatively less responsive to variations in $\rho$ and the initialization. 
In several instances, the iterates stagnate away from the target configuration, which is consistent with convergence to suboptimal stationary points. 
This behavior persists despite the use of a backtracking strategy, suggesting that the underlying optimization landscape remains challenging for the present adjoint formulation.

Overall, ADMM appears less sensitive to the choice of initial guess than CCBM, yielding stable reconstructions even for unfavorable initial guesses under exact measurements.

\begin{figure}[h!]
    \centering 
    \resizebox{0.32\textwidth}{!}{\includegraphics{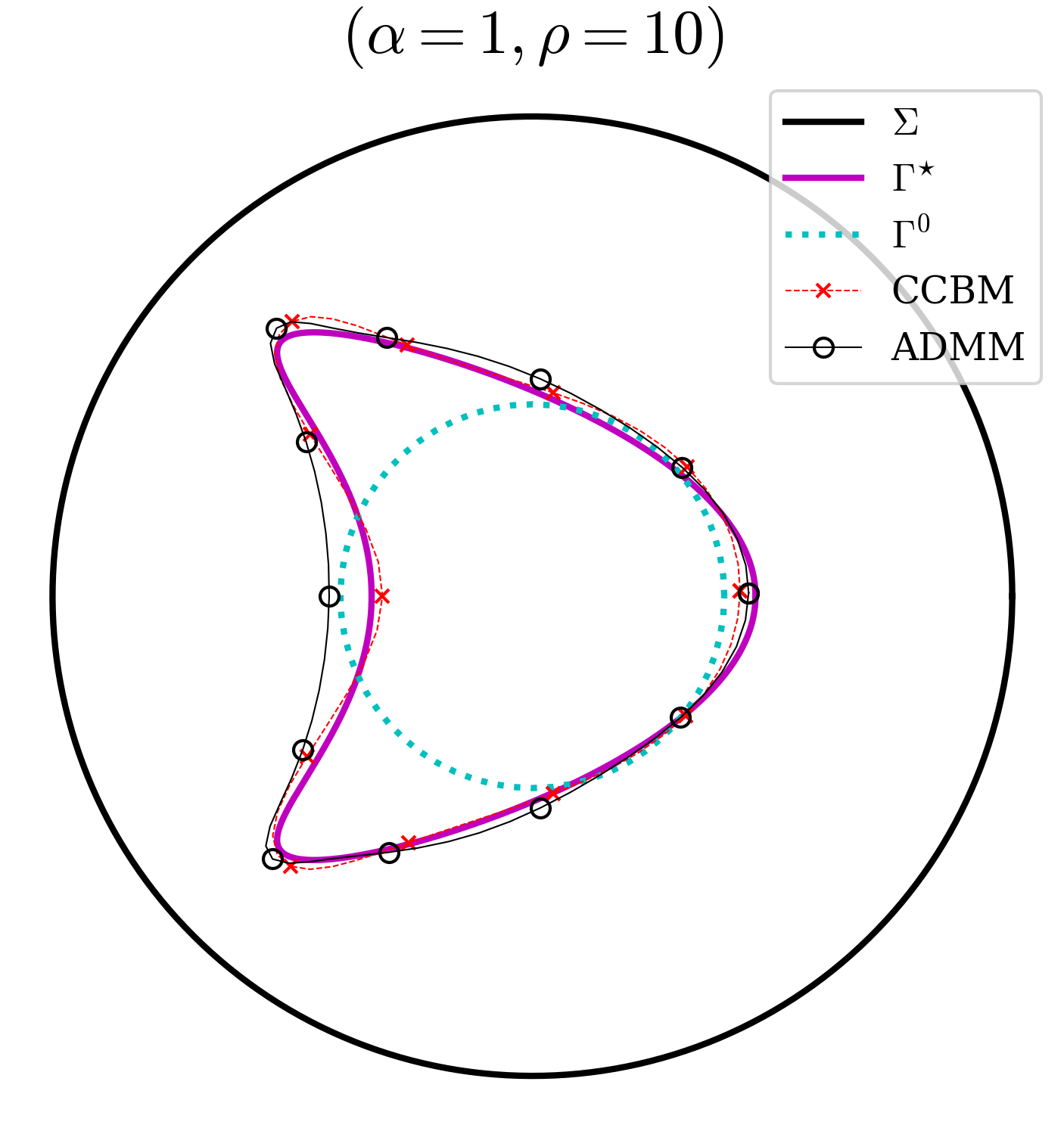}} \hfill
    \resizebox{0.32\textwidth}{!}{\includegraphics{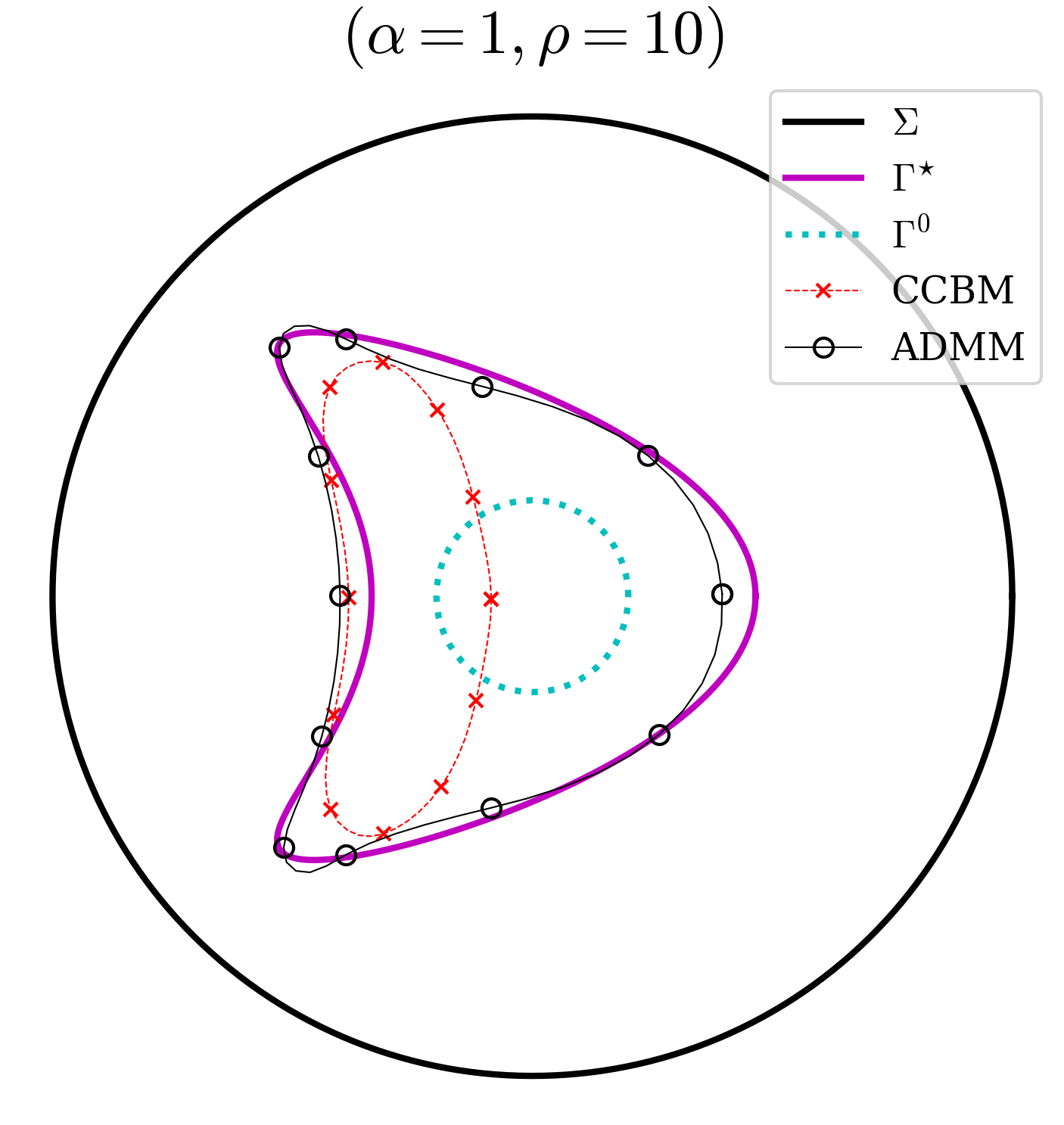}} \hfill
    \resizebox{0.32\textwidth}{!}{\includegraphics{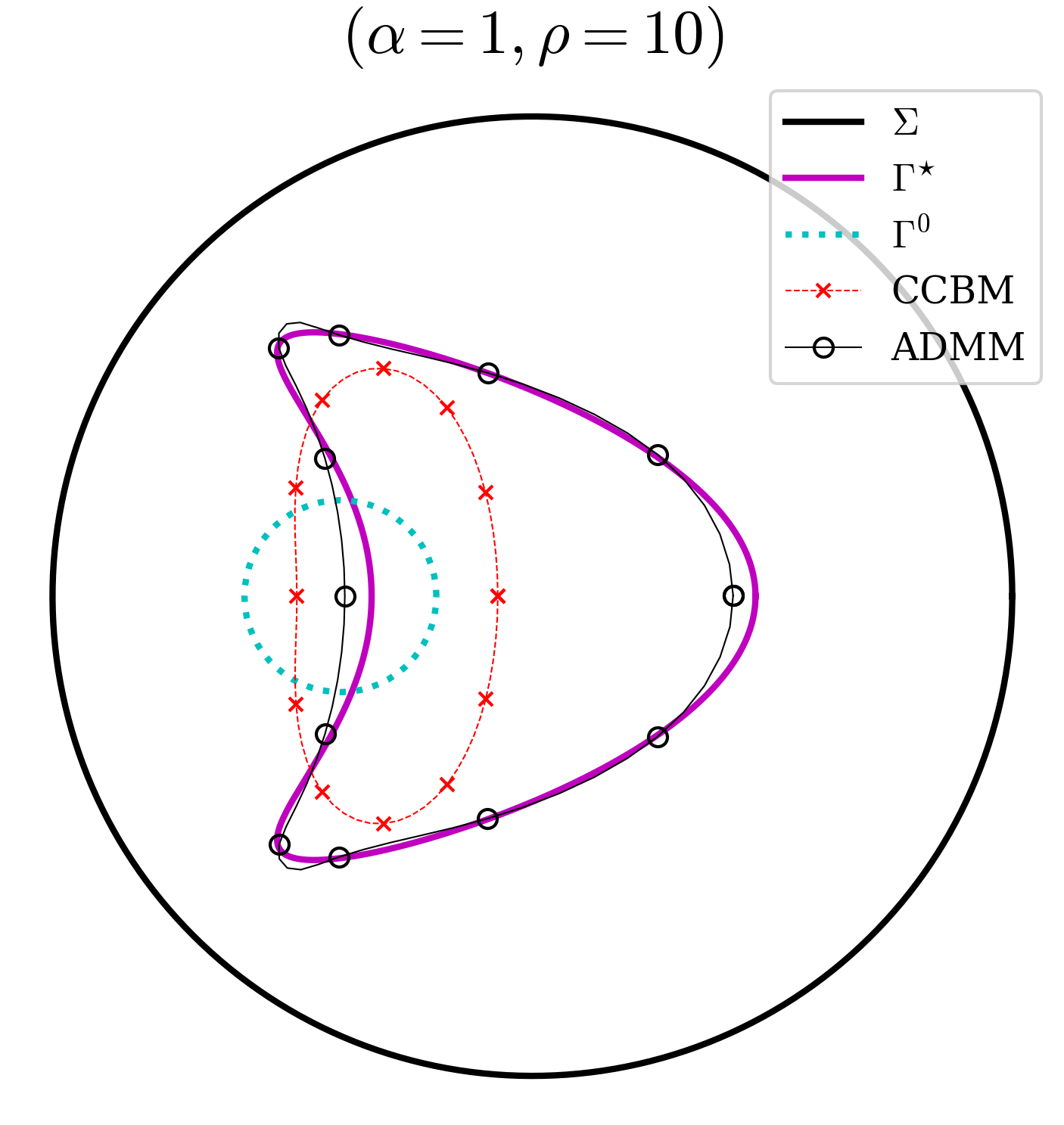}} 
\caption{
Effect of size and location of the initial guess with $\rho = 10$ under exact measurements.
}
\label{ADMMfig:test_initial_guess_1}
\end{figure}

\begin{figure}[h!]
    \centering 
    \resizebox{0.32\textwidth}{!}{\includegraphics{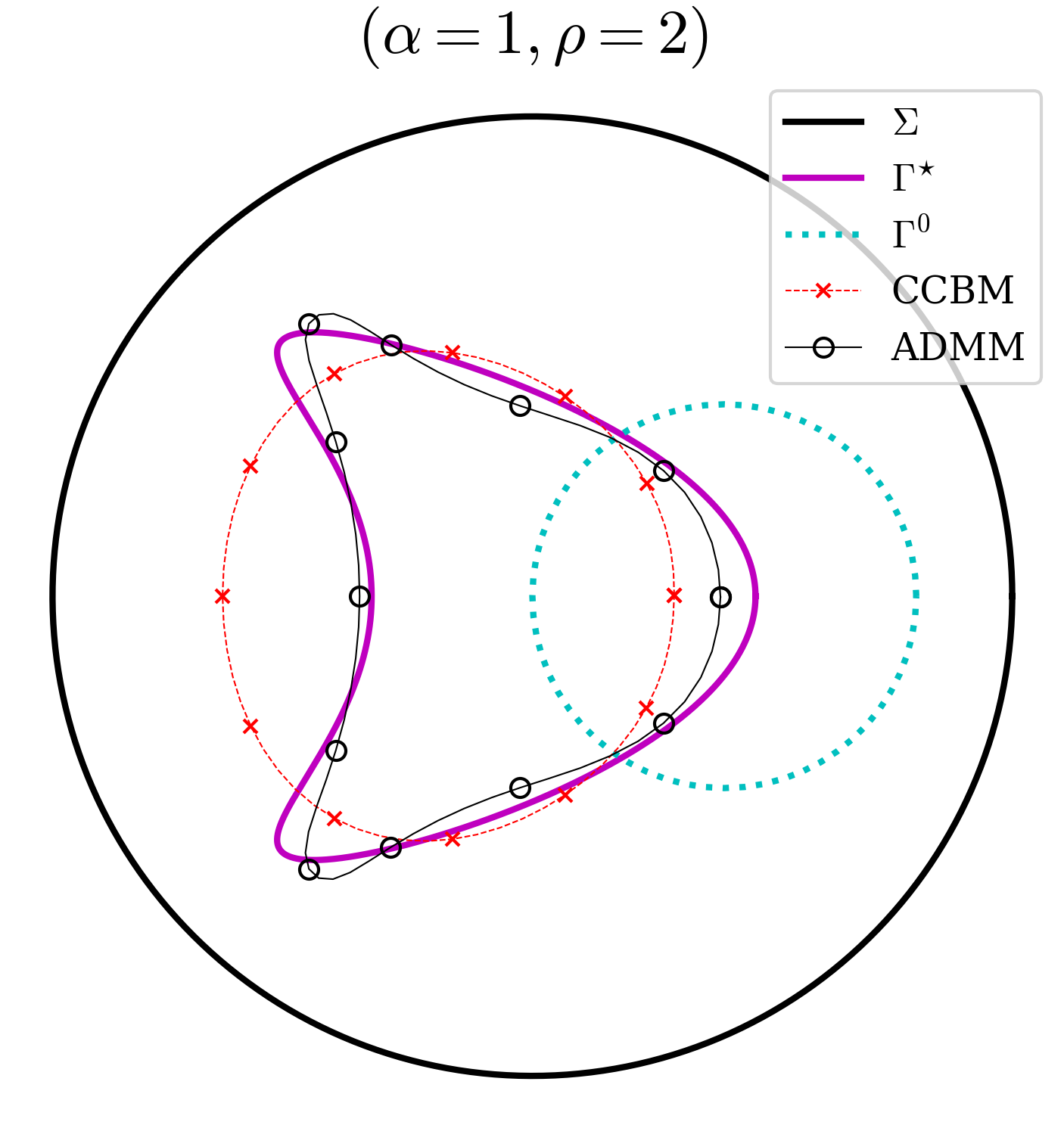}} \hfill
    \resizebox{0.32\textwidth}{!}{\includegraphics{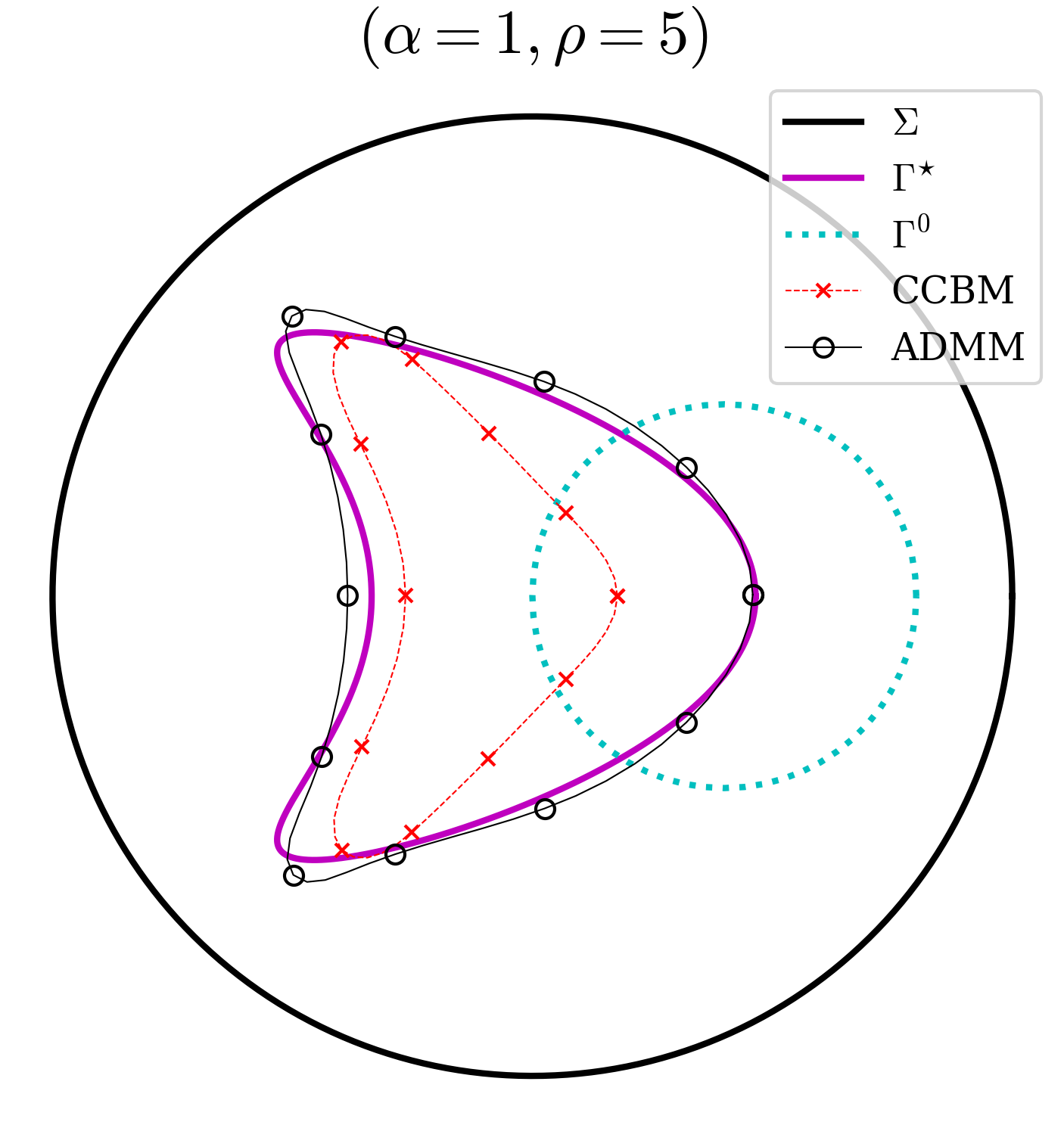}} \hfill
    \resizebox{0.32\textwidth}{!}{\includegraphics{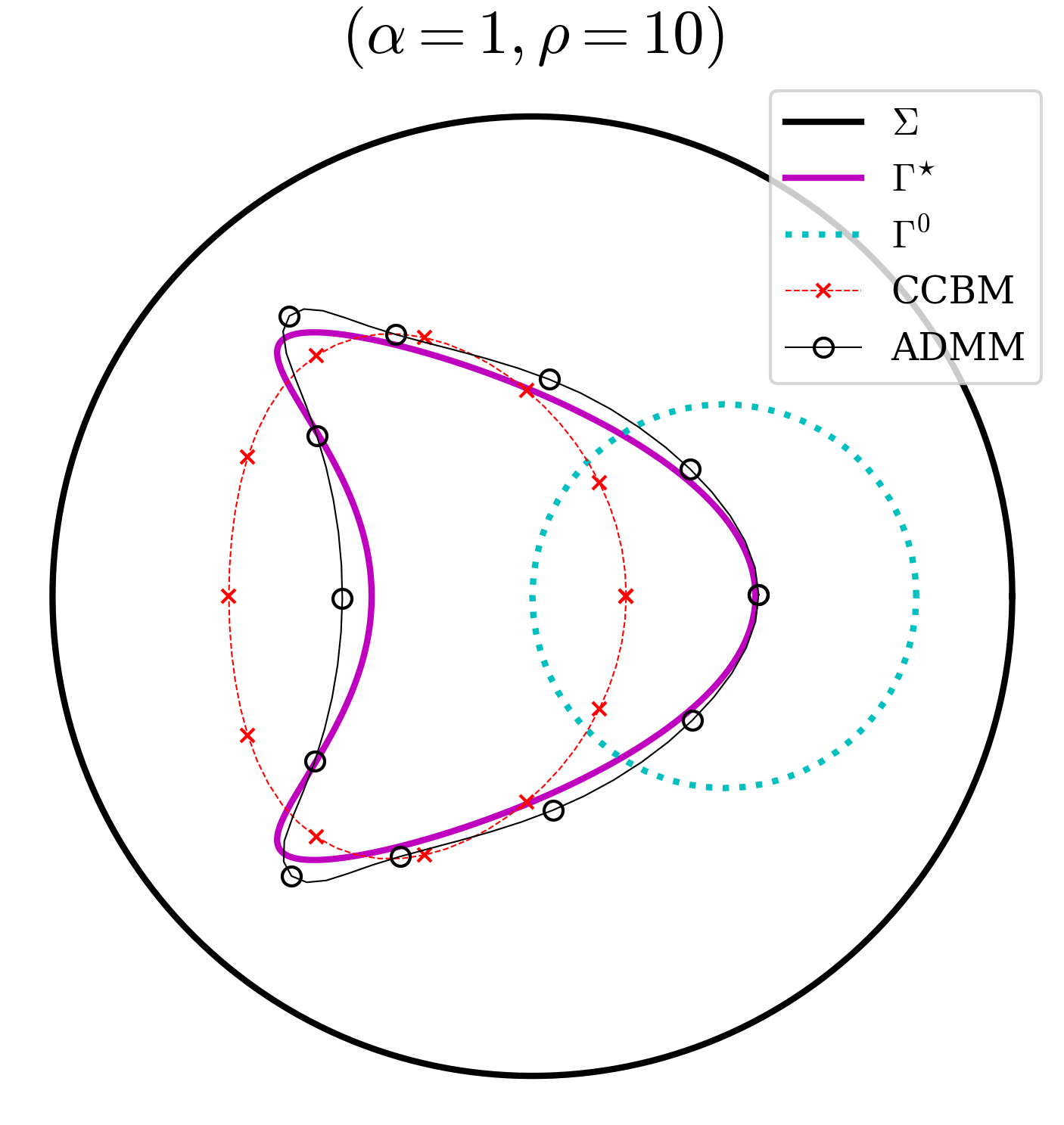}}
\caption{
Reconstructions with varying $\rho = 2,5,10$ under exact measurements.
}
\label{ADMMfig:test_initial_guess_2}
\end{figure}


We now turn to the more realistic setting of noisy data to assess the robustness of the proposed {{ADMM}} framework. 
The measurements are perturbed with relative noise levels $\delta = 0.005$ and $\delta = 0.01$.

Figure~\ref{ADMMfig:noisy_test_kite} reports reconstructions corresponding to three choices of Dirichlet input $f$.
The numerical results indicate a pronounced dependence of the reconstruction quality on the selected excitation, particularly in the recovery of concave portions of the cavity boundary, a behavior already observed in the multiple-measurement framework studied in \cite{AfraitesRabago2025}.
For $f = f_{1}$, the method captures the overall geometry of the cavity, although concave regions remain smoothed and the reconstructed boundary exhibits a slight displacement that becomes more noticeable as $\delta$ increases.
For the oscillatory input
$f = f_{2} \coloneqq \sin(0.1\pi x_{1}) \sin(0.1\pi x_{2})$,
the reconstruction improves locally, suggesting enhanced sensitivity to higher-frequency geometric features, despite the presence of minor distortions.
The combined excitation $f = f_{1} + f_{2}$ yields the most accurate reconstructions for both noise levels considered.
These observations are consistent with the behavior obtained in the noise-free regime and in the {{ADMM}} comparison tests, where richer boundary excitations lead to improved identifiability properties.
At the same time, the progressive deterioration as $\delta$ increases further reflects the mildly ill-posed character of the reconstruction problem.

\begin{figure}[h!]
    \centering 
    \resizebox{0.32\textwidth}{!}{\includegraphics{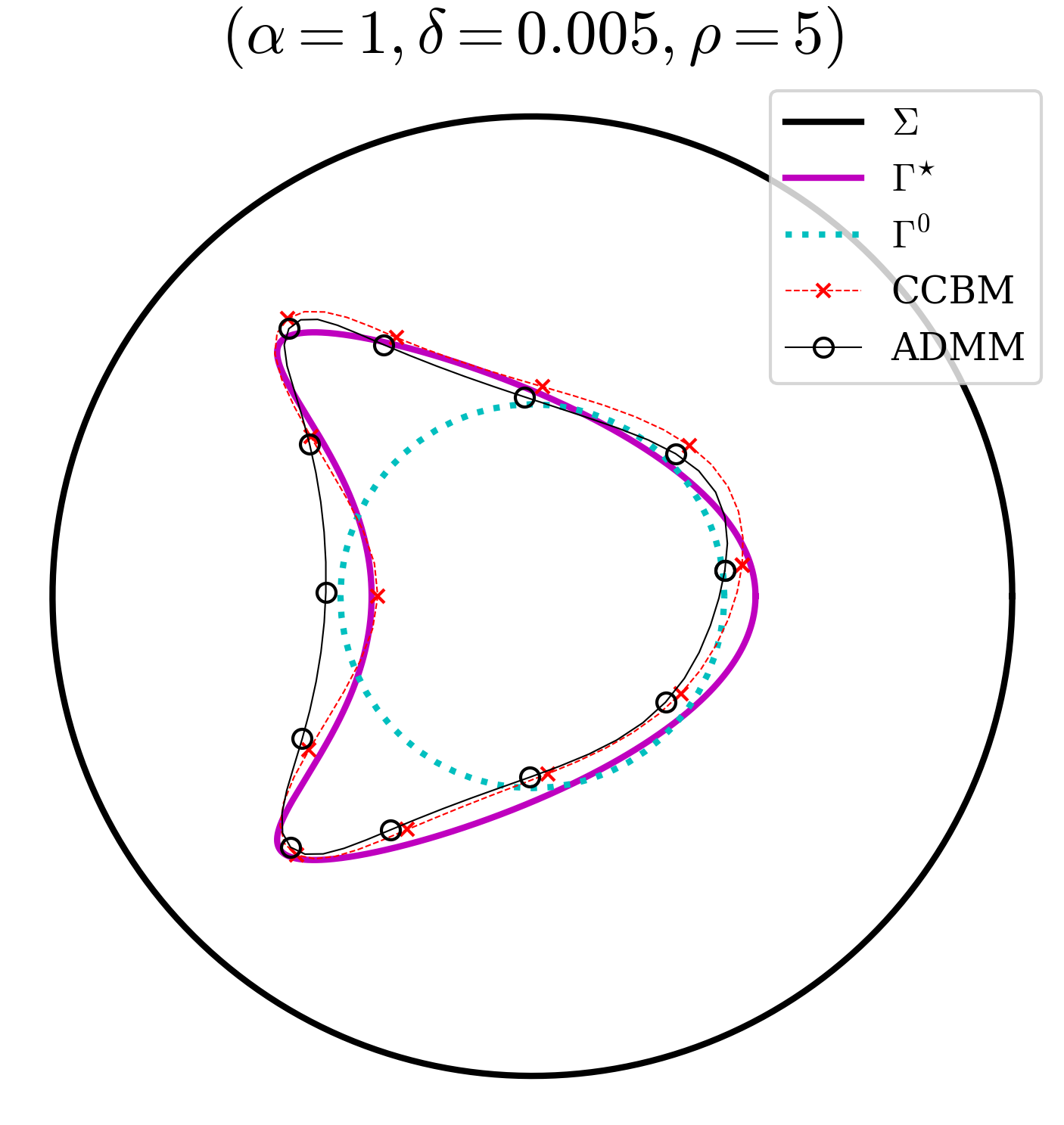}} \hfill
    \resizebox{0.32\textwidth}{!}{\includegraphics{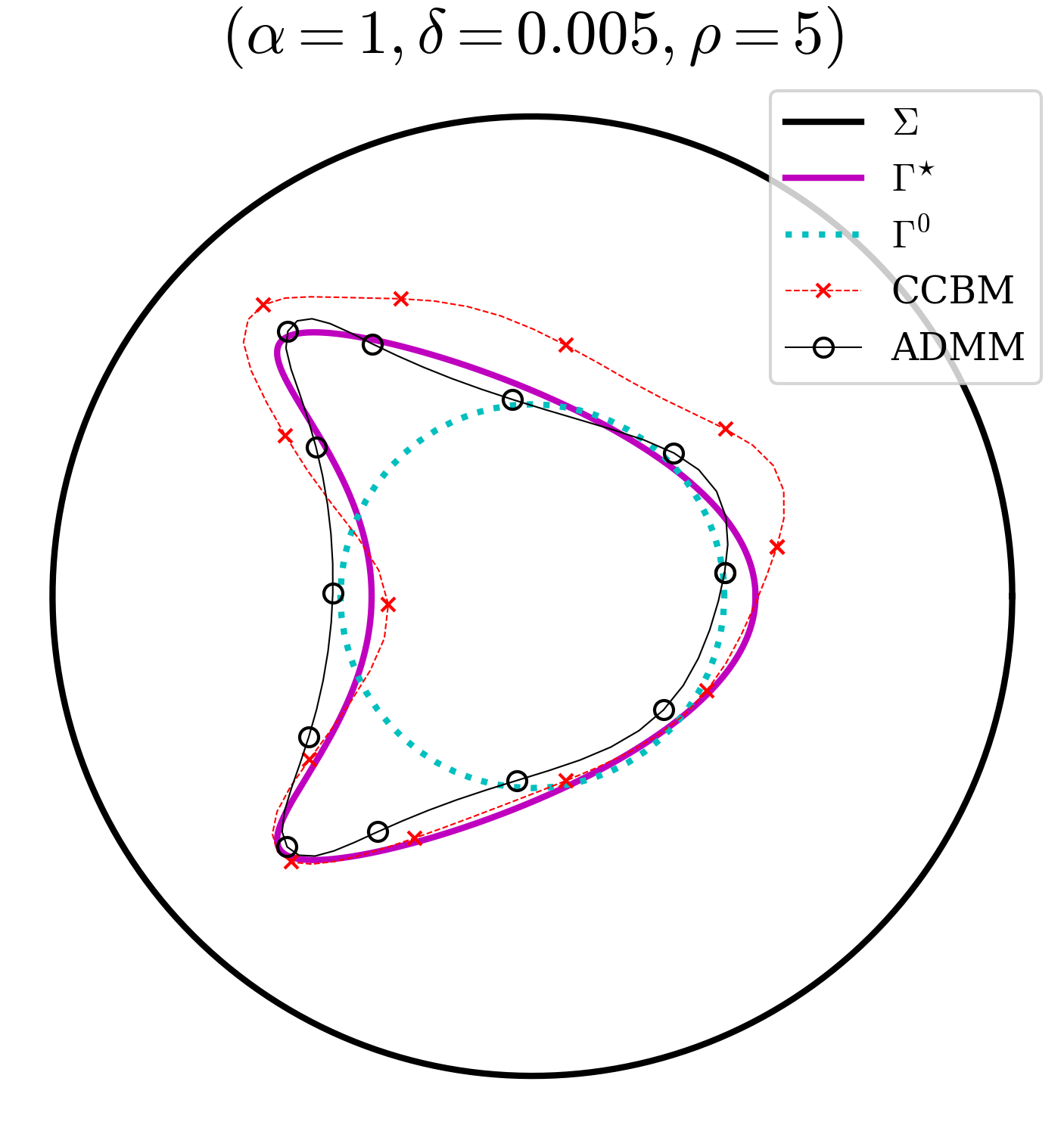}} \hfill
    \resizebox{0.32\textwidth}{!}{\includegraphics{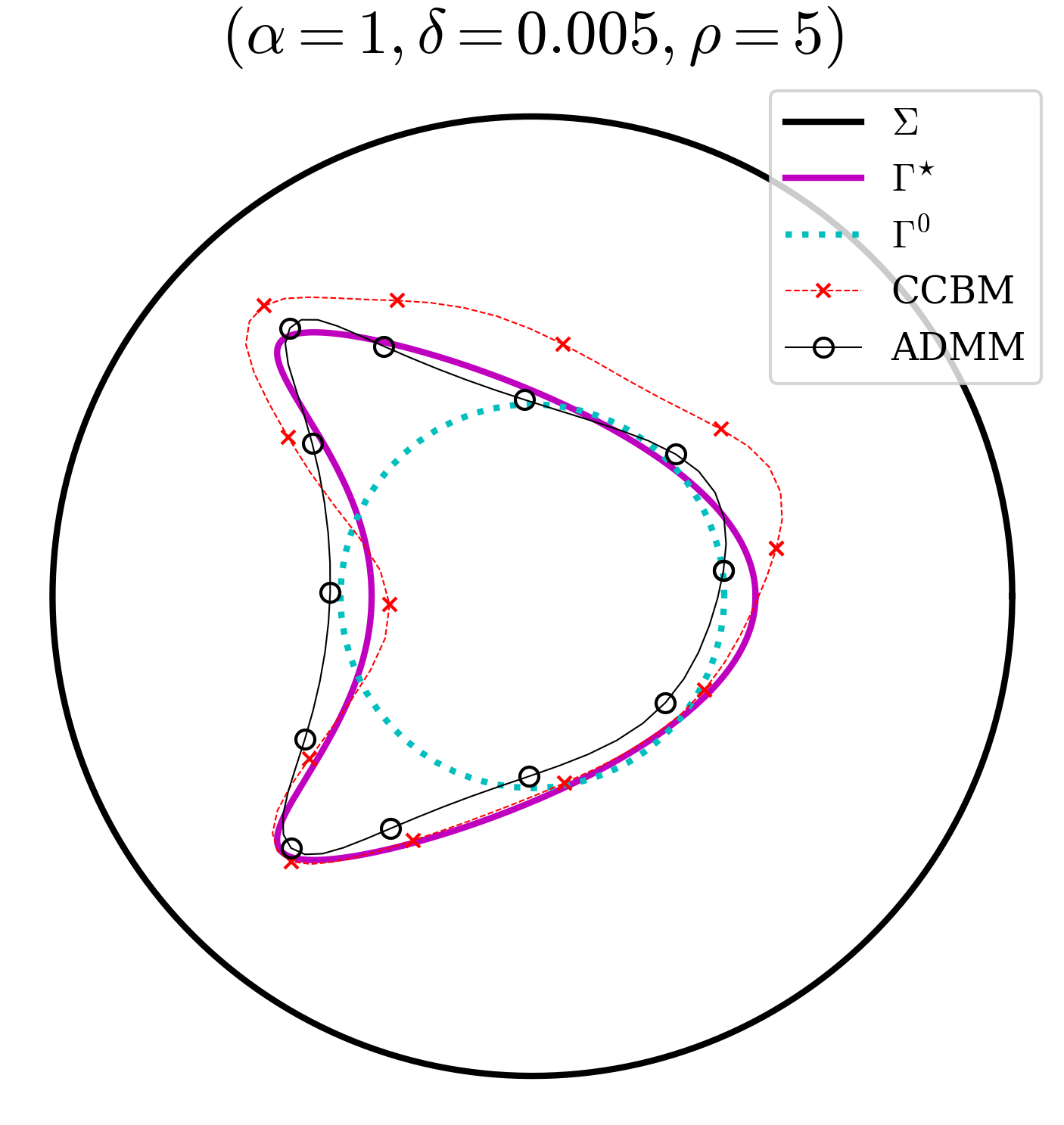}}\\[4pt]
    \resizebox{0.32\textwidth}{!}{\includegraphics{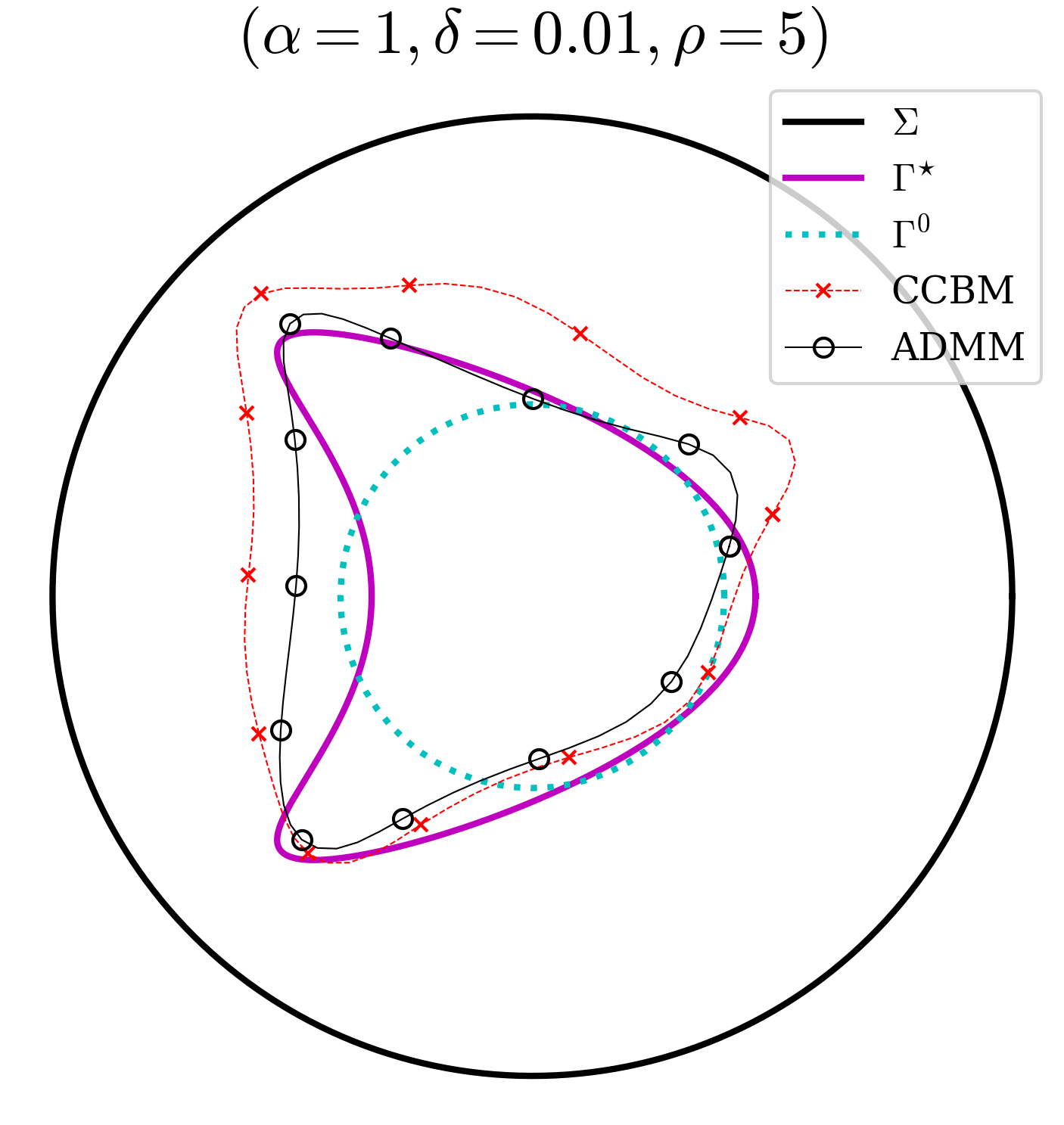}} \hfill
    \resizebox{0.32\textwidth}{!}{\includegraphics{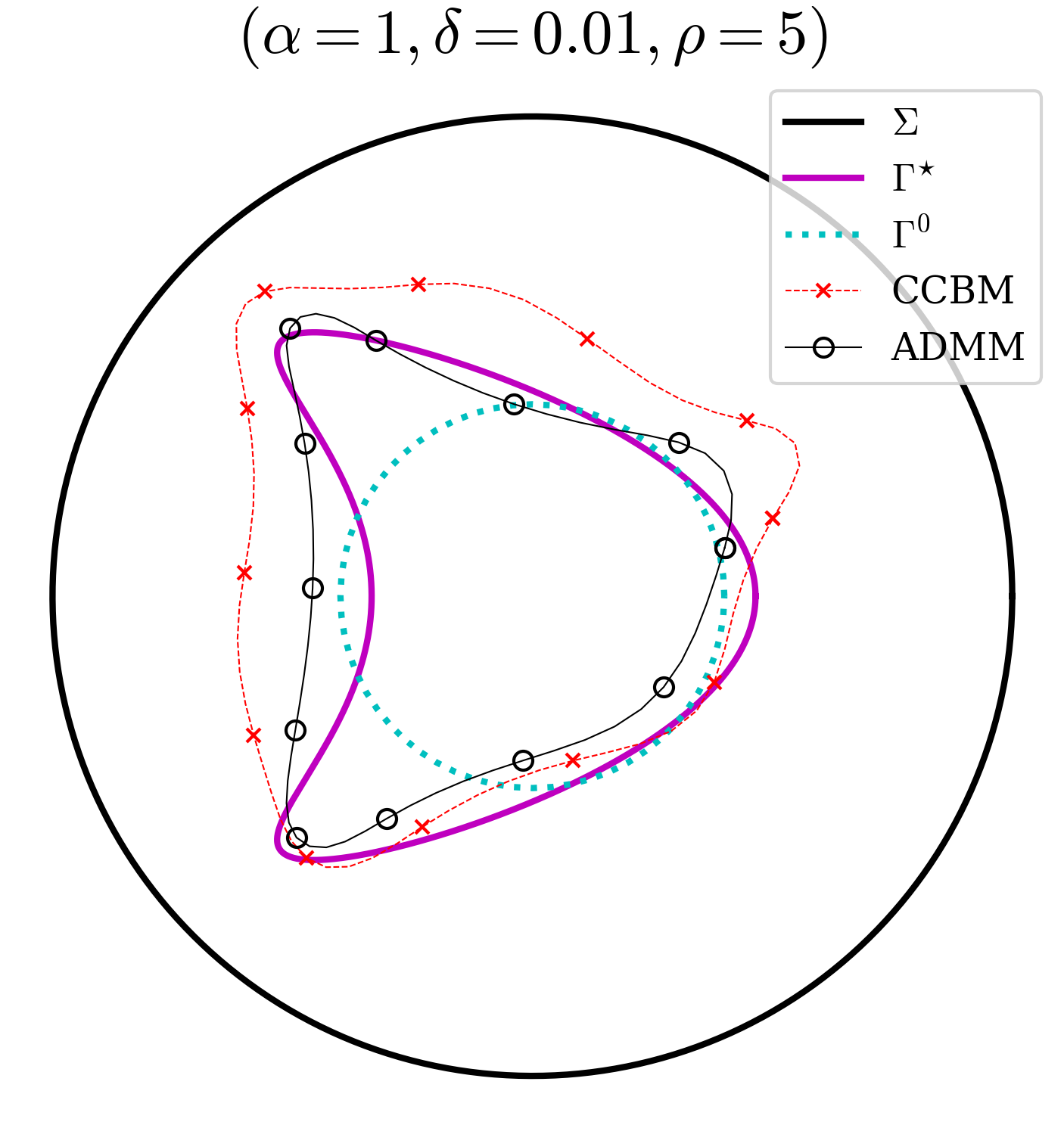}} \hfill
    \resizebox{0.32\textwidth}{!}{\includegraphics{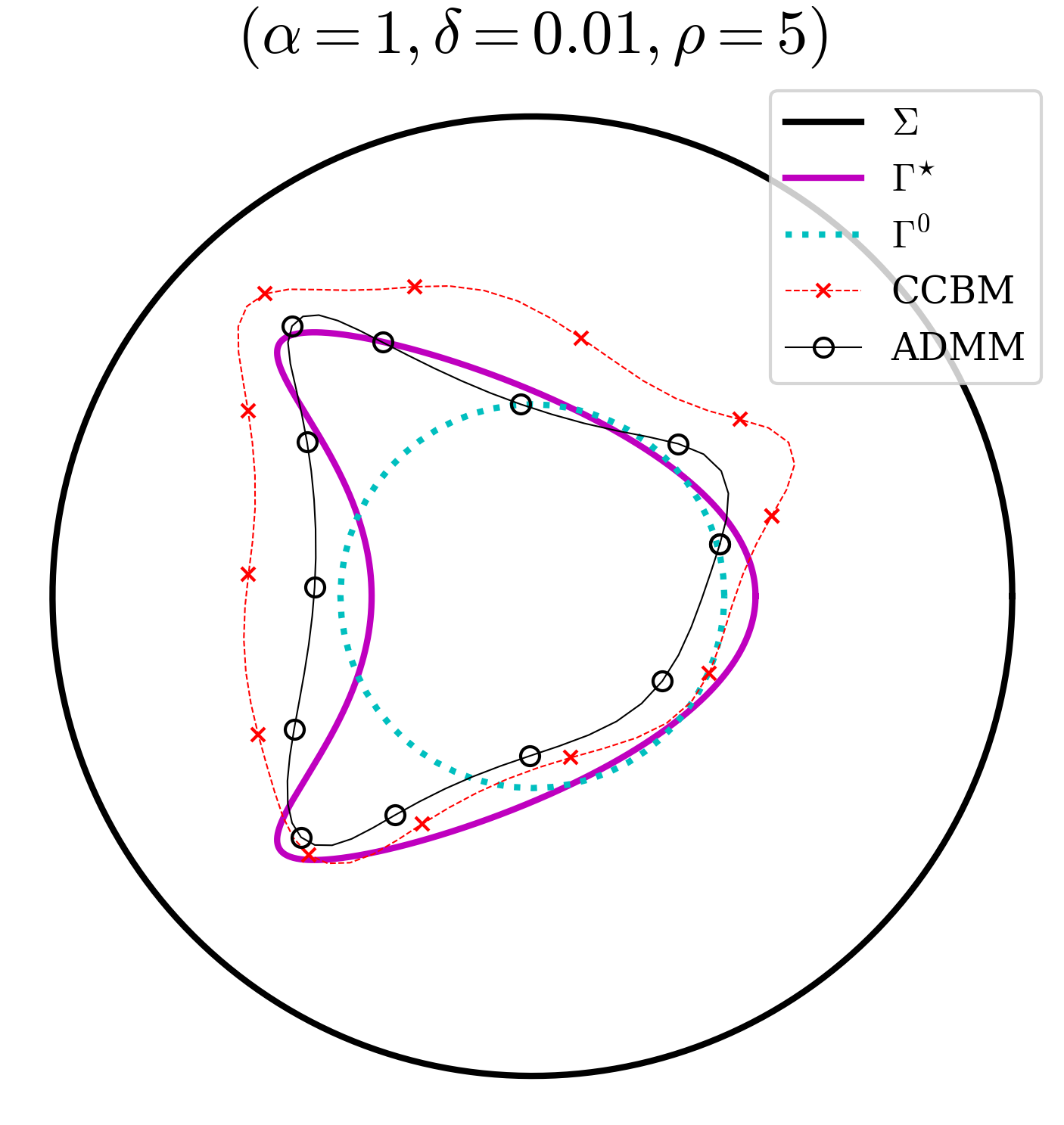}}
\caption{
Reconstructions under noisy measurements with low noise levels $\delta = 0.005$ (top row) and $\delta = 0.01$ (bottom row). The prescribed Dirichlet data are $f = \cos{\arctan(x_{2}/x_{1})}$ (left column), $f = \sin{(0.1\pi x_{1})} \sin{(0.1\pi x_{2})}$ (middle column), and $f = \cos{\arctan(x_{2}/x_{1})} + \sin{(0.1\pi x_{1})} \sin{(0.1\pi x_{2})}$ (right column).
}
\label{ADMMfig:noisy_test_kite}
\end{figure}

In Figure~\ref{ADMMfig:noisy_test_L}, we examine the effect of $\rho$ for the \texttt{L}-shaped cavity. 
For $\rho = 5$, the reconstructions are overly smooth and exhibit a visible bias near the reentrant corner. 
Increasing to $\rho = 10$ improves geometric fidelity, particularly along flat edges and corners. 
For $\rho = 20$, the improvement is marginal, and slight boundary oscillations appear in some cases. 
This behavior reflects the role of $\rho$ in the ADMM splitting: small values lead to weak enforcement of the auxiliary constraint, while large values increase sensitivity to noise. 
An intermediate choice of $\rho$ provides a reasonable compromise, consistent with the trends observed in the {{ADMM}} comparison.

\begin{figure}[h!]
    \centering 
    \resizebox{0.32\textwidth}{!}{\includegraphics{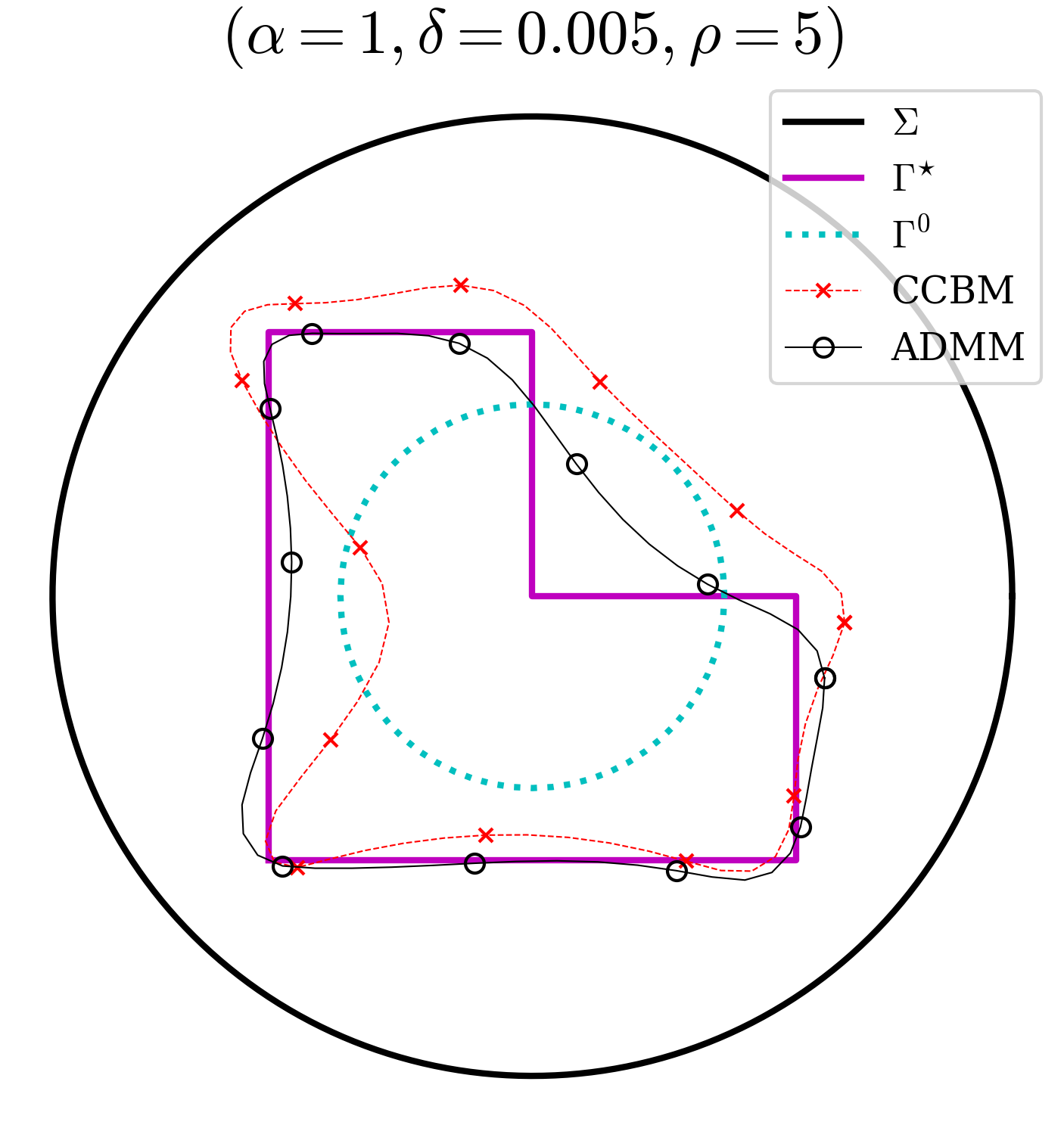}} \hfill
    \resizebox{0.32\textwidth}{!}{\includegraphics{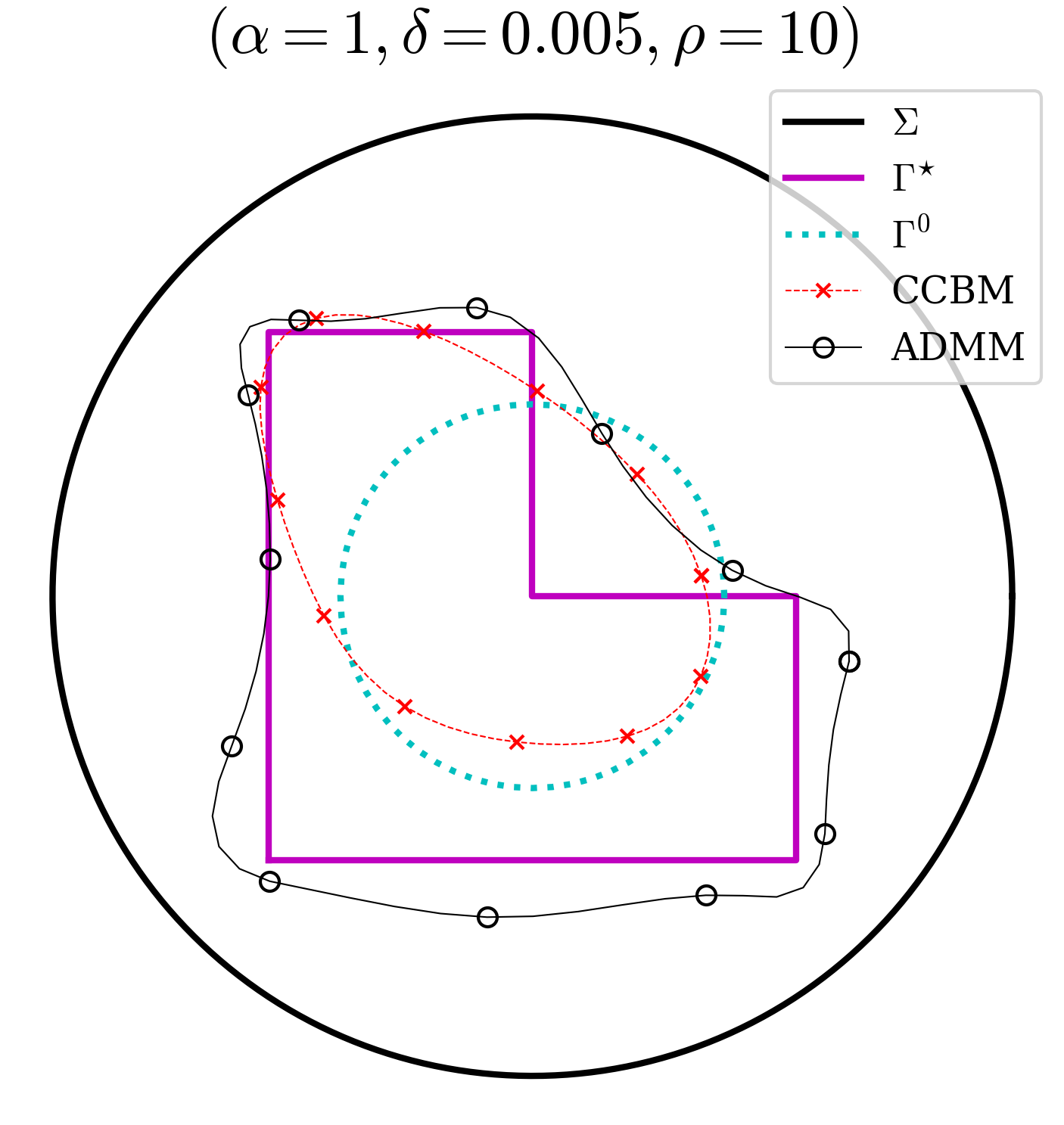}} \hfill
    \resizebox{0.32\textwidth}{!}{\includegraphics{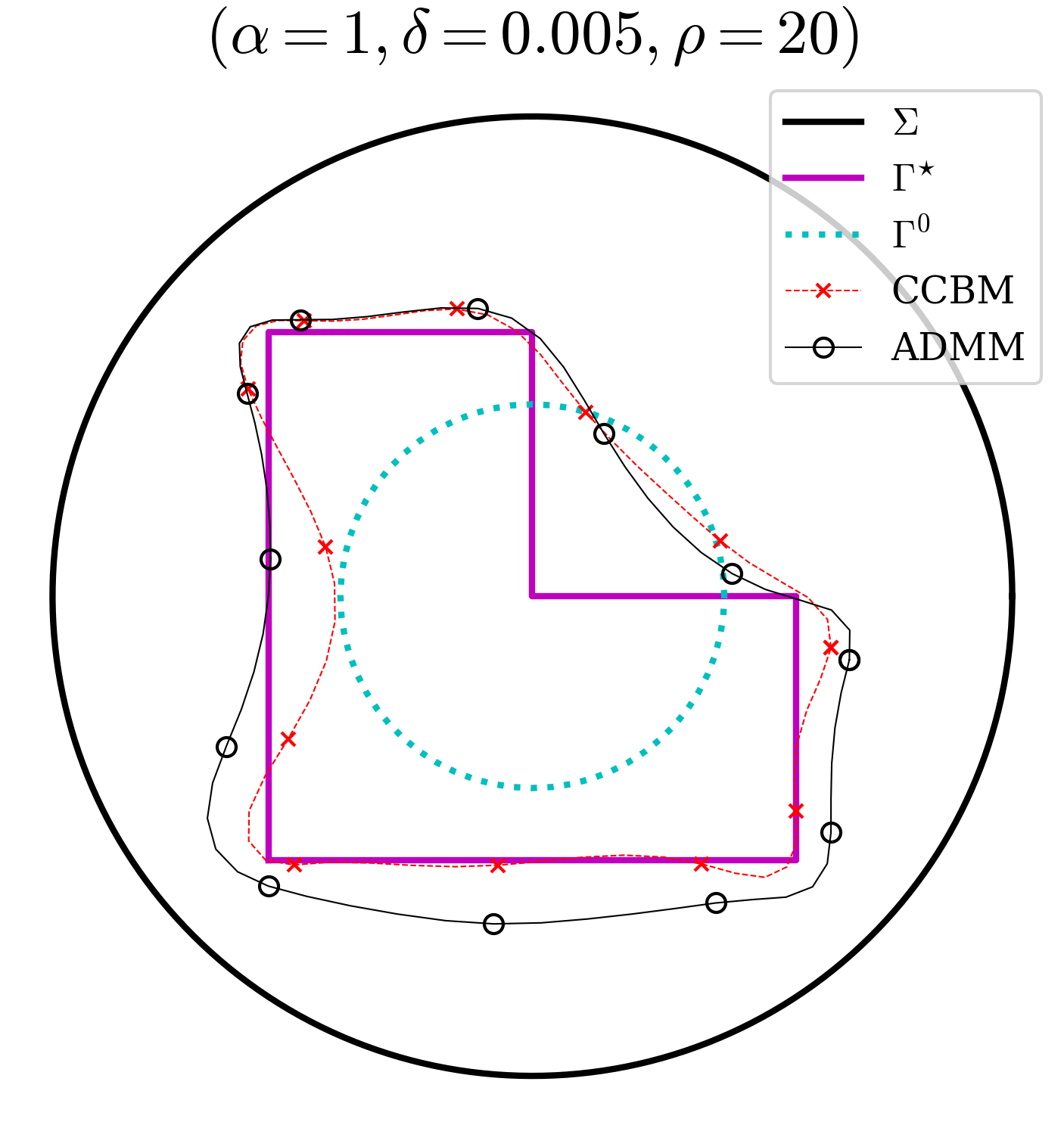}}\\[4pt]
    \resizebox{0.32\textwidth}{!}{\includegraphics{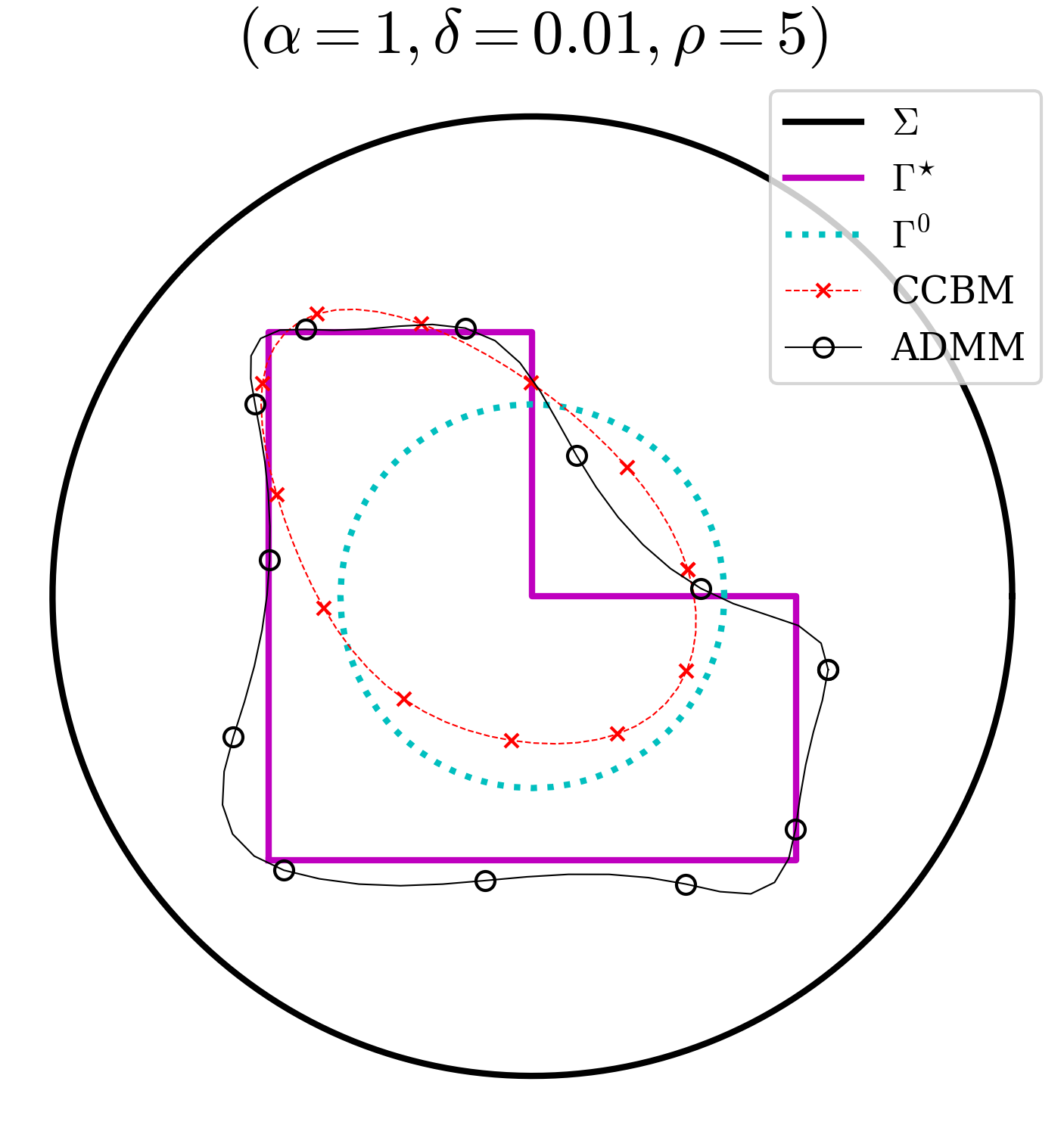}} \hfill
    \resizebox{0.32\textwidth}{!}{\includegraphics{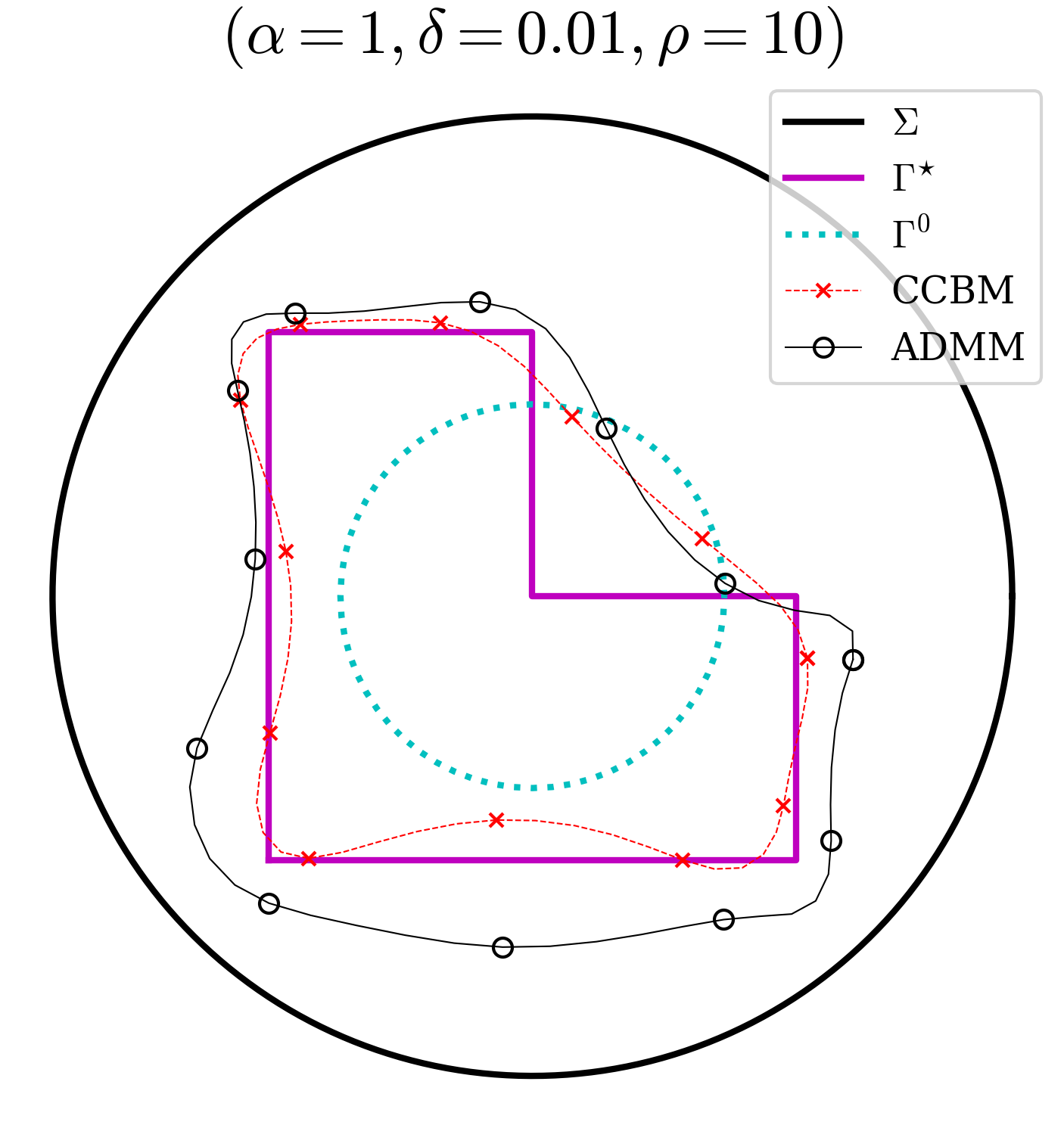}} \hfill
    \resizebox{0.32\textwidth}{!}{\includegraphics{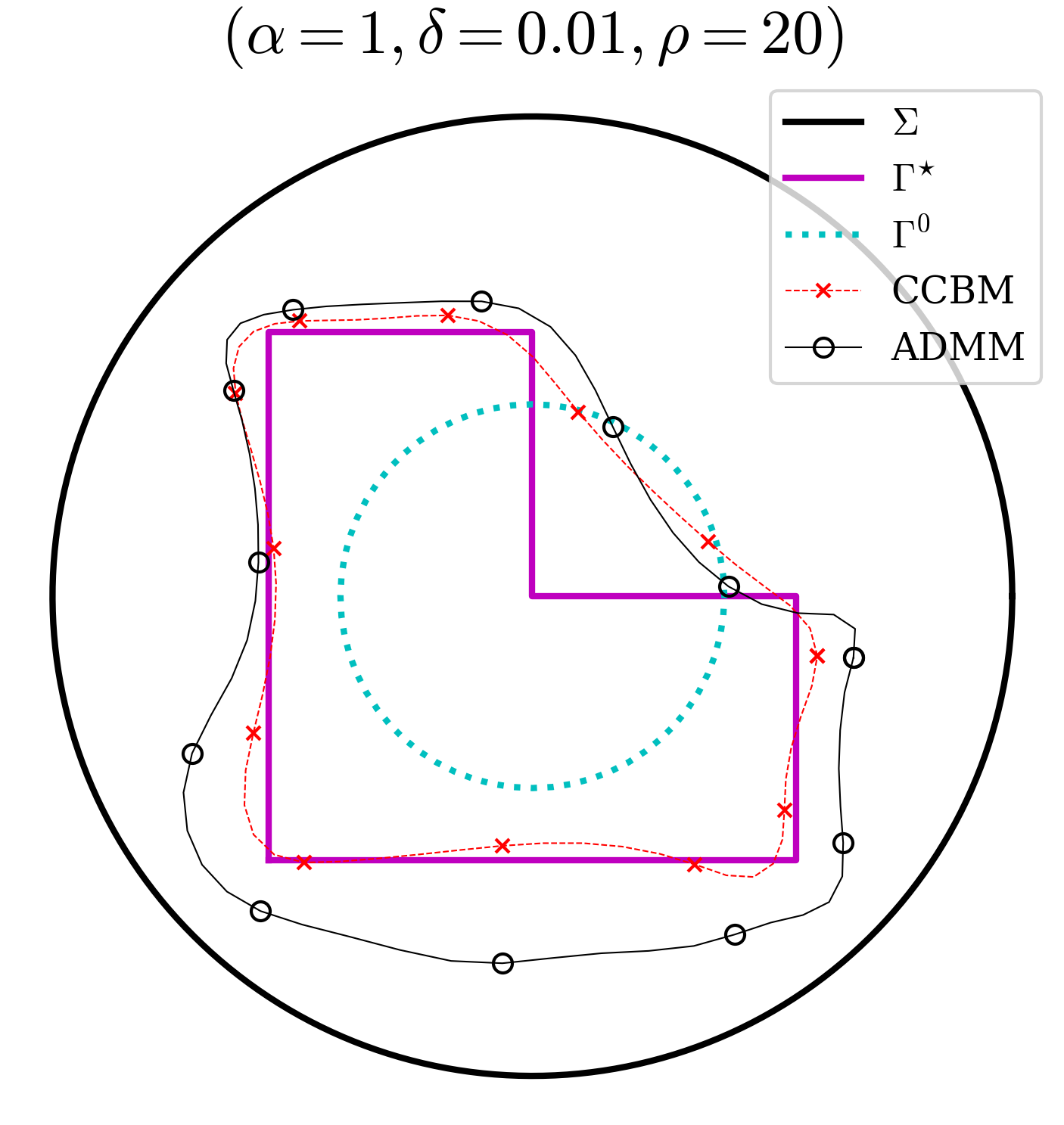}}
\caption{
Reconstructions under noisy measurements with low noise levels $\delta = 0.005$ (top row) and $\delta = 0.01$ (bottom row) for an \texttt{L}-shape cavity under the prescribed Dirichlet data $f = \cos{\arctan(x_{2}/x_{1})} + \sin{(0.1\pi x_{1})} \sin{(0.1\pi x_{2})}$.
}
\label{ADMMfig:noisy_test_L}
\end{figure}

Figure~\ref{ADMMfig:noisy_test_L_fminmax_ab} illustrates the effect of imposing the inequality constraint through the admissible set \eqref{eq:admissible_set_K} with 
$a = \min_{\xi \in \varOmega^{\star}} f(\xi)$ and $b = \max_{\xi \in \varOmega^{\star}} f(\xi)$.
The constraint stabilizes the reconstruction across different initializations and suppresses spurious oscillations induced by noise. 
In particular, the recovered shapes remain geometrically consistent, even in the presence of perturbations. 
However, the constraint introduces a mild bias, with reduced resolution near the reentrant corner. 
This reflects a trade-off between stability and accuracy, which is typical for constrained inverse problems. 
Compared with the unconstrained CCBM formulation, the constrained ADMM approach yields more stable reconstructions under noise, at the cost of slightly reduced sharpness.

\begin{figure}[h!]
    \centering 
    \resizebox{0.24\textwidth}{!}{\includegraphics{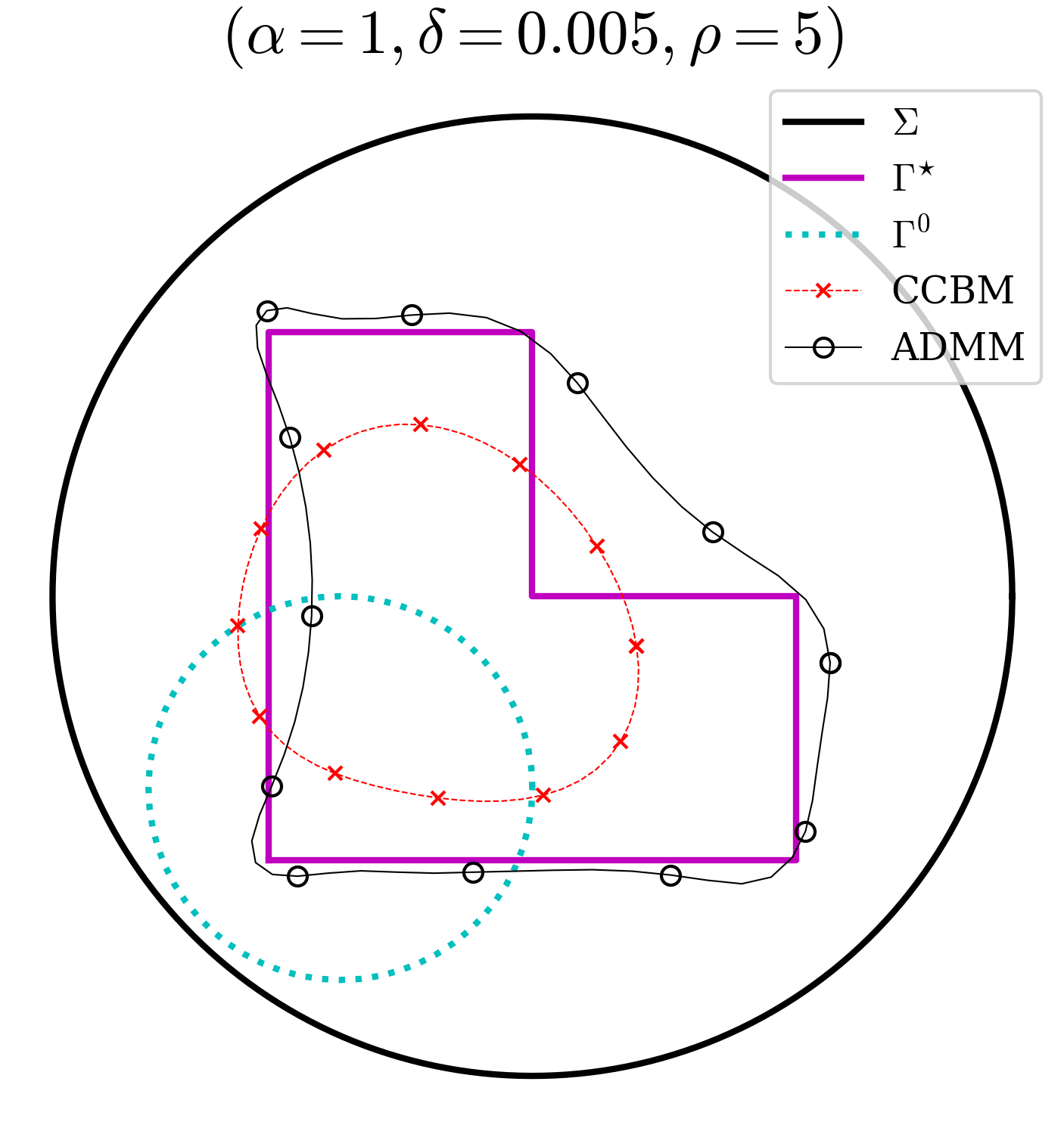}} 
    \resizebox{0.24\textwidth}{!}{\includegraphics{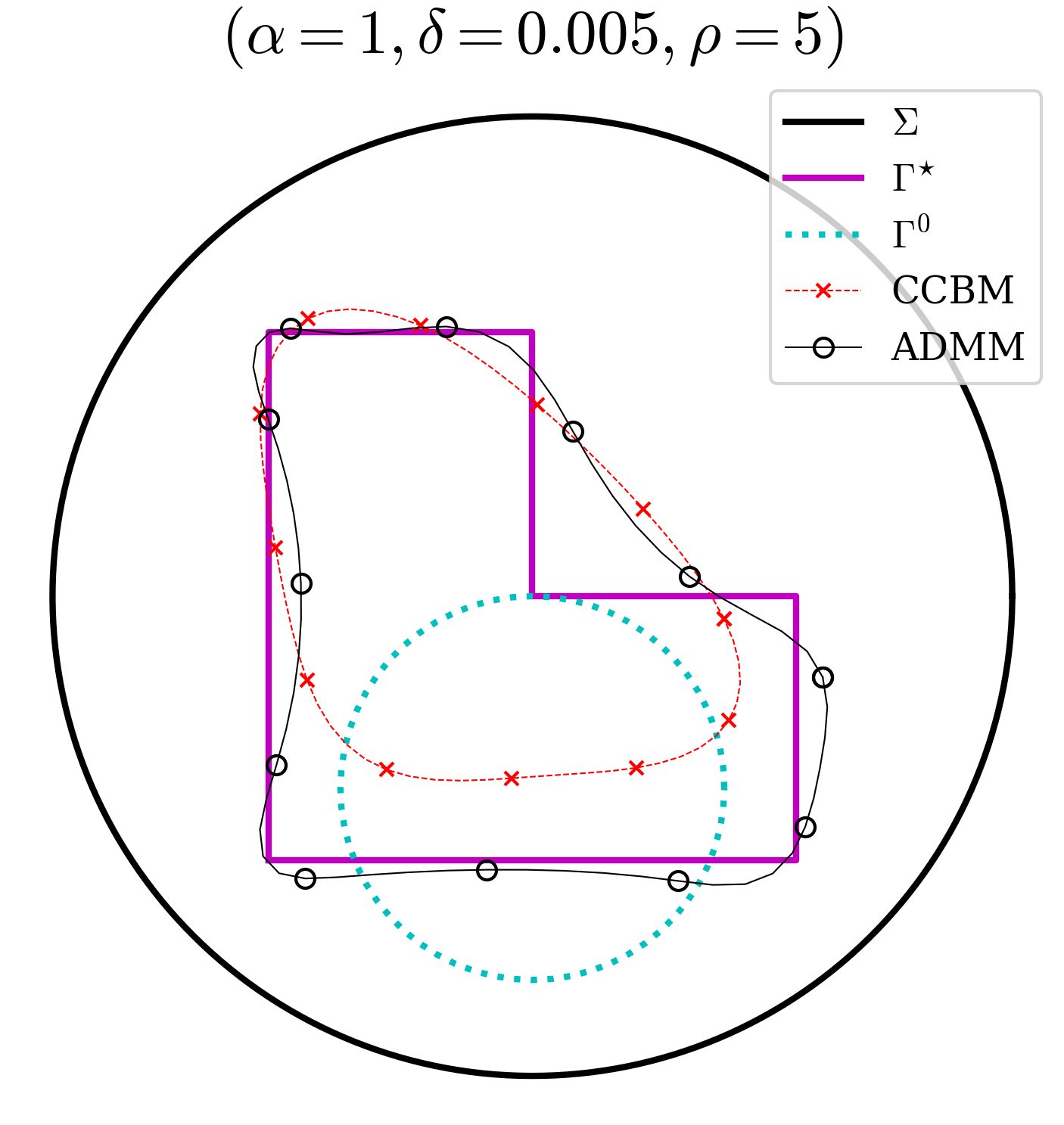}} 
    \resizebox{0.24\textwidth}{!}{\includegraphics{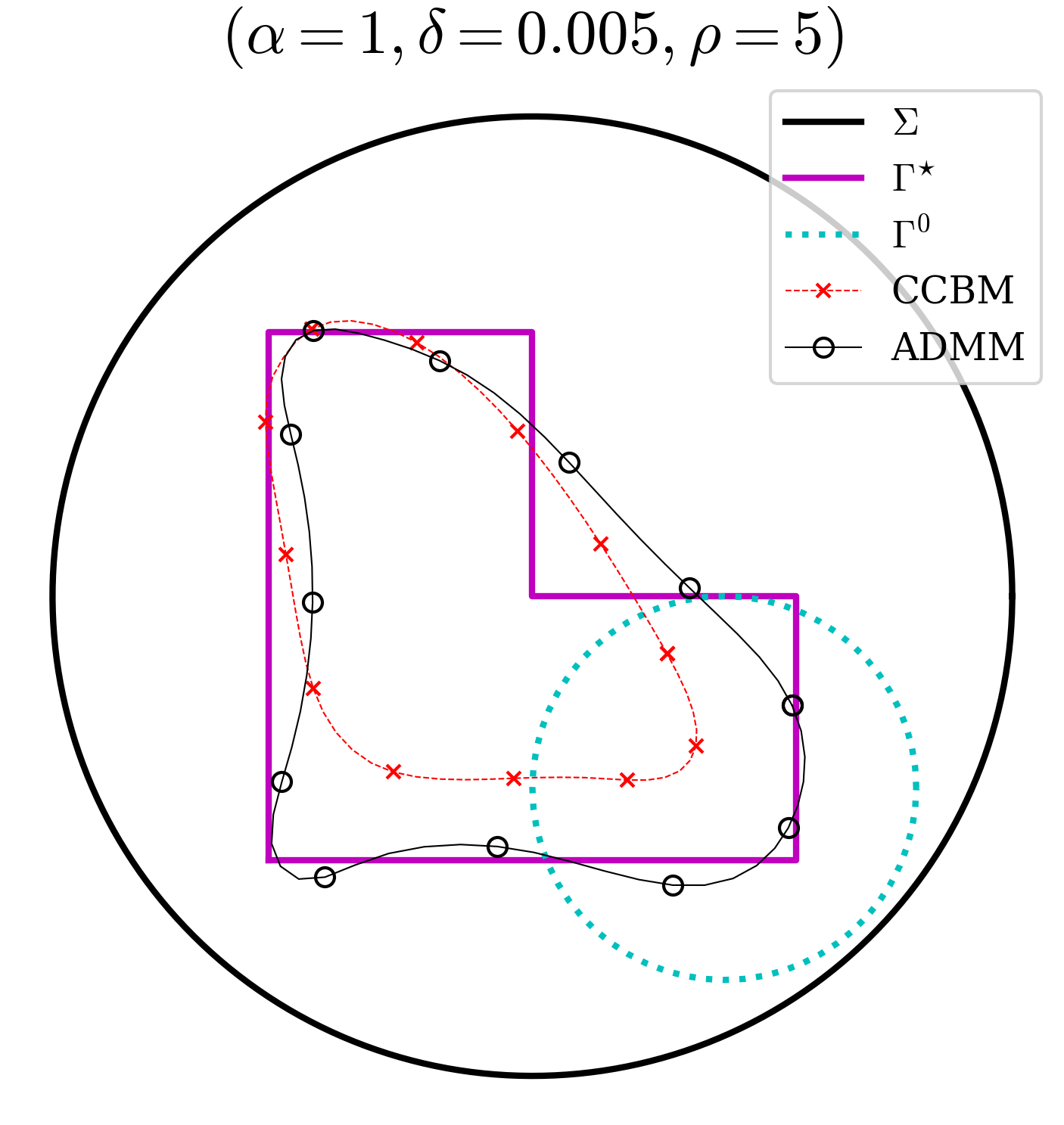}}
    \resizebox{0.24\textwidth}{!}{\includegraphics{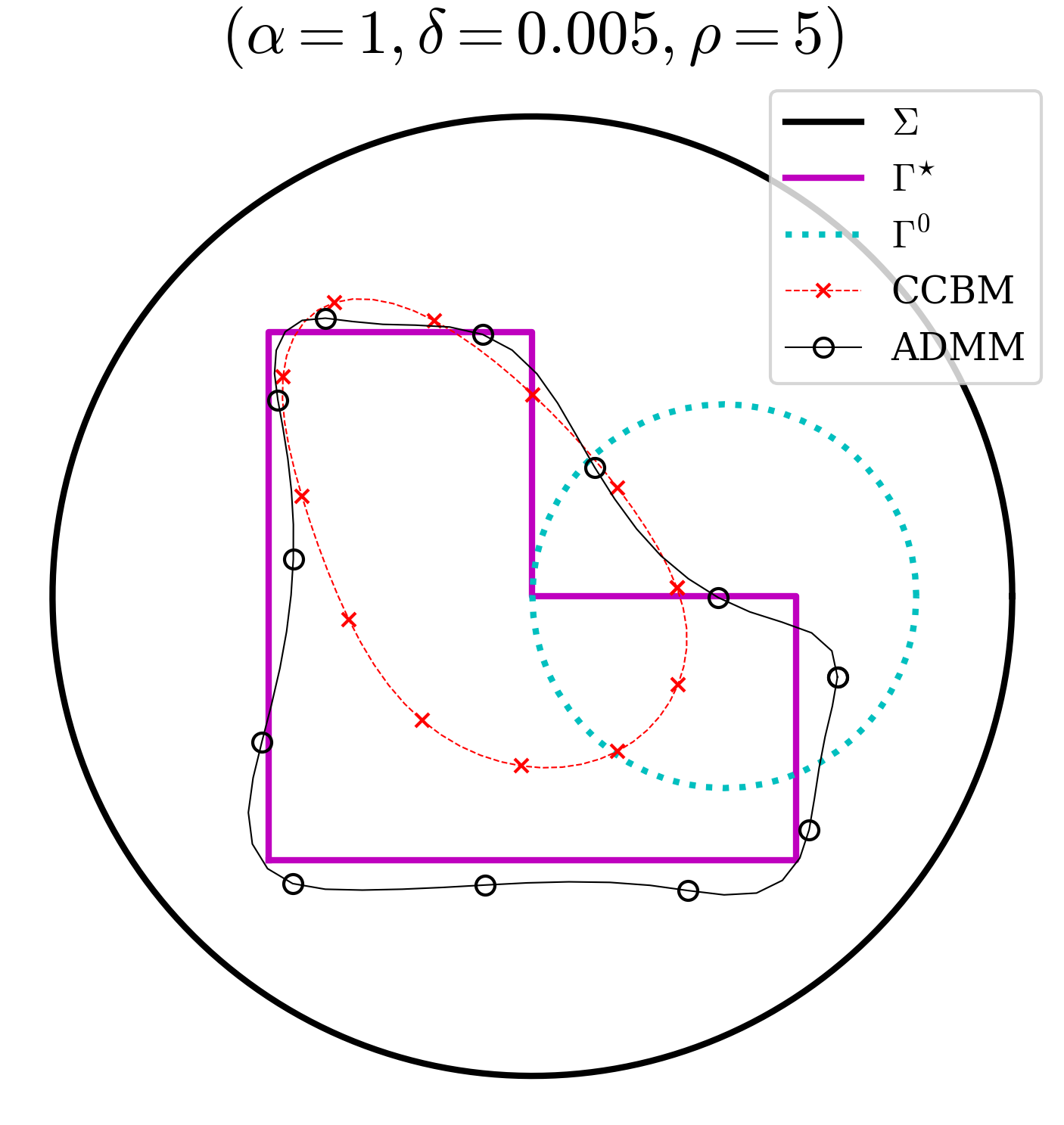}} \\[4pt]
    \resizebox{0.24\textwidth}{!}{\includegraphics{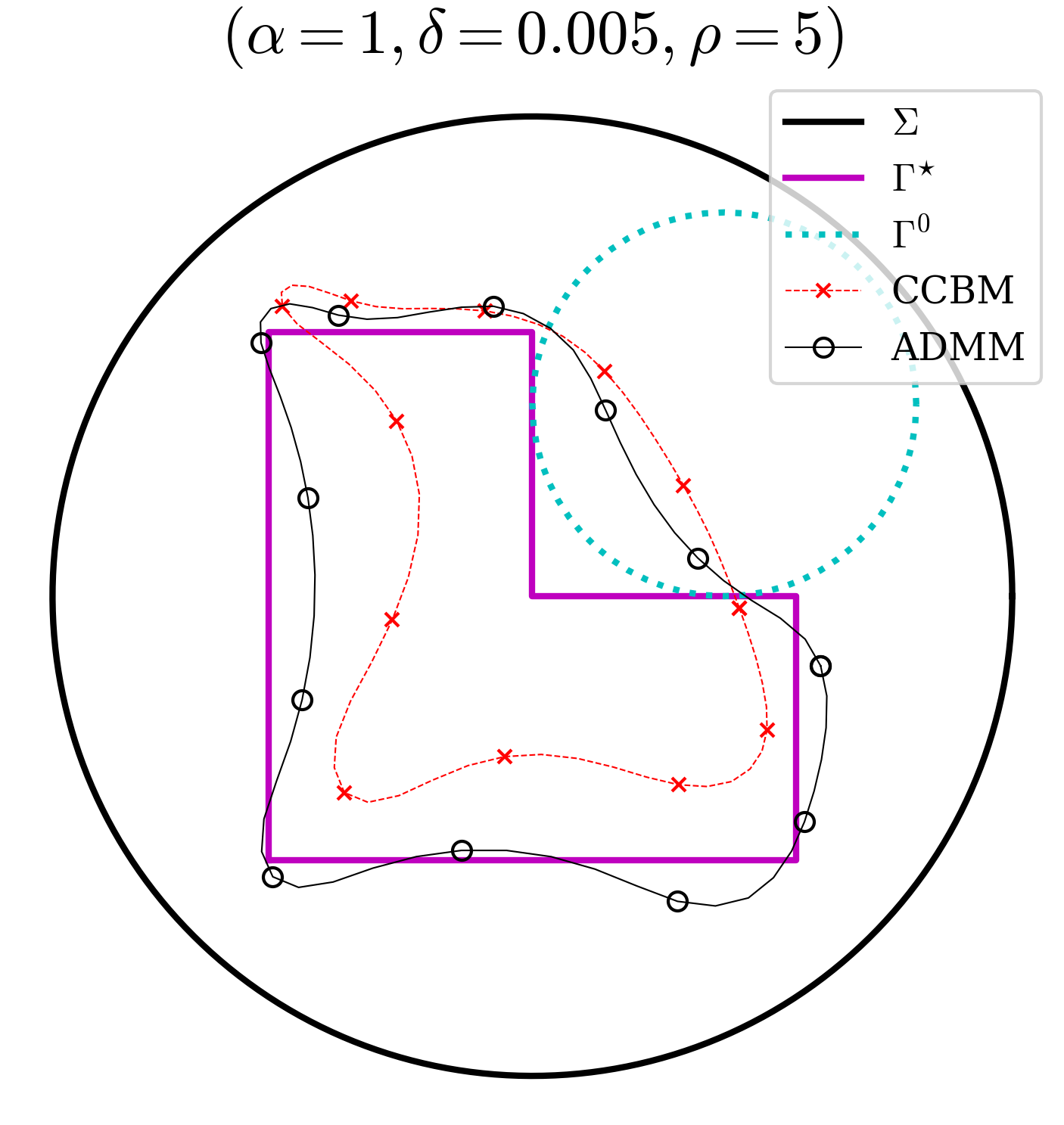}} 
    \resizebox{0.24\textwidth}{!}{\includegraphics{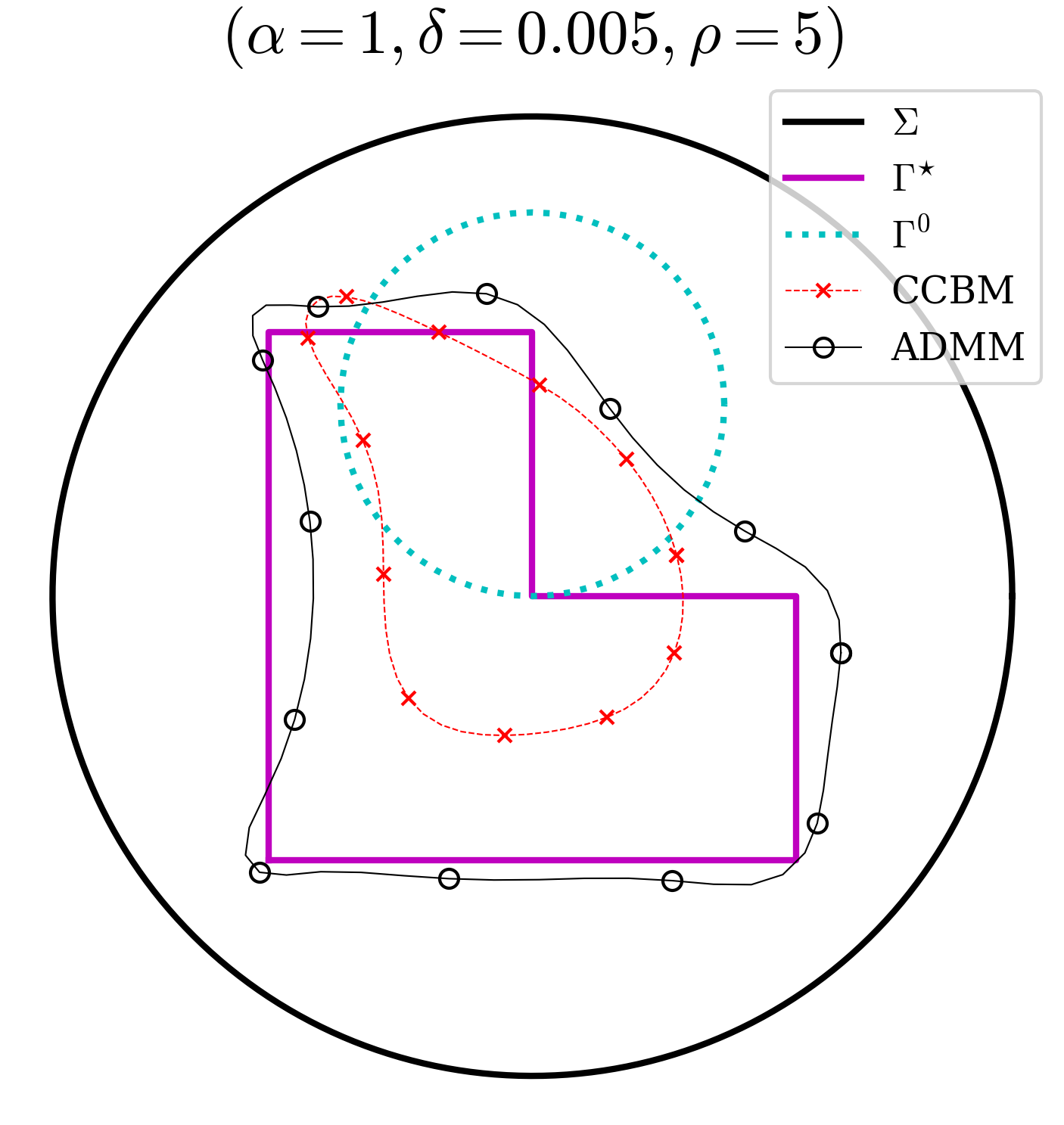}} 
    \resizebox{0.24\textwidth}{!}{\includegraphics{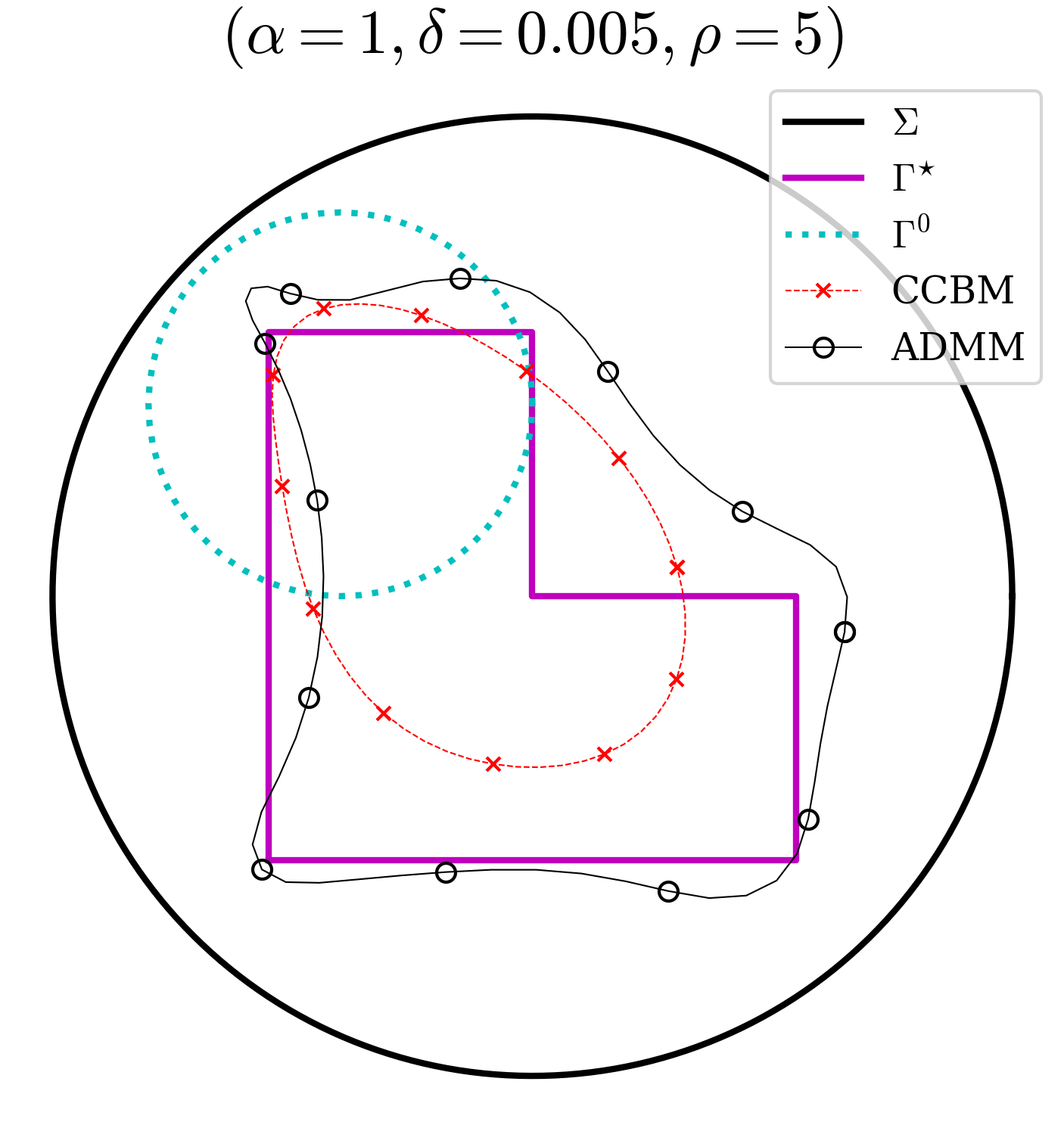}} 
    \resizebox{0.24\textwidth}{!}{\includegraphics{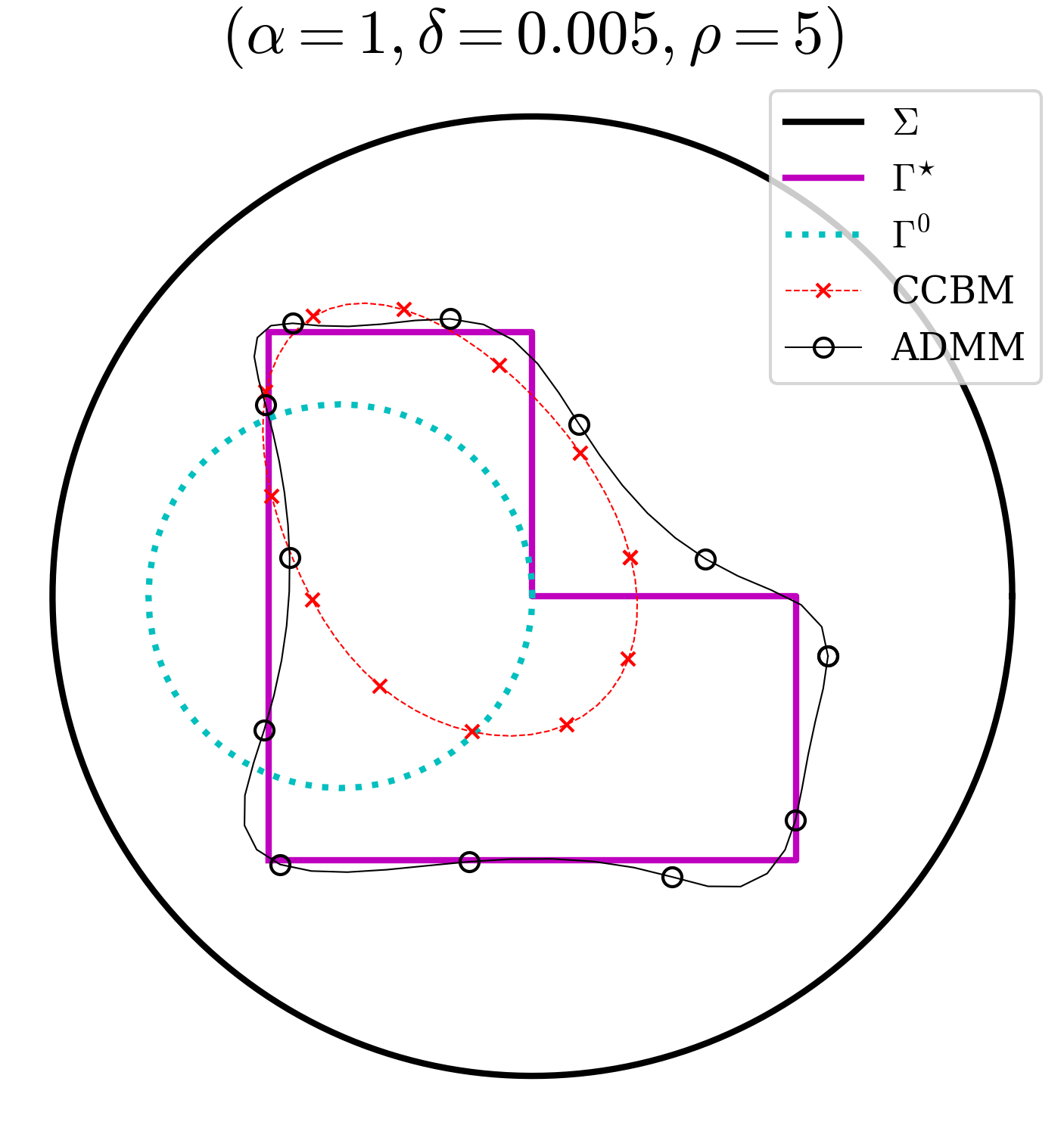}}
\caption{
Reconstructions under noisy measurements with noise level $\delta = 0.005$ for an \texttt{L}-shape cavity under the prescribed Dirichlet data $f = \cos{\arctan(x_{2}/x_{1})} + \sin{(0.1\pi x_{1})} \sin{(0.1\pi x_{2})}$ with $a=\min_{\xi \in \varOmega^{\star}} f(\xi)$ and $b = \max_{\xi \in \varOmega^{\star}} f(\xi)$.
}
\label{ADMMfig:noisy_test_L_fminmax_ab}
\end{figure}

The evolution of the iterates shown in Figure~\ref{ADMMfig:noisy_test_L_fminmax_ab_shape_evolution} indicates a stable and progressive deformation toward the reconstructed shape.
The algorithm first recovers the main geometric features and subsequently refines finer boundary details.
No pronounced oscillatory behavior or numerical instabilities are observed, despite the presence of noise in the data.
This behavior is consistent with the splitting structure of ADMM and further illustrates the stabilizing role of the admissible set $\mathcal{K}$ together with the extension-regularization procedure \eqref{eq:Sobolev_gradient_computation}.

Finally, to complete our numerical investigations, we consider the reconstruction of a cavity containing multiple concave regions, in contrast to the previous examples which involved only a single concavity. 
In this experiment, we choose the Dirichlet input $f = 1.0 + \cos\!\bigl(\arctan(x_{2}/x_{1})\bigr)$.
The corresponding reconstructions are displayed in Figure~\ref{ADMMfig:last_example}. The results indicate that the proposed ADMM-based formulation provides improved qualitative recovery compared with the conventional CCBM approach, particularly in the reconstruction of the concave portions of the boundary.

Moreover, the inclusion of the spatially varying term $\cos(\arctan(x_{2}/x_{1}))$ in the input data highlights the importance of the choice of excitation. In particular, compared with the related numerical experiment reported in \cite{AfraitesRabago2025}, where the constant input $f=1$ was employed, the present choice leads to a noticeably clearer identification of the concave regions. This observation suggests that appropriately chosen nonconstant boundary excitations may improve reconstruction quality, even in the single-measurement setting.

Overall, the numerical results indicate that the proposed ADMM framework yields stable reconstructions under low noise levels. The reconstruction quality depends significantly on the choice of Dirichlet input, with spatially varying excitations generally producing improved results. The penalty parameter $\rho$ plays an important role in balancing constraint enforcement and sensitivity to noise. Furthermore, the imposed inequality constraint suppresses oscillatory behavior and enhances stability, although at the expense of a slight reduction in boundary resolution. Taken together, these observations support the effectiveness of the proposed approach for noisy inverse shape reconstruction problems involving homogeneous Robin boundary conditions.

\begin{figure}[h!]
    \centering 
    \resizebox{0.35\textwidth}{!}{\includegraphics{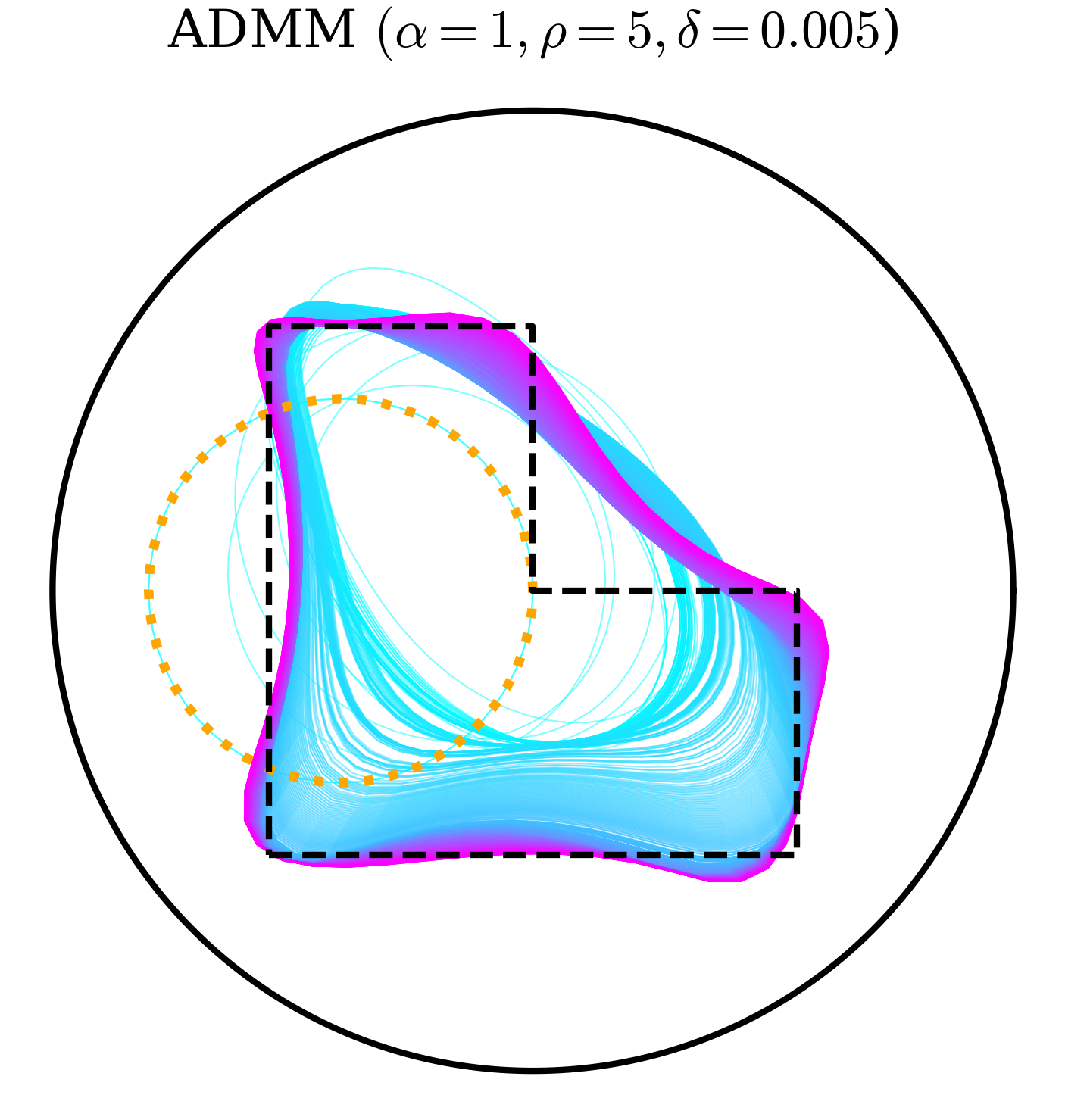}} \quad
    \resizebox{0.35\textwidth}{!}{\includegraphics{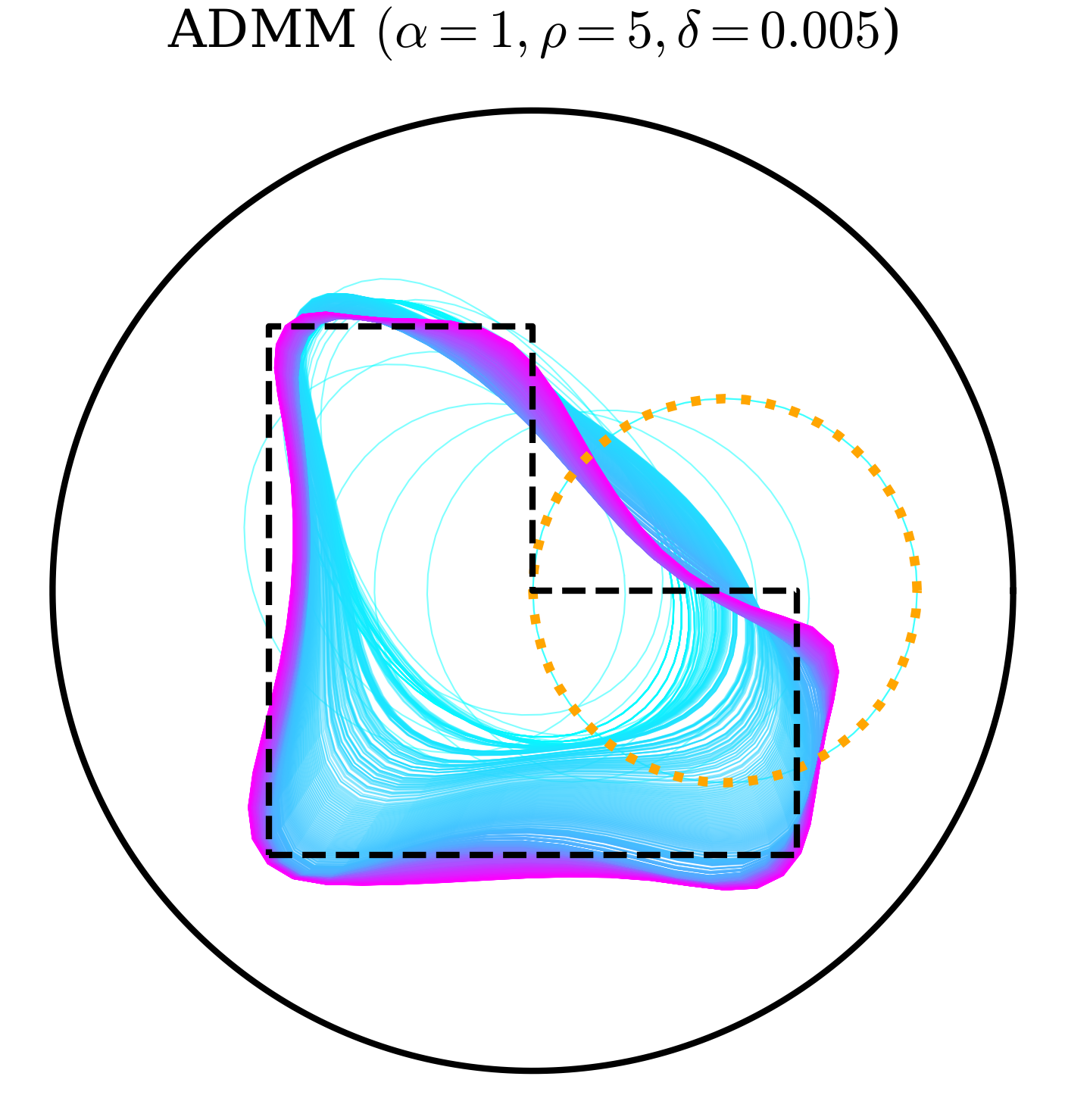}}  
\caption{
Shape evolution of two of the reconstructed shapes in Figure~\ref{ADMMfig:noisy_test_L_fminmax_ab} under ADMM.
}
\label{ADMMfig:noisy_test_L_fminmax_ab_shape_evolution}
\end{figure}

\begin{figure}[h!]
    \centering 
    \resizebox{0.32\textwidth}{!}{\includegraphics{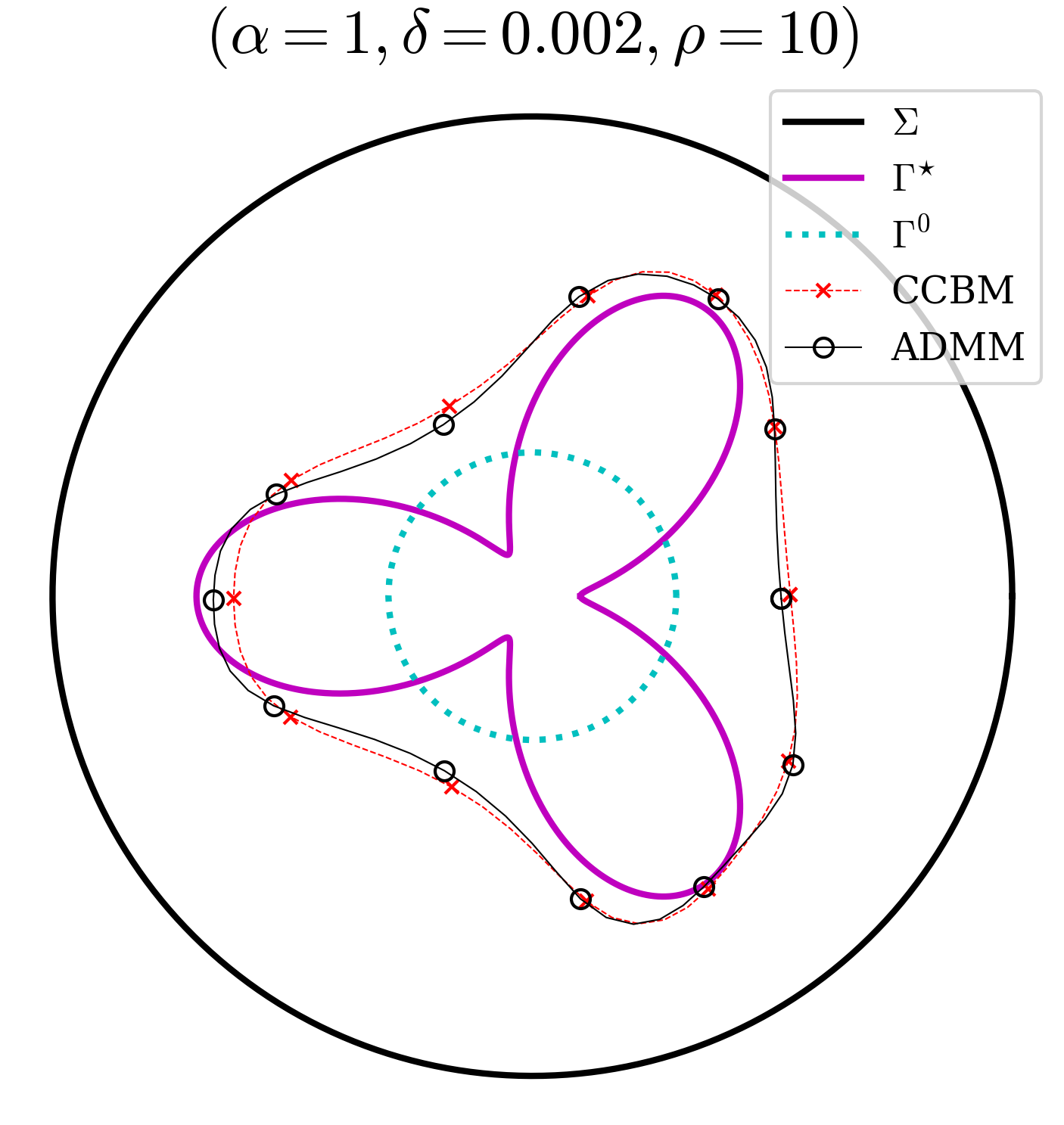}}\hfill
    \resizebox{0.32\textwidth}{!}{\includegraphics{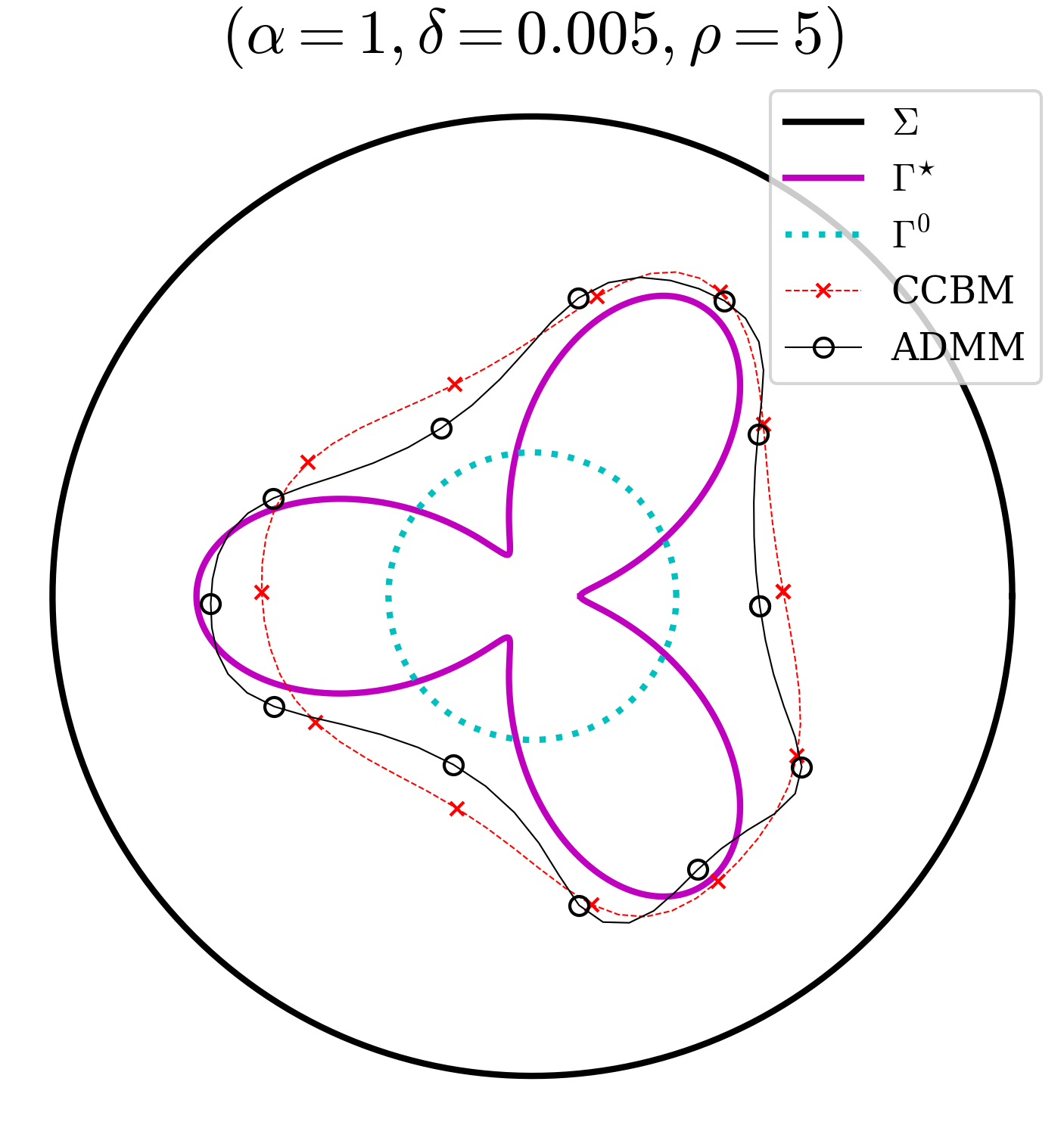}} \hfill
    \resizebox{0.32\textwidth}{!}{\includegraphics{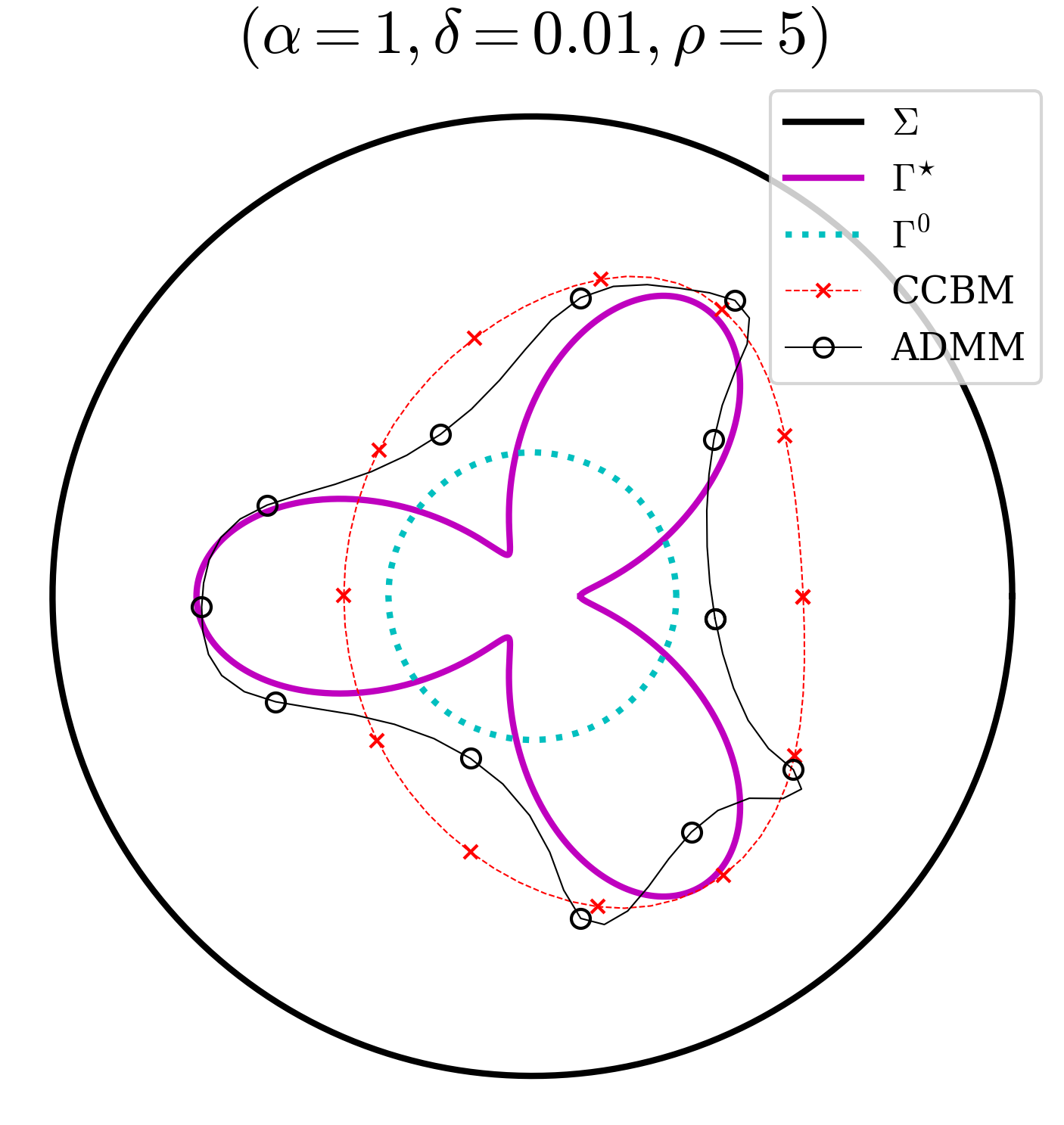}}
\caption{
Reconstructions with multiple concave regions under different noise levels $\delta = 0.002, 0.005, 0.01$.
}
\label{ADMMfig:last_example}
\end{figure}

\color{black}
\section{Summary and concluding remarks}\label{sec:conclusion}

We studied the inverse problem of reconstructing an unknown boundary portion subject to a homogeneous Robin condition from a single pair of Cauchy data prescribed on the accessible boundary. The reconstruction was formulated as a shape optimization problem based on a complex-valued state equation, with descent directions computed from the associated shape derivative and Sobolev gradient.

The problem remains severely ill-posed, particularly in the single-measurement setting. The numerical experiments show that the proposed approach recovers the main geometric features of the inclusion, while fine-scale details deteriorate under noise and depend strongly on the choice of boundary data.

To improve stability, we introduced an ADMM-based formulation with an auxiliary variable subject to an inequality constraint. Compared with the standard CCBM approach, the ADMM scheme exhibits improved robustness with respect to noise and initialization. In particular, the splitting strategy and admissibility constraint reduce spurious oscillations and tend to produce more coherent reconstructions, especially in the noisy regime.

The numerical results also indicate that the penalty parameter and constraint set must be chosen carefully. Weak enforcement may lead to diffused reconstructions, whereas excessive penalization can amplify noise or reduce resolution. In addition, the reconstruction quality depends significantly on the prescribed Dirichlet data, with combined excitations yielding improved results.

Overall, the results indicate that the proposed ADMM--CCBM framework provides a comparatively more stable and robust approach than the standard CCBM formulation for noisy inverse shape reconstruction problems with homogeneous Robin conditions. These findings are consistent with previous works \cite{RabagoHadriAfraitesHendyZaky2024,CherratAfraitesRabago2025b,CherratAfraitesRabago2026} supporting the use of inequality constraints in ADMM-based shape reconstruction methods. Future work includes the incorporation of multiple measurements, adaptive parameter selection strategies, and additional regularization techniques to further improve stability and reconstruction accuracy.
\section*{Acknowledgements}
The work of LA is supported by Toubkal/25/205-Campus France:51628SC. 
The work of JFTR is supported by the JSPS Postdoctoral Fellowships for Research in Japan (Grant Number JP24KF0221), and partially by the JSPS Grant-in-Aid for Early-Career Scientists (Grant Number JP23K13012) and the JST CREST (Grant Number JPMJCR2014).

%

\section*{Conflict of interest} The authors declare that they have no conflict of interest.

\section*{Funding} Not applicable.

\section*{Author Contributions} All listed authors contributed equally to this work.

\section*{Availability of data and materials} The authors confirm that the data supporting the findings of this study
are available within the article and supplementary sources.


\bibliographystyle{alpha} 
\bibliography{main}   

@article{HriziPrakashNovotny2025,
	author = {M. Hrizi and R. Prakash and A. A. Novotny},
	journal = {Comput. Math. Appl.},
	pages = {117--144},
	title = {Approximation of unknown sources in a time fractional {PDE} by the optimal ones and their reconstruction},
	volume = {200},
	year = {2025}}

@article{WuGongGongZhangZhu2026,
	author = {Q. Wu and R. Gong and W. Gong and Z. Zhang and S. Zhu},
	journal = {Inverse Problems},
	number = {3},
	pages = {035008},
	title = {Bioluminescence tomography via a shape optimization method based on a complex-valued model},
	volume = {42},
	year = {2026}}

@unpublished{AfraitesHadriHriziRabago2026,
	author = {L. Afraites and A. Hadri and M. Hrizi and J. F. T. Rabago},
	month = {March},
	note = {arXiv preprint arXiv:2603.04216},
	title = {Statistical topological gradient and shape optimization for robust metal--semiconductor contact reconstruction},
	year = {2026}}

@article{RabagoKimura2025,
	author = {J. F. T. Rabago and M. Kimura},
	journal = {ESAIM Control Optim. Calc. Var.},
	pages = {Art. 64},
	title = {On the well-posedness of a {H}ele-{S}haw-like system resulting from an inverse geometry problem formulated through a shape optimization setting},
	volume = {31},
	year = {2025}}

@article{Chicco1997,
	author = {M. Chicco},
	journal = {Boll. Un. Mat. Ital. B (7)},
	pages = {531--538},
	title = {A maximum principle for mixed boundary value problems for elliptic equations in non-divergence form},
	volume = {11},
	year = {1997}}

@article{CherratAfraitesRabago2026,
	author = {E. Cherrat and L. Afraites and J. F. T. Rabago},
	journal = {J. Optim. Theory Appl.},
	pages = {Art. 44, 28 pp.},
	title = {Enhanced shape recovery in advection--diffusion problems via a novel {A}{D}{M}{M}-based {C}{C}{B}{M} optimization},
	volume = {209},
	year = {2026}}

@article{RabagoHadriAfraitesHendyZaky2024,
	author = {J. F. T. Rabago and A. Hadri and L. Afraites and A. S. Hendy and M. A. Zaky},
	journal = {Comput. Math. Appl.},
	month = {September},
	pages = {19-32},
	title = {A robust alternating direction numerical scheme in a shape optimization setting for solving geometric inverse problems},
	volume = {175},
	year = {2024}}

@article{RabagoAfraitesNotsu2025,
	author = {J. F. T. Rabago and L. Afraites and H. Notsu},
	journal = {SIAM J. Control \& Optim.},
	number = {2},
	pages = {822--851},
	title = {Detecting immersed obstacle in {S}tokes fluid flow using the coupled complex boundary method},
	volume = {63},
	year = {2025}}

@article{ZhengChengGong2020,
	author = {X. Zheng and X. Cheng and R. Gong},
	journal = {Int. J. Comput. Math.},
	number = {5},
	pages = {998--1015},
	title = {A coupled complex boundary method for parameter identification in elliptic problems},
	volume = {97},
	year = {2020}}

@article{Rabago2023b,
	author = {J. F. T. Rabago},
	journal = {Math. Control Relat. Fields},
	number = {4},
	pages = {1362--1398},
	title = {On the new coupled complex boundary method in shape optimization framework for solving stationary free boundary problems},
	volume = {13},
	year = {2023}}

@article{CherratAfraitesRabago2025,
	author = {E. Cherrat and L. Afraites and J. F. T. Rabago},
	journal = {Discrete Contin. Dyn. Syst. Ser. S},
	number = {1},
	pages = {296--320},
	title = {Shape reconstruction for advection-diffusion problems by shape optimization techniques: The case of constant velocity},
	volume = {18},
	year = {2025}}

@article{CherratAfraitesRabago2025b,
	author = {E. Cherrat and L. Afraites and J. F. T. Rabago},
	journal = {Appl. Math. Optim.},
	pages = {Art. 13},
	title = {Numerical solution by shape optimization method to an inverse shape problem in multi-dimensional advection-diffusion problem with space dependent coefficients},
	volume = {92},
	year = {2025}}

@book{Azegami2020,
	address = {Singapore},
	author = {H. Azegami},
	publisher = {Springer},
	series = {Springer Optimization and Its Applications},
	title = {Shape Optimization Problems},
	volume = {164},
	year = {2020}}

@article{Rabago2025,
	author = {J. F. T. Rabago},
	journal = {Appl. Numer. Math.},
	pages = {135--171},
	title = {Localization of tumor through a non-conventional numerical shape optimization technique},
	volume = {217},
	year = {2025}}

@article{RabagoNotsu2024,
	author = {J. F. T. Rabago and H. Notsu},
	journal = {Appl. Math. Optim.},
	pages = {Art. 2},
	title = {Numerical solution to a free boundary problem for the {S}tokes equation using the coupled complex boundary method in shape optimization setting},
	volume = {89},
	year = {2024}}

@article{AfraitesRabago2024,
	author = {L. Afraites and J. F. T. Rabago},
	journal = {Comput. Appl. Math.},
	number = {5},
	pages = {Art. 270},
	title = {Boundary shape reconstruction with {R}obin condition: existence result, stability analysis, and inversion via multiple measurements},
	volume = {43},
	year = {2024}}

@techreport{Loh1987,
	author = {W. H. Loh},
	institution = {Stanford Electronics Labs.},
	number = {830-1},
	title = {Modeling and Measurement of Contact Resistances},
	type = {Tech. Rep. No. G},
	year = {1987}}

@article{FangLu2004,
	author = {W. Fang and M. Lu},
	journal = {Internat. J. Numer. Methods Engrg.},
	pages = {1563--1585},
	title = {A fast collocation method for an inverse boundary value problem,},
	volume = {59},
	year = {2004}}

@article{CakoniKressSchuft2010a,
	author = {F. Cakoni and R. Kress and C. Schuft},
	journal = {Inverse Problems},
	pages = {Art. 095012 24pp.},
	title = {Integral equations for inverse problems in corrosion detection from partial {C}auchy data},
	volume = {26},
	year = {2010}}

@article{Sincich2010,
	author = {E. Sincich},
	journal = {SIAM J. Math. Anal.},
	pages = {2922--2943},
	title = {Stability for the determination of unknown boundary and impedance with a {R}obin boundary condition},
	volume = {42},
	year = {2010}}

@article{PaganiPeerotti2009,
	author = {C. D. Pagani and D. Peerotti},
	journal = {Inverse Problems},
	pages = {Art. 055007 12pp},
	title = {Identifiability problems of defects with {R}obin condition},
	volume = {25},
	year = {2009}}

@article{Fang2022,
	author = {W. Fang},
	journal = {J. Comput. Appl. Math.},
	pages = {Art. 114376 13 pp},
	title = {Simultaneous recovery of {R}obin boundary and coefficient for the {L}aplace equation by shape derivative},
	volume = {413},
	year = {2022}}

@article{FangZeng2009,
	author = {W. Fang and S. Zeng},
	journal = {J. Comput. Appl. Math.},
	pages = {573--580},
	title = {Numerical recovery of {R}obin boundary from boundary measurements for the {L}aplace equation},
	volume = {224},
	year = {2009}}

@article{FangLinMa2019,
	author = {W. Fang and F. Lin and Y. Ma},
	journal = {East Asian J. Appl. Math.},
	pages = {485--505},
	title = {Fast algorithms for boundary integral equations on elliptic domains and related inverse problems},
	volume = {9},
	year = {2019}}

@article{AfraitesMasnaouiNachaoui2022,
	author = {L. Afraites and C. Masnaoui and M. Nachaoui},
	journal = {Discrete Contin. Dyn. Syst. Ser. S},
	number = {1},
	pages = {1--21},
	title = {Shape optimization method for an inverse geometric source problem and stability at critical shape},
	volume = {15},
	year = {2022}}

@article{CaubetDambrineKateb2013,
	author = {F. Caubet and M. Dambrine and D. Kateb},
	journal = {Inverse Problems},
	pages = {Art. 115011 (26pp)},
	title = {Shape optimization methods for the inverse obstacle problem with generalized impedance boundary conditions},
	volume = {29},
	year = {2013}}

@article{KressRundell2005,
	author = {R. Kress and W. Rundell},
	journal = {Inverse Problems},
	pages = {1207--1223},
	title = {Nonlinear integral equations and the iterative solution for an inverse boundary value problem},
	volume = {21},
	year = {2005}}

@article{Bacchelli2009,
	author = {V. Bacchelli},
	journal = {Inverse Problems},
	pages = {Art. 015004 (4pp)},
	title = {Uniqueness for the determination of unknown boundary and impedance with the homogeneous {R}obin condition},
	volume = {25},
	year = {2009}}

@article{KaupSantosaVogelius1996,
	author = {P. G. Kaup and F. Santosa and M. Vogelius},
	journal = {Inverse Problems},
	pages = {279--293},
	title = {Method for imaging corrosion damage in thin plates from electrostatic data},
	volume = {12},
	year = {1996}}

@article{KaupSantosa1995,
	author = {P. G. Kaup and F. Santosa},
	journal = {J. Nondestr. Eval.},
	pages = {127--136},
	title = {Nondestructive evaluation of corrosion damage using electrostatic measurements},
	volume = {14},
	year = {1995}}

@article{Inglese1997,
	author = {G. Inglese},
	journal = {Inverse Problems},
	pages = {977--994},
	title = {An inverse problem in corrosion detection},
	volume = {13},
	year = {1997}}

@article{FasinoInglese2007,
	author = {D. Fasino and G. Inglese},
	journal = {J. Comput. Appl. Math.},
	pages = {460--470},
	title = {Recovering nonlinear terms in an inverse boundary value problem for {L}aplace's equation: a stability estimate},
	volume = {198},
	year = {2007}}

@article{Rundell2008,
	author = {W. Rundell},
	journal = {Inverse Problems},
	pages = {1--22},
	title = {Recovering an obstacle and its impedance from {C}auchy data},
	volume = {24},
	year = {2008}}

@article{IngleseMariani2004,
	author = {G. Inglese and F. Mariani},
	journal = {Inverse Problems},
	pages = {1207--1215},
	title = {Corrosion detection in conducting boundaries},
	volume = {20},
	year = {2004}}

@article{ChaabaneaJaoua1999,
	author = {S. Chaabane and M. Jaoua},
	journal = {Inverse Problems},
	pages = {1425--1438},
	title = {Identification of {R}obin coefficients by means of boundary measurements},
	volume = {15},
	year = {1999}}

@article{AlessandriniSincich2007,
	author = {G. Alessandrini and E. Sincich},
	journal = {J. Comput. Appl. Math.},
	pages = {307--320},
	title = {Solving elliptic {C}auchy problems and identification of non-linear corrosion},
	volume = {198},
	year = {2007}}

@article{AlessandriniDelPieroRondi2003,
	author = {G. Alessandrini and L. Del Piero and L. Rondi},
	journal = {Inverse Problems},
	pages = {973--984},
	title = {Stable determination of corrosion by a single electrostatic boundary measurement},
	volume = {19},
	year = {2003}}

@article{CakoniKress2007,
	author = {F. Cakoni and R. Kress},
	journal = {Inverse Prob. Imaging},
	pages = {229--245},
	title = {Integral equations for inverse problems in corrosion detection from partial Cauchy data},
	volume = {1},
	year = {2007}}

@article{Ouiassaetal2022,
	author = {H. Ouaissa and A. Chakib and A. Nachaoui and M. Nachaoui},
	journal = {Appl. Math. Optim.},
	number = {Art. 3},
	pages = {37 pp.},
	title = {On Numerical Approaches for Solving an Inverse {C}auchy {S}tokes Problem},
	volume = {85},
	year = {2022}}

@book{DautrayLionsv21998,
	author = {R. Dautray and J.-L. Lions},
	publisher = {Springer},
	title = {Mathematical {A}nalysis and {N}umerical {M}ethods for {S}cience and {T}echnology},
	volume = {2},
	year = {1998}}

@article{Afraites2022,
	author = {L. Afraites},
	journal = {Discrete Contin. Dyn. Syst. Ser. S},
	number = {1},
	pages = {23 -- 40},
	title = {A new coupled complex boundary method ({C}{C}{B}{M}) for an inverse obstacle problem},
	volume = {15},
	year = {2022}}

@article{Gongetal2017,
	author = {R. Gong and X. Cheng and W. Han},
	journal = {Appl. Anal.},
	number = {5},
	pages = {869--885},
	title = {A coupled complex boundary method for an inverse conductivity problem with one measurement},
	volume = {96},
	year = {2017}}

@article{Chengetal2014,
	author = {X. L. Cheng and R. F. Gong and W. Han and X. Zheng},
	journal = {Inverse Problems},
	pages = {Article 055002},
	title = {A novel coupled complex boundary method for solving inverse source problems},
	year = {2014}}

@article{Doganetal2007,
	author = {G. Do\v{g}an and P. Morin and R.H. Nochetto and M. Verani},
	journal = {Comput. Methods Appl. Mech. Engrg.},
	pages = {3898--3914},
	title = {Discrete gradient flows for shape optimization and applications},
	volume = {196},
	year = {2007}}

@techreport{MuratSimon1976,
	address = {Paris},
	author = {F. Murat and J. Simon},
	institution = {Univ. Pierre et Marie Curie},
	number = {76015},
	title = {Sur le contr\^{o}le par un domaine g\'{e}om\'{e}trique},
	type = {Research report},
	year = {1976}}

@book{SokolowskiZolesio1992,
	address = {Berlin, Heidelberg},
	author = {J. Soko\l{}owski and J.-P. Zol\'{e}sio},
	publisher = {Springer-Verlag},
	series = {Springer Series in Computational Mathematics},
	title = {{I}ntroduction to {S}hape {O}ptimization: {S}hape {S}ensitivity {A}nalysis},
	year = {1992}}

@book{SauterSchwab2011,
	address = {Berlin, Heidelberg},
	author = {S. A. Sauter and C. Schwab},
	publisher = {Springer-Verlag},
	title = {Boundary Element Methods},
	year = {2011}}

@article{Simon1980,
	author = {J. Simon},
	journal = {Numer. Funct. Anal. Optim.},
	pages = {649-687},
	title = {Differentiation with respect to the domain in boundary value problems},
	volume = {2},
	year = {1980}}

@article{RabagoAzegami2020,
	author = {J. F. T. Rabago and H. Azegami},
	journal = {Comput. Optim. Appl.},
	number = {1},
	pages = {251--305},
	title = {A second-order shape optimization algorithm for solving the exterior {B}ernoulli free boundary problem using a new boundary cost functional},
	volume = {77},
	year = {2020}}

@article{RabagoAzegami2018,
	author = {J. F. T. Rabago and H. Azegami},
	journal = {Japan J. Indust. Appl. Math.},
	number = {1},
	pages = {131-176},
	title = {Shape optimization approach to defect-shape identification with convective boundary condition via partial boundary measurement},
	volume = {31},
	year = {2018}}

@book{Neuberger1997,
	address = {Berlin},
	author = {J. W. Neuberger},
	publisher = {Springer-Verlag},
	title = {Sobolev Gradients and Differential Equations},
	year = {1997}}

@article{IKP2008,
	author = {K. Ito and K. Kunisch and G. H. Peichl},
	journal = {ESAIM Control Optim. Calc. Var.},
	pages = {517-539},
	title = {Variational approach to shape derivatives},
	volume = {14},
	year = {2008}}

@book{HenrotPierre2018,
	address = {Z\"{u}rich},
	author = {A. Henrot and M. Pierre},
	publisher = {European Mathematical Society},
	series = {Tracts in Mathematics},
	title = {Shape Variation and Optimization: A Geometrical Analysis},
	volume = {28},
	year = {2018}}

@article{Hecht2012,
	author = {F. Hecht},
	journal = {J. Numer. Math.},
	pages = {251-265},
	title = {New development in {F}ree{F}em++},
	volume = {20},
	year = {2012}}

@article{HIKKP2009,
	author = {J. Haslinger and K. Ito and T. Kozubek and K. Kunisch and G. H. Peichl},
	journal = {Interfaces Free Bound.},
	pages = {317-330},
	title = {On the shape derivative for problems of {B}ernoulli type},
	volume = {11},
	year = {2009}}

@book{DelfourZolesio2011,
	address = {Philadelphia},
	author = {M. C. Delfour and J.-P. Zol\'{e}sio},
	edition = {2nd},
	publisher = {SIAM},
	series = {Adv. Des. Control},
	title = {{S}hapes and {G}eometries: {M}etrics, {A}nalysis, {D}ifferential {C}alculus, and {O}ptimization},
	volume = {22},
	year = {2011}}

@article{AfraitesRabago2025,
	author = {L. Afraites and J. F. T. Rabago},
	journal = {Discrete Contin. Dyn. Syst. Ser. S},
	number = {1},
	pages = {43--76},
	title = {Shape optimization methods for detecting an unknown boundary with the {R}obin condition by a single measurement},
	volume = {18},
	year = {2025}}



\end{document}